\theoremstyle{plain}
\newtheorem{thm}{Theorem}[subsection]
\newtheorem{cor}[thm]{Corollary}
\newtheorem{lemma}[thm]{Lemma}
\newtheorem{proposition}[thm]{Proposition}
\newtheorem{rmk}[thm]{Remark}
\theoremstyle{plain}
\newtheorem{defs}[thm]{Definition}
\theoremstyle{remark}
\def\dated#1{\def\thedate{#1}}%
 \newdimen\xydashw@@
\newdimen\high%
\newdimen\ul%
\newdimen\wdth%
\def\ratchet#1#2{\ifnum#1<#2\global #1=#2\fi}%
\def\ifnextchar#1#2#3{\let\@tempe%
#1\def\@tempa{#2}\def\@tempb{#3}\futurelet%
    \@tempc\@ifnch}%
\def\@ifnch{\ifx \@tempc \@sptoken \let\@tempd\@xifnch%
      \else \ifx \@tempc \@tempe\let\@tempd\@tempa\else\let\@tempd\@tempb\fi%
      \fi \@tempd}%
\def\:{\let\@sptoken= } \:  % this makes \@sptoken a space token%
\def\:{\@xifnch} \expandafter\def\: {\futurelet\@tempc\@ifnch}%
\let\ifnextchar\@ifnextchar%
\newdimen\axis \axis=\fontdimen22\textfont2%
\def\scalefactor#1{\ul=#1\ul \X@xbase=#1\X@xbase \Y@ybase=#1\Y@ybase}%
\def\fontscale#1{%
\if#1h\relax%
\font\xydashfont=xydash10 scaled \magstephalf%
\font\xyatipfont=xyatip10 scaled \magstephalf%
\font\xybtipfont=xybtip10 scaled \magstephalf%
\font\xybsqlfont=xybsql10 scaled \magstephalf%
\font\xycircfont=xycirc10 scaled \magstephalf%
\else%
\font\xydashfont=xydash10 scaled \magstep#1%
\font\xyatipfont=xyatip10 scaled \magstep#1%
\font\xybtipfont=xybtip10 scaled \magstep#1%
\font\xybsqlfont=xybsql10 scaled \magstep#1%
\font\xycircfont=xycirc10 scaled \magstep#1%
\fi}%
\def\bfig{\vcenter\bgroup\xy}%
\def\efig{\endxy\egroup}%
\def\car#1#2\nil{#1}%
\def\morphism{\ifnextchar({\morphismp}{\morphismp(0,0)}}%
\def\morphismp(#1){\ifnextchar|{\morphismpp(#1)}{\morphismpp(#1)|a|}}%
\def\morphismpp(#1)|#2|{\ifnextchar/{\morphismppp(#1)|#2|}%
    {\morphismppp(#1)|#2|/>/}}%
\def\morphismppp(#1)|#2|/#3/{%
    \ifnextchar<{\morphismpppp(#1)|#2|/#3/}%
    {\morphismpppp(#1)|#2|/#3/<\default,0>}}%
\def\morphismpppp(#1,#2)|#3|/#4/<#5,#6>[#7`#8;#9]{%
\xend#1\advance \xend by #5%
\yend#2\advance \yend by #6%
\domorphism(#1,#2)|#3|/#4/<#5,#6>[{#7}`{#8};{#9}]}%
\def\domorphism(#1,#2)|#3|/#4/<#5,#6>[#7`#8;#9]{%
 % Check if arrow arg has an @; then don't add it.%
\def\next{\car#4.\nil}%
\if@\next\relax%
 \if#3l%
  \ifnum #6>0%
   \POS(#1,#2)*+!!<0ex,\axis>{#7}\ar#4^-{#9} (\xend,\yend)*+!!<0ex,\axis>{#8}%
  \else%
   \POS(#1,#2)*+!!<0ex,\axis>{#7}\ar#4_-{#9} (\xend,\yend)*+!!<0ex,\axis>{#8}%
  \fi%
 \else \if#3m%
    \setbox0\hbox{$#9$}%
   \ifdim \wd0=0pt%
     \POS(#1,#2)*+!!<0ex,\axis>{#7}\ar#4 (\xend,\yend)*+!!<0ex,\axis>{#8}%
   \else%
     \POS(#1,#2)*+!!<0ex,\axis>{#7}\ar#4|-*+<1pt,4pt>{\labelstyle#9}%
       (\xend,\yend)*+!!<0ex,\axis>{#8}%
   \fi%
 \else \if#3r%
  \ifnum #6<0%
   \POS(#1,#2)*+!!<0ex,\axis>{#7}\ar#4^-{#9} (\xend,\yend)*+!!<0ex,\axis>{#8}%
  \else%
   \POS(#1,#2)*+!!<0ex,\axis>{#7}\ar#4_-{#9} (\xend,\yend)*+!!<0ex,\axis>{#8}%
  \fi%
 \else \if#3a%
  \ifnum #5>0%
   \POS(#1,#2)*+!!<0ex,\axis>{#7}\ar#4^-{#9} (\xend,\yend)*+!!<0ex,\axis>{#8}%
  \else%
   \POS(#1,#2)*+!!<0ex,\axis>{#7}\ar#4_-{#9} (\xend,\yend)*+!!<0ex,\axis>{#8}%
  \fi%
 \else \if#3b%
  \ifnum #5<0%
   \POS(#1,#2)*+!!<0ex,\axis>{#7}\ar#4^-{#9} (\xend,\yend)*+!!<0ex,\axis>{#8}%
  \else%
   \POS(#1,#2)*+!!<0ex,\axis>{#7}\ar#4_-{#9} (\xend,\yend)*+!!<0ex,\axis>{#8}%
  \fi%
 \else%
   \POS(#1,#2)*+!!<0ex,\axis>{#7}\ar#4 (\xend,\yend)*+!!<0ex,\axis>{#8}%
 \fi\fi\fi\fi\fi%
 %% Otherwise, have to add @{..}%
\else%
 \if#3l%
  \ifnum #6>0%
   \POS(#1,#2)*+!!<0ex,\axis>{#7}\ar@{#4}^-{#9} (\xend,\yend)*+!!<0ex,\axis>{#8}%
  \else%
   \POS(#1,#2)*+!!<0ex,\axis>{#7}\ar@{#4}_-{#9} (\xend,\yend)*+!!<0ex,\axis>{#8}%
  \fi%
 \else \if#3m%
    \setbox0\hbox{$#9$}%
   \ifdim \wd0=0pt%
     \POS(#1,#2)*+!!<0ex,\axis>{#7}\ar@{#4} (\xend,\yend)*+!!<0ex,\axis>{#8}%
   \else%
     \POS(#1,#2)*+!!<0ex,\axis>{#7}\ar@{#4}|-*+<1pt,4pt>{\labelstyle#9}%
         (\xend,\yend)*+!!<0ex,\axis>{#8}%
   \fi%
 \else \if#3r%
  \ifnum #6<0%
   \POS(#1,#2)*+!!<0ex,\axis>{#7}\ar@{#4}^-{#9} (\xend,\yend)*+!!<0ex,\axis>{#8}%
  \else%
   \POS(#1,#2)*+!!<0ex,\axis>{#7}\ar@{#4}_-{#9} (\xend,\yend)*+!!<0ex,\axis>{#8}%
  \fi%
 \else \if#3a%
  \ifnum #5>0%
   \POS(#1,#2)*+!!<0ex,\axis>{#7}\ar@{#4}^-{#9} (\xend,\yend)*+!!<0ex,\axis>{#8}%
  \else%
   \POS(#1,#2)*+!!<0ex,\axis>{#7}\ar@{#4}_-{#9} (\xend,\yend)*+!!<0ex,\axis>{#8}%
  \fi%
 \else \if#3b%
  \ifnum #5<0%
   \POS(#1,#2)*+!!<0ex,\axis>{#7}\ar@{#4}^-{#9} (\xend,\yend)*+!!<0ex,\axis>{#8}%
  \else%
   \POS(#1,#2)*+!!<0ex,\axis>{#7}\ar@{#4}_-{#9} (\xend,\yend)*+!!<0ex,\axis>{#8}%
  \fi%
 \else%
   \POS(#1,#2)*+!!<0ex,\axis>{#7}\ar@{#4} (\xend,\yend)*+!!<0ex,\axis>{#8}%
 \fi\fi\fi\fi\fi%
\fi\ignorespaces}%
\def\vect(#1,#2)/#3/<#4,#5>{%
 \xend#1 \yend#2 \advance\xend by #4 \advance\yend by #5%
     \POS(#1,#2)\ar#3 (\xend,\yend)}%
\def\squarepppp(#1,#2)|#3|/#4`#5`#6`#7/<#8>[#9]{%
\xpos#1\ypos#2%
\def\next|##1##2##3##4|{%
 \def\xa{##1}\def\xb{##2}\def\xc{##3}\def\xd{##4}\ignorespaces}%
\next|#3|%
\def\next<##1,##2>{\deltax=##1\deltay=##2\ignorespaces}%
\next<#8>%
\def\next[##1`##2`##3`##4;##5`##6`##7`##8]{%
    \def\nodea{##1}\def\nodeb{##2}\def\nodec{##3}\def\noded{##4}%
    \def\labela{##5}\def\labelb{##6}\def\labelc{##7}\def\labeld{##8}\ignorespaces}%
\next[#9]%
\morphism(\xpos,\ypos)|\xd|/{#7}/<\deltax,0>[\nodec`\noded;\labeld]%
\advance \ypos by \deltay%
\morphism(\xpos,\ypos)|\xb|/{#5}/<0,-\deltay>[\nodea`\nodec;\labelb]%
\morphism(\xpos,\ypos)|\xa|/{#4}/<\deltax,0>[\nodea`\nodeb;\labela]%
 \advance \xpos by \deltax%
\morphism(\xpos,\ypos)|\xc|/{#6}/<0,-\deltay>[\nodeb`\noded;\labelc]%
\ignorespaces}%
\def\square{\ifnextchar({\squarep}{\squarep(0,0)}}%
\def\squarep(#1){\ifnextchar|{\squarepp(#1)}{\squarepp(#1)|alrb|}}%
\def\squarepp(#1)|#2|{\ifnextchar/{\squareppp(#1)|#2|}%
    {\squareppp(#1)|#2|/>`>`>`>/}}%
\def\squareppp(#1)|#2|/#3`#4`#5`#6/{%
    \ifnextchar<{\squarepppp(#1)|#2|/#3`#4`#5`#6/}%
    {\squarepppp(#1)|#2|/#3`#4`#5`#6/<\default,\default>}}%
\def\diamondpppp(#1,#2)|#3|/#4`#5`#6`#7/<#8>[#9]{%
\xpos#1\ypos#2%
\def\next|##1##2##3##4|{%
 \def\xa{##1}\def\xb{##2}\def\xc{##3}\def\xd{##4}\ignorespaces}%
\next|#3|%
\def\next<##1,##2>{\deltax=##1\deltay=##2\ignorespaces}%
\next<#8>%
\def\next[##1`##2`##3`##4;##5`##6`##7`##8]{%
    \def\nodea{##1}\def\nodeb{##2}\def\nodec{##3}\def\noded{##4}%
    \def\labela{##5}\def\labelb{##6}\def\labelc{##7}%
\def\labeld{##8}\ignorespaces}%
\next[#9]%
\advance\ypos\deltay
\morphism(\xpos,\ypos)|\xc|/{#6}/<\deltax,-\deltay>[\nodeb`\noded;\labelc]%
\advance\xpos \deltax
\advance\xpos \deltax
\morphism(\xpos,\ypos)|\xd|/{#7}/<-\deltax,-\deltay>[\nodec`\noded;\labeld]%
\advance\ypos\deltay \advance\xpos -\deltax
\morphism(\xpos,\ypos)|\xa|/{#4}/<-\deltax,-\deltay>[\nodea`\nodeb;\labela]%
\morphism(\xpos,\ypos)|\xb|/{#5}/<\deltax,-\deltay>[\nodea`\nodec;\labelb]%
}
\def\diamondp(#1){\ifnextchar|{\diamondpp(#1)}{\diamondpp(#1)|lrlr|}}%
\def\diamondpp(#1)|#2|{\ifnextchar/{\diamondppp(#1)|#2|}%
    {\diamondppp(#1)|#2|/>`>`>`>/}}%
\def\diamondppp(#1)|#2|/#3`#4`#5`#6/{%
    \ifnextchar<{\diamondpppp(#1)|#2|/#3`#4`#5`#6/}%
    {\diamondpppp(#1)|#2|/#3`#4`#5`#6/<400,400>}}%
\def\ptrianglepppp(#1,#2)|#3|/#4`#5`#6/<#7>[#8]{%
\xpos#1\ypos#2%
\def\next|##1##2##3|{\def\xa{##1}\def\xb{##2}\def\xc{##3}}%
\next|#3|%
\def\next<##1,##2>{\deltax=##1\deltay=##2\ignorespaces}%
\next<#7>%
\def\next[##1`##2`##3;##4`##5`##6]{%
    \def\nodea{##1}\def\nodeb{##2}\def\nodec{##3}%
    \def\labela{##4}\def\labelb{##5}\def\labelc{##6}}%
\next[#8]%
\advance\ypos by \deltay%
\morphism(\xpos,\ypos)|\xa|/{#4}/<\deltax,0>[\nodea`\nodeb;\labela]%
\morphism(\xpos,\ypos)|\xb|/{#5}/<0,-\deltay>[\nodea`\nodec;\labelb]%
\advance\xpos by \deltax%
\morphism(\xpos,\ypos)|\xc|/{#6}/<-\deltax,-\deltay>[\nodeb`\nodec;\labelc]%
\ignorespaces}%
\def\qtrianglepppp(#1,#2)|#3|/#4`#5`#6/<#7>[#8]{%
\xpos#1\ypos#2%
\def\next|##1##2##3|{\def\xa{##1}\def\xb{##2}\def\xc{##3}}%
\next|#3|%
\def\next<##1,##2>{\deltax=##1\deltay=##2\ignorespaces}%
\next<#7>%
\def\next[##1`##2`##3;##4`##5`##6]{%
    \def\nodea{##1}\def\nodeb{##2}\def\nodec{##3}%
    \def\labela{##4}\def\labelb{##5}\def\labelc{##6}}%
\next[#8]%
\advance\ypos by \deltay%
\morphism(\xpos,\ypos)|\xa|/{#4}/<\deltax,0>[\nodea`\nodeb;\labela]%
\morphism(\xpos,\ypos)|\xb|/{#5}/<\deltax,-\deltay>[\nodea`\nodec;\labelb]%
\advance\xpos by \deltax%
\morphism(\xpos,\ypos)|\xc|/{#6}/<0,-\deltay>[\nodeb`\nodec;\labelc]%
\ignorespaces}%
\def\dtrianglepppp(#1,#2)|#3|/#4`#5`#6/<#7>[#8]{%
\xpos#1\ypos#2%
\def\next|##1##2##3|{\def\xa{##1}\def\xb{##2}\def\xc{##3}}%
\next|#3|%
\def\next<##1,##2>{\deltax=##1\deltay=##2\ignorespaces}%
\next<#7>%
\def\next[##1`##2`##3;##4`##5`##6]{%
    \def\nodea{##1}\def\nodeb{##2}\def\nodec{##3}%
    \def\labela{##4}\def\labelb{##5}\def\labelc{##6}}%
\next[#8]%
\morphism(\xpos,\ypos)|\xc|/{#6}/<\deltax,0>[\nodeb`\nodec;\labelc]%
\advance\ypos by \deltay\advance \xpos by \deltax%
\morphism(\xpos,\ypos)|\xa|/{#4}/<-\deltax,-\deltay>[\nodea`\nodeb;\labela]%
\morphism(\xpos,\ypos)|\xb|/{#5}/<0,-\deltay>[\nodea`\nodec;\labelb]%
\ignorespaces}%
\def\btrianglepppp(#1,#2)|#3|/#4`#5`#6/<#7>[#8]{%
\xpos#1\ypos#2%
\def\next|##1##2##3|{\def\xa{##1}\def\xb{##2}\def\xc{##3}}%
\next|#3|%
\def\next<##1,##2>{\deltax=##1\deltay=##2\ignorespaces}%
\next<#7>%
\def\next[##1`##2`##3;##4`##5`##6]{%
    \def\nodea{##1}\def\nodeb{##2}\def\nodec{##3}%
    \def\labela{##4}\def\labelb{##5}\def\labelc{##6}}%
\next[#8]%
\morphism(\xpos,\ypos)|\xc|/{#6}/<\deltax,0>[\nodeb`\nodec;\labelc]%
\advance\ypos by \deltay%
\morphism(\xpos,\ypos)|\xa|/{#4}/<0,-\deltay>[\nodea`\nodeb;\labela]%
\morphism(\xpos,\ypos)|\xb|/{#5}/<\deltax,-\deltay>[\nodea`\nodec;\labelb]%
\ignorespaces}%
\def\Atrianglepppp(#1,#2)|#3|/#4`#5`#6/<#7>[#8]{%
\xpos#1\ypos#2%
\def\next|##1##2##3|{\def\xa{##1}\def\xb{##2}\def\xc{##3}}%
\next|#3|%
\def\next<##1,##2>{\deltax=##1\deltay=##2\ignorespaces}%
\next<#7>%
\def\next[##1`##2`##3;##4`##5`##6]{%
    \def\nodea{##1}\def\nodeb{##2}\def\nodec{##3}%
    \def\labela{##4}\def\labelb{##5}\def\labelc{##6}}%
\next[#8]%
\multiply\deltax by 2%
\morphism(\xpos,\ypos)|\xc|/{#6}/<\deltax,0>[\nodeb`\nodec;\labelc]%
\divide\deltax by 2%
\advance\ypos by \deltay\advance\xpos by \deltax%
\morphism(\xpos,\ypos)|\xa|/{#4}/<-\deltax,-\deltay>[\nodea`\nodeb;\labela]%
\morphism(\xpos,\ypos)|\xb|/{#5}/<\deltax,-\deltay>[\nodea`\nodec;\labelb]%
\ignorespaces}%
\def\Vtrianglepppp(#1,#2)|#3|/#4`#5`#6/<#7>[#8]{%
\xpos#1\ypos#2%
\def\next|##1##2##3|{\def\xa{##1}\def\xb{##2}\def\xc{##3}}%
\next|#3|%
\def\next<##1,##2>{\deltax=##1\deltay=##2\ignorespaces}%
\next<#7>%
\def\next[##1`##2`##3;##4`##5`##6]{%
    \def\nodea{##1}\def\nodeb{##2}\def\nodec{##3}%
    \def\labela{##4}\def\labelb{##5}\def\labelc{##6}}%
\next[#8]%
\advance\ypos by \deltay%
\morphism(\xpos,\ypos)|\xb|/{#5}/<\deltax,-\deltay>[\nodea`\nodec;\labelb]%
\multiply\deltax by 2%
\morphism(\xpos,\ypos)|\xa|/{#4}/<\deltax,0>[\nodea`\nodeb;\labela]%
\advance\xpos by \deltax \divide \deltax by 2%
\morphism(\xpos,\ypos)|\xc|/{#6}/<-\deltax,-\deltay>[\nodeb`\nodec;\labelc]%
\ignorespaces}%
\def\Ctrianglepppp(#1,#2)|#3|/#4`#5`#6/<#7>[#8]{%
\xpos#1\ypos#2%
\def\next|##1##2##3|{\def\xa{##1}\def\xb{##2}\def\xc{##3}}%
\next|#3|%
\def\next<##1,##2>{\deltax=##1\deltay=##2\ignorespaces}%
\next<#7>%
\def\next[##1`##2`##3;##4`##5`##6]{%
    \def\nodea{##1}\def\nodeb{##2}\def\nodec{##3}%
    \def\labela{##4}\def\labelb{##5}\def\labelc{##6}}%
\next[#8]%
\advance \ypos by \deltay%
\morphism(\xpos,\ypos)|\xc|/{#6}/<\deltax,-\deltay>[\nodeb`\nodec;\labelc]%
\advance\ypos by \deltay \advance \xpos by \deltax%
\morphism(\xpos,\ypos)|\xa|/{#4}/<-\deltax,-\deltay>[\nodea`\nodeb;\labela]%
\multiply\deltay by 2%
\morphism(\xpos,\ypos)|\xb|/{#5}/<0,-\deltay>[\nodea`\nodec;\labelb]%
\ignorespaces}%
\def\Dtrianglepppp(#1,#2)|#3|/#4`#5`#6/<#7>[#8]{%
\xpos#1\ypos#2%
\def\next|##1##2##3|{\def\xa{##1}\def\xb{##2}\def\xc{##3}}%
\next|#3|%
\def\next<##1,##2>{\deltax=##1\deltay=##2\ignorespaces}%
\next<#7>%
\def\next[##1`##2`##3;##4`##5`##6]{%
    \def\nodea{##1}\def\nodeb{##2}\def\nodec{##3}%
    \def\labela{##4}\def\labelb{##5}\def\labelc{##6}}%
\next[#8]%
\advance\xpos by \deltax \advance\ypos by \deltay%
\morphism(\xpos,\ypos)|\xc|/{#6}/<-\deltax,-\deltay>[\nodeb`\nodec;\labelc]%
\advance\xpos by -\deltax \advance\ypos by \deltay%
\morphism(\xpos,\ypos)|\xb|/{#5}/<\deltax,-\deltay>[\nodea`\nodeb;\labelb]%
\multiply \deltay by 2%
\morphism(\xpos,\ypos)|\xa|/{#4}/<0,-\deltay>[\nodea`\nodec;\labela]%
\ignorespaces}%
\def\ptrianglep(#1){\ifnextchar|{\ptrianglepp(#1)}{\ptrianglepp(#1)|alr|}}%
\def\ptrianglepp(#1)|#2|{\ifnextchar/{\ptriangleppp(#1)|#2|}%
    {\ptriangleppp(#1)|#2|/>`>`>/}}%
\def\ptriangleppp(#1)|#2|/#3`#4`#5/{%
    \ifnextchar<{\ptrianglepppp(#1)|#2|/#3`#4`#5/}%
    {\ptrianglepppp(#1)|#2|/#3`#4`#5/<\default,\default>}}%
\def\qtrianglep(#1){\ifnextchar|{\qtrianglepp(#1)}{\qtrianglepp(#1)|alr|}}%
\def\qtrianglepp(#1)|#2|{\ifnextchar/{\qtriangleppp(#1)|#2|}%
    {\qtriangleppp(#1)|#2|/>`>`>/}}%
\def\qtriangleppp(#1)|#2|/#3`#4`#5/{%
    \ifnextchar<{\qtrianglepppp(#1)|#2|/#3`#4`#5/}%
    {\qtrianglepppp(#1)|#2|/#3`#4`#5/<\default,\default>}}%
\def\dtrianglep(#1){\ifnextchar|{\dtrianglepp(#1)}{\dtrianglepp(#1)|lrb|}}%
\def\dtrianglepp(#1)|#2|{\ifnextchar/{\dtriangleppp(#1)|#2|}%
    {\dtriangleppp(#1)|#2|/>`>`>/}}%
\def\dtriangleppp(#1)|#2|/#3`#4`#5/{%
    \ifnextchar<{\dtrianglepppp(#1)|#2|/#3`#4`#5/}%
    {\dtrianglepppp(#1)|#2|/#3`#4`#5/<\default,\default>}}%
\def\btrianglep(#1){\ifnextchar|{\btrianglepp(#1)}{\btrianglepp(#1)|lrb|}}%
\def\btrianglepp(#1)|#2|{\ifnextchar/{\btriangleppp(#1)|#2|}%
    {\btriangleppp(#1)|#2|/>`>`>/}}%
\def\btriangleppp(#1)|#2|/#3`#4`#5/{%
    \ifnextchar<{\btrianglepppp(#1)|#2|/#3`#4`#5/}%
    {\btrianglepppp(#1)|#2|/#3`#4`#5/<\default,\default>}}%
\def\Atrianglep(#1){\ifnextchar|{\Atrianglepp(#1)}{\Atrianglepp(#1)|lrb|}}%
\def\Atrianglepp(#1)|#2|{\ifnextchar/{\Atriangleppp(#1)|#2|}%
    {\Atriangleppp(#1)|#2|/>`>`>/}}%
\def\Atriangleppp(#1)|#2|/#3`#4`#5/{%
    \ifnextchar<{\Atrianglepppp(#1)|#2|/#3`#4`#5/}%
    {\Atrianglepppp(#1)|#2|/#3`#4`#5/<\default,\default>}}%
\def\Vtrianglep(#1){\ifnextchar|{\Vtrianglepp(#1)}{\Vtrianglepp(#1)|alb|}}%
\def\Vtrianglepp(#1)|#2|{\ifnextchar/{\Vtriangleppp(#1)|#2|}%
    {\Vtriangleppp(#1)|#2|/>`>`>/}}%
\def\Vtriangleppp(#1)|#2|/#3`#4`#5/{%
    \ifnextchar<{\Vtrianglepppp(#1)|#2|/#3`#4`#5/}%
    {\Vtrianglepppp(#1)|#2|/#3`#4`#5/<\default,\default>}}%
\def\Ctrianglep(#1){\ifnextchar|{\Ctrianglepp(#1)}{\Ctrianglepp(#1)|arb|}}%
\def\Ctrianglepp(#1)|#2|{\ifnextchar/{\Ctriangleppp(#1)|#2|}%
    {\Ctriangleppp(#1)|#2|/>`>`>/}}%
\def\Ctriangleppp(#1)|#2|/#3`#4`#5/{%
    \ifnextchar<{\Ctrianglepppp(#1)|#2|/#3`#4`#5/}%
    {\Ctrianglepppp(#1)|#2|/#3`#4`#5/<\default,\default>}}%
\def\Dtrianglep(#1){\ifnextchar|{\Dtrianglepp(#1)}{\Dtrianglepp(#1)|lab|}}%
\def\Dtrianglepp(#1)|#2|{\ifnextchar/{\Dtriangleppp(#1)|#2|}%
    {\Dtriangleppp(#1)|#2|/>`>`>/}}%
\def\Dtriangleppp(#1)|#2|/#3`#4`#5/{%
    \ifnextchar<{\Dtrianglepppp(#1)|#2|/#3`#4`#5/}%
    {\Dtrianglepppp(#1)|#2|/#3`#4`#5/<\default,\default>}}%
\def\Atrianglepairpppp(#1)|#2|/#3`#4`#5`#6`#7/<#8>[#9]{%
\def\next(##1,##2){\xpos##1\ypos##2}%
\next(#1)%
\def\next|##1##2##3##4##5|{\def\xa{##1}\def\xb{##2}%
\def\xc{##3}\def\xd{##4}\def\xe{##5}}%
\next|#2|%
\def\next<##1,##2>{\deltax=##1\deltay=##2\ignorespaces}%
\next<#8>%
\def\next[##1`##2`##3`##4;##5`##6`##7`##8`##9]{%
 \def\nodea{##1}\def\nodeb{##2}\def\nodec{##3}\def\noded{##4}%
 \def\labela{##5}\def\labelb{##6}\def\labelc{##7}\def\labeld{##8}\def\labele{##9}}%
\next[#9]%
\morphism(\xpos,\ypos)|\xd|/{#6}/<\deltax,0>[\nodeb`\nodec;\labeld]%
\advance\xpos by \deltax%
\morphism(\xpos,\ypos)|\xe|/{#7}/<\deltax,0>[\nodec`\noded;\labele]%
\advance\ypos by \deltay%
\morphism(\xpos,\ypos)|\xa|/{#3}/<-\deltax,-\deltay>[\nodea`\nodeb;\labela]%
\morphism(\xpos,\ypos)|\xb|/{#4}/<0,-\deltay>[\nodea`\nodec;\labelb]%
\morphism(\xpos,\ypos)|\xc|/{#5}/<\deltax,-\deltay>[\nodea`\noded;\labelc]%
\ignorespaces}%
\def\Vtrianglepairpppp(#1)|#2|/#3`#4`#5`#6`#7/<#8>[#9]{%
\def\next(##1,##2){\xpos##1\ypos##2}%
\next(#1)%
\def\next|##1##2##3##4##5|{\def\xa{##1}\def\xb{##2}%
\def\xc{##3}\def\xd{##4}\def\xe{##5}}%
\next|#2|%
\def\next<##1,##2>{\deltax=##1\deltay=##2\ignorespaces}%
\next<#8>%
\def\next[##1`##2`##3`##4;##5`##6`##7`##8`##9]{%
 \def\nodea{##1}\def\nodeb{##2}\def\nodec{##3}\def\noded{##4}%
 \def\labela{##5}\def\labelb{##6}\def\labelc{##7}\def\labeld{##8}\def\labele{##9}}%
\next[#9]%
\advance\ypos by \deltay%
\morphism(\xpos,\ypos)|\xa|/{#3}/<\deltax,0>[\nodea`\nodeb;\labela]%
\morphism(\xpos,\ypos)|\xc|/{#5}/<\deltax,-\deltay>[\nodea`\noded;\labelc]%
\advance\xpos by \deltax%
\morphism(\xpos,\ypos)|\xb|/{#4}/<\deltax,0>[\nodeb`\nodec;\labelb]%
\morphism(\xpos,\ypos)|\xd|/{#6}/<0,-\deltay>[\nodeb`\noded;\labeld]%
\advance\xpos by \deltax%
\morphism(\xpos,\ypos)|\xe|/{#7}/<-\deltax,-\deltay>[\nodec`\noded;\labele]%
\ignorespaces}%
\def\Ctrianglepairpppp(#1)|#2|/#3`#4`#5`#6`#7/<#8>[#9]{%
\def\next(##1,##2){\xpos##1\ypos##2}%
\next(#1)%
\def\next|##1##2##3##4##5|{\def\xa{##1}\def\xb{##2}%
\def\xc{##3}\def\xd{##4}\def\xe{##5}}%
\next|#2|%
\def\next<##1,##2>{\deltax=##1\deltay=##2\ignorespaces}%
\next<#8>%
\def\next[##1`##2`##3`##4;##5`##6`##7`##8`##9]{%
 \def\nodea{##1}\def\nodeb{##2}\def\nodec{##3}\def\noded{##4}%
 \def\labela{##5}\def\labelb{##6}\def\labelc{##7}\def\labeld{##8}\def\labele{##9}}%
\next[#9]%
\advance\ypos by \deltay%
\morphism(\xpos,\ypos)|\xe|/{#7}/<0,-\deltay>[\nodec`\noded;\labele]%
\advance\xpos by -\deltax%
\morphism(\xpos,\ypos)|\xc|/{#5}/<\deltax,0>[\nodeb`\nodec;\labelc]%
\morphism(\xpos,\ypos)|\xd|/{#6}/<\deltax,-\deltay>[\nodeb`\noded;\labeld]%
\advance\ypos by \deltay%
\advance\xpos by \deltax%
\morphism(\xpos,\ypos)|\xa|/{#3}/<-\deltax,-\deltay>[\nodea`\nodeb;\labela]%
\morphism(\xpos,\ypos)|\xb|/{#4}/<0,-\deltay>[\nodea`\nodec;\labelb]%
\ignorespaces}%
\def\Dtrianglepairpppp(#1)|#2|/#3`#4`#5`#6`#7/<#8>[#9]{%
\def\next(##1,##2){\xpos##1\ypos##2}%
\next(#1)%
\def\next|##1##2##3##4##5|{\def\xa{##1}\def\xb{##2}%
\def\xc{##3}\def\xd{##4}\def\xe{##5}}%
\next|#2|%
\def\next<##1,##2>{\deltax=##1\deltay=##2\ignorespaces}%
\next<#8>%
\def\next[##1`##2`##3`##4;##5`##6`##7`##8`##9]{%
 \def\nodea{##1}\def\nodeb{##2}\def\nodec{##3}\def\noded{##4}%
 \def\labela{##5}\def\labelb{##6}\def\labelc{##7}\def\labeld{##8}\def\labele{##9}}%
\next[#9]%
\advance\ypos by \deltay%
\morphism(\xpos,\ypos)|\xc|/{#5}/<\deltax,0>[\nodeb`\nodec;\labelc]%
\morphism(\xpos,\ypos)|\xd|/{#6}/<0,-\deltay>[\nodeb`\noded;\labeld]%
\advance\ypos by \deltay%
\morphism(\xpos,\ypos)|\xa|/{#3}/<0,-\deltay>[\nodea`\nodeb;\labela]%
\morphism(\xpos,\ypos)|\xb|/{#4}/<\deltax,-\deltay>[\nodea`\nodec;\labelb]%
\advance\ypos by -\deltay%
\advance\xpos by \deltax%
\morphism(\xpos,\ypos)|\xe|/{#7}/<-\deltax,-\deltay>[\nodec`\noded;\labele]%
\ignorespaces}%
\def\Atrianglepairp(#1){\ifnextchar|{\Atrianglepairpp(#1)}%
{\Atrianglepairpp(#1)|lmrbb|}}%
\def\Atrianglepairpp(#1)|#2|{\ifnextchar/{\Atrianglepairppp(#1)|#2|}%
    {\Atrianglepairppp(#1)|#2|/>`>`>`>`>/}}%
\def\Atrianglepairppp(#1)|#2|/#3`#4`#5`#6`#7/{%
    \ifnextchar<{\Atrianglepairpppp(#1)|#2|/#3`#4`#5`#6`#7/}%
    {\Atrianglepairpppp(#1)|#2|/#3`#4`#5`#6`#7/<\default,\default>}}%
\def\Vtrianglepairp(#1){\ifnextchar|{\Vtrianglepairpp(#1)}%
{\Vtrianglepairpp(#1)|aalmr|}}%
\def\Vtrianglepairpp(#1)|#2|{\ifnextchar/{\Vtrianglepairppp(#1)|#2|}%
    {\Vtrianglepairppp(#1)|#2|/>`>`>`>`>/}}%
\def\Vtrianglepairppp(#1)|#2|/#3`#4`#5`#6`#7/{%
    \ifnextchar<{\Vtrianglepairpppp(#1)|#2|/#3`#4`#5`#6`#7/}%
    {\Vtrianglepairpppp(#1)|#2|/#3`#4`#5`#6`#7/<\default,\default>}}%
\def\Ctrianglepairp(#1){\ifnextchar|{\Ctrianglepairpp(#1)}%
{\Ctrianglepairpp(#1)|lrmlr|}}%
\def\Ctrianglepairpp(#1)|#2|{\ifnextchar/{\Ctrianglepairppp(#1)|#2|}%
    {\Ctrianglepairppp(#1)|#2|/>`>`>`>`>/}}%
\def\Ctrianglepairppp(#1)|#2|/#3`#4`#5`#6`#7/{%
    \ifnextchar<{\Ctrianglepairpppp(#1)|#2|/#3`#4`#5`#6`#7/}%
    {\Ctrianglepairpppp(#1)|#2|/#3`#4`#5`#6`#7/<\default,\default>}}%
\def\Dtrianglepairp(#1){\ifnextchar|{\Dtrianglepairpp(#1)}%
{\Dtrianglepairpp(#1)|lrmlr|}}%
\def\Dtrianglepairpp(#1)|#2|{\ifnextchar/{\Dtrianglepairppp(#1)|#2|}%
    {\Dtrianglepairppp(#1)|#2|/>`>`>`>`>/}}%
\def\Dtrianglepairppp(#1)|#2|/#3`#4`#5`#6`#7/{%
    \ifnextchar<{\Dtrianglepairpppp(#1)|#2|/#3`#4`#5`#6`#7/}%
    {\Dtrianglepairpppp(#1)|#2|/#3`#4`#5`#6`#7/<\default,\default>}}%
\def\pplace[#1](#2,#3)[#4]{\POS(#2,#3)*+!!<0ex,\axis>!#1{#4}\ignorespaces}%
\def\cplace(#1,#2)[#3]{\POS(#1,#2)*+!!<0ex,\axis>{#3}\ignorespaces}%
\def\place{\ifnextchar[{\pplace}{\cplace}}%
\def\pullback#1]#2]{\square#1]\trident#2]\ignorespaces}%
\def\tridentppp|#1#2#3|/#4`#5`#6/<#7,#8>[#9]{%
\def\next[##1;##2`##3`##4]{\def\nodee{##1}\def\labele{##2}%
   \def\labelf{##3}\def\labelg{##4}}%
\next[#9]%
\advance \xpos by -\deltax%
\advance \xpos by -#7\advance \ypos by #8%
\advance\deltax by #7%
\morphism(\xpos,\ypos)|#1|/{#4}/<\deltax,-#8>[\nodee`\nodeb;\labele]%
\advance\deltax by -#7%
\morphism(\xpos,\ypos)|#2|/{#5}/<#7,-#8>[\nodee`\nodea;\labelf]%
\advance\deltay by #8%
\morphism(\xpos,\ypos)|#3|/{#6}/<#7,-\deltay>[\nodee`\nodec;\labelg]%
\ignorespaces}%
\def\trident{\ifnextchar|{\tridentp}{\tridentp|amb|}}%
\def\tridentp|#1|{\ifnextchar/{\tridentpp|#1|}{\tridentpp|#1|/{>}`{>}`{>}/}}%
\def\tridentpp|#1|/#2/{\ifnextchar<{\tridentppp|#1|/#2/}%
  {\tridentppp|#1|/#2/<500,500>}}%
\def\setmorphismwidth#1#2#3#4{%
 \setbox0=\hbox{$#1{\labelstyle#3#3}#2$}#4=\wd0%
 \divide #4 by 2 \divide #4 by \ul%
 \advance #4 by 350 \ratchet{#4}{500}}%
\def\setSquarewidth[#1`#2`#3`#4;#5`#6`#7`#8]{%
 \setmorphismwidth{#1}{#2}{#5}{\topw}%
 \setmorphismwidth{#3}{#4}{#8}{\botw}%
\ratchet{\topw}{\botw}}%
\def\Squarepppp(#1)|#2|/#3/<#4>[#5]{%
 \setSquarewidth[#5]%
 \squarepppp(#1)|#2|/#3/<\topw,#4>[#5]%
\ignorespaces}%
\def\Squarep(#1){\ifnextchar|{\Squarepp(#1)}{\Squarepp(#1)|alrb|}}%
\def\Squarepp(#1)|#2|{\ifnextchar/{\Squareppp(#1)|#2|}%
    {\Squareppp(#1)|#2|/>`>`>`>/}}%
\def\Squareppp(#1)|#2|/#3`#4`#5`#6/{%
    \ifnextchar<{\Squarepppp(#1)|#2|/#3`#4`#5`#6/}%
    {\Squarepppp(#1)|#2|/#3`#4`#5`#6/<\default>}}%
\def\hsquarespppp(#1,#2)|#3|/#4/<#5>[#6;#7]{%
\Xpos=#1\Ypos=#2%
\def\next|##1##2##3##4##5##6##7|{%
 \def\Xa{##1}\def\Xb{##2}\def\Xc{##3}\def\Xd{##4}%
 \def\Xe{##5}\def\Xf{##6}\def\Xg{##7}}%
\next|#3|%
\def\next<##1,##2,##3>{\deltaX=##1\deltaXprime=##2\deltaY=##3}%
\next<#5>%
\def\next[##1`##2`##3`##4`##5`##6]{%
 \def\Nodea{##1}\def\Nodeb{##2}\def\Nodec{##3}%
 \def\Noded{##4}\def\Nodee{##5}\def\Nodef{##6}}%
\next[#6]%
\def\next[##1`##2`##3`##4`##5`##6`##7]{%
 \def\Labela{##1}\def\Labelb{##2}\def\Labelc{##3}\def\Labeld{##4}%
 \def\Labele{##5}\def\Labelf{##6}\def\Labelg{##7}}%
\next[#7]%
\dohsquares/#4/}%
\def\dohsquares/#1`#2`#3`#4`#5`#6`#7/{%
\squarepppp(\Xpos,\Ypos)|\Xa\Xc\Xd\Xf|/#1`#3`#4`#6/<\deltaX,\deltaY>%
 [\Nodea`\Nodeb`\Noded`\Nodee;\Labela`\Labelc`\Labeld`\Labelf]%
 \advance \Xpos by \deltaX%
\squarepppp(\Xpos,\Ypos)|\Xb\Xd\Xe\Xg|/#2``#5`#7/<\deltaXprime,\deltaY>%
[\Nodeb`\Nodec`\Nodee`\Nodef;\Labelb``\Labele`\Labelg]%
\ignorespaces}%
\def\hsquaresp(#1){\ifnextchar|{\hsquarespp(#1)}{\hsquarespp%
(#1)|aalmrbb|}}%
\def\hsquarespp(#1)|#2|{\ifnextchar/{\hsquaresppp(#1)|#2|}%
    {\hsquaresppp(#1)|#2|/>`>`>`>`>`>`>/}}%
\def\hsquaresppp(#1)|#2|/#3/{%
    \ifnextchar<{\hsquarespppp(#1)|#2|/#3/}%
    {\hsquarespppp(#1)|#2|/#3/<\default,\default,\default>}}%
\def\hSquarespppp(#1,#2)|#3|/#4/<#5>[#6;#7]{%
\Xpos=#1\Ypos=#2%
\def\next|##1##2##3##4##5##6##7|{%
 \def\Xa{##1}\def\Xb{##2}\def\Xc{##3}\def\Xd{##4}%
 \def\Xe{##5}\def\Xf{##6}\def\Xg{##7}}%
\next|#3|%
\deltaY=#5%
\def\next[##1`##2`##3`##4`##5`##6]{%
 \def\Nodea{##1}\def\Nodeb{##2}\def\Nodec{##3}%
 \def\Noded{##4}\def\Nodee{##5}\def\Nodef{##6}}%
\next[#6]%
\def\next[##1`##2`##3`##4`##5`##6`##7]{%
 \def\Labela{##1}\def\Labelb{##2}\def\Labelc{##3}\def\Labeld{##4}%
 \def\Labele{##5}\def\Labelf{##6}\def\Labelg{##7}}%
\next[#7]%
\dohSquares/#4/}%
\def\dohSquares/#1`#2`#3`#4`#5`#6`#7/{%
\Squarepppp(\Xpos,\Ypos)|\Xa\Xc\Xd\Xf|/#1`#3`#4`#6/<\deltaY>%
 [\Nodea`\Nodeb`\Noded`\Nodee;\Labela`\Labelc`\Labeld`\Labelf]%
 \advance \Xpos by \topw%
\Squarepppp(\Xpos,\Ypos)|\Xb\Xd\Xe\Xg|/#2``#5`#7/<\deltaY>%
[\Nodeb`\Nodec`\Nodee`\Nodef;\Labelb``\Labele`\Labelg]%
\ignorespaces}%
\def\hSquaresp(#1){\ifnextchar|{\hSquarespp(#1)}{\hSquarespp%
(#1)|aalmrbb|}}%
\def\hSquarespp(#1)|#2|{\ifnextchar/{\hSquaresppp(#1)|#2|}%
    {\hSquaresppp(#1)|#2|/>`>`>`>`>`>`>/}}%
\def\hSquaresppp(#1)|#2|/#3/{%
    \ifnextchar<{\hSquarespppp(#1)|#2|/#3/}%
    {\hSquarespppp(#1)|#2|/#3/<\default>}}%
\def\vsquarespppp(#1,#2)|#3|/#4/<#5>[#6;#7]{%
\Xpos=#1\Ypos=#2%
\def\next|##1##2##3##4##5##6##7|{%
 \def\Xa{##1}\def\Xb{##2}\def\Xc{##3}\def\Xd{##4}%
 \def\Xe{##5}\def\Xf{##6}\def\Xg{##7}}%
\next|#3|%
\def\next<##1,##2,##3>{\deltaX=##1\deltaY=##2\deltaYprime=##3}%
\next<#5>%
\def\next[##1`##2`##3`##4`##5`##6]{%
 \def\Nodea{##1}\def\Nodeb{##2}\def\Nodec{##3}%
 \def\Noded{##4}\def\Nodee{##5}\def\Nodef{##6}}%
\next[#6]%
\def\next[##1`##2`##3`##4`##5`##6`##7]{%
 \def\Labela{##1}\def\Labelb{##2}\def\Labelc{##3}\def\Labeld{##4}%
 \def\Labele{##5}\def\Labelf{##6}\def\Labelg{##7}}%
\next[#7]%
\dovsquares/#4/}%
\def\dovsquares/#1`#2`#3`#4`#5`#6`#7/{%
\squarepppp(\Xpos,\Ypos)|\Xd\Xe\Xf\Xg|/`#5`#6`#7/<\deltaX,\deltaYprime>%
[\Nodec`\Noded`\Nodee`\Nodef;`\Labele`\Labelf`\Labelg]%
 \advance\Ypos by \deltaYprime%
\squarepppp(\Xpos,\Ypos)|\Xa\Xb\Xc\Xd|/#1`#2`#3`#4/<\deltaX,\deltaY>%
 [\Nodea`\Nodeb`\Nodec`\Noded;\Labela`\Labelb`\Labelc`\Labeld]%
\ignorespaces}%
\def\vsquaresp(#1){\ifnextchar|{\vsquarespp(#1)}{\vsquarespp%
(#1)|aalmrbb|}}%
\def\vsquarespp(#1)|#2|{\ifnextchar/{\vsquaresppp(#1)|#2|}%
    {\vsquaresppp(#1)|#2|/>`>`>`>`>`>`>/}}%
\def\vsquaresppp(#1)|#2|/#3/{%
    \ifnextchar<{\vsquarespppp(#1)|#2|/#3/}%
    {\vsquarespppp(#1)|#2|/#3/<\default,\default,\default>}}%
\def\vSquarespppp(#1,#2)|#3|/#4/<#5,#6>[#7;#8]{%
\Xpos=#1\Ypos=#2%
\def\next|##1##2##3##4##5##6##7|{%
 \def\Xa{##1}\def\Xb{##2}\def\Xc{##3}\def\Xd{##4}%
 \def\Xe{##5}\def\Xf{##6}\def\Xg{##7}}%
\next|#3|%
\deltaX=#5%
\deltaY=#6%
\def\next[##1`##2`##3`##4`##5`##6]{%
 \def\Nodea{##1}\def\Nodeb{##2}\def\Nodec{##3}%
 \def\Noded{##4}\def\Nodee{##5}\def\Nodef{##6}}%
\next[#7]%
\def\next[##1`##2`##3`##4`##5`##6`##7]{%
 \def\Labela{##1}\def\Labelb{##2}\def\Labelc{##3}\def\Labeld{##4}%
 \def\Labele{##5}\def\Labelf{##6}\def\Labelg{##7}}%
\next[#8]%
\dovSquares/#4/\ignorespaces}%
\def\dovSquares/#1`#2`#3`#4`#5`#6`#7/{%
\setmorphismwidth{\Nodea}{\Nodeb}{\Labela}{\topw}%
\setmorphismwidth{\Nodec}{\Noded}{\Labeld}{\botw}%
\ratchet{\topw}{\botw}%
\setmorphismwidth{\Nodee}{\Nodef}{\Labelg}{\botw}%
\ratchet{\topw}{\botw}%
\square(\Xpos,\Ypos)|\Xd\Xe\Xf\Xg|/`#5`#6`#7/<\topw,\deltaY>%
 [\Nodec`\Noded`\Nodee`\Nodef;`\Labele`\Labelf`\Labelg]%
\advance \Ypos by \deltaY%
\square(\Xpos,\Ypos)|\Xa\Xb\Xc\Xd|/#1`#2`#3`#4/<\topw,\deltaX>%
 [\Nodea`\Nodeb`\Nodec`\Noded;\Labela`\Labelb`\Labelc`\Labeld]%
}%
\def\vSquaresp(#1){\ifnextchar|{\vSquarespp(#1)}{\vSquarespp%
(#1)|alrmlrb|}}%
\def\vSquarespp(#1)|#2|{\ifnextchar/{\vSquaresppp(#1)|#2|}%
    {\vSquaresppp(#1)|#2|/>`>`>`>`>`>`>/}}%
\def\vSquaresppp(#1)|#2|/#3/{%
    \ifnextchar<{\vSquarespppp(#1)|#2|/#3/}%
    {\vSquarespppp(#1)|#2|/#3/<\default,\default>}}%
\def\osquarepppp(#1)|#2|/#3`#4`#5`#6/<#7>[#8]{\squarepppp%
 (#1)|#2|/#3`#4`#5`#6/<#7>[#8]%
 \let\Nodea\nodea\let\Nodeb\nodeb%
\let\Nodec\nodec\let\Noded\noded\Xpos=\xpos\Ypos=\ypos%
\deltaX=\deltax \deltaY=\deltay \isquare}%
\def\osquarep(#1){\ifnextchar|{\osquarepp(#1)}{\osquarepp(#1)|alrb|}}%
\def\osquarepp(#1)|#2|{\ifnextchar/{\osquareppp(#1)|#2|}%
    {\osquareppp(#1)|#2|/>`>`>`>/}}%
\def\osquareppp(#1)|#2|/#3`#4`#5`#6/{%
    \ifnextchar<{\osquarepppp(#1)|#2|/#3`#4`#5`#6/}%
    {\osquarepppp(#1)|#2|/#3`#4`#5`#6/<1500,1500>}}%
\def\isquarepppp(#1)|#2|/#3`#4`#5`#6/<#7>[#8]{%
 \squarepppp(#1)|#2|/#3`#4`#5`#6/<#7>[#8]%
\ifnextchar|{\cubep}{\cubep|mmmm|}}%
\def\cubep|#1|{\ifnextchar/{\cubepp|#1|}{\cubepp|#1|/>`>`>`>/}}%
\def\isquare{\ifnextchar({\isquarep}{\isquarep(\default,\default)}}%
\def\isquarep(#1){\ifnextchar|{\isquarepp(#1)}{\isquarepp(#1)|alrb|}}%
\def\isquarepp(#1)|#2|{\ifnextchar/{\isquareppp(#1)|#2|}%
    {\isquareppp(#1)|#2|/>`>`>`>/}}%
\def\isquareppp(#1)|#2|/#3`#4`#5`#6/{%
    \ifnextchar<{\isquarepppp(#1)|#2|/#3`#4`#5`#6/}%
    {\isquarepppp(#1)|#2|/#3`#4`#5`#6/<500,500>}}%
\def\cubepp|#1#2#3#4|/#5`#6`#7`#8/[#9]{%
\def\next[##1`##2`##3`##4]{\gdef\Labela{##1}%
\gdef\Labelb{##2}\gdef\Labelc{##3}\gdef\Labeld{##4}}\next[#9]%
\xend\xpos \yend\ypos%
\Xend\xend\advance\Xend by -\Xpos%
\Yend\yend\advance\Yend by -\Ypos%
\domorphism(\Xpos,\Ypos)|#2|/#6/<\Xend,\Yend>[\Nodeb`\nodeb;\Labelb]%
\advance\Xpos by-\deltaX%
\advance\xend by-\deltax%
\Xend\xend\advance\Xend by -\Xpos%
\domorphism(\Xpos,\Ypos)|#1|/#5/<\Xend,\Yend>[\Nodea`\nodea;\Labela]%
\advance\Ypos by-\deltaY%
\advance\yend by-\deltay%
\Yend\yend\advance\Yend by -\Ypos%
\domorphism(\Xpos,\Ypos)|#3|/#7/<\Xend,\Yend>[\Nodec`\nodec;\Labelc]%
\advance\Xpos by\deltaX%
\advance\xend by\deltax%
\Xend\xend\advance\Xend by -\Xpos%
\domorphism(\Xpos,\Ypos)|#4|/#8/<\Xend,\Yend>[\Noded`\noded;\Labeld]%
\ignorespaces}%
\def\setwdth#1#2{\setbox0\hbox{$\labelstyle#1$}\wdth=\wd0%
\setbox0\hbox{$\labelstyle#2$}\ifnum\wdth<\wd0 \wdth=\wd0 \fi}%
\def\topppp/#1/<#2>^#3_#4{\:%
\ifnum#2=0%
   \setwdth{#3}{#4}\deltax=\wdth \divide \deltax by \ul%
   \advance \deltax by \defaultmargin  \ratchet{\deltax}{100}%
\else \deltax #2%
\fi%
\xy\ar@{#1}^{#3}_{#4}(\deltax,0) \endxy%
\:}%
\def\toppp/#1/<#2>^#3{\ifnextchar_{\topppp/#1/<#2>^{#3}}{\topppp/#1/<#2>^{#3}_{}}}%
\def\topp/#1/<#2>{\ifnextchar^{\toppp/#1/<#2>}{\toppp/#1/<#2>^{}}}%
\def\toop/#1/{\ifnextchar<{\topp/#1/}{\topp/#1/<0>}}%
\def\to{\ifnextchar/{\toop}{\toop/>/}}%
\def\rlimto{{%
\font\xyatipfont=xyatip10 scaled 800
\font\xybtipfont=xybtip10 scaled 800
\raise 2pt\hbox{\,\xy\ar@{->}(100,0) \endxy}\,}}
\def\llimto{{%
\font\xyatipfont=xyatip10 scaled 800
\font\xybtipfont=xybtip10 scaled 800
\raise 2pt\hbox{\,\xy\ar@{<-}(100,0) \endxy}\,}}
\def\twopppp/#1`#2/<#3>^#4_#5{\:%
\ifnum0=#3%
  \setwdth{#4}{#5}\deltax=\wdth \divide \deltax by \ul \advance \deltax%
  by \defaultmargin \ratchet{\deltax}{200}%
\else \deltax#3 \fi%
\xy\ar@{#1}@<2.5pt>^{#4}(\deltax,0)%
\ar@{#2}@<-2.5pt>_{#5}(\deltax,0)\endxy\:}%
\def\twoppp/#1`#2/<#3>^#4{\ifnextchar_{\twopppp/#1`#2/<#3>^{#4}}%
  {\twopppp/#1`#2/<#3>^{#4}_{}}}%
\def\twopp/#1`#2/<#3>{\ifnextchar^{\twoppp/#1`#2/<#3>}{\twoppp/#1`#2/<#3>^{}}}%
\def\twop/#1`#2/{\ifnextchar<{\twopp/#1`#2/}{\twopp/#1`#2/<0>}}%
\def\threeppppp/#1`#2`#3/<#4>^#5|#6_#7{\:%
\ifnum0=#4%
\setbox0\hbox{$\labelstyle#5$}\wdth=\wd0%
\setbox0\hbox{$\labelstyle#6$}\ifnum\wdth<\wd0 \wdth=\wd0 \fi%
\setbox0\hbox{$\labelstyle#7$}\ifnum\wdth<\wd0 \wdth=\wd0 \fi%
\deltax=\wdth \divide \deltax by \ul \advance \deltax by%
\defaultmargin \ratchet{\deltax}{300}%
\else\deltax#4 \fi%
    \xy \ifnum\wd0=0 \ar@{#2}(\deltax,0)%
    \else \ar@{#2}|{#6}(\deltax,0)\fi%
\ar@{#1}@<4.5pt>^{#5}(\deltax,0)%
\ar@{#3}@<-4.5pt>_{#7}(\deltax,0)\endxy\:}%
\def\threepppp/#1`#2`#3/<#4>^#5|#6{\ifnextchar_{\threeppppp%
  /#1`#2`#3/<#4>^{#5}|{#6}}{\threeppppp/#1`#2`#3/<#4>^{#5}|{#6}_{}}}%
\def\threeppp/#1`#2`#3/<#4>^#5{\ifnextchar|{\threepppp%
  /#1`#2`#3/<#4>^{#5}}{\threepppp/#1`#2`#3/<#4>^{#5}|{}}}%
\def\threepp/#1`#2`#3/<#4>{\ifnextchar^{\threeppp/#1`#2`#3/<#4>}%
  {\threeppp/#1`#2`#3/<#4>^{}}}%
\def\threep/#1`#2`#3/{\ifnextchar<{\threepp/#1`#2`#3/}%
  {\threepp/#1`#2`#3/<0>}}%
\def\twoar(#1,#2){{%
 \scalefactor{0.1}%
 \deltax#1\deltay#2%
 \deltaX=\ifnum\deltax<0-\fi\deltax%
 \deltaY=\ifnum\deltay<0-\fi\deltay%
 \Xend\deltax \multiply \Xend by \deltax%
 \Yend\deltay \multiply \Yend by \deltay%
 \advance\Xend by \Yend \multiply \Xend by 3%
 \ifnum \deltaX > \deltaY%
    \multiply \deltaX by 3 \advance \deltaX by \deltaY%
 \else%
    \multiply \deltaY by 3 \advance \deltaX by \deltaY%
 \fi%
 \multiply\deltax by 500%
 \multiply\deltay by 500%
 \xpos\deltax \multiply \xpos by 3 \divide\xpos by \deltaX%
 \Xpos\deltax \multiply \Xpos by \deltaX \divide \Xpos by \Xend%
 \advance \xpos by \Xpos%
 \ypos\deltay \multiply \ypos by 3 \divide\ypos by \deltaX%
 \Ypos\deltay \multiply \Ypos by \deltaX \divide \Ypos by \Xend%
 \advance \ypos by \Ypos%
 \xy \ar@{=>}(\xpos,\ypos) \endxy%
}\ignorespaces}%
\def\iiixiiipppppp(#1,#2)|#3|/#4/<#5>#6<#7>[#8;#9]{%
 \xpos#1\ypos#2\relax%
 \def\next|##1##2##3##4##5##6##7|{\def\xa{##1}\def\xb{##2}%
 \def\xc{##3}\def\xd{##4}\def\xe{##5}\def\xf{##6}\nextt|##7|}%
 \def\nextt|##1##2##3##4##5##6|{\def\xg{##1}\def\xh{##2}%
 \def\xi{##3}\def\xj{##4}\def\xk{##5}\def\xl{##6}}%
 \next|#3|%
 \def\next<##1,##2>{\deltax##1\deltay##2}%
 \next<#5>%
 \def\next<##1,##2>{\deltaX##1\deltaY##2}%
 \next<#7>%
 \def\next##1{\topw##1\relax%
 \ifodd\topw \def\za{}\else\def\za{\relax}\fi \divide\topw by 2
 \ifodd\topw \def\zb{}\else\def\zb{\relax}\fi \divide\topw by 2
 \ifodd\topw \def\zc{}\else\def\zc{\relax}\fi \divide\topw by 2
 \ifodd\topw \def\zd{}\else\def\zd{\relax}\fi \divide\topw by 2
 \ifodd\topw \def\ze{}\else\def\ze{\relax}\fi \divide\topw by 2
 \ifodd\topw \def\zf{}\else\def\zf{\relax}\fi \divide\topw by 2
 \ifodd\topw \def\zg{}\else\def\zg{\relax}\fi \divide\topw by 2
 \ifodd\topw \def\zh{}\else\def\zh{\relax}\fi \divide\topw by 2
 \ifodd\topw \def\zi{}\else\def\zi{\relax}\fi \divide\topw by 2
 \ifodd\topw \def\zj{}\else\def\zj{\relax}\fi \divide\topw by 2
 \ifodd\topw \def\zk{}\else\def\zk{\relax}\fi \divide\topw by 2
 \ifodd\topw \def\zl{}\else\def\zl{\relax}\fi}%
  % \ifodd\topw \def\zl{}\else\def\zl{\relax}\fi \divide\topw by 2
  % \ifodd\topw \def\zk{}\else\def\zk{\relax}\fi \divide\topw by 2
  % \ifodd\topw \def\zj{}\else\def\zj{\relax}\fi \divide\topw by 2
  % \ifodd\topw \def\zi{}\else\def\zi{\relax}\fi \divide\topw by 2
  % \ifodd\topw \def\zh{}\else\def\zh{\relax}\fi \divide\topw by 2
  % \ifodd\topw \def\zg{}\else\def\zg{\relax}\fi \divide\topw by 2
  % \ifodd\topw \def\zf{}\else\def\zf{\relax}\fi \divide\topw by 2
  % \ifodd\topw \def\ze{}\else\def\ze{\relax}\fi \divide\topw by 2
  % \ifodd\topw \def\zd{}\else\def\zd{\relax}\fi \divide\topw by 2
  % \ifodd\topw \def\zc{}\else\def\zc{\relax}\fi \divide\topw by 2
  % \ifodd\topw \def\zb{}\else\def\zb{\relax}\fi \divide\topw by 2
  % \ifodd\topw \def\za{}\else\def\za{\relax}\fi}%
 \next{#6}%
 \def\next[##1`##2`##3`##4`##5`##6`##7`##8`##9]{%
 \def\nodeA{##1}\def\nodeB{##2}\def\nodeC{##3}%
 \def\nodeD{##4}\def\nodeE{##5}\def\nodeF{##6}%
 \def\nodeG{##7}\def\nodeH{##8}\def\nodeI{##9}}%
 \next[#8]%
 \def\next[##1`##2`##3`##4`##5`##6`##7]{%
 \def\labela{##1}\def\labelb{##2}\def\labelc{##3}%
 \def\labeld{##4}\def\labele{##5}\def\labelf{##6}\nextt[##7]}%
 \def\nextt[##1`##2`##3`##4`##5`##6]{%
 \def\labelg{##1}\def\labelh{##2}\def\labeli{##3}%
 \def\labelj{##4}\def\labelk{##5}\def\labell{##6}}%
 \next[#9]%
 \def\next/##1`##2`##3`##4`##5`##6`##7`##8/{%
 \advance\ypos\deltay
    \ifx\zf\empty \morphism(\xpos,\ypos)/<-/<-\deltaX,0>[\nodeD`0;]\fi
 \morphism(\xpos,\ypos)|\xf|/{##6}/<\deltax,0>[\nodeD`\nodeE;\labelf]%
    \advance \xpos\deltax
    \morphism(\xpos,\ypos)|\xg|/{##7}/<\deltax,0>[\nodeE`\nodeF;\labelg]%
    \ifx\zg\empty \advance\xpos \deltax
        \morphism(\xpos,\ypos)<\deltaX,0>[\nodeF`0;]\fi
    \xpos#1 \advance\ypos\deltay
    \ifx\zd\empty \morphism(\xpos,\ypos)/<-/<-\deltaX,0>[\nodeA`0;]\fi
    \ifx\za\empty \morphism(\xpos,\ypos)/<-/<0,\deltaY>[\nodeA`0;]\fi
    \morphism(\xpos,\ypos)|\xa|/{##1}/<\deltax,0>[\nodeA`\nodeB;\labela]%
 \morphism(\xpos,\ypos)|\xc|/{##3}/<0,-\deltay>[\nodeA`\nodeD;\labelc]%
    \advance \xpos\deltax
     \morphism(\xpos,\ypos)|\xb|/{##2}/<\deltax,0>[\nodeB`\nodeC;\labelb]%
     \morphism(\xpos,\ypos)|\xd|/{##4}/<0,-\deltay>[\nodeB`\nodeE;\labeld]%
     \ifx\zb\empty \morphism(\xpos,\ypos)/<-/<0,\deltaY>[\nodeB`0;]\fi
     \advance\xpos\deltax
 \morphism(\xpos,\ypos)|\xe|/{##5}/<0,-\deltay>[\nodeC`\nodeF;\labele]%
     \ifx\zc\empty \morphism(\xpos,\ypos)/<-/<0,\deltaY>[\nodeC`0;]\fi
     \ifx\ze\empty \morphism(\xpos,\ypos)<\deltaX,0>[\nodeC`0;]\fi
   \nextt/##8/}%
 \def\nextt/##1`##2`##3`##4`##5/{%
 \xpos#1\ypos#2\relax%
   \ifx\zh\empty \morphism(\xpos,\ypos)/<-/<-\deltaX,0>[\nodeG`0;]\fi
   \ifx\zj\empty \morphism(\xpos,\ypos)<0,-\deltaY>[\nodeG`0;]\fi
   \morphism(\xpos,\ypos)|\xk|/{##4}/<\deltax,0>[\nodeG`\nodeH;\labelk]%
   \advance\xpos\deltax
   \morphism(\xpos,\ypos)|\xl|/{##5}/<\deltax,0>[\nodeH`\nodeI;\labell]%
   \ifx\zk\empty \morphism(\xpos,\ypos)<0,-\deltaY>[\nodeH`0;]\fi
   \advance\xpos\deltax
   \ifx\zi\empty \morphism(\xpos,\ypos)<\deltaX,0>[\nodeI`0;]\fi
   \ifx\zl\empty \morphism(\xpos,\ypos)<0,-\deltaY>[\nodeI`0;]\fi
   \xpos#1 \advance\ypos\deltay
    \morphism(\xpos,\ypos)|\xh|/{##1}/<0,-\deltay>[\nodeD`\nodeG;\labelh]%
    \advance \xpos\deltax
    \morphism(\xpos,\ypos)|\xi|/{##2}/<0,-\deltay>[\nodeE`\nodeH;\labeli]%
    \advance \xpos\deltax
 \morphism(\xpos,\ypos)|\xj|/{##3}/<0,-\deltay>[\nodeF`\nodeI;\labelj]}%
 \next/#4/\ignorespaces}%
\def\iiixiiip(#1){\ifnextchar|{\iiixiiipp(#1)}%
  {\iiixiiipp(#1)|aalmrmmlmrbb|}}%
\def\iiixiiipp(#1)|#2|{\ifnextchar/{\iiixiiippp(#1)|#2|}%
    {\iiixiiippp(#1)|#2|/>`>`>`>`>`>`>`>`>`>`>`>/}}%
\def\iiixiiippp(#1)|#2|/#3/{%
    \ifnextchar<{\iiixiiipppp(#1)|#2|/#3/}%
    {\iiixiiipppp(#1)|#2|/#3/<\default,\default>}}%
\def\iiixiiipppp(#1)|#2|/#3/<#4>{\ifnextchar[{\iiixiiippppp(#1)|#2|/#3/%
   <#4>0<0,0>}{\iiixiiippppp(#1)|#2|/#3/<#4>}}%
\def\iiixiiippppp(#1)|#2|/#3/<#4>#5{\ifnextchar<%
   {\iiixiiipppppp(#1)|#2|/#3/<#4>{#5}}%
   {\iiixiiipppppp(#1)|#2|/#3/<#4>{#5}<400,400>}}%
\def\iiixiipppppp(#1,#2)|#3|/#4/<#5>#6<#7>[#8;#9]{%
 \xpos#1\ypos#2\relax%
 \def\next|##1##2##3##4##5##6##7|{\def\xa{##1}\def\xb{##2}%
 \def\xc{##3}\def\xd{##4}\def\xe{##5}\def\xf{##6}\def\xg{##7}}%
 \next|#3|%
 \def\next<##1,##2>{\deltax##1\deltay##2}%
 \next<#5>%
 \deltaX#7
 \topw#6
 \def\next{%
 \ifodd\topw \def\za{}\else\def\za{\relax}\fi \divide\topw by 2
 \ifodd\topw \def\zb{}\else\def\zb{\relax}\fi \divide\topw by 2
 \ifodd\topw \def\zc{}\else\def\zc{\relax}\fi \divide\topw by 2
 \ifodd\topw \def\zd{}\else\def\zd{\relax}\fi}%
 \next%
 \def\next[##1`##2`##3`##4`##5`##6]{%
 \def\nodea{##1}\def\nodeb{##2}\def\nodec{##3}%
 \def\noded{##4}\def\nodee{##5}\def\nodef{##6}}%
 \next[#8]%
 \def\next[##1`##2`##3`##4`##5`##6`##7]{%
 \def\labela{##1}\def\labelb{##2}\def\labelc{##3}%
 \def\labeld{##4}\def\labele{##5}\def\labelf{##6}\def\labelg{##7}}%
 \next[#9]%
 \def\next/##1`##2`##3`##4`##5`##6`##7/{%
 {\ifx\zc\empty\advance\xpos -\deltaX
\relax\morphism(\xpos,\ypos)<\deltaX,0>[0`\noded;]\fi}%
 \morphism(\xpos,\ypos)|\xf|/##6/<\deltax,0>[\noded`\nodee;\labelf]%
 \advance\xpos by \deltax%
 \morphism(\xpos,\ypos)|\xg|/##7/<\deltax,0>[\nodee`\nodef;\labelg]%
 {\ifx\zd\empty \advance\xpos by \deltax
\relax  \morphism(\xpos,\ypos)<\deltaX,0>[\nodef`0;]\fi}%
 \advance\xpos by -\deltax  \advance\ypos by \deltay
 {\ifx\za\empty\advance \xpos by -\deltaX
\relax\morphism(\xpos,\ypos)<\deltaX,0>[0`\nodea;]\fi}%
 \morphism(\xpos,\ypos)|\xa|/##1/<\deltax,0>[\nodea`\nodeb;\labela]%
 \morphism(\xpos,\ypos)|\xc|/##3/<0,-\deltay>[\nodea`\noded;\labelc]%
 \advance\xpos by \deltax%
 \morphism(\xpos,\ypos)|\xb|/##2/<\deltax,0>[\nodeb`\nodec;\labelb]%
 \morphism(\xpos,\ypos)|\xd|/##4/<0,-\deltay>[\nodeb`\nodee;\labeld]%
 \advance\xpos by \deltax%
 \morphism(\xpos,\ypos)|\xe|/##5/<0,-\deltay>[\nodec`\nodef;\labele]%
 \ifx\zb\empty\relax \morphism(\xpos,\ypos)<\deltaX,0>[\nodec`0;]\fi}%
 \next/#4/\ignorespaces}%
\def\iiixiip(#1){\ifnextchar|{\iiixiipp(#1)}%
  {\iiixiipp(#1)|aalmrbb|}}%
\def\iiixiipp(#1)|#2|{\ifnextchar/{\iiixiippp(#1)|#2|}%
    {\iiixiippp(#1)|#2|/>`>`>`>`>`>`>/}}%
\def\iiixiippp(#1)|#2|/#3/{%
    \ifnextchar<{\iiixiipppp(#1)|#2|/#3/}%
    {\iiixiipppp(#1)|#2|/#3/<\default,\default>}}%
\def\iiixiipppp(#1)|#2|/#3/<#4>{\ifnextchar[{\iiixiippppp(#1)|#2|/#3/%
   <#4>{0}<0>}{\iiixiippppp(#1)|#2|/#3/<#4>}}%
\def\iiixiippppp(#1)|#2|/#3/<#4>#5{\ifnextchar<%
   {\iiixiipppppp(#1)|#2|/#3/<#4>{#5}}%
   {\iiixiipppppp(#1)|#2|/#3/<#4>{#5}<400>}}%
\def\node#1(#2,#3)[#4]{%
\expandafter\gdef\csname x@#1\endcsname{#2}%
\expandafter\gdef\csname y@#1\endcsname{#3}%
\expandafter\gdef\csname ob@#1\endcsname{#4}%
\place(#2,#3)[#4]\ignorespaces}%
\def\arrow{\ifnextchar|{\arrowp}{\arrowp|a|}}%
\def\arrowp|#1|{\ifnextchar/{\arrowpp|#1|}{\arrowpp|#1|/>/}}%
\def\arrowpp|#1|/#2/[#3`#4;#5]{%
\xfinish=\csname x@#4\endcsname%
\yfinish=\csname y@#4\endcsname%
\advance\xfinish by -\csname x@#3\endcsname%
\advance\yfinish by -\csname y@#3\endcsname%
\morphism(\csname x@#3\endcsname,\csname y@#3\endcsname)|#1|/{#2}/%
<\xfinish,\yfinish>[\phantom{\csname ob@#3\endcsname}`\phantom{\csname
ob@#4\endcsname};#5]%
}%
\def\Loop(#1,#2)#3(#4,#5){\POS(#1,#2)*+!!<0ex,\axis>{#3}\ar@(#4,#5)}%
\def\iloop#1(#2,#3){\xy\Loop(0,0)#1(#2,#3)\endxy}%
     \let \PATHafterPOS\PATHafterPOS@default%
     \let \arsavedPATHafterPOS@@\relax%
     \let\afterar@@\relax%
\xydef@\endxyobj{\if\inxy@\else\xyerror@{Unexpected \string\endxy}{}\fi%
>  \relax%
>   \dimen@=\Y@max \advance\dimen@-\Y@min%
>   \ifdim\dimen@<\z@ \dimen@=\z@ \Y@min=\z@ \Y@max=\z@ \fi%
>   \dimen@=\X@max \advance\dimen@-\X@min%
>   \ifdim\dimen@<\z@ \dimen@=\z@ \X@min=\z@ \X@max=\z@ \fi%
>   \edef\tmp@{\egroup%
>     \setboxz@h{\kern-\the\X@min \boxz@}%
>     \ht\z@=\the\Y@max \dp\z@=-\the\Y@min \wdz@=\the\dimen@%
>     \noexpand\maybeunraise@ \raise\dimen@\boxz@%
>     \noexpand\recoverXyStyle@ \egroup \noexpand\xy@end%
>     \U@c=\the\Y@max \advance\U@c-\the\Y@c%
>     \D@c=-\the\Y@min \advance\D@c\the\Y@c%
>     \L@c=-\the\X@min  \advance\L@c\the\X@c%
>     \R@c=\the\X@max  \advance\R@c-\the\X@c%
>    }\tmp@}%
\gdef\xymerge@MinMax{}%
\xydef@\twocell{\hbox\bgroup\xysave@MinMax\@twocell}%
\xydef@\uppertwocell{\hbox\bgroup\xysave@MinMax\@uppertwocell}%
\xydef@\lowertwocell{\hbox\bgroup\xysave@MinMax\@lowertwocell}%
\xydef@\compositemap{\hbox\bgroup\xysave@MinMax\@compositemap}%
\xydef@\xysave@MinMax{\xdef\xymerge@MinMax{%
   \noexpand\ifdim\X@max<\the\X@max \X@max=\the\X@max\noexpand\fi%
   \noexpand\ifdim\X@min>\the\X@min \X@min=\the\X@min\noexpand\fi%
   \noexpand\ifdim\Y@max<\the\Y@max \Y@max=\the\Y@max\noexpand\fi%
   \noexpand\ifdim\Y@min>\the\Y@min \Y@min=\the\Y@min\noexpand\fi%
  }}%
\xydef@\drop@Twocell{\boxz@ \xymerge@MinMax}%
\xydef@\twocell@DONE{%
  \edef\tmp@{\egroup%
   \X@min=\the\X@min \X@max=\the\X@max%
   \Y@min=\the\Y@min \Y@max=\the\Y@max}\tmp@%
  \L@c=\X@c \advance\L@c-\X@min \R@c=\X@max \advance\R@c-\X@c%
  \D@c=\Y@c \advance\D@c-\Y@min \U@c=\Y@max \advance\U@c-\Y@c%
  \ht\z@=\U@c \dp\z@=\D@c \dimen@=\L@c \advance\dimen@\R@c \wdz@=\dimen@%
  \computeLeftUpness@%
  \setboxz@h{\kern-\X@p \raise-\Y@c\boxz@ }%
  \dimen@=\L@c \advance\dimen@\R@c \wdz@=\dimen@ \ht\z@=\U@c \dp\z@=\D@c%
  \Edge@c={\rectangleEdge}\Invisible@false \Hidden@false%
  \edef\Drop@@{\noexpand\drop@Twocell%
   \noexpand\def\noexpand\Leftness@{\Leftness@}%
   \noexpand\def\noexpand\Upness@{\Upness@}}%
  \edef\Connect@@{\noexpand\connect@Twocell%
   \noexpand\ifdim\X@max<\the\X@max \X@max=\the\X@max\noexpand\fi%
   \noexpand\ifdim\X@min>\the\X@min \X@min=\the\X@min\noexpand\fi%
   \noexpand\ifdim\Y@max<\the\Y@max \Y@max=\the\Y@max\noexpand\fi%
   \noexpand\ifdim\Y@min>\the\Y@min \Y@min=\the\Y@min\noexpand\fi }%
  \xymerge@MinMax%
}%
\newbox\anglebox % large pullback angle
\newbox\angleboxr % reverse large pullback angle
\newbox\sanglebox % small pullback angle
\newbox\sangleboxr % small reverse pullback angle
\newbox\sangleboxf % small flipped pullback angle
\newbox\angleboxf % flipped pullback angle
\newbox\sangleboxfr % small flipped reverse pullback angle
\newbox\angleboxfr % small flipped reverse pullback angle
\newcommand{\cat}[1]{\ensuremath{\mathcal{#1}}}
\newcommand{\synt}[2]{\ensuremath{\mathcal{#1}_{{#2}}}}
\newcommand{\alg}[1]{\ensuremath{\mathbf{#1}}}
\newcommand{\Sets}{\ensuremath{\mathbf{Set}}}
\newcommand{\fins}[1]{\exists {#1}\mathpunct .}
\newcommand{\theory}{\ensuremath{\mathbb{T}}}
\title{INFINITARY FIRST-ORDER CATEGORICAL LOGIC}
\author{Christian Esp\'indola}
\begin{document}
\date{}
\maketitle

\begin{abstract}
We present a unified categorical treatment of completeness theorems for several classical and intuitionistic infinitary logics with a proposed axiomatization. This provides new completeness theorems and subsumes previous ones by G\"odel, Kripke, Beth, Karp, Joyal, Makkai and Fourman/Grayson. As an application we prove, using large cardinals assumptions, the disjunction and existence properties for infinitary intuitionistic first-order logics.
\end{abstract}

\tableofcontents

\section{Introduction}

A very natural and fundamental problem in mathematical logic is the determination of the precise set of axioms that are enough to guarantee the completeness property of a certain logic. That is, to axiomatize the logic in question so that there is a relation between semantic validity and syntactic provability. For classical and intuitionistic first-order and propositional logic the answers are known. As soon as the expressive power of the logic increases, proving a completeness theorem usually requires a stronger metatheory, often relying on set-theoretical assumptions. For example, completeness theorems for infinitary classical logic are known to depend on the addition of large cardinal axioms in the metatheory. In this work, we plan to consider infinitary intuitionistic logics in full generality, and to provide axiomatizations and completeness theorems. Unlike the common techniques used to establish completeness, the main insight we will follow is to establish a general framework within categorical logic that results adequate for the development of infinitary languages. Building on previous work by Karp \cite{karp} and Makkai \cite{makkai}, we will see that this setting is precisely that of infinitary first-order categorical logic. The unifying power of category-theoretic language provided the right means to subsume and amalgamate the existing completeness theorems in a unique framework, where new completeness results branch out, combining aspects of categorical logic, sheaf theory, model theory and set theory.

Completeness theorems for classical propositional logic were developed by Hilbert and Ackermann, and extended to the classical first-order case by G\"odel in his doctoral dissertation \cite{godel}. For intuitionistic logic the completeness in terms of the semantic of possible worlds was introduced by Kripke in \cite{kripke}. Meanwhile, completeness of infinitary logics was obtained by Karp in the monograph \cite{karp}, where systems for classical infinitary propositional and first-order logic have been described and studied extensively with Hilbert-type systems (see also \cite{mt} for a related development with Gentzen's sequents). Nadel developed infinitary intuitionistic propositional logic for countable many conjunctions/disjunctions in \cite{nadel}, proving completeness with respect to the infinitary version of Kripke semantics. On the other hand, Makkai considered infinitary regular theories in \cite{makkai} together with a corresponding completeness theorem. 

Makkai's work has a great deal of categorical logic, a subject prominent since the seventies, when the school of Montr\'eal introduced the discipline exploiting the techniques of category theory to study logical theories. The philosophy of theories as categories had reached its climax in the work of Makkai and Reyes \cite{mr} (1977), where the authors discuss an extensive treatment of categorical logic for first-order theories, and include some infinitary versions. We intend to provide the jump from Makkai's regular case in \cite{makkai} to the first-order case by means of a completeness theorem for infinitary coherent logic, which together with a generalization of an unpublished theorem of A. Joyal will allow us to derive completeness for infinitary intuitionistic propositional and first-order logics, in terms of infinitary Kripke semantics, as well as sheaf and categorical models. Unlike classical infinitary logics, whose related completeness results have been known for decades, the main difficulty in studying infinitary intuitionistic logics is the huge variety of non-equivalent formulas that one can obtain. This, as we will show, suggests the introduction of large cardinal axioms appropriate to handle this unexpected richness that one gets by dropping the excluded middle.

Instead of relying on Henkin's method of adding constants, which encounters some problems in the infinitary case, the way that we will prove completeness relies on manipulating sheaf models for infinitary logics (these are explained for example in \cite{bj}). The expert will however find that the methods here developed are a refinement of those of Henkin, in the sense that instead of introducing witnessing constants for existential sentences, they will indirectly appear naturally as a result of forcing statements in our way to building models for the theory. The key contribution is the identification of the transfinite transitivity property as the correct categorical counterpart of the axiomatic treatment proposed. This property is a generalization of the transitivity property for Grothendieck topologies, and plays an essential r\^ole in the completeness proof.

The structure of this work is as follows. In the first part we introduce a system for infinitary intuitionistic logic and we prove that the addition of the axiom of excluded middle results in a system equivalent to Karp's classical system. The second part deals with the categorical counterpart of this type of logic, and introduces many of the developments of categorical logic generalized to the infinitary case; it also contains completeness proofs for infinitary intuitionistic logic in terms of sheaf and more general categorical models. The third part contains the proof of completeness of infinitary intuitionistic logic with respect to Kripke semantics (a generalization of Joyal's theorem) relying on a large cardinal hypothesis and on a completeness theorem for infinitary coherent logic. Along the way we prove an infinitary version of completeness for Beth semantics. We derive also Karp completeness theorem from ours, as well as Makkai's and Fourman/Grayson's. When restricting to the finitary case, these theorems reduce to the completeness theorems of G\"odel, Kripke, Beth, and the completeness of the regular and coherent fragment of first-order logic, providing thus new proofs for all of them.

\subsection{Karp's system}

Infinitary languages $\mathcal{L}_{\kappa, \lambda}$ are defined according to the length of infinitary conjunctions/disjunctions as well as quantification it allows. In that way, assuming a supply of $\kappa$ variables to be interpreted as ranging over a nonempty domain, one includes in the inductive definition of formulas an infinitary clause for conjunctions and disjunctions, namely, whenever the ordinal indexed sequence $A_0, ..., A_{\delta}, ...$ of formulas has length less than $\kappa$, one can form the infinitary conjunction/disjunction of them to produce a formula. Analogously, whenever an ordinal indexed sequence of variables has length less than $\lambda$, one can introduce one of the quantifiers $\forall$ or $\exists$ together with the sequence of variables in front of a formula to produce a new formula. One also stipulates that $\kappa$ be a regular cardinal, so that the length of any well-formed formula is less than $\kappa$ itself. 

It is then a natural question to ask for which of these infinitary languages one can provide a notion of provability for which a form of completeness theorem can be proven, in terms, for example, of the obvious Tarskian semantics associated to them. In \cite{karp}, Karp proves completeness theorems for the classical logic $\mathcal{L}_{\kappa, \kappa}$, for inaccessible $\kappa$, within a Hilbert-style system including the distributivity and the dependent choice axioms. These axioms consist of the following schemata:

\begin{enumerate}
 \item $A \to [B \to A]$
 \item $[A \to [B \to C] \to [[A \to B] \to [A \to C]]]$
 \item $[\neg B \to \neg A] \to [A \to B]$
 \item $[\bigwedge_{i<\alpha} [A \to A_i]] \to [A \to \bigwedge_{i<\alpha} A_i]$
 \item $[\bigwedge_{i<\alpha}  A_i] \to A_j$
 \item $[\forall \mathbf{x} [A \to B] \to [A \to \forall \mathbf{x} B]]$
 
 provided no variable in $\mathbf{x}$ occurs free in $A$;
 
 \item $\forall \mathbf{x} A \to S_f(A)$
 
 where $S_f(A)$ is a substitution based on a function $f$ from $\mathbf{x}$ to the terms of the language;
 
 \item Equality axioms:
 
 \begin{enumerate}
 \item $t=t$
 \item $[\bigwedge_{i<\alpha}t_i=t'_i] \to [\phi(t_0, ..., t_\xi, ...)=\phi(t'_0, ..., t'_\xi, ...)]$
 \item $[\bigwedge_{i<\alpha}t_i=t'_i] \to [P(t_0, ..., t_\xi, ...) \to P(t'_0, ..., t'_\xi, ...)]$
 
 for each $\alpha<\kappa$, where $t, t_i$ are terms and $\phi$ is a function symbol of arity $\alpha$ and $P$ a relation symbol of arity $\alpha$;
 \end{enumerate}
 
 \item Classical distributivity axiom\footnote{Throughout this work the notation $\alpha^{\beta}$ for ordinals $\alpha, \beta$ will always denote the set of functions $f: \beta \to \alpha$, and should not be confused with ordinal exponentiation.}:
 
 $$\bigwedge_{i<\gamma}\bigvee_{j<\gamma} \psi_{ij} \to \bigvee_{f \in \gamma^{\gamma}}\bigwedge_{i<\gamma} \psi_{if(i)}$$
 
 \item Classical dependent choice axiom:
 
 $$\bigwedge_{\alpha < \gamma} \forall_{\beta < \alpha}\mathbf{x}_{\beta} \exists \mathbf{x}_{\alpha} \psi_{\alpha} \to \exists_{\alpha < \gamma} \mathbf{x}_{\alpha} \bigwedge_{\alpha < \gamma} \psi_{\alpha}$$
 
 provided the sets $\mathbf{x}_{\alpha}$ are pairwise disjoint and no variable in $\mathbf{x}_{\alpha}$ is free in $\psi_{\beta}$ for $\beta<\alpha$.
 
\end{enumerate}

The inference rules are modus ponens, conjunction introduction and generalization.

In the same way that for finitary languages proofs are finitary objects, the right metatheory to study formal proofs of infinitary languages is that of sets hereditarily of cardinal less than $\kappa$. Similarly, G\"odel numberings of finitary formulas can be generalized to the infinitary case if one uses (not necessarily finite) ordinal numbers (see \cite{karp}), by considering one-to-one functions from the symbols of the language into $\kappa$. It is then possible to consider G\"odel numbers of formulas and prove that they correspond to those sets hereditarily of cardinal less than $\kappa$ that satisfy a precise ordinary predicate in a certain metalanguage. Moreover, G\"odel numbers of provable formulas must satisfy a precise provability predicate in such metalanguage.

The development of \cite{karp} is classical, that is, the infinitary systems considered formalize infinitary classical logic. Intuitionistic systems of infinitary logic using countable many conjunctions and disjunctions was studied in \cite{nadel}. Our purpose here is to study systems for the intuitionistic general case, together with corresponding completeness theorems.

\subsection{Infinitary first-order systems}

Let $\kappa$ be an inaccessible cardinal (we consider $\omega$ to be inaccessible as well, so that our account embodies in particular the finitary case). The syntax of intuitionistic $\kappa$-first-order logics $\mathcal{L}_{\kappa, \kappa}$ consists of a (well-ordered) set of sorts and a set of function and relation symbols, these latter together with the corresponding type, which is a subset with less than $\kappa$ many sorts. Therefore, we assume that our signature may contain relation and function symbols on $\gamma<\kappa$ many variables, and we suppose there is a supply of $\kappa$ many fresh variables of each sort. Terms and atomic formulas are defined as usual, and general formulas are defined inductively according to the following:

\begin{defs} If $\phi, \psi, \{\phi_{\alpha}: \alpha<\gamma\}$ (for each $\gamma<\kappa$) are formulas of $\mathcal{L}_{\kappa, \kappa}$, the following are also formulas: $\bigwedge_{\alpha<\gamma}\phi_{\alpha}$, $\bigvee_{\alpha<\gamma}\phi_{\alpha}$, $\phi \to \psi$, $\forall_{\alpha<\gamma} x_{\alpha} \phi$ (also written $\forall \mathbf{x}_{\gamma} \phi$ if $\mathbf{x}_{\gamma}=\{x_{\alpha}: \alpha<\gamma\}$), $\exists_{\alpha<\gamma} x_{\alpha} \phi$ (also written $\exists \mathbf{x}_{\gamma} \phi$ if $\mathbf{x}_{\gamma}=\{x_{\alpha}: \alpha<\gamma\}$).
\end{defs}
 
The inductive definition of formulas allows to place them in hierarchies or levels up to $\kappa$. Formulas in a successor level are built using the clauses of the definition from formulas in the previous level, while at limit levels one takes the union of all formulas in all levels so far defined. Proofs by induction on the complexity of the formula are proofs by transfinite induction on the least level of the formulas.

The infinitary systems that we will use in categorical logic for our purposes have all the rules of finitary first-order logic, except that in the case of $\mathcal{L}_{\kappa, \kappa}$ we allow infinite sets of variables as contexts of the sequents. Since the variables of each sort are assumed to in correspondence with (the elements of) $\kappa$, each subset of variables comes with an inherited well-order, which we will assume as given when we quantify over sets of variables. There are two special types of formulas that one can consider. One is the class of $\kappa$-\emph{regular} formulas (see \cite{makkai}), which are those build of atomic formulas, $\kappa$-conjunctions and $\kappa$-existential quantification. Adding $\kappa$-disjunction  results in the class of $\kappa$-\emph{coherent} formulas. We shall introduce both of these in more detail later. 

We use sequent style calculus to formulate the axioms of first-order logic, as can be found, e.g., in \cite{johnstone}, D1.3. The system for $\kappa$-first order logic is described below. Its key feature and the difference with the system of Karp essentially resides, besides being an intuitionistic system, in the introduction of the transfinite transitivity rule, which, as we shall see, is an intuitionistic way of merging the classical distributivity and dependent choice axioms. The intuitive meaning of this rule will be further explained after the following:

\begin{defs}\label{sfol}
 The system of axioms and rules for $\kappa$-first-order logic consists of

\begin{enumerate}
 \item Structural rules:
 \begin{enumerate}
 \item Identity axiom:
\begin{mathpar}
\phi \vdash_{\mathbf{x}} \phi 
\end{mathpar}
\item Substitution rule:
\begin{mathpar}
\inferrule{\phi \vdash_{\mathbf{x}} \psi}{\phi[\mathbf{s}/\mathbf{x}] \vdash_{\mathbf{y}} \psi[\mathbf{s}/\mathbf{x}]} 
\end{mathpar}

where $\mathbf{y}$ is a string of variables including all variables occurring in the string of terms $\mathbf{s}$.
\item Cut rule:
\begin{mathpar}
\inferrule{\phi \vdash_{\mathbf{x}} \psi \\ \psi \vdash_{\mathbf{x}} \theta}{\phi \vdash_{\mathbf{x}} \theta} 
\end{mathpar}
\end{enumerate}

\item Equality axioms:

\begin{enumerate}
\item 

\begin{mathpar}
\top \vdash_{x} x=x 
\end{mathpar}

\item 

\begin{mathpar}
(\mathbf{x}=\mathbf{y}) \wedge \phi \vdash_{\mathbf{z}} \phi[\mathbf{y}/\mathbf{x}]
\end{mathpar}

where $\mathbf{x}$, $\mathbf{y}$ are contexts of the same length and type and $\mathbf{z}$ is any context containing $\mathbf{x}$, $\mathbf{y}$ and the free variables of $\phi$.
\end{enumerate}

\item Conjunction axioms and rules:

$$\bigwedge_{i<\gamma} \phi_i \vdash_{\mathbf{x}} \phi_j$$

\begin{mathpar}
\inferrule{\{\phi \vdash_{\mathbf{x}} \psi_i\}_{i<\gamma}}{\phi \vdash_{\mathbf{x}} \bigwedge_{i<\gamma} \psi_i}
\end{mathpar}

for each cardinal $\gamma<\kappa$.

\item Disjunction axioms and rules:

$$\phi_j \vdash_{\mathbf{x}} \bigvee_{i<\gamma} \phi_i$$

\begin{mathpar}
\inferrule{\{\phi_i \vdash_{\mathbf{x}} \theta\}_{i<\gamma}}{\bigvee_{i<\gamma} \phi_i \vdash_{\mathbf{x}} \theta}
\end{mathpar}

for each cardinal $\gamma<\kappa$.

\item Implication rule:

\begin{mathpar}
\mprset{fraction={===}}
\inferrule{\phi \wedge \psi \vdash_{\mathbf{x}} \theta}{\phi \vdash_{\mathbf{x}} \psi \to \theta}
\end{mathpar}

\item Existential rule:
\begin{mathpar}
\mprset{fraction={===}}
\inferrule{\phi \vdash_{\mathbf{x} \mathbf{y}} \psi}{\exists \mathbf{y}\phi \vdash_{\mathbf{x}} \psi}
\end{mathpar}

where no variable in $\mathbf{y}$ is free in $\psi$.

\item Universal rule:
\begin{mathpar}
\mprset{fraction={===}}
\inferrule{\phi \vdash_{\mathbf{x} \mathbf{y}} \psi}{\phi \vdash_{\mathbf{x}} \forall \mathbf{y}\psi}
\end{mathpar}

where no variable in $\mathbf{y}$ is free in $\phi$.

\item Transfinite transitivity rule:

\begin{mathpar}
\inferrule{\phi_{f} \vdash_{\mathbf{y}_{f}} \bigvee_{g \in \gamma^{\beta+1}, g|_{\beta}=f} \exists \mathbf{x}_{g} \phi_{g} \\ \beta<\gamma, f \in \gamma^{\beta} \\\\ \phi_{f} \dashv \vdash_{\mathbf{y}_{f}} \bigwedge_{\alpha<\beta}\phi_{f|_{\alpha}} \\ \beta < \gamma, \text{ limit }\beta, f \in \gamma^{\beta}}{\phi_{\emptyset} \vdash_{\mathbf{y}_{\emptyset}} \bigvee_{f \in \gamma^{\gamma}}  \exists_{\beta<\gamma}\mathbf{x}_{f|_{\beta +1}} \bigwedge_{\beta<\gamma}\phi_{f|_\beta}}
\end{mathpar}
\\
for each cardinal $\gamma < \kappa$, where $\mathbf{y}_{f}$ is the canonical context of $\phi_{f}$, provided that, for every $f \in \gamma^{\beta+1}$,  $FV(\phi_{f}) = FV(\phi_{f|_{\beta}}) \cup \mathbf{x}_{f}$ and $\mathbf{x}_{f|_{\beta +1}} \cap FV(\phi_{f|_{\beta}})= \emptyset$ for any $\beta<\gamma$, as well as $FV(\phi_{f}) = \bigcup_{\alpha<\beta} FV(\phi_{f|_{\alpha}})$ for limit $\beta$. Note that we assume that there is a fixed well-ordering of $\gamma^{\gamma}$ for each $\gamma<\kappa$.

\end{enumerate}
\end{defs}

In this formulation the double line indicates a bidirectional rule. Note that the axiom (schema) of excluded middle, which is not assumed here, is $\top \vdash_{\mathbf{x}} \phi \vee \neg \phi$. Note also that in full infinitary first-order logic we can dispense with the use of sequents and treat $\phi \vdash_{\mathbf{x}} \psi$ as simply $\forall \mathbf{x}(\phi \to \psi)$. Conversely, any formula $\phi(\mathbf{x})$ can be interpreted as the sequent $\top \vdash_{\mathbf{x}} \phi$, thereby obtaining a translation with Hilbert style systems.

In $\kappa$-first-order logic the following two axioms are provable:

\begin{enumerate}
\item Small distributivity axiom

$$\phi \wedge \bigvee_{i<\gamma} \psi_{i} \vdash_{\mathbf{x}} \bigvee_{i<\gamma}\phi \wedge \psi_{i}$$

for each cardinal $\gamma<\kappa$.

\item Frobenius axiom:

$$\phi \wedge \exists \mathbf{y} \psi \vdash_{\mathbf{x}} \exists \mathbf{y} (\phi \wedge \psi)$$

\noindent where no variable in $\mathbf{y}$ is in the context $\mathbf{x}$.
\end{enumerate}

However, when working in smaller fragments without implication (like the $\kappa$-coherent fragment to be introduced later) they need to be included as they are  not derivable (for the small distributivity axiom this can be seen for example in the propositional case by considering non-distributive lattices satisfying all other axioms, and a similar idea works for Frobenius axiom considering categorical semantics).

The transfinite transitivity rule can be understood as follows. Consider $\gamma^{\leq \gamma}$, the $\gamma$-branching tree of height $\gamma$, i.e., the poset of functions $f: \beta \to \gamma$ for $\beta \leq \gamma$ with the order given by inclusion. Suppose there is an assignment of formulas $\phi_f$ to each node $f$ of $\gamma^{\leq \gamma}$. Then the rule expresses that if the assignment is done in a way that the formula assigned to each node entails the join of the formulas assigned to its immediate successors, and if the formula assigned to a node in a limit level is equivalent to the meet of the formulas assigned to its predecessors, then the formula assigned to the root entails the join of the formulas assigned to the nodes in level $\gamma$. 

There is also a version of the deduction theorem that holds here:

\begin{lemma}\label{dt}
 Let $\Sigma$ be a set of sequents and let $\sigma$ be a sentence. If the theory $\Sigma \cup \{\top \vdash \sigma\}$ derives the sequent $\phi \vdash_{\mathbf{x}} \psi$, then the theory $\Sigma$ derives the sequent $\phi \wedge \sigma \vdash_{\mathbf{x}} \psi$.
\end{lemma}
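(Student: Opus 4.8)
The plan is to prove this deduction theorem by induction on the structure of the derivation of $\phi \vdash_{\mathbf{x}} \psi$ in the theory $\Sigma \cup \{\top \vdash \sigma\}$. The statement to maintain as the induction hypothesis is: whenever $\Sigma \cup \{\top \vdash \sigma\}$ derives a sequent $\alpha \vdash_{\mathbf{z}} \beta$, then $\Sigma$ derives $\alpha \wedge \sigma \vdash_{\mathbf{z}} \beta$. Since $\sigma$ is a sentence, it has no free variables, so conjoining it with $\alpha$ never disturbs the context $\mathbf{z}$ and never interferes with variable-freshness side conditions; this is the reason the hypothesis is stated for a sentence rather than an arbitrary formula, and it is what makes the quantifier cases go through cleanly.

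First I would handle the base cases. If $\alpha \vdash_{\mathbf{z}} \beta$ is one of the logical axioms of Definition \ref{sfol} (identity, equality, conjunction/disjunction axioms), then $\alpha \vdash_{\mathbf{z}} \beta$ is already derivable in $\Sigma$ alone, and since $\alpha \wedge \sigma \vdash_{\mathbf{z}} \alpha$ is an instance of the conjunction axiom, a cut yields $\alpha \wedge \sigma \vdash_{\mathbf{z}} \beta$. If the sequent is a nonlogical axiom from $\Sigma$, the same argument applies. The remaining base case is the distinguished axiom itself, $\top \vdash \sigma$: here the desired conclusion is $\top \wedge \sigma \vdash \sigma$, which is immediate from the conjunction axiom. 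Then I would run through the inference rules. For modus ponens / cut, substitution, conjunction introduction, implication, and the existential and universal rules, the induction hypothesis gives the premises with $\sigma$ conjoined to the left, and one reassembles the conclusion using small distributivity and the Frobenius axiom (both provable in $\kappa$-first-order logic, as noted in the excerpt) to push the $\sigma$ through disjunctions and existentials, together with the fact that $\sigma$ is closed so that no freshness condition is violated when $\sigma$ crosses a quantifier.

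The main obstacle will be the transfinite transitivity rule. There one has a whole tree-indexed family of premises $\phi_f \vdash_{\mathbf{y}_f} \bigvee_{g} \exists \mathbf{x}_g \phi_g$ together with the limit-level equivalences, and the conclusion concerns the root formula $\phi_\emptyset$. The natural strategy is to apply the rule to the modified family $\phi'_f := \phi_f \wedge \sigma$. I must check that this modified family still satisfies all the hypotheses of the rule: the induction hypothesis converts each successor premise into $\phi_f \wedge \sigma \vdash_{\mathbf{y}_f} \bigvee_g \exists \mathbf{x}_g \phi_g$, and I then use small distributivity and Frobenius to absorb $\sigma$ into the disjuncts and under the existentials, obtaining $\phi'_f \vdash_{\mathbf{y}_f} \bigvee_g \exists \mathbf{x}_g \phi'_g$; the free-variable and disjointness side conditions are preserved precisely because $\sigma$ is a sentence so $FV(\phi'_f) = FV(\phi_f)$. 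The delicate point is the limit-level biconditional: I need $\phi'_f \dashv\vdash_{\mathbf{y}_f} \bigwedge_{\alpha<\beta} \phi'_{f|_\alpha}$, i.e.\ $\phi_f \wedge \sigma \dashv\vdash \bigwedge_{\alpha<\beta}(\phi_{f|_\alpha} \wedge \sigma)$, which follows from the original limit equivalence by observing that an infinite meet of copies of $\sigma$ is equivalent to $\sigma$ (using conjunction axioms and rules, with $\beta>0$). Applying the rule to the primed family yields $\phi_\emptyset \wedge \sigma \vdash_{\mathbf{y}_\emptyset} \bigvee_f \exists \ldots \bigwedge_\beta \phi'_{f|_\beta}$, and a final use of the conjunction axioms to drop the extraneous $\sigma$'s from the conclusion recovers exactly the sequent required. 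I expect the bookkeeping of the free-variable conditions across the tree to be the most error-prone part, but the closedness of $\sigma$ is precisely what keeps it under control.
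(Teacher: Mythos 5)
Your proposal is correct and is exactly the argument the paper has in mind: the paper's proof of Lemma \ref{dt} consists of the single line ``straightforward induction on the length of the derivation,'' and your write-up is that induction carried out in detail, with the transfinite transitivity case (absorbing $\sigma$ into the tree-indexed family via small distributivity and Frobenius, using that $\sigma$ is closed to preserve the side conditions) being the only case that requires real care.
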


\begin{proof}
 Straightforward induction on the length of the derivation.
\end{proof}

In full first-order logic the transfinite transitivity rule can be replaced by the axiom schema, for each $\gamma < \kappa$:

$$\bigwedge_{f \in \gamma^{\beta}, \beta<\gamma} \forall (\mathbf{y}_{f} \setminus \mathbf{y}_{\emptyset}) \left(\phi_{f} \to \bigvee_{g \in \gamma^{\beta+1}, g|_{\beta}=f} \exists \mathbf{x}_{g} \phi_{g}\right)$$

$$\wedge \bigwedge_{\beta < \gamma, \text{ limit }\beta, f \in \gamma^{\beta}} \forall (\mathbf{y}_{f} \setminus \mathbf{y}_{\emptyset}) \left(\phi_{f} \leftrightarrow \bigwedge_{\alpha<\beta}\phi_{f|_{\alpha}}\right)$$

$$\vdash_{\mathbf{y}_{\emptyset}} \phi_{\emptyset} \to \bigvee_{f \in \gamma^{\gamma}} \exists_{\beta<\gamma}\mathbf{x}_{f|_{\beta +1}} \bigwedge_{\beta<\gamma}\phi_{f|_{\beta}}.$$

There are two particular cases of the transfinite transitivity rule which are of interest:

\begin{enumerate}

\item Distributivity rule:

\begin{mathpar}
\inferrule{\phi_{f} \vdash_{\mathbf{x}} \bigvee_{g \in \gamma^{\beta+1}, g|_{\beta}=f} \phi_{g} \\ \beta<\gamma, f \in \gamma^{\beta} \\\\ \phi_{f} \dashv \vdash_{\mathbf{x}} \bigwedge_{\alpha<\beta}\phi_{f|_{\alpha}} \\ \beta < \gamma, \text{ limit }\beta, f \in \gamma^{\beta}}{\phi_{\emptyset} \vdash_{\mathbf{x}} \bigvee_{f \in \gamma^{\gamma}}\bigwedge_{\beta<\gamma}\phi_{f|_{\beta}}}
\end{mathpar}

for each $\gamma<\kappa$ (we assume that there is a fixed well-ordering of $\gamma^{\gamma}$ for each $\gamma<\kappa$).

\item Dependent choice rule:

\begin{mathpar}
\inferrule{\phi_{\beta} \vdash_{\mathbf{y}_{\beta}} \exists \mathbf{x}_{\beta+1} \phi_{\beta +1} \\ \beta<\gamma \\\\ \phi_{\beta} \dashv \vdash_{\mathbf{y}_{\beta}} \bigwedge_{\alpha<\beta}\phi_{\alpha} \\ \beta \leq \gamma, \text{ limit }\beta}{\phi_{\emptyset} \vdash_{\mathbf{y}_{\emptyset}} \exists_{\beta<\gamma}\mathbf{x}_{\beta +1}\phi_{\gamma}}
\end{mathpar}

for each $\gamma < \kappa$, where $\mathbf{y}_{\beta}$ is the canonical context of $\phi_{\beta}$, provided that, for every $f \in \gamma^{\beta+1}$,  $FV(\phi_{f}) = FV(\phi_{f|_{\beta}}) \cup \mathbf{x}_{f}$ and $\mathbf{x}_{f|_{\beta +1}} \cap FV(\phi_{f|_{\beta}})= \emptyset$ for any $\beta<\gamma$, as well as $FV(\phi_{f}) = \bigcup_{\alpha<\beta} FV(\phi_{f|_{\alpha}})$ for limit $\beta$.

\end{enumerate}

Again, if implication and universal quantification are available in the fragment we are considering, we can instead replace the distributivity and dependent choice rules by axiom schemata expressible with single sequents, for each $\gamma < \kappa$:

$$\bigwedge_{f \in \gamma^{\beta}, \beta<\gamma} \left( \phi_{f} \to \bigvee_{g \in \gamma^{\beta+1}, g|_{\beta}=f} \phi_{g}\right)$$

$$\wedge \bigwedge_{\beta < \gamma, \text{ limit }\beta, f \in \gamma^{\beta}} \left(\phi_{f} \leftrightarrow \bigwedge_{\alpha<\beta}\phi_{f|_{\alpha}}\right) \vdash_{\mathbf{x}} \phi_{\emptyset} \to \bigvee_{f \in \gamma^{\gamma}} \bigwedge_{\beta<\gamma}\phi_{f|_{\beta}}$$
\\
and

$$\bigwedge_{\beta < \gamma} \forall (\mathbf{y}_{\beta} \setminus \mathbf{y}_{\emptyset}) \left(\phi_{\beta} \to \exists \mathbf{x}_{\beta +1} \phi_{\beta +1} \right)$$

$$\wedge \bigwedge_{\beta \leq \gamma, \text{ limit }\beta} \forall (\mathbf{y}_{\beta} \setminus \mathbf{y}_{\emptyset}) \left(\phi_{\beta} \leftrightarrow \bigwedge_{\alpha<\beta}\phi_{\alpha} \right) \vdash_{\mathbf{y}_{\emptyset}} \phi_{\emptyset} \to \exists_{\alpha < \gamma} \mathbf{x}_{\alpha} \phi_{\gamma}.$$

In turn, the rule of dependent choice has as a particular case the rule of choice:

\begin{mathpar}
\inferrule{\phi \vdash_{\mathbf{x}} \bigwedge_{\beta<\gamma} \exists \mathbf{x}_{\beta} \phi_{\beta}}{\phi \vdash_{\mathbf{x}} \exists_{\beta<\gamma}\mathbf{x}_{\beta}\bigwedge_{\beta<\gamma}\phi_{\beta}}
\end{mathpar}

\noindent where the $\mathbf{x}_{\beta}$ are disjoint canonical contexts of the $\phi_{\beta}$. This can be seen by applying dependent choice to the formulas $\psi_{\beta}=\phi \wedge \bigwedge_{\alpha<\beta}\phi_{\alpha+1}$.

\begin{lemma}\label{equivl}
All instances of the classical distributivity axiom:

$$\bigwedge_{i<\gamma}\bigvee_{j<\gamma} \psi_{ij} \vdash_{\mathbf{x}} \bigvee_{f \in \gamma^{\gamma}}\bigwedge_{i<\gamma} \psi_{if(i)}$$
\\
are derivable from those of the axiom: 

$$\bigwedge_{f \in \gamma^{\beta}, \beta<\gamma}\left(\phi_{f} \to \bigvee_{g \in \gamma^{\beta+1}, g|_{\beta}=f} \phi_{g}\right)$$

$$\wedge \bigwedge_{\beta < \gamma, \text{ limit }\beta, f \in \gamma^{\beta}} \left(\phi_{f} \leftrightarrow \bigwedge_{\alpha<\beta}\phi_{f|_{\alpha}}\right) \vdash_{\mathbf{x}} \phi_{\emptyset} \to \bigvee_{f \in \gamma^{\gamma}} \bigwedge_{\beta<\gamma}\phi_{f|_{\beta}}.$$
\\
Moreover, if excluded middle is available and $\kappa$ is inaccessible, the converse holds.

\end{lemma}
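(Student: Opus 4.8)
\emph{Forward direction.} The plan is to instantiate the distributivity axiom (the implication/biconditional schema in the statement) at a tree of formulas built from the classical-distributivity data. Given a family $\{\psi_{ij}\}_{i,j<\gamma}$, set, for $f\in\gamma^{\beta}$,
\[
\phi_f \;:=\; \Big(\bigwedge_{i<\gamma}\bigvee_{j<\gamma}\psi_{ij}\Big)\wedge\bigwedge_{\alpha<\beta}\psi_{\alpha,f(\alpha)},
\]
so that $\phi_{\emptyset}$ is the antecedent of classical distributivity. I would first verify that the antecedent $H$ of the distributivity axiom is provable for this family: the successor premise $\phi_f\vdash_{\mathbf{x}}\bigvee_{g}\phi_g$ holds because $\phi_f\vdash_{\mathbf{x}}\bigvee_{j<\gamma}\psi_{\beta,j}$ (conjunction elimination on the $i=\beta$ factor of $\phi_\emptyset$), after which the small distributivity axiom converts $\phi_f\wedge\bigvee_j\psi_{\beta,j}$ into $\bigvee_j(\phi_f\wedge\psi_{\beta,j})=\bigvee_g\phi_g$; the limit premise $\phi_f\dashv\vdash_{\mathbf{x}}\bigwedge_{\alpha<\beta}\phi_{f|_\alpha}$ holds since both sides are provably $\phi_\emptyset\wedge\bigwedge_{\alpha<\beta}\psi_{\alpha,f(\alpha)}$. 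As $\top\vdash_{\mathbf{x}}H$, cutting against the axiom and applying the implication rule gives $\phi_\emptyset\vdash_{\mathbf{x}}\bigvee_{f\in\gamma^\gamma}\bigwedge_{\beta<\gamma}\phi_{f|_\beta}$; and $\bigwedge_{\beta<\gamma}\phi_{f|_\beta}$ is provably $\phi_\emptyset\wedge\bigwedge_{\alpha<\gamma}\psi_{\alpha,f(\alpha)}$, which entails $\bigwedge_{\alpha<\gamma}\psi_{\alpha,f(\alpha)}$. This is exactly classical distributivity.

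\emph{Converse.} By the implication rule it suffices to derive $H\wedge\phi_\emptyset\vdash_{\mathbf{x}}C$, where $C=\bigvee_{b\in\gamma^\gamma}\bigwedge_{\beta<\gamma}\phi_{b|_\beta}$ and $H$ is the axiom's antecedent, which I keep on the left throughout (so the successor and limit hypotheses are available). Write $N=\gamma^{<\gamma}$ for the nodes of height $<\gamma$; since $\kappa$ is inaccessible (strong limit) we have $|N|\le\gamma^\gamma<\kappa$, the disjunction over $\gamma^\gamma$ in $C$ is a legitimate formula, and the set of choice functions introduced below also has cardinality $<\kappa$. For $f\in N$ of height $\beta$ put
\[
D_f \;:=\; \neg\phi_f\;\vee\;\bigvee_{g\in\gamma^{\beta+1},\,g|_\beta=f}\phi_g .
\]
Using excluded middle together with the successor conjunct $\phi_f\vdash_{\mathbf{x}}\bigvee_g\phi_g$ of $H$, each $D_f$ is provably $\top$, so $H\vdash_{\mathbf{x}}\bigwedge_{f\in N}D_f$.

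The core step is a single application of \emph{rectangular} classical distributivity to $\{D_f\}_{f\in N}$, each $D_f$ a disjunction of length $\gamma$; rectangular distributivity follows from the square instances by padding with $\top$ and $\bot$, legitimate because $\kappa$ is inaccessible. This yields $\bigwedge_{f\in N}D_f\vdash_{\mathbf{x}}\bigvee_\sigma E_\sigma$, where $\sigma$ ranges over functions selecting in each $D_f$ either the disjunct $\neg\phi_f$ or one of the $\phi_g$, and $E_\sigma$ is the conjunction of the chosen disjuncts. For each fixed $\sigma$ I would show $H\wedge\phi_\emptyset\wedge E_\sigma\vdash_{\mathbf{x}}C$ by following the branch dictated by $\sigma$: start at the root, and at any node where $\sigma$ selects some $\phi_g$ move to $g$, taking unions at limits. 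If $\sigma$ never selects a $\neg\phi_f$ along this branch, the branch is some $b\in\gamma^\gamma$ and $E_\sigma\vdash_{\mathbf{x}}\bigwedge_{\beta<\gamma}\phi_{b|_{\beta+1}}$; combined with $\phi_\emptyset$ and the limit conjuncts of $H$ this gives $\phi_\emptyset\wedge E_\sigma\vdash_{\mathbf{x}}\bigwedge_{\beta<\gamma}\phi_{b|_\beta}$, a disjunct of $C$. If instead $\sigma$ first selects $\neg\phi_{b|_{\beta^*}}$ at a node $b|_{\beta^*}$, then on one hand $E_\sigma\vdash_{\mathbf{x}}\neg\phi_{b|_{\beta^*}}$, while tracing the selected $\phi_g$'s strictly below $\beta^*$ together with $\phi_\emptyset$ and the limit conjuncts gives $\phi_\emptyset\wedge E_\sigma\vdash_{\mathbf{x}}\phi_{b|_{\beta^*}}$; hence $\phi_\emptyset\wedge E_\sigma\vdash_{\mathbf{x}}\bot$, entailing $C$ vacuously. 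Summing over $\sigma$ via small distributivity, $H\wedge\phi_\emptyset\vdash_{\mathbf{x}}\bigvee_\sigma(\phi_\emptyset\wedge E_\sigma)\vdash_{\mathbf{x}}C$, as required.

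\emph{Main obstacle.} The forward direction is routine bookkeeping. The substance is the converse, and specifically the design of the tautologies $D_f$ and the branch-tracing case split: one must see that complementing each node's formula and feeding the $D_f$ into the flat distributive law records, in each $E_\sigma$, either a genuine branch through the tree or a first point of failure where $\phi_{b|_{\beta^*}}\wedge\neg\phi_{b|_{\beta^*}}$ collapses the term. Excluded middle is used precisely to form $\neg\phi_f$ and to know $D_f=\top$, and inaccessibility of $\kappa$ is what keeps the index sets $N$, $\gamma^\gamma$, and the set of choice functions below $\kappa$, so that all the displayed conjunctions and disjunctions are well-formed and the classical distributivity instance may be invoked.
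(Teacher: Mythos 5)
Your proof is correct, and its overall strategy matches the paper's, though the converse is implemented differently enough to be worth comparing. In the forward direction the two arguments differ only in bookkeeping: the paper assigns the bare formula $\psi_{\beta j}$ to the $j$-th immediate successor of a node of height $\beta$ (with $\top$ at the root) and derives the hypothesis of the tree-distributivity axiom from the antecedent $\bigwedge_{i<\gamma}\bigvee_{j<\gamma}\psi_{ij}$, whereas you fold that antecedent and the accumulated history into each $\phi_f$ so that the hypothesis becomes provable outright; these are interchangeable. For the converse, both proofs use excluded middle to manufacture a family of tautologies indexed by the nodes of $\gamma^{<\gamma}$ and feed it into one large, padded instance of flat classical distributivity (this padding over a node set of size below $\kappa$ is exactly where inaccessibility enters in both versions), then read off a branch from each selected conjunct. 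The paper distributes over the two-element disjunctions $\phi_f\vee\neg\phi_f$, obtaining complete conjunctions of literals $b=\bigwedge_{f}a_g(\phi_f)$, and extracts a branch by observing that whenever $b\not\equiv\bot$ and $b\vdash\phi_f$, the successor hypothesis forces some $\phi_h$ to be decided positively in $b$. You instead distribute over $\neg\phi_f\vee\bigvee_{g}\phi_g$, so each selector explicitly names a successor (or a refutation) at every node; the branch is then traced directly, with the first selected negation collapsing the term to $\bot$. Your variant replaces the paper's reasoning about which complete conjunctions are inconsistent by a purely syntactic first-failure case split; the cardinality accounting and the role of excluded middle are the same in both.
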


\begin{proof}
Assign to the nodes of the tree $\gamma^{< \gamma}$ the following formulas: to the immediate succesors of a node $\phi_{f}$, for $f \in \gamma^{\beta}$, assign the formulas $\psi_{\beta j}$, then set $\phi_{\emptyset}= \top$, and $\phi_{f}=\bigwedge_{\alpha<\beta}\phi_{f|_{\alpha}}$ for $f \in \gamma^{\beta}$ and limit $\beta$. Then we have the axiom $\bigwedge_{i<\gamma}\bigvee_{j<\gamma} \psi_{ij} \vdash_{\mathbf{x}} \bigvee_{g \in \gamma^{\beta+1}, g|_{\beta}=f} \phi_{g}$ for each $f$, from which we can further derive:

$$\bigwedge_{i<\gamma}\bigvee_{j<\gamma} \psi_{ij} \vdash_{\mathbf{x}} \phi_{f} \to \bigvee_{g \in \gamma^{\beta+1}, g|_{\beta}=f} \phi_{g}$$
\\
for each $f$. Thus, applying the distributivity and the cut rule we get:

$$\bigwedge_{i<\gamma}\bigvee_{j<\gamma} \psi_{ij} \vdash_{\mathbf{x}} \bigvee_{f \in \gamma^{\gamma}}\bigwedge_{i<\gamma} \psi_{if(i)}$$
\\
as we wanted.

If excluded middle is available, we have $\top \vdash_{\mathbf{x}} \bigwedge_{f \in \gamma^{\beta}, \beta<\gamma} (\phi_f \vee \neg \phi_f)$. If $I=\{f \in \gamma^{\beta}, \beta<\gamma\}$, the distributivity axiom implies that then $\top \vdash_{\mathbf{x}} \bigvee_{g \in 2^I} \bigwedge_{f \in I} a_g(\phi_f)$, where $a_g(\phi_f)=\phi_f$ if $g(f)=0$ and $a_g(\phi_f)=\neg \phi_f$ if $g(f)=1$. We have therefore:

$$\phi_{\emptyset} \equiv \bigvee_{g \in 2^I} \left(\phi_{\emptyset} \wedge \bigwedge_{f \in I} a_g(\phi_f)\right).$$
\\
If $\phi_{\emptyset} \wedge \bigwedge_{f \in I} a_g(\phi_f)$ is not equivalent to $\bot$, $a_g(\phi_{\emptyset})=\phi_{\emptyset}$, and hence $\phi_{\emptyset}$ is the join:

$$\bigvee_{\phi_{\emptyset} \wedge \bigwedge_{f \in I} a_g(\phi_f) \not\equiv \bot} \bigwedge_{f \in I} a_g(\phi_f).$$

We will prove that each disjunct in this join entails $\bigvee_{f \in I} \bigwedge_{\beta<\gamma}\phi_{f|_{\beta}}$, which is clearly enough. Take one such disjunct, $b=\bigwedge_{f \in I} a_g(\phi_f) \not\equiv \bot$ for a certain $g$. First, notice that since $\top \vdash_{\mathbf{x}} \phi_{f} \to \bigvee_{h \in \gamma^{\beta+1}, h|_{\beta}=f} \phi_{h}$, if $b \vdash_{\mathbf{x}} \phi_{f}$ then $b \wedge \phi_h \not\equiv \bot$ for at least one $h$, in which case it follows that $a_g(\phi_h)=\phi_h$ and hence $b \vdash_{\mathbf{x}} \phi_h$. With this observation, we can inductively define a path $f \in \gamma^{\gamma}$ such that $b \vdash_{\mathbf{x}} \phi_{f|_{\alpha}}$ for each $\alpha<\gamma$. Thus, $b \vdash_{\mathbf{x}} \bigvee_{f \in \gamma^{\gamma}} \bigwedge_{\beta<\gamma}\phi_{f|_{\beta}}$, as we wanted. This finishes the proof.

\end{proof}

\begin{rmk}
 The intuitionistic form of the distributivity law is strictly stronger than the classical version. For example, the interval $[0, 1]$ with the supremum and infimum as join and meet, respectively, satisfies the classical distributivity law for every $\gamma<\kappa$, but the intuitionistic distributivity law fails for $\gamma=\omega_1$. 
\end{rmk}

Note that the distributivity rule implies in particular, due to Lemma \ref{equivl}, the small distributivity axiom, which is a particular case of classical distributivity. 

\begin{lemma}\label{equivl2}
All instances of the classical dependent choice axiom:

$$\bigwedge_{\alpha < \gamma} \forall_{\beta < \alpha}\mathbf{x}_{\beta} \exists \mathbf{x}_{\alpha} \psi_{\alpha} \vdash_{\mathbf{x}} \exists_{\alpha < \gamma} \mathbf{x}_{\alpha} \bigwedge_{\alpha < \gamma} \psi_{\alpha}$$
\\
are derivable from those of the axiom:

$$\bigwedge_{\beta < \gamma} \forall (\mathbf{y}_{\beta} \setminus \mathbf{y}_{\emptyset}) \left(\phi_{\beta} \to \exists \mathbf{x}_{\beta +1} \phi_{\beta +1} \right)$$

$$\wedge \bigwedge_{\beta \leq \gamma, \text{ limit }\beta} (\mathbf{y}_{\beta} \setminus \mathbf{y}_{\emptyset}) \left(\phi_{\beta} \leftrightarrow \bigwedge_{\alpha<\beta}\phi_{\alpha} \right) \vdash_{\mathbf{y}_{\emptyset}} \phi_{\emptyset} \to \exists_{\alpha < \gamma} \mathbf{x}_{\alpha} \phi_{\gamma}.$$
\\
Moreover, if excluded middle is available, the converse holds.

\end{lemma}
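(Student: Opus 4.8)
The statement has exactly the shape of Lemma \ref{equivl}, so the plan is to follow the same strategy, replacing the propositional distributivity axiom by the dependent choice schema and the tree $\gamma^{<\gamma}$ by the linear chain $0<1<\dots<\gamma$. For the forward direction no excluded middle is needed: I would instantiate the schema. Writing $P$ for the premise $\bigwedge_{\alpha<\gamma}\forall_{\beta<\alpha}\mathbf{x}_\beta\exists\mathbf{x}_\alpha\psi_\alpha$ of the classical axiom, set $\phi_\emptyset:=P$ and $\phi_\beta:=P\wedge\bigwedge_{\alpha<\beta}\psi_\alpha$, so that the fresh variables introduced in passing from $\phi_\beta$ to $\phi_{\beta+1}$ are exactly the $\mathbf{x}_\beta$ of the classical axiom (up to the harmless shift between the schema's $\mathbf{x}_{\beta+1}$ and the axiom's $\mathbf{x}_\beta$). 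The hypotheses of the schema are then derivable: the successor condition $\phi_\beta\vdash\exists\mathbf{x}_\beta\phi_{\beta+1}$ follows by extracting the conjunct $\forall_{\delta<\beta}\mathbf{x}_\delta\exists\mathbf{x}_\beta\psi_\beta$ from $P$, instantiating the already-fixed variables $\mathbf{x}_\delta$, and re-conjoining $\phi_\beta$ via the Frobenius axiom; the limit condition is immediate since $\bigwedge_{\alpha<\lambda}(P\wedge\bigwedge_{\delta<\alpha}\psi_\delta)$ equals $P\wedge\bigwedge_{\delta<\lambda}\psi_\delta=\phi_\lambda$. As the antecedent of the schema is thus provable from $\top$, a cut with the schema yields $P\vdash\exists_{\alpha<\gamma}\mathbf{x}_\alpha\phi_\gamma$, and dropping the inner copy of $P$ (whose free variables avoid the $\mathbf{x}_\alpha$) gives precisely the classical axiom.

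For the converse I would use excluded middle exactly as in Lemma \ref{equivl}, to turn the \emph{conditional} successor hypotheses of the schema into the \emph{unconditional} choice premise of the classical axiom. After the implication rule it suffices to derive $\Theta\wedge\phi_\emptyset\vdash\exists_{\alpha<\gamma}\mathbf{x}_\alpha\phi_\gamma$, where $\Theta$ is the conjunction of the successor and limit hypotheses. I would apply the classical axiom to the formulas $\psi_0:=\phi_\emptyset$, $\psi_{\beta+1}:=\phi_\beta\to\phi_{\beta+1}$, and $\psi_\lambda:=\top$ at limits (with empty $\mathbf{x}_\alpha$ at $0$ and at limit stages). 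The role of the implications is that their choice premise is derivable from $\Theta\wedge\phi_\emptyset$: for a successor $\beta+1$, excluded middle splits on $\phi_\beta$; on $\phi_\beta$ the successor hypothesis gives $\exists\mathbf{x}_{\beta+1}\phi_{\beta+1}$, hence $\exists\mathbf{x}_{\beta+1}(\phi_\beta\to\phi_{\beta+1})$, while on $\neg\phi_\beta$ the implication $\phi_\beta\to\phi_{\beta+1}$ holds outright, so $\exists\mathbf{x}_{\beta+1}(\phi_\beta\to\phi_{\beta+1})$ follows using the nonemptiness of the sorts of $\mathbf{x}_{\beta+1}$. The zero conjunct is $\phi_\emptyset$ itself and the limit conjuncts are trivial.

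Having obtained $\Theta\wedge\phi_\emptyset\vdash\exists_{\alpha<\gamma}\mathbf{x}_\alpha\bigwedge_{\alpha<\gamma}\psi_\alpha$ from the classical axiom by a cut, it remains to convert $\bigwedge_{\alpha<\gamma}\psi_\alpha$ into $\phi_\gamma$. For this I would prove $\Theta\wedge\bigwedge_{\alpha<\gamma}\psi_\alpha\vdash\phi_\delta$ for every $\delta<\gamma$ by transfinite induction on $\delta$ carried out at the meta-level: the base case is $\psi_0=\phi_\emptyset$, the successor case is a modus ponens against $\psi_{\beta+1}=\phi_\beta\to\phi_{\beta+1}$, and the limit case combines the induction hypotheses by the conjunction-introduction rule and then invokes the limit equivalence $\phi_\delta\dashv\vdash\bigwedge_{\eta<\delta}\phi_\eta$ recorded in $\Theta$. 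A final conjunction introduction over all $\delta<\gamma$ yields $\Theta\wedge\bigwedge_{\alpha<\gamma}\psi_\alpha\vdash\phi_\gamma$; pushing $\Theta$ under the existential by the Frobenius axiom (its free variables lie in $\mathbf{y}_\emptyset$ and hence avoid the $\mathbf{x}_\alpha$) and composing with the previous sequent gives $\Theta\wedge\phi_\emptyset\vdash\exists_{\alpha<\gamma}\mathbf{x}_\alpha\phi_\gamma$, as required.

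The main obstacle is the converse, and within it two points: arranging the $\psi_\alpha$ so that the unconditional choice premise is derivable (the excluded-middle split, which is where nonemptiness of sorts enters), and internalizing the transfinite induction through limit stages via the limit equivalences. Everything else is bookkeeping with the one-step index shift between the two formulations.
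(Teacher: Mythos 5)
Your proposal is correct and follows essentially the same route as the paper: the forward direction instantiates the schema with $\phi_{\beta+1}$ built from $\psi_\beta$ (the paper takes $\phi_\emptyset=(\mathbf{x}=\mathbf{x})$ rather than carrying the premise $P$ inside each $\phi_\beta$, a cosmetic difference) and cuts, while the converse uses exactly the paper's independence-of-premise step — which you unfold as the excluded-middle case split — followed by classical dependent choice applied to the implications $\phi_\beta\to\phi_{\beta+1}$ and the transfinite induction through the limit equivalences. Your version is merely more explicit about the nonemptiness of sorts and the final induction, both of which the paper leaves implicit.
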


\begin{proof}
Define the following formulas: set $\phi_{\emptyset}= (\mathbf{x}=\mathbf{x})$, $\phi_{\beta +1}=\psi_{\beta}$ and $\phi_{\alpha}=\bigwedge_{\beta < \alpha} \phi_{\beta}$ if $\alpha$ is a limit ordinal. Then we have:

$$\bigwedge_{\alpha < \gamma} \forall_{\beta < \alpha}\mathbf{x}_{\beta} \exists \mathbf{x}_{\alpha} \psi_{\alpha} \vdash_{\mathbf{y_{\beta}}} \exists \mathbf{x}_{\beta} \phi_{\beta +1}$$
\\
and in particular,

$$\bigwedge_{\alpha < \gamma} \forall_{\beta < \alpha}\mathbf{x}_{\beta} \exists \mathbf{x}_{\alpha} \psi_{\alpha} \vdash_{\mathbf{y_{\beta}}} \phi_{\beta} \to \exists \mathbf{x}_{\beta} \phi_{\beta +1}.$$
\\
Taking universal quantification and conjunctions on the right side, together with the sentence $\bigwedge_{\beta \leq \gamma, \text{ limit }\beta} \forall (\mathbf{y}_{\beta} \setminus \mathbf{y}_{\emptyset}) \left(\phi_{\beta} \leftrightarrow \bigwedge_{\alpha<\beta}\phi_{\alpha} \right)$, we can then use dependent choice and the cut rule to get:

$$\bigwedge_{\alpha < \gamma} \forall_{\beta < \alpha}\mathbf{x}_{\beta} \exists \mathbf{x}_{\alpha} \psi_{\alpha} \vdash_{\mathbf{x}} (\mathbf{x}=\mathbf{x}) \to \exists_{\alpha < \gamma} \mathbf{x}_{\alpha} \bigwedge_{\alpha < \gamma} \psi_{\alpha},$$
\\
as desired.

If excluded middle is available, notice that we have:

$$\forall (\mathbf{y}_{\beta} \setminus \mathbf{y}_{\emptyset}) \left(\phi_{\beta} \to \exists \mathbf{x}_{\beta +1} \phi_{\beta +1} \right) \vdash_{\mathbf{y}_{\emptyset}} \forall (\mathbf{y}_{\beta} \setminus \mathbf{y}_{\emptyset}) \exists \mathbf{x}_{\beta +1} \left(\phi_{\beta} \to \phi_{\beta +1} \right)$$
\\
for $\beta < \gamma$ by the independence of premise principle, classically provable. Taking conjunctions and applying classical dependent choice and the cut rule, we get:

$$\bigwedge_{\beta < \gamma} \forall (\mathbf{y}_{\beta} \setminus \mathbf{y}_{\emptyset}) \left(\phi_{\beta} \to \exists \mathbf{x}_{\beta +1} \phi_{\beta +1} \right) \vdash_{\mathbf{y}_{\emptyset}} \exists_{\alpha < \gamma} \mathbf{x}_{\alpha} \bigwedge_{\beta<\gamma} \left(\phi_{\beta} \to \phi_{\beta +1} \right)$$
\\
Since we also have 

$$\bigwedge_{\beta \leq \gamma, \text{ limit }\beta} \forall (\mathbf{y}_{\beta} \setminus \mathbf{y}_{\emptyset}) \left(\phi_{\beta} \leftrightarrow \bigwedge_{\alpha<\beta}\phi_{\alpha} \right) \wedge \exists_{\alpha < \gamma} \mathbf{x}_{\alpha} \bigwedge_{\beta<\gamma} \left(\phi_{\beta} \to \phi_{\beta +1} \right) \vdash_{\mathbf{y}_{\emptyset}} \phi_{\emptyset} \to \exists_{\alpha < \gamma} \mathbf{x}_{\alpha} \bigwedge_{\beta<\gamma} \phi_{\beta}$$
\\
the result follows.
\end{proof}

We have seen that the transfinite transitivity rule implies both classical distributivity and classical dependent choice. We will see now that if excluded middle is available, the converse holds:

\begin{thm}\label{equiv}
Assuming excluded middle, if $\kappa$ is inaccessible, all the instances of the transfinite transitivity rule:

\begin{mathpar}
\inferrule{\phi_{f} \vdash_{\mathbf{y}_{f}} \bigvee_{g \in \gamma^{\beta+1}, g|_{\beta}=f} \exists \mathbf{x}_{g} \phi_{g} \\ \beta<\gamma, f \in \gamma^{\beta} \\\\ \phi_{f} \dashv \vdash_{\mathbf{y}_{f}} \bigwedge_{\alpha<\beta}\phi_{f|_{\alpha}} \\ \beta < \gamma, \text{ limit }\beta, f \in \gamma^{\beta}}{\phi_{\emptyset} \vdash_{\mathbf{y}_{\emptyset}} \bigvee_{f \in \gamma^{\gamma}}  \exists_{\beta<\gamma}\mathbf{x}_{f|_{\beta +1}} \bigwedge_{\beta<\gamma}\phi_{f|_\beta}}
\end{mathpar}
\\ 
are derivable from instances of the classical distributivity axiom:

$$\bigwedge_{i<\gamma}\bigvee_{j<\gamma} \psi_{ij} \vdash_{\mathbf{x}} \bigvee_{f \in \gamma^{\gamma}}\bigwedge_{i<\gamma} \psi_{if(i)}$$
\\and the classical dependent choice axiom:

$$\bigwedge_{\alpha < \gamma} \forall_{\beta < \alpha}\mathbf{x}_{\beta} \exists \mathbf{x}_{\alpha} \psi_{\alpha} \vdash_{\mathbf{x}} \exists_{\alpha < \gamma} \mathbf{x}_{\alpha} \bigwedge_{\alpha < \gamma} \psi_{\alpha}.$$
\\
In particular, the addition to the axiom of excluded middle to the system of $\kappa$-first-order logic results in a system equivalent to Karp's.
\end{thm}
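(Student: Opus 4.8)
The plan is to derive the transfinite transitivity rule from the two special cases already isolated in the text, the distributivity rule and the dependent choice rule, and then to combine them. Since excluded middle is assumed, Lemma \ref{equivl} (this is where inaccessibility of $\kappa$ enters) and Lemma \ref{equivl2} make the intuitionistic distributivity and dependent choice axiom schemata derivable from classical distributivity and classical dependent choice; and because we work in full first-order logic, where implication and universal quantification are available, those schemata are interderivable with the distributivity rule and the dependent choice rule. Thus all of classical logic, together with both of these rules, the small distributivity axiom and the Frobenius axiom, may be used freely, and it suffices to derive the general transfinite transitivity rule from this stock.

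First I would reduce to a \emph{monotone} version of the rule. Given the data $\{\phi_f\}$, set $\Phi_f=\bigwedge_{\alpha\le\beta}\phi_{f|_\alpha}$ for $f\in\gamma^\beta$, the conjunction of the node formulas along the branch from the root to $f$. Using the Frobenius axiom (to move each $\exists\mathbf{x}_g$ past the conjuncts of $\Phi_f$, legitimate since the $\mathbf{x}_g$ are fresh) and the small distributivity axiom, one checks routinely that the $\Phi_f$ again satisfy the successor and limit hypotheses of the rule, that $\Phi_\emptyset\dashv\vdash\phi_\emptyset$, and that $\bigwedge_{\beta<\gamma}\Phi_{f|_\beta}\dashv\vdash\bigwedge_{\beta<\gamma}\phi_{f|_\beta}$; hence the conclusion for the $\Phi_f$ is equivalent to the desired one. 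The gain is that now $\Phi_g\vdash\Phi_f$ whenever $g$ is an immediate successor of $f$, so the formulas decrease along branches.

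The core of the argument then combines the two rules in the pattern of the excluded middle direction of Lemma \ref{equivl}. Because the distributivity rule requires a common context, I would pass from the $\Phi_f$ to their existential closures $\Psi_f=\exists\mathbf{z}_f\,\Phi_f$, where $\mathbf{z}_f$ gathers every fresh variable introduced along the branch to $f$; these live in the single context $\mathbf{y}_\emptyset$, satisfy $\Psi_\emptyset\dashv\vdash\phi_\emptyset$ and $\Psi_f\vdash_{\mathbf{y}_\emptyset}\bigvee_{g}\Psi_g$ over immediate successors, and decrease along branches. Splitting each $\Psi_f$ by excluded middle and applying classical distributivity over the index set $I=\{f\in\gamma^\beta:\beta<\gamma\}$ --- of cardinality less than $\kappa$ precisely because $\kappa$ is inaccessible --- writes $\phi_\emptyset$ as a join of conjunctions $b$, each deciding every $\Psi_f$, taken over the patterns not equivalent to $\bot$. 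As in the cited lemma, every such $b$ determines a branch $f\in\gamma^\gamma$, the successor entailments forcing a consistent immediate successor at each stage (the limit stages being the delicate point addressed below); the dependent choice rule is then applied along the branch to convert the level-by-level existential statements into a single witness block, giving $b\vdash_{\mathbf{y}_\emptyset}\exists_{\beta<\gamma}\mathbf{x}_{f|_{\beta+1}}\bigwedge_{\beta<\gamma}\phi_{f|_\beta}$.

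The main obstacle is the limit stages, where selecting the branch and producing a coherent assignment of witnesses cannot be separated. Distributivity chooses a consistent immediate successor at each successor level, but the existential closures $\Psi_{f|_\alpha}$ record witnesses only level by level, and knowing that those chosen below a limit $\lambda$ extend to witnesses valid for $\Psi_{f|_\lambda}$ is exactly the content of dependent choice; conversely, dependent choice cannot begin until a branch has been singled out. Neither classical axiom suffices in isolation --- which is the structural reason the transfinite transitivity rule bundles them into one --- and the delicate part is interleaving them while keeping track of the free variables and canonical contexts $\mathbf{y}_f$, so that the side conditions on the quantifiers hold at every stage. Once each disjunct $b$ is shown to entail one disjunct of the target $\bigvee_{f\in\gamma^\gamma}\exists_{\beta<\gamma}\mathbf{x}_{f|_{\beta+1}}\bigwedge_{\beta<\gamma}\phi_{f|_\beta}$, the rule follows by the disjunction rule and the cut rule. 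Finally, since the transfinite transitivity rule has already been seen to prove both classical distributivity and classical dependent choice, over the base system together with excluded middle the two axiomatizations prove the same sequents; by the translation between $\phi\vdash_{\mathbf{x}}\psi$ and $\forall\mathbf{x}(\phi\to\psi)$ recorded earlier, this is exactly the asserted equivalence of $\kappa$-first-order logic with excluded middle and Karp's system.
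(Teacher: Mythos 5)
Your overall strategy is genuinely different from the paper's, and it contains a real gap at exactly the point you flag but do not resolve. The reduction to the monotone formulas $\Phi_f=\bigwedge_{\alpha\le\beta}\phi_{f|_\alpha}$ and the excluded-middle/distributivity analysis of the existential closures $\Psi_f=\exists\mathbf{z}_f\,\Phi_f$ are fine, and they do produce, for each non-trivial pattern $b$, a branch $f\in\gamma^{\gamma}$ with $b\vdash_{\mathbf{y}_\emptyset}\Psi_{f|_\alpha}$ at every \emph{successor} stage. But the step ``the dependent choice rule is then applied along the branch'' is not available. To apply dependent choice along the chosen branch you need, for each $\beta$, the sequent $\Phi_{f|_\beta}\vdash_{\mathbf{y}_{f|_\beta}}\exists\mathbf{x}_{f|_{\beta+1}}\Phi_{f|_{\beta+1}}$ (even just relative to $b$): every witness tuple for the branch up to $\beta$ must extend to one for the branch up to $\beta+1$. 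The premises only give $\Phi_{f|_\beta}\vdash\bigvee_{g}\exists\mathbf{x}_g\Phi_g$ over \emph{all} immediate successors, and $b$ may entail $\Psi_g$ for several of them; a particular witness of $\Phi_{f|_\beta}$ may extend only into a successor off the branch that distributivity selected. The existential closures $\Psi_f$ record only that \emph{some} witness exists under $b$, which is too coarse to feed into dependent choice. Consequently at a limit $\lambda$ you cannot conclude $b\vdash\Psi_{f|_\lambda}$ from $b\vdash\Psi_{f|_\alpha}$ for all $\alpha<\lambda$, since $\exists$ does not commute with the $\lambda$-indexed conjunction --- and that commutation is precisely what is at stake.

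The paper's proof avoids this by reversing the order of the two classical principles. It first uses the independence-of-premise principle (classically valid), the rule of choice, and classical dependent choice to derive a single sequent in which the witnesses for \emph{all} nodes of the tree are bound simultaneously by one existential prefix $\exists_{\alpha<\gamma}(\cup_{f\in\gamma^{\gamma}}\mathbf{x}_{f|_{\alpha+1}})$ standing over the matrix $\bigwedge_{\beta<\gamma,\,f\in\gamma^{\beta}}\bigl(\phi_f\to\bigvee_{g}\phi_g\bigr)$; only then is the distributivity axiom of Lemma \ref{equivl} applied \emph{inside} the scope of that prefix to select the branch. With all witnesses fixed in advance, the limit stages reduce to the purely propositional argument of Lemma \ref{equivl}, where $b\vdash\phi_{f|_\alpha}$ for all $\alpha<\lambda$ does give $b\vdash\phi_{f|_\lambda}$. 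If you want to keep your architecture, you would need to import this same idea --- Skolemize the successor premises globally before splitting by excluded middle --- at which point you have essentially reconstructed the paper's proof. Your final paragraph on the equivalence with Karp's system is correct and matches the paper.
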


\begin{proof}
If $\phi_{f} \vdash_{\mathbf{y}_{f}} \bigvee_{g \in \gamma^{\beta+1}, g|_{\beta}=f} \exists \mathbf{x}_{g} \phi_{g}$ for each $\beta<\gamma$, $f \in \gamma^{\beta}$, then we can derive, using the independence of premise principle, the sequent:

$$\top \vdash_{\mathbf{y}_{\emptyset}} \forall_{\alpha < \beta}(\cup_{f \in \gamma^{\beta}}\mathbf{x}_{f|_{\alpha+1}}) \bigwedge_{f \in \gamma^{\beta}} \exists_{g \in \gamma^{\beta+1}, g|_{\beta}=f} \mathbf{x}_{g} \left(\phi_f \to \bigvee_{g \in \gamma^{\beta+1}, g|_{\beta}=f} \phi_{g}\right).$$
\\
Using the rule of choice, derivable from that of dependent choice (which is in turn classically derivable from classical dependent choice by Lemma \ref{equivl2}), we get:

$$\top \vdash_{\mathbf{y}_{\emptyset}} \forall_{\alpha < \beta}(\cup_{f \in \gamma^{\beta}}\mathbf{x}_{f|_{\alpha+1}}) \exists (\cup_{g \in \gamma^{\beta+1}}\mathbf{x}_{g}) \bigwedge_{f \in \gamma^{\beta}} \left(\phi_f \to \bigvee_{g \in \gamma^{\beta+1}, g|_{\beta}=f} \phi_{g}\right).$$
\\
Since we have one such sequent for each $\beta<\gamma$, by classical dependent choice we can thus infer:

$$\top \vdash_{\mathbf{y}_{\emptyset}} \exists_{\alpha < \gamma} (\cup_{f \in \gamma^{\gamma}}\mathbf{x}_{f|_{\alpha+1}}) \bigwedge_{\beta<\gamma, f \in \gamma^{\beta}} \left(\phi_f \to \bigvee_{g \in \gamma^{\beta+1}, g|_{\beta}=f} \phi_{g}\right). \qquad (1)$$
\\
On the other hand, by the distributivity property (provable by Lemma \ref{equivl} from the classical distributivity property, since excluded middle is available), we have:

$$\bigwedge_{\beta<\gamma, f \in \gamma^{\beta}} \left(\phi_f \to \bigvee_{g \in \gamma^{\beta+1}, g|_{\beta}=f} \phi_{g}\right) \wedge \bigwedge_{\beta < \gamma, \text{ limit }\beta, f \in \gamma^{\beta}} \left(\phi_{f} \leftrightarrow \bigwedge_{\alpha<\beta}\phi_{f|_{\alpha}}\right)$$

$$\vdash_{\mathbf{y}_{\emptyset} \cup \bigcup_{\alpha < \gamma, f \in \gamma^{\gamma}} \mathbf{x}_{f|_{\alpha +1}}} \phi_{\emptyset} \to \bigvee_{f \in \gamma^{\gamma}} \bigwedge_{\beta<\gamma}\phi_{f|_\beta}.$$
\\
Therefore, the sequent $(1)$ together with the premises $\top \vdash_{\mathbf{y}_{\emptyset} \cup \bigcup_{\alpha < \gamma, f \in \gamma^{\gamma}} \mathbf{x}_{f|_{\alpha +1}}} \phi_{f} \leftrightarrow \bigwedge_{\alpha<\beta}\phi_{f|_{\alpha}}$ for $\beta < \gamma$, $\text{ limit }\beta$, $f \in \gamma^{\beta}$, entail:

$$\top \vdash_{\mathbf{y}_{\emptyset}} \phi_{\emptyset} \to \exists_{\alpha < \gamma} (\cup_{f \in \gamma^{\gamma}}\mathbf{x}_{f|_{\alpha+1}}) \bigvee_{f \in \gamma^{\gamma}} \bigwedge_{\beta<\gamma}\phi_{f|_\beta}$$
\\
which in turn entails:

$$\top \vdash_{\mathbf{y}_{\emptyset}} \phi_{\emptyset} \to \bigvee_{f \in \gamma^{\gamma}} \exists_{\beta<\gamma}\mathbf{x}_{f|_{\beta +1}} \bigwedge_{\beta<\gamma}\phi_{f|_\beta}.$$
\\
This concludes the proof.
\end{proof}

\section{Infinitary first-order categorical logic}

\subsection{$\kappa$-coherent categories}

We begin now to study the categorical counterpart of the infinitary systems. We start with the following:

\begin{defs}
The $\kappa$-coherent fragment of $\kappa$-first-order logic is the fragment of those sequents where formulas are $\kappa$-coherent, i.e., only use $\bigwedge$, $\bigvee$, $\exists$, where the axioms and rules of $\kappa$-first-order logic are restricted to instantiations on $\kappa$-coherent formulas only, and where we add the corresponding instances of the small distributivity and the Frobenius axioms. 
\end{defs}

The $\kappa$-coherent fragment of first-order logic, which is an extension of the usual finitary coherent fragment, has a corresponding category which we are now going to define. Following \cite{makkai}, consider a $\kappa$-chain in a category $\mathcal{C}$ with $\kappa$-limits, i.e., a diagram $\Gamma: \gamma^{op} \to \mathcal{C}$ specified by morphisms $(h_{\beta, \alpha}: C_{\beta} \to C_{\alpha})_{\alpha \leq \beta<\gamma}$ such that the restriction $\Gamma|_{\beta}$ is a limit diagram for every limit ordinal $\beta$. We say that the morphisms $h_{\beta, \alpha}$ compose transfinitely, and take the limit projection $f_{\beta, 0}$ to be the transfinite composite of $h_{\alpha+1, \alpha}$ for $\alpha<\beta$.

Given a cardinal $\gamma<\kappa$, consider the tree $T=\gamma^{<\gamma}$. We will consider diagrams $F: T^{op} \to \mathcal{C}$, which determine, for each node $f$, a family of arrows in $\mathcal{C}$, $\{h_{g, f}: C_{g} \to C_{f} | f \in \gamma^{\beta}, g \in \gamma^{\beta+1}, g|_{\beta}=f\}$. A $\kappa$-family of morphisms with the same codomain is said to be \emph{jointly covering} if the union of the images of the morphisms is the whole codomain. We say that a diagram $F: T^{op} \to \mathcal{C}$ is \emph{proper} if the $\{h_{g, f}: f \in T\}$ are jointly covering and, for limit $\beta$, $h_{f, \emptyset}$ is the transfinite composition of the $h_{f|_{\alpha+1}, f|_{\alpha}}$ for $\alpha+1<\beta$. Given a proper diagram, we say that the families $\{h_{g, f}: f \in T\}$ compose transfinitely, and refer to the projections $\{h_{g, \emptyset} | g \in \gamma^{\gamma}\}$ as the transfinite composites of these families. If in a proper diagram the transfinite composites of the $\kappa$-families of morphisms form itself a jointly covering family, we will say that the diagram is completely proper.

\begin{defs}
 A $\kappa$-coherent category is a $\kappa$-complete coherent category with $\kappa$-complete subobject lattices where unions of cardinality less than $\kappa$ are stable under pullback, and where every proper diagram is completely proper, i.e., the transfinite composites of jointly covering $\kappa$-families of morphisms form a jointly covering family.
\end{defs}

The latter property, which is the categorical analogue of the transfinite transitivity rule, can be considered as an exactness property of $\Sets$, generalizing the property in \cite{makkai} where the families consisted of single morphisms. The transfinite transitivity property expresses that transfinite compositions of covering families (in the Grothendieck topology given by the jointly covering families of less than $\kappa$-morphisms) are again covering families; whence its name. It is easy to see that the property holds in \Sets, and in fact in every presheaf category.

$\kappa$-coherent categories have an internal logic, in a signature containing one sort for each object, no relation symbols and one unary function symbol for each arrow, and axiomatized by the following sequents:

$$\top \vdash_x \it Id_X(x)=x$$
\\
for all objects $X$ (here $x$ is a variable of sort $X$);

$$\top \vdash_x f(x)=h(g(x))$$
\\
for all triples of arrows such that $f=h \circ g$ (here $x$ is a variable whose sort is the domain of $f$);

$$\top \vdash_y \exists x f(x)=y$$
\\
for all covers $f$ (here $x$ is a variable whose sort is the domain of $f$);

$$\top \vdash_x \bigvee_{i<\gamma}\exists y_i m_i(y_i)=x$$
\\
whenever the sort $A$ of $x$ is the union of $\gamma$ subobjects $m_i: A_i \rightarrowtail A$ (here $y_i$ is a variable of sort $A_i$);

$$\bigwedge_{i:I \to J}\overline{i}(x_I)=x_J \vdash_{\{x_I: I \in \mathbf{I}\}} \exists x \bigwedge_{I \in \mathbf{I}} \pi_I(x)=x_I$$

$$\bigwedge_{I \in \mathbf{I}}\pi_I(x)=\pi_I(y) \vdash_{x, y} x=y$$
\\
whenever there is a $\kappa$-small diagram $\Phi: \mathbf{I} \to \mathcal{C}$, $(\{C_I\}_{I \in \mathbf{I}}, \{\overline{i}: C_I \to C_J)\}_{i: I \to J})$ and a limit cone $\pi: \Delta_C \Rightarrow \Phi$, $(\pi_I: C \to C_I)_{I \in \mathbf{I}}$. Here $x_I$ is a variable of type $C_I$, and $x, y$ are variables of type $C$.

Functors preserving this logic, i.e., $\kappa$-coherent functors, are just coherent functors which preserve $\kappa$-limits and $\kappa$-unions of subobjects, and they can be easily seen to correspond to structures of the internal theory in a given $\kappa$-coherent category, where we use a straightforward generalization of categorical semantics, to be explained in the next section.

\subsection{Categorical semantics and Kripke-Joyal forcing}

Categorical model theory techniques explore the study of models in arbitrary categories besides the usual category of sets. Unlike classical model theory, the logics one uses for this purpose formulate theories in terms of sequents; the type of theory studied depends on the type of formula one encounters in these sequents. The theories of the fragments mentioned so far all correspond to specific types of categories. We have the $\kappa$-regular categories, which are categories with $\kappa$-limits, regular epimorphism-monomorphism factorizations stable under pullback and where the transfinite composition of epimorphisms is an epimorphism. The $\kappa$-coherent categories have, in addition to this, stable $\kappa$-unions of subobjects and satisfy the property that the transfinite composition of jointly covering families is jointly covering. Finally, the $\kappa$-Heyting categories have, in addition, right adjoint for pullback functors between subobject lattices, which makes interpreting universal quantification possible.

There is a categorical semantics that one can associate with each type of category and theory, which is usually defined according to some inductive clauses. Following \cite{johnstone}, D1.2, given a category $\mathcal{C}$, for each signature $\Sigma$ of a first order language we can associate the so called $\Sigma$-structure within $\mathcal{C}$ in a way that generalizes the $\Sets$-valued interpretations to all $\kappa$-Heyting categories:

\begin{defs}
 A $\Sigma$-structure in $\mathcal{C}$ consists of the following data:
 \begin{enumerate}
  \item for each sort $A$ of variables in $\Sigma$ there is a corresponding object $M(A)$;
  \item for each $\gamma$-ary function symbol $f$ there is a morphism $M(f): M(A_1, ..., A_{\alpha}, ...)=\Pi_{i<\gamma} M(A_i) \rightarrow M(B)$;
  \item for each $\gamma$-ary relation symbol $R$ there is a subobject $M(R) \rightarrowtail M(A_1, ..., A_{\alpha}, ...)$, where $A_i$ are the sorts corresponding to the individual variables corresponding to $R$ (which will specify, by definition, the type of $R$).
 \end{enumerate}

\end{defs}

The $\Sigma$-structure will serve as a setup for interpreting all formulas of the language considered. Due to the need of distinguishing the context in which the free variables of the formula occur, for the purpose of a correct interpretation, we shall adopt the notation $(\mathbf{x}, \phi)$ to represent a term/formula $\phi$ whose free variables occur within $\mathbf{x}=x_1, ...,x_{\alpha}, ...$. We now define the interpretation of such formulas by induction on their complexity:

\begin{defs} Given a term in context $(\mathbf{x}, s)$ of a $\kappa$-first order theory, its interpretation $[\![\mathbf{x}, s]\!]$ within the $\kappa$-Heyting category $\mathcal{C}$ is a morphism of $\mathcal{C}$ defined in the following way:
\begin{enumerate}

\item If $s$ is a variable, it is necessarily some $x_i$, and then the corresponding morphism is $[\![\mathbf{x}, x_i]\!]=\pi_i: M(A_0, ..., A_{\alpha}, ...) \rightarrow M(A_i)$, the $i$-th product projection.
\item If $s$ is a term $f(t_0, ..., t_{\alpha}, ...)$, where each term $t_{\alpha}$ is of type $C_{\alpha}$, its interpretation is the composite:

\begin{displaymath}
\xymatrix{
M(A_0, ..., A_{\alpha}, ...) \ar@{->}[rrr]^{([\![\mathbf{x}, t_0]\!], ..., [\![\mathbf{x}, t_{\alpha}]\!], ...)} & & & M(C_0, ..., C_{\alpha}, ...) \ar@{->}[rr]^{M(f)} & & M(B)\\
}
\end{displaymath}

\end{enumerate}

The interpretation in $\mathcal{C}$ of the formula in context $(\mathbf{x}, \phi)$, where $\mathbf{x}=x_0...x_{\alpha} ...$ and $x_i$ is a variable of sort $A_i$, is defined as a subobject $[\![\mathbf{x}, \phi]\!] \rightarrowtail M(A_0, ..., A_{\alpha}, ...)$ in the following way:
\begin{enumerate}

\item If $\phi$ is the formula $R(t_0, ..., t_{\alpha}, ...)$, where $R$ is a $\gamma$-ary relation symbol of type $B_0, ..., B_{\alpha}, ...$, then $[\![\mathbf{x}, \phi]\!]$ is given by the pullback:

\begin{displaymath}
\xymatrix{
[\![\mathbf{x}, \phi]\!] \ar@{->}[rrr] \ar@{ >->}[dd] & & & M(R) \ar@{ >->}[dd]\\
 & & & \\
M(A_0, ..., A_{\alpha}, ...) \ar@{->}[rrr]^{([\![\mathbf{x}, t_1]\!], ..., [\![\mathbf{x}, t_{\alpha}]\!], ...)} & & & M(B_0, ..., B_{\alpha}, ...)\\
}
\end{displaymath}

\item If $\phi$ is the formula $s=t$ where $s, t$ are terms of sort $B$, then $[\![\mathbf{x}, \phi]\!]$ is the equalizer of the arrows:

\begin{displaymath}
\xymatrix{
M(A_0, ..., A_{\alpha}, ...) \ar@/^{1pc}/[rr]^{[\![\mathbf{x}, s]\!]} \ar@/_{1pc}/[rr]_{[\![\mathbf{x}, t]\!]} & & M(B)\\
}
\end{displaymath}
\\
Equivalently, $[\![\mathbf{x}, \phi]\!]$ is the pullback of the diagonal $M(B) \rightarrowtail M(B) \times M(B)$ along the morphism $([\![\mathbf{x}, s]\!], [\![\mathbf{x}, t]\!])$.
\item If $\phi$ is the formula $\bigvee_{i<\gamma}\psi_i$, then $[\![\mathbf{x}, \phi]\!]$ is the union $\bigvee_{i<\gamma}[\![\mathbf{x}, \psi_i]\!]$ in\\ $\mathcal{S}ub(M(A_0, ..., A_{\alpha}, ...))$. If $\phi$ is the formula $\bigwedge_{i<\gamma}\psi_i$, then $[\![\mathbf{x}, \phi]\!]$ is the intersection $\bigwedge_{i<\gamma}[\![\mathbf{x}, \psi_i]\!]$ in $\mathcal{S}ub(M(A_0, ..., A_{\alpha}, ...))$. Similarly, if $\phi$ is the formula $\neg \psi$, the corresponding subobject is $\neg[\![\mathbf{x}, \psi]\!]$.

\item If $\phi$ is the formula $(\exists y)\psi$, then $[\![\mathbf{x}, \phi]\!]$ is the image of the composite:

\begin{displaymath}
\xymatrix{
[\![\mathbf{x}y, \psi]\!] \ar@{ >->}[r] & M(A_0, ..., A_{\alpha}, ..., B) \ar@{->}[r]^{\pi} & M(A_0, ..., A_{\alpha}, ...)\\
}
\end{displaymath}
\\
where $\pi$ is the projection to the first $\gamma$ coordinates. Equivalently, this amounts to applying the left adjoint $\exists_{\pi}$ to the pullback functor\\ $\pi^{-1}: \mathcal{S}ub(M(A_0, ..., A_{\alpha}, ..., B)) \to \mathcal{S}ub(M(A_0, ..., A_{\alpha}, ...))$.
\item If $\phi$ is the formula $(\forall y)\psi$, then $[\![\mathbf{x}, \phi]\!]$ can be obtained by applying to $[\![\mathbf{x}y, \psi]\!]$ the right adjoint $\forall_{\pi}$ to the pullback functor\\ $\pi^{-1}: \mathcal{S}ub(M(A_0, ..., A_{\alpha}, ..., B)) \to \mathcal{S}ub(M(A_0, ..., A_{\alpha}, ...))$, where $\pi$ is the projection to the first $\gamma$  coordinates. Implication can be seen as a particular case of this right adjoint, by considering in $\mathcal{S}ub(M(A_0, ..., A_{\alpha}, ...))$ the pullback functor $\phi \wedge -: \mathcal{S}ub(M(A_0, ..., A_{\alpha}, ...)) \to \mathcal{S}ub(M(A_0, ..., A_{\alpha}, ...))$.
\end{enumerate}
\end{defs}

Functors between the appropriate categories preserving the corresponding structure correpond to models in the codomain category of the internal theory of the domain category. Such functors are called conservative if they reflect isomorphisms, and hence they reflect also the validity of formulas in the corresponding models.

One then has:

\begin{lemma}\label{soundness}
$\kappa$-coherent logic is sound with respect to models in $\kappa$-coherent categories.
\end{lemma}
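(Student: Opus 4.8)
The plan is to fix a $\kappa$-coherent category $\mathcal{C}$ with a $\Sigma$-structure $M$, declare that a sequent $\phi \vdash_{\mathbf{x}} \psi$ \emph{holds in} $M$ precisely when $[\![\mathbf{x}, \phi]\!] \leq [\![\mathbf{x}, \psi]\!]$ in $\mathcal{S}ub(M(\mathbf{x}))$, and then argue by transfinite induction on the length of a derivation that every sequent derivable in the $\kappa$-coherent fragment holds. Since a model of a theory is a $\kappa$-coherent functor, and such functors preserve exactly the structure ($\kappa$-limits, images, $\kappa$-unions, the left adjoints $\exists$) used to build the interpretation, it suffices to verify that each axiom of Definition~\ref{sfol} restricted to $\kappa$-coherent formulas, together with small distributivity and Frobenius, is valid, and that each rule preserves validity.

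Most cases are the standard verifications of categorical soundness, and I would dispatch them quickly. The identity axiom and the cut rule are reflexivity and transitivity of $\leq$ in the poset $\mathcal{S}ub(M(\mathbf{x}))$; the substitution rule follows from the substitution lemma, namely that $[\![\mathbf{y}, \phi[\mathbf{s}/\mathbf{x}]]\!]$ is the pullback of $[\![\mathbf{x},\phi]\!]$ along $[\![\mathbf{y},\mathbf{s}]\!]$, proved by an induction on $\phi$ that commutes with pullback. The equality axioms are immediate from the interpretation of $=$ as an equalizer. The conjunction and disjunction axioms and rules express precisely the universal properties of the $\kappa$-meet and $\kappa$-join in $\mathcal{S}ub(M(\mathbf{x}))$, which exist because the subobject lattices are $\kappa$-complete. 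The bidirectional existential rule is the adjunction $\exists_{\pi} \dashv \pi^{-1}$, while small distributivity and the Frobenius axiom are exactly the stability of $\kappa$-unions and of images under pullback, i.e. the frame distributive law and Frobenius reciprocity.

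The substantive case, and the one I expect to be the main obstacle, is soundness of the transfinite transitivity rule, where the defining exactness property of $\kappa$-coherent categories (proper diagrams are completely proper) is used. Given the data of the rule, I would set $\psi_f := \bigwedge_{\alpha \leq \beta} \phi_{f|_{\alpha}}$ for $f \in \gamma^{\beta}$ and let $C_f := [\![\mathbf{y}_f, \psi_f]\!]$, regarded as an object of $\mathcal{C}$. For $g$ an immediate successor of $f$, the context $\mathbf{y}_g$ extends $\mathbf{y}_f$ by $\mathbf{x}_g$, and since $\psi_g \vdash \psi_f$ the projection $M(\mathbf{y}_g) \to M(\mathbf{y}_f)$ restricts to a morphism $h_{g,f}\colon C_g \to C_f$. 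The premise $\phi_f \vdash_{\mathbf{y}_f} \bigvee_{g} \exists \mathbf{x}_g \phi_g$, conjoined with $\psi_f$ and simplified using small distributivity and Frobenius (so that $\psi_f \wedge \exists \mathbf{x}_g \phi_g \vdash \exists \mathbf{x}_g \psi_g$, as $\mathbf{x}_g$ is not free in $\psi_f$), yields $\psi_f \vdash \bigvee_g \exists \mathbf{x}_g \psi_g$; interpreted, this says exactly that $\{h_{g,f}\}_g$ is jointly covering onto $C_f$. The limit clause $\phi_f \dashv\vdash \bigwedge_{\alpha<\beta}\phi_{f|_{\alpha}}$ at limit levels makes $C_f$ the $\kappa$-limit of the chain $(C_{f|_{\alpha}})_{\alpha<\beta}$, so $h_{f,\emptyset}$ is the transfinite composite, and hence $F\colon T^{\mathrm{op}} \to \mathcal{C}$, $f \mapsto C_f$, is a proper diagram.

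Applying the completely-proper axiom then gives that the transfinite composites $\{h_{f,\emptyset} : f \in \gamma^{\gamma}\}$ form a jointly covering family onto $C_\emptyset = [\![\mathbf{y}_\emptyset, \phi_\emptyset]\!]$. Since for a branch $f \in \gamma^{\gamma}$ the image of $h_{f,\emptyset}$ is obtained by existentially quantifying the branch-introduced variables, namely $[\![\mathbf{y}_\emptyset, \exists_{\beta<\gamma}\mathbf{x}_{f|_{\beta+1}}\bigwedge_{\beta<\gamma}\phi_{f|_{\beta}}]\!]$, joint covering reads as $[\![\phi_\emptyset]\!] \leq \bigvee_{f\in\gamma^{\gamma}}[\![\exists_{\beta<\gamma}\mathbf{x}_{f|_{\beta+1}}\bigwedge_{\beta<\gamma}\phi_{f|_{\beta}}]\!]$, which is exactly the conclusion of the rule. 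The delicate points to get right are the bookkeeping of contexts and free variables along branches (ensuring the projections compose correctly and that $\mathbf{x}_g$ really is absent from $\psi_f$), and the identification of the categorical transfinite composite and its image with the syntactic meet-along-a-branch followed by existential quantification; in particular one must check that the intersection of a chain of subobjects computed in $\mathcal{S}ub$ agrees with the limit of the associated chain of objects, which is what licenses calling $F$ proper.
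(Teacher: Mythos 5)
Your proposal is correct and follows essentially the same route as the paper's proof: the routine cases are dispatched by the standard categorical-semantics verifications, and for the transfinite transitivity rule you make exactly the paper's reduction to the cumulative conjunctions $\psi_f=\bigwedge_{\alpha\leq\beta}\phi_{f|_{\alpha}}$ (justified by small distributivity and Frobenius), identify the limit-stage object with the intersection of subobjects, and invoke the ``proper implies completely proper'' exactness property to conclude. The only difference is that you spell out the routine cases and the final identification of the image of $h_{f,\emptyset}$ with the existentially quantified branch formula in more detail than the paper does.
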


\begin{proof}
 This is straightforward for all axioms and rules, except for the rule of transfinite transitivity. But here the proof is the natural generalization of that of the soundness of dependent choice, presented in \cite{makkai} for $\kappa$-regular logic. Let $S_{\mathbf{y}_{f}}$ be the product of the sorts assigned to the variables in $\mathbf{y}_{f}$ in the structure within a $\kappa$-coherent category, and assume that the premises of the transfinite transitivity rule hold there. We must show that the conclusion holds. We can also assume, without loss of generality, that:
 
 $$\phi_{g} \vdash_{\mathbf{y}_{g}} \phi_{f}$$
 \\
 for each $g \in \gamma^{\beta+1}, g|_{\beta}=f$; otherwise we can take, for each $f \in \gamma^{\beta}$:
 
 $$\psi_{f}=\bigwedge_{\alpha \leq \beta} \phi_{f|_{\alpha}}$$
 \\
 which, using the small distributivity law as well as Frobenius axiom can be seen to satisfy the premises of the rule as well, and both this form of distributivity and Frobenius axiom hold in any $\kappa$-coherent category because $\kappa$-unions and covers are stable under pullback.
 
 Let $m_{\alpha}: C_{\alpha} \to S_{\mathbf{y}_{f}}$ be the monomorphism representing the subobject $[\mathbf{y}_{f}, \phi_{f}]$. The assumption we have provides arrows:
 
 $$h_{g, f}: C_{g} \to C_{f}$$
 \\
 for $g \in \gamma^{\beta+1}$, $g|_{\beta}=f$, and by interpreting the premises of the rule it follows that the arrows:
 
 $$\{h_{g, f} | g \in \gamma^{\beta+1}, g|_{\alpha}=f\}$$
 \\
 form a jointly covering family. For a fixed $f \in \gamma^{\gamma}$ and limit $\beta$, the limit of the diagram formed by the $C_{f|_{\alpha}}$ for $\alpha<\beta$ is given by the intersection in the subobject lattice of $S_{\mathbf{y}_{f|_{\beta}}}$ of the pullbacks of each $m_{\alpha}$ along the projections $\pi_{f|_{\beta}, f|_{\alpha}}: S_{\mathbf{y}_{f|_{\beta}}} \to S_{\mathbf{y}_{f|_{\alpha}}}$. This intersection is in turn given by the subobject:
 
 $$C_{f|_{\beta}}=\bigwedge_{\alpha<\beta}\phi_{f|_{\alpha}} \to S_{\mathbf{y}_{f|_{\beta}}}$$
 \\
 
 By the property of the $\kappa$-coherent category, the arrows $C_{f|_{\beta}} \to C_{\emptyset}$ for $f \in \gamma^{\gamma}$ form a jointly covering family whenever $\beta$ is a limit ordinal, and the interpretation of the conclusion of the rule is precisely this statement for the case $\beta=\gamma$. This proves the soundness of the rule.
\end{proof}

One particular case of categorical semantics is given by the so called Kripke-Joyal semantics in a topos. A sheaf topos is in particular a Heyting category, and so it will be $\kappa$-Heyting precisely when it satisfies the transfinite transitivity property. The verification of the Heyting properties of a sheaf topos presents no major difficulties, but it is instructive to point how the connectives and quantifiers are interpreted. Following \cite{maclane-moerdijk}, section III.8, given a sheaf topos $\mathcal{S}h(\mathcal{C}, \tau)$ and some subsheaves $\{A, B, A_i: i \in I\}$ in the lattice of subobjects of a sheaf $E$, we have:

\begin{enumerate}
 \item The subsheaf $0$, initial in $\mathcal{S}ub(E)$, is determined by the property $x \in 0(C) \Leftrightarrow \emptyset$ is a cover of $C$ and $x \in E(C)$.
 \item The subsheaf $\bigwedge_{i \in I} A_i$ is determined by the property $\bigwedge_{i \in I} A_i(C)= \bigcap_{i \in I} A_i(C)$.
 \item The subsheaf $\bigvee_{i \in I} A_i$ is determined by the property\footnote{For $e \in E(C)$ and $f: D \to C$, we denote by $e.f \in E(D)$ the element $E(f)(e)$.} $e \in \bigvee_{i \in I} A_i(C) \Leftrightarrow \{f: D \to C/e.f \in \bigcup_{i \in I}A_i(D)\}$ is a cover of $C$.
 \item The subsheaf $A \rightarrow B$ is determined by the property $e \in (A \rightarrow B)(C) \Leftrightarrow$ for all $f: D \to C$, $e \in A(D)$ implies $e \in B(D)$.
 \item Given a morphism $\phi: E \to F$ inducing the pullback functor between subsheaves $\phi^{-1}: \mathcal{S}ub(F) \to \mathcal{S}ub(E)$, the action of its left adjoint $\exists_{\phi}$ is determined by the property $y \in \exists_{\phi}(A)(C) \Leftrightarrow \{f: D \to C/\exists a \in A(D): \phi_D(a)=y.f\}$ is a cover of $C$.
 \item Given a morphism $\phi: E \to F$ inducing the pullback functor between subsheaves $\phi^{-1}: \mathcal{S}ub(F) \to \mathcal{S}ub(E)$, the action of its right adjoint $\forall_{\phi}$ is determined by the property $y \in \forall_{\phi}(A)(C) \Leftrightarrow$ for all $f: D \to C$, $\phi_D^{-1}(y.f) \subseteq A(D)$.
\end{enumerate}

If we have a model of a theory in a sheaf topos and $C$ is now any object from $\mathcal{C}$, the forcing relation $C \Vdash \phi(\boldsymbol\alpha)$ for $\boldsymbol\alpha: [-, C] \to [\![\mathbf{x}, \top]\!]$ holds by definition if $\boldsymbol\alpha$ factors through the subobject $[\![\mathbf{x}, \phi(\mathbf{x})]\!] \rightarrowtail [\![\mathbf{x}, \top]\!]$. From the previous clauses, one can see that the following relations hold:

\begin{enumerate}
 \item $C \Vdash \bot \Leftrightarrow$ $\emptyset$ is a cover of $C$. 
 \item $C \Vdash \bigwedge_{i \in I} \phi_i(\boldsymbol\alpha) \Leftrightarrow C \Vdash \phi_i(\boldsymbol\alpha)$ for every $i \in I$.
 \item $C \Vdash \bigvee_{i \in I} \phi_i(\boldsymbol\alpha) \Leftrightarrow $ there is a cover $\{f_j: C_j \to C\}_{j \in J}$ such that for every $j \in J$, $C_j \Vdash \phi_{i_j}(\boldsymbol\alpha f_j)$ for some $i_j \in I$ (we say that the covering family witnesses the forcing clause).
 \item $C \Vdash \phi(\boldsymbol\alpha) \to \psi(\boldsymbol\alpha) \Leftrightarrow $ for all $f: D \to C$, $D \Vdash \phi(\boldsymbol\alpha f)$ implies $D \Vdash \psi(\boldsymbol\alpha f)$.
 \item $C \Vdash \exists \mathbf{x}\phi(\mathbf{x}, \boldsymbol\alpha) \Leftrightarrow $ there is a cover $\{f_j: C_j \to C\}_{j \in J}$ such that for every $j \in J$, $C_j \Vdash \phi(\mathbf{c_j}, \boldsymbol\alpha f_j)$ for some $\mathbf{c_j}: C_{j} \to [\![\mathbf{x}, \top]\!]$ (we say that the covering family witnesses the forcing clause).
 \item $C \Vdash \forall \mathbf{x}\phi(\mathbf{x}, \boldsymbol\alpha) \Leftrightarrow $ for all $f: D \to C$ and $\mathbf{c}: D \to [\![\mathbf{x}, \top]\!]$ we have $D \Vdash \phi(\mathbf{c}, \boldsymbol\alpha f)$.
\end{enumerate}

(These last two clauses are obtained from the case in which the morphism $\phi$ is the projection between the products of corresponding sorts). There are two important properties of this forcing notion. First, it is clear that if $C \Vdash \phi(\boldsymbol\alpha)$ and $f: D \to C$, then $D \Vdash \phi(\boldsymbol\alpha f)$. The second property states that if $\{f_i: C_i \to C\}_{i \in I}$ is a cover of $C$ and $C_i \Vdash \phi(\boldsymbol\alpha f_i)$ for all $i \in I$, then also $C \Vdash \phi(\boldsymbol\alpha)$. This is simply a reformulation of the glueing condition for the subsheaf $[\![\mathbf{x}, \phi(\mathbf{x})]\!] \rightarrowtail [\![\mathbf{x}, \top]\!]$.

The Kripke-Joyal forcing and its properties will become useful in our proof of completeness of infinitary logics.

\subsection{Syntactic categories}

The development of syntactic categories for infinite quantifier logics follows precisely the same pattern as the finitary case, except that instead of finite contexts for the objects of the syntactic category of a theory over such logic, we allow arbitrary sets of variables of cardinality less than $\kappa$, following, e.g., \cite{makkai}. 

Given a $\kappa$-coherent theory $\theory$, we explain how to define, following \cite{johnstone}, D1.4, its syntatic category $\mathcal{C}_{\theory}$ and a categorical model $M_{\theory}$ inside it, in such a way that a formula in $\theory$ will be provable if and only if its interpretation in $\mathcal{C}_{\theory}$ is satisfied by the model $M_{\theory}$. Formulas shall be considered in suitable contexts, which are (possibly empty) subsets of variables of cardinality less than $\kappa$ containing the free variables of the formula. We will say that two formulas in context $(\mathbf{x}, \phi), (\mathbf{y}, \psi)$ are $\alpha$-equivalent if the second has been obtained from the first after renaming the bound variables of $\phi$ and the variables in the context (some of them appearing as free variables in $\phi$). We take the objects of $\mathcal{C}_{\theory}$ to be the $\alpha$-equivalence classes of formulas $(\mathbf{x}, \phi)$. To describe the morphisms, consider two objects $[\mathbf{x}, \phi], [\mathbf{y}, \psi]$, and assume, without loss of generality, that their set of variables $\mathbf{x}, \mathbf{y}$ are disjoint. Consider now a formula $\theta$ that satisfies the following conditions:\\
\\
a) Its free variables are amongst $\mathbf{x}\mathbf{y}$.\\
b) The following sequents are provable in $\theory$:\\

$$ \theta(\mathbf{x}, \mathbf{y}) \vdash_{\mathbf{x}\mathbf{y}} \phi(\mathbf{x}) \wedge \psi(\mathbf{y})$$
$$ \phi(\mathbf{x}) \vdash_{\mathbf{x}} \exists \mathbf{y} (\theta(\mathbf{x},\mathbf{y}))$$
$$ \theta(\mathbf{x}, \mathbf{y}) \wedge \theta(\mathbf{x}, \mathbf{z}/\mathbf{y}) \vdash_{\mathbf{x}\mathbf{y}\mathbf{z}} (\mathbf{y}=\mathbf{z})$$
\\
Define now the morphisms between $[\mathbf{x}, \phi]$ and $[\mathbf{y}, \psi]$ to be the provable-equivalence class of all those formulas of $\theory$ that satisfy conditions a) and b) above. 

The idea behind this definition is to allow only those morphisms that are exactly needed for our purposes. More precisely, the first formula in condition b) restricts the interpretation $[\![\theta(\mathbf{x}, \mathbf{y})]\!]$ in any model to be a subobject of $[\![\phi(\mathbf{x}) \wedge \psi(\mathbf{y})]\!]$, while the last two formulas imply, if the category has finite limits, that it will be the graph of a morphism from $[\![\phi(\mathbf{x})]\!]$ to $[\![\psi(\mathbf{y})]\!]$. Because of the particular construction of the category $\mathcal{C}_{\theory}$, this says exactly that the class $[\mathbf{x} \mathbf{y}, \theta(\mathbf{x}, \mathbf{y})]$ is a morphism from $[\mathbf{x}, \phi(\mathbf{x})]$ to $[\mathbf{y}, \psi(\mathbf{y})]$.

The composite of two morphisms:

\begin{displaymath}
\xymatrix{
[\mathbf{x}, \phi] \ar@{->}[r]^{[\mathbf{x}\mathbf{y},\theta]} & [\mathbf{y}, \psi] \ar@{->}[r]^{[\mathbf{y}\mathbf{z},\delta]} & [\mathbf{z}, \eta]\\
}
\end{displaymath}
\\
is defined to be the class $[\mathbf{x}\mathbf{z}, \exists \mathbf{y} (\theta \wedge \delta)]$. It can be verified that this definition does not depend on the choice of representatives $\theta, \delta$ and that this morphism so defined satisfies conditions a) and b) above. It can also be verified that composition of morphisms is associative. Finally, the identity morphism on an object $[\mathbf{x}, \phi]$ can be defined to be arrow:

\begin{displaymath}
\xymatrix{
[\mathbf{x}, \phi] \ar@{->}[rrr]^{[\mathbf{x} \mathbf{y}, \phi(\mathbf{x}) \wedge (\mathbf{x}=\mathbf{y})]} & & & [\mathbf{y}, \phi(\mathbf{y}/\mathbf{x})]\\
}
\end{displaymath}
\\
Again, it is easily checked that this morphism satisfies condition a) and b) and that it is the unity for composition. Also, note that these definitions do not depend on the choices of representatives in each class. This makes $\mathcal{C}_{\theory}$ a small category.

Our goal is to relate syntactical provability in $\theory$ with semantic validity in the categorical model $M_{\theory}$ to be defined. One aspect of this relation is given by the following lemma, which highlights the syntactical properties of $\mathcal{C}_{\theory}$:

\begin{lemma}\label{lemmap1}
1) A morphism $[\mathbf{x}\mathbf{y}, \theta]: [\mathbf{x}, \phi] \rightarrow [\mathbf{y}, \psi]$ is an isomorphism if and only if $[\mathbf{y}\mathbf{x}, \theta]: [\mathbf{y}, \psi] \rightarrow [\mathbf{x}, \phi]$ is a valid morphism in $\mathcal{C}_{\theory}$ (i.e., it satisfies conditions a) and b) of the definition of morphism).\\
2) A morphism $[\mathbf{x}\mathbf{y}, \theta]: [\mathbf{x}, \phi] \rightarrow [\mathbf{y}, \psi]$ is a monomorphism if and only if the sequent $ \theta(\mathbf{x}, \mathbf{y}) \wedge \theta(\mathbf{z}, \mathbf{y}) \vdash_{\mathbf{x}\mathbf{y}\mathbf{z}} \mathbf{x}=\mathbf{z})$ is provable in $\theory$.\\
3) Every subobject of $[\mathbf{y}, \phi]$ is isomorphic to one of the form:

\begin{displaymath}
\xymatrix{
[\mathbf{x}, \psi] \ar@{ >->}[rr]^{[\psi \wedge (\mathbf{x}=\mathbf{y})]} & & [\mathbf{y}, \phi]\\
}
\end{displaymath}
\\
where $\psi$ is such that the sequent $\psi(\mathbf{y}) \vdash_{\mathbf{y}} \phi(\mathbf{y})$ is provable in $\theory$. Moreover, any two subobjects $[\mathbf{y}, \psi], [\mathbf{y}, \eta]$ in $\mathcal{S}ub ([\mathbf{y}, \phi])$ satisfy $[\mathbf{y}, \psi] \leq [\mathbf{y}, \eta]$ if and only if the sequent $\psi(\mathbf{y}) \vdash_{\mathbf{y}} \eta(\mathbf{y})$ is provable in $\theory$.\end{lemma}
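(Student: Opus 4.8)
The plan is to verify all three statements by direct manipulation of the defining formulas inside $\mathcal{C}_{\theory}$, using only the structural, conjunction, disjunction, existential and equality rules together with the Frobenius and small distributivity axioms available in the $\kappa$-coherent fragment; no genuinely infinitary input is needed beyond the closure of these operations under the relevant index sets. Since part 3) invokes 1) and 2), I would prove them in that order. Throughout I write $f=[\mathbf{x}\mathbf{y},\theta]$ and freely use the three defining sequents of a morphism: containment $\theta\vdash_{\mathbf{x}\mathbf{y}}\phi(\mathbf{x})\wedge\psi(\mathbf{y})$, totality $\phi(\mathbf{x})\vdash_{\mathbf{x}}\exists\mathbf{y}\,\theta$, and single-valuedness $\theta(\mathbf{x},\mathbf{y})\wedge\theta(\mathbf{x},\mathbf{y}')\vdash\mathbf{y}=\mathbf{y}'$.

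For 1), the ``if'' direction is a computation: assuming $[\mathbf{y}\mathbf{x},\theta]$ is also a valid morphism, I form the composite $[\mathbf{y}\mathbf{x},\theta]\circ[\mathbf{x}\mathbf{y},\theta]=[\mathbf{x}\mathbf{x}',\exists\mathbf{y}(\theta(\mathbf{x},\mathbf{y})\wedge\theta(\mathbf{x}',\mathbf{y}))]$ and show it equals the identity $[\mathbf{x}\mathbf{x}',\phi(\mathbf{x})\wedge(\mathbf{x}=\mathbf{x}')]$; one inclusion uses single-valuedness of the reversed morphism (which yields $\mathbf{x}=\mathbf{x}'$) and containment, the other uses totality. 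The symmetric computation handles the other composite. For the ``only if'' direction, let $g=[\mathbf{y}\mathbf{x},\chi]$ be the inverse; the key step is to prove $\theta\dashv\vdash\chi$ as formulas on $\mathbf{x}\mathbf{y}$. Reading off $g\circ f=\mathrm{id}$ gives $\exists\mathbf{y}(\theta(\mathbf{x},\mathbf{y})\wedge\chi(\mathbf{y},\mathbf{x}'))\vdash\mathbf{x}=\mathbf{x}'$, whence $\theta(\mathbf{x},\mathbf{y})\wedge\chi(\mathbf{y},\mathbf{x}')\vdash\chi(\mathbf{y},\mathbf{x})$ by substitution; combining this with totality of $g$ to produce a witnessing $\mathbf{x}'$ yields $\theta\vdash\chi$, and $f\circ g=\mathrm{id}$ gives the reverse. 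Since $\chi$ defines a valid morphism, so does the reversed $\theta$.

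For 2), in the forward direction I would exhibit a universal test pair. Set $\rho(\mathbf{x},\mathbf{z})=\exists\mathbf{y}(\theta(\mathbf{x},\mathbf{y})\wedge\theta(\mathbf{z},\mathbf{y}))$ and let $W=[\mathbf{x}\mathbf{z},\rho]$, with $u,v\colon W\to[\mathbf{x},\phi]$ the two ``projections'' given by $\rho\wedge(\mathbf{x}''=\mathbf{x})$ and $\rho\wedge(\mathbf{x}''=\mathbf{z})$. Using single-valuedness of $f$ one checks $fu=fv$ (both equal $[\mathbf{x}\mathbf{z}\mathbf{y},\rho\wedge\theta(\mathbf{x},\mathbf{y})]$), so monicity forces $u=v$, which unwinds to $\rho\vdash\mathbf{x}=\mathbf{z}$, i.e. the asserted sequent. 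Conversely, assuming the injectivity sequent, I take arbitrary $u=[\mathbf{w}\mathbf{x},\sigma]$, $v=[\mathbf{w}\mathbf{x},\tau]$ with $fu=fv$ and prove $\sigma\dashv\vdash\tau$: from $\sigma(\mathbf{w},\mathbf{x})$ I use totality and Frobenius to produce a witness $\mathbf{y}$ with $\theta(\mathbf{x},\mathbf{y})$, transport it across the equality $fu=fv$ to obtain some $\mathbf{x}''$ with $\tau(\mathbf{w},\mathbf{x}'')\wedge\theta(\mathbf{x}'',\mathbf{y})$, and then the injectivity sequent forces $\mathbf{x}=\mathbf{x}''$, giving $\tau(\mathbf{w},\mathbf{x})$; symmetry finishes the argument.

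For 3), given a monomorphism $m=[\mathbf{w}\mathbf{y},\mu]\colon[\mathbf{w},\omega]\rightarrowtail[\mathbf{y},\phi]$ I would take its ``image'' $\psi(\mathbf{y}):=\exists\mathbf{w}\,\mu(\mathbf{w},\mathbf{y})$, which satisfies $\psi\vdash_{\mathbf{y}}\phi$, and let $n$ be the standard inclusion $[\mathbf{x},\psi]\rightarrowtail[\mathbf{y},\phi]$ given by $\psi(\mathbf{x})\wedge(\mathbf{x}=\mathbf{y})$. The comparison $i=[\mathbf{w}\mathbf{x},\mu(\mathbf{w},\mathbf{x})]\colon[\mathbf{w},\omega]\to[\mathbf{x},\psi]$ is a valid morphism, its three conditions being exactly containment, totality and single-valuedness of $m$; by part 1) it is an isomorphism, since its reverse is valid, using the definition of $\psi$ for totality and the mono characterisation of part 2) for single-valuedness, and a one-line composite computation shows $n\circ i=m$, so $m$ and $n$ represent the same subobject. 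Finally, for the ordering, if $\psi\vdash_{\mathbf{y}}\eta$ the morphism $\psi(\mathbf{x})\wedge(\mathbf{x}=\mathbf{x}')$ realizes $[\mathbf{y},\psi]\leq[\mathbf{y},\eta]$, and conversely any factorization through $[\mathbf{x}',\eta]$, composed with the inclusion of $\eta$, yields $\psi(\mathbf{y})\vdash_{\mathbf{y}}\eta(\mathbf{y})$ after setting $\mathbf{x}=\mathbf{y}$. The only real obstacle is the careful existential-variable bookkeeping in the converse of 2), where Frobenius and single-valuedness must be applied in exactly the right order; everything else is routine translation between provable sequents and the categorical constructions.
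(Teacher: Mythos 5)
Your proof is correct and follows essentially the same route as the paper's: part 1) by showing the reversed formula is itself the inverse and that any actual inverse is provably equivalent to $\theta$, part 2) via the kernel-pair formula $\exists\mathbf{y}(\theta(\mathbf{x},\mathbf{y})\wedge\theta(\mathbf{z},\mathbf{y}))$, and part 3) by corestricting to the image $\exists\mathbf{x}\,\theta(\mathbf{x},\mathbf{y})$ and applying 1), then reading the ordering of subobjects off the canonical form. The only (harmless) divergence is in the converse of 2), where you verify monicity directly against arbitrary test pairs rather than reducing it to the diagonal into the kernel pair being an isomorphism; this spares you the paper's implicit appeal to the limit constructions that are only carried out later, and all the sequent computations you supply to replace the paper's ``easily checked'' steps are sound.
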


\begin{proof} To prove 1), suppose $[\mathbf{y}\mathbf{x}, \theta]$ is a valid morphism from $[\mathbf{y}, \psi]$ to $[\mathbf{x}, \phi]$. Then it can be easily checked that $[\mathbf{y}\mathbf{x}, \theta]$ itself is an inverse for $[\mathbf{x}\mathbf{y}, \theta]$. Conversely, if $[\mathbf{x}\mathbf{y}, \theta]: [\mathbf{x}, \phi] \rightarrow [\mathbf{y}, \psi]$ has an inverse $[\mathbf{y}\mathbf{x}, \delta]$ (which is a valid morphism), then it can be verified that $\theta$ and $\delta$ are necessarily provable equivalent in $\theory$, from which the result follows.\\

To prove 2), construct the kernel pair of $[\mathbf{x}\mathbf{y}, \theta]: [\mathbf{x}, \phi] \rightarrow [\mathbf{y}, \psi]$, which,  using the construction of products and equalizers, can be verified to be the class $[\mathbf{x}\mathbf{z}, \exists \mathbf{y} (\theta(\mathbf{x}, \mathbf{y}) \wedge \theta(\mathbf{z}, \mathbf{y}))]$. Then, as can be easily checked, the provability of the stated sequent is equivalent, by 1), to the fact that the diagonal morphism from $[\mathbf{x}, \phi]$ to this kernel pair is an isomorphism, which is in turn equivalent to the fact that $[\mathbf{x}\mathbf{y}, \theta]$ is a monomorphism.\\

Finally, suppose we have a monomorphism $[\mathbf{x}\mathbf{y}, \theta]: [\mathbf{x}, \psi] \rightarrowtail [\mathbf{y}, \phi]$. By 1), the morphism $[\mathbf{x}\mathbf{y}, \theta]: [\mathbf{x}, \psi] \rightarrow [\mathbf{y}, \exists \mathbf{x} \theta(\mathbf{x}, \mathbf{y})]$ is an isomorphism. Then, composing its inverse with the original monomorphism we have a subobject of the stated form, where $\psi(\mathbf{y})$ is the formula $\exists \mathbf{x} \theta(\mathbf{x}, \mathbf{y})$. Now, two subobjects $[\mathbf{y}, \psi], [\mathbf{y}, \eta]$ of $[\mathbf{y}, \phi]$ satisfy $[\mathbf{y}, \psi] \leq [\mathbf{y}, \eta]$ if and only if there exists a monomorphism $[\mathbf{y}, \psi] \rightarrowtail [\mathbf{y}, \eta]$, which by the previous argument must have the form $[\psi' \wedge (\mathbf{x}=\mathbf{y})]: [\mathbf{x}, \psi'] \rightarrowtail [\mathbf{y}, \eta]$ for some $\psi'$. But then, since $\psi$ and $\psi'$ must be  provable equivalent, this is a valid morphism if and only if the sequent $\psi(\mathbf{y}) \vdash_{\mathbf{y}} \eta(\mathbf{y})$ is provable in $\theory$. This completes the proof of 3).\end{proof}

To construct the desired model $M_{\theory}$ in the syntactic category of $\theory$, note that there is a natural $\Sigma$-structure assigning to the sort $A$ the formula $[x, \top]$ where $x$ is a variable of sort $A$, and to the relation symbols $R$ over variables $\mathbf{x}=x_1, ..., x_{\alpha}, ...$ of sorts $A_, ..., A_{\alpha}, ...$ respectively, the subobject $[\mathbf{x}, R(x_1, ..., x_{\alpha}, ...)] \rightarrowtail [\mathbf{x}, \top]$. We have now finally gotten to the important relationship between syntactic provability and semantic validity in $M_{\theory}$:

\begin{proposition}\label{thmp2} The sequent $\phi(\mathbf{x}) \vdash_{\mathbf{x}} \psi(\mathbf{x})$ is satisfied by the $\Sigma$-structure $M_{\theory}$ if and only if it is provable in $\theory$. Consequently, a formula $\eta(\mathbf{x})$ has full extension in $M_{\theory}$ if and only if it is provable in $\theory$.
\end{proposition}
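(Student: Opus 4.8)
The plan is to reduce the whole proposition to a single fundamental lemma: for every $\kappa$-coherent formula-in-context $(\mathbf{x}, \phi)$, the categorical interpretation $[\![\mathbf{x}, \phi]\!]$ computed in $M_{\theory}$ coincides, as a subobject of $[\mathbf{x}, \top]$, with the canonical subobject $[\mathbf{x}, \phi] \rightarrowtail [\mathbf{x}, \top]$ described in Lemma \ref{lemmap1}(3). Granting this, the proposition is immediate: saying that $M_{\theory}$ satisfies $\phi \vdash_{\mathbf{x}} \psi$ means exactly that $[\![\mathbf{x}, \phi]\!] \leq [\![\mathbf{x}, \psi]\!]$ in $\mathcal{S}ub([\mathbf{x}, \top])$, and by the fundamental lemma this is $[\mathbf{x}, \phi] \leq [\mathbf{x}, \psi]$, which by the order-characterization in Lemma \ref{lemmap1}(3) holds precisely when $\phi(\mathbf{x}) \vdash_{\mathbf{x}} \psi(\mathbf{x})$ is provable in $\theory$. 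The final clause follows by taking $\phi = \top$, since $\eta$ having full extension means $[\![\mathbf{x}, \eta]\!]$ is the top subobject $[\mathbf{x}, \top]$, equivalently $\top \vdash_{\mathbf{x}} \eta$ is provable.

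First I would dispose of terms: by induction on term complexity one checks that $[\![\mathbf{x}, t]\!]$ is the morphism $[\mathbf{x}, \top] \to [y, \top]$ represented by the graph $[\mathbf{x}\,y,\ y = t(\mathbf{x})]$, which satisfies conditions a) and b) by the existence and functionality provided by the equality axioms; the definition of $M_{\theory}$ on function symbols and the composition law of $\mathcal{C}_{\theory}$ give the inductive step. The fundamental lemma is then proved by transfinite induction on the level of $\phi$. The atomic case $R(t_0, \dots)$ follows from the definition of $M_{\theory}$ on relation symbols together with the term computation and the pullback clause, while the equality case $s = t$ amounts to checking that the equalizer of $[\![\mathbf{x}, s]\!]$ and $[\![\mathbf{x}, t]\!]$ is represented by $[\mathbf{x}, s = t]$, verified from the equality axioms and Lemma \ref{lemmap1}(3).

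For the connectives I would invoke the induction hypothesis and match each categorical operation with the corresponding syntactic rule through the order-characterization of Lemma \ref{lemmap1}(3). For $\bigwedge_{i<\gamma} \psi_i$ the interpretation is the meet $\bigwedge_i [\mathbf{x}, \psi_i]$, and the projection axiom together with conjunction introduction identify this greatest lower bound with $[\mathbf{x}, \bigwedge_i \psi_i]$; dually $\bigvee_{i<\gamma}\psi_i$ gives the join, identified with $[\mathbf{x}, \bigvee_i \psi_i]$ via the disjunction axiom and rule. For $\exists \mathbf{y}\,\psi$ the interpretation is $\exists_\pi$ applied to $[\![\mathbf{x}\mathbf{y}, \psi]\!] = [\mathbf{x}\mathbf{y}, \psi]$, and since $\pi^{-1}[\mathbf{x}, \chi] = [\mathbf{x}\mathbf{y}, \chi]$, the bidirectional existential rule shows that $\exists_\pi[\mathbf{x}\mathbf{y}, \psi]$ and $[\mathbf{x}, \exists\mathbf{y}\,\psi]$ have the same universal property, hence agree. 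All the infinite meets, joins and images exist and behave as required precisely because $\mathcal{C}_{\theory}$ is $\kappa$-coherent, with $\kappa$-complete subobject lattices and stable $\kappa$-unions; along the way one records that the axioms of $\theory$ are satisfied, so that $M_{\theory}$ is genuinely a model.

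The main obstacle is the existential case in the infinitary setting: one must be sure that the image along the projection forgetting a block $\mathbf{y}$ of fewer than $\kappa$ variables is correctly computed in $\mathcal{C}_{\theory}$ and is compatible with the joins occurring inside $\psi$, which is exactly where stability of $\kappa$-unions and of images under pullback (part of the $\kappa$-coherent axioms, already exploited in Lemma \ref{soundness}) is essential; the remaining clauses are routine. I note that the direct route via Lemma \ref{lemmap1}(3) settles both implications simultaneously, so the soundness already established in Lemma \ref{soundness} is not strictly needed here, though it furnishes the forward direction once $M_{\theory}$ is known to be a model.
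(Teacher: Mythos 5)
Your proposal is correct and follows essentially the same route as the paper: the paper's proof likewise reduces everything to the observation that a straightforward induction on the complexity of $\phi$ identifies $[\![\mathbf{x}, \phi]\!]$ with the canonical subobject $[\mathbf{x}, \phi] \rightarrowtail [\mathbf{x}, \top]$, and then concludes via the order-characterization in Lemma \ref{lemmap1}(3). You merely spell out the details of that induction (terms, atomic case, connectives, quantifiers) which the paper leaves implicit.
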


\begin{proof} By definition, the stated sequent is satisfied by $M_{\theory}$ if and only if the corresponding subobjects in the interpretation satisfy $[\![\mathbf{x}, \phi]\!] \leq [\![\mathbf{x}, \psi]\!]$. By the construction of $M_{\theory}$, a straightforward induction on the complexity of $\phi$ proves that the interpretation $[\![\mathbf{x}, \phi]\!]$ is the subobject $[\mathbf{x}, \phi] \rightarrowtail [\mathbf{x}, \top]$. Therefore, the assertion $[\![\mathbf{x}, \phi]\!] \leq [\![\mathbf{x}, \psi]\!]$ is equivalent to the fact that the two subobjects $[\mathbf{x}, \phi], [\mathbf{x}, \psi]$ of $[\mathbf{x}, \top]$ satisfy $[\mathbf{x}, \phi] \leq [\mathbf{x}, \psi]$, which, by Lemma \ref{lemmap1} 3), is in turn equivalent to the fact that $\phi(\mathbf{x}) \vdash_{\mathbf{x}} \psi(\mathbf{x})$ is provable in $\theory$.\end{proof}

Proposition \ref{thmp2} says in a way that the model $M_{\theory}$ reflects all syntactical relations in the theory $\theory$; therefore, the analysis of categorical properties of $M_{\theory}$ will reveal facts about provability in $\theory$. 

We now have:

\begin{proposition}\label{catcomp}
 If $\theory$ is a $\kappa$-coherent (resp. $\kappa$-Heyting) theory, then $\mathcal{C}_{\theory}$ is a $\kappa$-coherent (resp. $\kappa$-Heyting) category.
\end{proposition}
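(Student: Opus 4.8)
The plan is to follow the finitary template of \cite{johnstone}, D1.4, verifying each clause of the definition of a $\kappa$-coherent category in turn, and then to treat separately the one genuinely infinitary condition, the transfinite transitivity property, where the transfinite transitivity rule of Definition \ref{sfol} is invoked. Throughout, the bridge between categorical statements about $\mathcal{C}_{\theory}$ and provability in $\theory$ is Lemma \ref{lemmap1} together with Proposition \ref{thmp2}: every subobject of $[\mathbf{y},\phi]$ is represented by a formula $\psi$ with $\psi \vdash_{\mathbf{y}} \phi$, and the subobject ordering coincides with provable entailment.

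First I would establish the finite-limit and regular/coherent structure exactly as in the finitary case: the terminal object is the class $[\mathbf{x},\top]$ with empty context $\mathbf{x}$, binary products are given by juxtaposition of (disjoint) contexts together with conjunction, $[\mathbf{x},\phi]\times[\mathbf{y},\psi]=[\mathbf{x}\mathbf{y},\phi\wedge\psi]$, and equalizers and pullbacks are built using equality formulas; the regular epi/mono factorization of $[\mathbf{x}\mathbf{y},\theta]\colon[\mathbf{x},\phi]\to[\mathbf{y},\psi]$ has image $[\mathbf{y},\exists\mathbf{x}\,\theta]$, with covers corresponding to provable surjectivity. The passage to $\kappa$ is then routine: $\kappa$-small products are formed with $\kappa$-conjunctions over disjoint contexts, so that all $\kappa$-small limits exist (being constructible from $\kappa$-products and equalizers); and by Lemma \ref{lemmap1}(3) the lattice $\mathcal{S}ub([\mathbf{y},\phi])$ is the poset of formulas entailing $\phi$ modulo provable equivalence, whose $\kappa$-meets and $\kappa$-joins are computed by $\bigwedge_{i<\gamma}$ and $\bigvee_{i<\gamma}$; these exist because the $\kappa$-coherent fragment is closed under these connectives. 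Stability of $<\kappa$-unions under pullback reduces, after computing the pullback functor as an existential-substitution operation, to the small distributivity and Frobenius axioms, which are part of the $\kappa$-coherent fragment by definition.

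The crux is the transfinite transitivity property, and this is where I expect the real work to lie. Given a proper diagram $F\colon(\gamma^{<\gamma})^{op}\to\mathcal{C}_{\theory}$, I would first represent each object $C_f$ by a formula-in-context $[\mathbf{y}_f,\phi_f]$ and, imitating the reduction in the proof of Lemma \ref{soundness}, replace $\phi_g$ by $\bigwedge_{\alpha\le\beta}\phi_{g|_\alpha}$ so that $\phi_g\vdash\phi_f$ whenever $g$ extends $f$; this is legitimate because the small distributivity and Frobenius axioms hold. One then checks that the two defining conditions of a proper diagram translate, via Lemma \ref{lemmap1}(3), into exactly the two families of premises of the transfinite transitivity rule: the jointly covering condition on each family of immediate successors becomes $\phi_f\vdash_{\mathbf{y}_f}\bigvee_{g}\exists\mathbf{x}_g\,\phi_g$, and the requirement that $h_{f,\emptyset}$ be the transfinite composite (a limit projection) at limit levels becomes $\phi_f\dashv\vdash_{\mathbf{y}_f}\bigwedge_{\alpha<\beta}\phi_{f|_\alpha}$. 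Applying the rule yields $\phi_\emptyset\vdash\bigvee_{f\in\gamma^\gamma}\exists\mathbf{x}_{f|_{\beta+1}}\bigwedge_{\beta<\gamma}\phi_{f|_\beta}$, which by Proposition \ref{thmp2} says precisely that the transfinite composites $\{h_{g,\emptyset}\}$ form a jointly covering family, i.e.\ the diagram is completely proper.

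The main obstacle is the bookkeeping needed to make this translation exact: one must verify that the context and free-variable constraints demanded by the transfinite transitivity rule — namely $FV(\phi_f)=FV(\phi_{f|_\beta})\cup\mathbf{x}_f$, disjointness $\mathbf{x}_{f|_{\beta+1}}\cap FV(\phi_{f|_\beta})=\emptyset$, and $FV(\phi_f)=\bigcup_{\alpha<\beta}FV(\phi_{f|_\alpha})$ at limit $\beta$ — can be met by a suitable and coherent choice of representatives and fresh variables indexing the arrows of the diagram, and that the intersection of the pullbacks of the $m_\alpha$ computing the limit at limit stages matches $\bigwedge_{\alpha<\beta}\phi_{f|_\alpha}$ as in the proof of Lemma \ref{soundness}. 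Finally, for the $\kappa$-Heyting case one adds right adjoints to pullback functors: for a projection these are given by universal quantification $\forall\mathbf{y}$ over $<\kappa$-many variables (available in the $\kappa$-Heyting fragment), the general case is reduced to a projection via the graph, and implication arises as the special case of the right adjoint to $\phi\wedge-$; preservation of the remaining $\kappa$-coherent structure is inherited from the coherent part already treated.
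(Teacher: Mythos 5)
Your proposal is correct and follows essentially the same route as the paper: limits via $\kappa$-products (conjunctions over disjoint contexts) and equalizers, images via $\exists$, unions via $\bigvee$ with stability from Frobenius and small distributivity, the transfinite transitivity property obtained by translating the jointly-covering and limit conditions of a proper diagram into the premises of the transfinite transitivity rule via Lemma \ref{lemmap1}(3), and the Heyting structure via $\forall$. The only cosmetic difference is that the paper defines $\forall_{[\mathbf{x}\mathbf{y},\theta]}$ along an arbitrary morphism directly by the formula $[\mathbf{y},\psi\wedge\forall\mathbf{x}(\theta\to\eta)]$ rather than reducing to a projection through the graph, and your explicit replacement of $\phi_g$ by $\bigwedge_{\alpha\le\beta}\phi_{g|_\alpha}$ is a reasonable way to supply bookkeeping the paper leaves implicit.
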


\begin{proof} To prove $\mathcal{C}_{\theory}$ has $\kappa$-limits it suffices to prove it has $\kappa$-products and equalizers. As the product of $\gamma$-many objects $[\mathbf{x_i}, \phi_i]_{i<\gamma}$ (where the $\mathbf{x_i}$ are assumed to be disjoint) we can take the class $[\bigcup_{i<\gamma}\mathbf{x_i}, \bigwedge_{i<\gamma} \phi_i]$ together with the projections indicated below:

\begin{displaymath}
\xymatrix{
 [\mathbf{z}, \chi] \ar@{-->}[dd]_{[\mathbf{z} \cup (\bigcup_{i<\gamma}\mathbf{x_i}), \bigwedge_{j<\gamma} \theta_j]} \ar@{->}[ddrrrrrr]^{[\mathbf{z}\mathbf{x_j'}, \theta_j]} & & & & & &\\
& & & & & & \\
 [\bigcup_{i<\gamma}\mathbf{x_i}, \bigwedge_{i<\gamma} \phi_i] \ar@{->}[rrrrrr]_{[\bigcup_{i<\gamma}\mathbf{x_i}\mathbf{x_j'}, \bigwedge_{i<\gamma} \phi_i \wedge (\mathbf{x_j'}=\mathbf{x_j})]} & & & & & & [\mathbf{x_j'}, \phi_j]\\
}
\end{displaymath}
\\
Given morphisms $[\mathbf{z}\mathbf{x_j'}, \theta_j]$, the induced morphism into the product is given by the class $[\mathbf{z}\cup (\bigcup_{i<\gamma}\mathbf{x_i}), \bigwedge_{j<\gamma} \theta_j]$, since it can be easily verified that this is the only morphism that makes the diagram commute.

For the equalizer of a parallel pair of morphisms $[\mathbf{x}\mathbf{y}, \theta], [\mathbf{x}\mathbf{y}, \delta]$, we take:

\begin{displaymath}
\xymatrix{
[\mathbf{x'}, \exists \mathbf{y} (\theta(\mathbf{x'}, \mathbf{y}) \wedge \delta(\mathbf{x'}, \mathbf{y}))] \ar@{->}[rrrrr]^{[\mathbf{x'}\mathbf{x}, \exists \mathbf{y} (\theta \wedge \delta \wedge (\mathbf{x'}=\mathbf{x}))]} & & & & & [\mathbf{x}, \phi] \ar@/^{1pc}/[rr]^{[\mathbf{x}\mathbf{y}, \theta]} \ar@/_{1pc}/[rr]_{[\mathbf{x}\mathbf{y}, \delta]} & & [\mathbf{y}, \psi]\\
 & & & & & & & & \\
 & & & & & & & & \\
[\mathbf{z}, \chi] \ar@{->}[uuurrrrr]_{[\mathbf{z}\mathbf{x}, \eta]} \ar@{-->}[uuu]^{[\mathbf{z}\mathbf{x'}, \eta]} & & & & & & & & \\
}
\end{displaymath}
\\
and the universal property is satisfied with the indicated induced morphism. This proves that $\mathcal{C}_T$ has $\kappa$-limits. Note as well that there is an initial object given by $[\{\}, \bot]$, and a terminal object given by $[\{\}, \top]$.

To prove that the category has image factorizations, given a morphism $[\mathbf{x}\mathbf{y}, \theta]: [\mathbf{x}, \phi] \rightarrow [\mathbf{y}, \psi]$ we take its image as the subobject $[\mathbf{y}, \exists \mathbf{x} (\theta)] \rightarrowtail [\mathbf{y}, \psi]$. In particular, $[\mathbf{x}\mathbf{y}, \theta]$ is a cover if and only if the sequent $\psi(\mathbf{y}) \vdash_{\mathbf{y}} \exists \mathbf{x} \theta(\mathbf{x}, \mathbf{y})$ is provable in $\theory$. Then, from the construction of limits above, it can be verified straightforwardly using Frobenius axiom that covers are stable under pullbacks. To prove that the category has unions, take subobjects $[\mathbf{x}, \phi_i]_{i<\gamma}$ of $[\mathbf{x}, \phi]$ and define their union to be $[\mathbf{x}, \bigvee_{i<\gamma} \phi_i]$; the small distributivity law then ensures that unions are stable under pullback.

The validity of the transfinite transitivity property is proven using the transfinite transitivity rule, the construction of limits and the fact that $([\mathbf{x}_i\mathbf{y}, \theta_i]: [\mathbf{x}_i, \phi_i] \rightarrow [\mathbf{y}, \psi])_{i<\gamma}$ are jointly covering if and only if the sequent $\psi(\mathbf{y}) \vdash_{\mathbf{y}} \bigvee_{i<\gamma} \exists \mathbf{x}_i \theta_i(\mathbf{x}_i, \mathbf{y})$ is provable in $\theory$. It is necessary to compute the limit of a $\kappa$-chain $([\mathbf{x_{\alpha+1}}\mathbf{x_{\alpha}}, \theta_{\alpha}]: [\mathbf{x_{\alpha+1}}, \phi_{\alpha+1}] \to [\mathbf{x_{\alpha}}, \phi_{\alpha}])_{\alpha<\gamma}$. This can be computed using the construction of limits with products and equalizers; in this case, one can verify that the limit of such a chain reduces to compute the equalizer of the following diagram:

 \begin{displaymath}
\xymatrix{
[\mathbf{x}, \bigwedge_{\alpha<\gamma} \phi_{\alpha}] \ar@/^{1pc}/[rrrrrrrr]^{[\mathbf{x}\mathbf{x}', \bigwedge_{\alpha<\gamma} \phi_{\alpha} \wedge \bigwedge_{\alpha<\gamma} x_{\alpha}=x_{\alpha}']} \ar@/_{1pc}/[rrrrrrrr]_{[\mathbf{x}\mathbf{x}', \bigwedge_{\alpha<\gamma} \phi_{\alpha} \wedge \bigwedge_{\alpha<\gamma} \exists y_{\alpha+1}  \theta(y_{\alpha+1}, x_{\alpha}') \wedge y_{\alpha+1}=x_{\alpha+1}]} & & & & & & & & [\mathbf{x'}, \bigwedge_{\alpha<\gamma} \phi_{\alpha}]\\
}
\end{displaymath}

\noindent where $\mathbf{x}=x_0, ...x_{\alpha}, ...$. From this construction and the construction of equalizers in the syntactic category we can derive the sequent that expresses that the transfinite composition of jointly covering families are jointly covering, and verify that the sequent is provable within the theory making use of the transfinite transitivity rule.

Finally, if the theory is $\kappa$-Heyting, to construct universal quantification along a morphism $[\mathbf{x}\mathbf{y}, \theta]: [\mathbf{x}, \phi] \rightarrow [\mathbf{y}, \psi]$, take a subobject $[\mathbf{x}, \eta]$ of its domain, in the canonical form given in Lemma \ref{lemmap1} 3). Then define $\forall_{[\mathbf{x}\mathbf{y}, \theta]}([\mathbf{x}, \eta])$ to be the subobject $[\mathbf{y}, \psi \wedge \forall \mathbf{x} (\theta \to \eta)]$ of $[\mathbf{y}, \psi]$. It follows from Lemma \ref{lemmap1} 3) that this works.

This concludes the proof.\end{proof}

Due to the need of distinguishing several strengths of completeness, we introduce now the following terminology:

\begin{defs}
 A theory in a fragment of $\kappa$-first-order logic is said to be complete with respect to models of a certain type if whenever a sequent is valid in those models, it is provable from the axioms of the theory.
\end{defs}

With this definition, we now get:

\begin{thm}\label{cc}
 If $\kappa$ is any inaccessible cardinal, $\kappa$-coherent (resp. $\kappa$-Heyting) theories are semantically complete with respect to models in $\kappa$-coherent (resp. $\kappa$-Heyting) categories. 
\end{thm}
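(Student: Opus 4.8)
The plan is to exhibit the syntactic category $\mathcal{C}_{\theory}$ together with its canonical model $M_{\theory}$ as a \emph{generic} (universal) model, so that validity in all $\kappa$-coherent (resp. $\kappa$-Heyting) models forces provability. Essentially all the substantive work has already been carried out in Propositions \ref{catcomp} and \ref{thmp2}; what remains is to assemble these two facts.

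First I would observe that $M_{\theory}$ is genuinely a model of $\theory$ inside $\mathcal{C}_{\theory}$. By Proposition \ref{catcomp}, $\mathcal{C}_{\theory}$ is a $\kappa$-coherent (resp. $\kappa$-Heyting) category, so it is a legitimate ambient category in which to interpret the theory. Each axiom of $\theory$ is a sequent that is (trivially) provable in $\theory$, hence by Proposition \ref{thmp2} it is satisfied by $M_{\theory}$. Thus $M_{\theory}$ is a model of $\theory$ living in a category of precisely the required type.

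Next, suppose a sequent $\phi(\mathbf{x}) \vdash_{\mathbf{x}} \psi(\mathbf{x})$ is valid in every model of $\theory$ in a $\kappa$-coherent (resp. $\kappa$-Heyting) category. Specializing this universal hypothesis to the particular model $M_{\theory}$ just produced, the sequent holds in $M_{\theory}$. By the converse direction of Proposition \ref{thmp2}, a sequent satisfied by $M_{\theory}$ is provable in $\theory$, so the sequent is provable. This is exactly the completeness assertion. The two cases, $\kappa$-coherent and $\kappa$-Heyting, are handled identically, invoking the respective halves of Propositions \ref{catcomp} and \ref{thmp2}; together with soundness (Lemma \ref{soundness}) one obtains the full equivalence of provability and validity, although only the completeness direction is claimed here.

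The step requiring the most care is not this final assembly, which is very short, but rather the input it relies on: namely that $\mathcal{C}_{\theory}$ satisfies the transfinite transitivity property, the delicate part of Proposition \ref{catcomp} where the transfinite transitivity rule is converted into the exactness condition on completely proper diagrams. Within the present proof the only genuine point to check is that \emph{``validity in all models in the relevant categories''} legitimately applies to $M_{\theory}$; this is precisely where one uses that $M_{\theory}$ is interpreted in $\mathcal{C}_{\theory}$, a category already shown to be of the right type, so that the universally quantified hypothesis may indeed be instantiated at it.
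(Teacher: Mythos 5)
Your proposal is correct and is precisely the argument the paper intends: the theorem is stated immediately after Proposition \ref{catcomp} with no separate proof because it follows by instantiating the validity hypothesis at the generic model $M_{\theory}$ in $\mathcal{C}_{\theory}$ and applying Proposition \ref{thmp2}. Your assembly of these two facts, including the observation that $M_{\theory}$ is itself a model of $\theory$ in a category of the required type, matches the paper's route exactly.
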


\subsection{Joyal's theorem and Morleyization}

The construction of the syntactic category is an aspect of the philosophy of theories as categories, which is supplemented by the concept of internal theory of a given category and the functorial semantics associated with it. For, say, a coherent category \synt{C}\ there is a canonical signature and coherent axioms associated to the category in such a way that coherent models of this theory correspond to coherent functors having the category as a domain. That is, functors which preserve the categorical properties are seen as models of the internal theory of the categories in the codomain categories. Moreover, model homomorphisms correspond in this view to natural transformations of functors. This allows us to think, for example, of the category $\cat{M}$ of set-valued coherent models of a theory as corresponding functors from the syntactic category of the theory to the category \Sets\ of sets. Consider now the further functor category $\Sets^{\cat{M}}$. To each coherent formula in context we can assign its extension in each of the models of $\cat{M}$, or equivalently, evaluate the models, seen as functors, on the corresponding object represented by the formula. This assignment is in fact functorial, and thus each coherent formula in context gives rise to a functor in $\Sets^{\cat{M}}$, which we call the evaluation functor at the corresponding formula. If we do this for every coherent formula in context, the assignment of evaluation functors at formulas is itself functorial, and gives rise to a functor $ev :\ \synt{C}{\theory}\to \Sets^{\cat{M}}$. 

In its original version, Joyal's theorem is a statement over ZFC which could be described as follows:

\begin{thm}\label{joyal}(Joyal)
Let \theory\ be a coherent theory and let $\cat{M}$ be the category of coherent models of $\theory$. Then the functor
\[ev :\ \synt{C}{\theory}\to \Sets^{\cat{M}}\]
is conservative and preserves any right adjoint to pullback functors that might exist in $\synt{C}{\theory}$.
\end{thm}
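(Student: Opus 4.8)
The plan is to treat the two assertions separately, after first recording that $ev$ is itself a coherent functor. Since finite limits, images and unions of subobjects in the functor category $\Sets^{\cat{M}}$ are computed pointwise, and since by definition every object $M$ of $\cat{M}$ is a coherent functor $\synt{C}{\theory}\to\Sets$ (hence preserves exactly this structure), the functor $ev$, given on objects by $ev(X)(M)=M(X)$, preserves finite limits, covers and finite unions. In particular $ev$ preserves the order on subobject lattices, binary meets and joins, and every pullback functor $f^{*}$, and it maps $\operatorname{Sub}([\mathbf{x},\phi])$ into $\operatorname{Sub}(ev[\mathbf{x},\phi])$. This reduces everything to two inequalities of subobjects.

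For conservativity I would reduce to completeness with respect to $\Sets$-valued models. In the coherent, hence regular, category $\synt{C}{\theory}$ a morphism $\alpha$ is an isomorphism precisely when it is simultaneously a monomorphism and a cover. By Lemma \ref{lemmap1}(2) monicity of $\alpha$ is equivalent to provability of a definite sequent, and by the cover criterion established in the proof of Proposition \ref{catcomp} the covering property is likewise equivalent to provability of a sequent; meanwhile $ev(\alpha)$ being an isomorphism in $\Sets^{\cat{M}}$ means exactly that $M(\alpha)$ is a bijection for every model $M$. One direction is soundness (Lemma \ref{soundness}, since $\Sets$ is $\kappa$-coherent): provable sequents hold in all models. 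Thus conservativity of $ev$ is equivalent to the converse, namely that a coherent sequent valid in all $\Sets$-models is provable; in the finitary case over ZFC this is the classical completeness of coherent logic for $\Sets$-models (Deligne's theorem that a coherent topos has enough points). Invoking this classical input closes the first half.

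The preservation of right adjoints is the main obstacle. Fix $f:A\to B$ for which $\forall_{f}$, right adjoint to $f^{*}:\operatorname{Sub}(A)\to\operatorname{Sub}(B)$, exists in $\synt{C}{\theory}$, and a subobject $S\rightarrowtail A$; I must show $ev(\forall_{f}S)=\forall_{ev f}(ev\,S)$ in $\Sets^{\cat{M}}$, where $\forall_{ev f}$ exists because $\Sets^{\cat{M}}$ is a topos. One inequality is formal: applying the coherent functor $ev$ to the counit $f^{*}\forall_{f}S\le S$ gives $(ev f)^{*}ev(\forall_{f}S)\le ev\,S$, whence $ev(\forall_{f}S)\le\forall_{ev f}(ev\,S)$ by the adjunction. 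The reverse inequality is the genuine content, and it does \emph{not} follow from each $M$ preserving $\forall$ (coherent functors do not). Since $\Sets^{\cat{M}}$ is a presheaf topos, its internal logic is ordinary Kripke forcing over $\cat{M}$, so the clause for $\forall_{ev f}$ recorded earlier says that $b\in(\forall_{ev f}ev\,S)(N)$ iff for every model homomorphism $g:N\to N'$ and every $a\in N'(A)$ with $N'(f)(a)=(ev B)(g)(b)$ one has $a\in N'(S)$; what must be shown is that this forces $b\in N(\forall_{f}S)$.

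I expect the crux to be the contrapositive: given $b\in N(B)$ with $b\notin N(\forall_{f}S)$, to produce a homomorphism $g:N\to N'$ and a point $a\in N'(A)$ lying over $(ev B)(g)(b)$ with $a\notin N'(S)$. The natural route is a diagram/type-realization argument: form the coherent theory consisting of $\theory$ together with the positive diagram of $N$ (so that its $\Sets$-models are exactly the models equipped with a homomorphism out of $N$) and a fresh constant $c$ of the sort of $A$ subject to $\top\vdash f(c)=\overline{b}$. Translating $\forall_{f}$ through Lemma \ref{lemmap1}(3), the hypothesis $b\notin N(\forall_{f}S)$ should amount to non-provability of $S(c)$ in this extended theory, so the same completeness input used for conservativity yields a model $N'$, a homomorphism $N\to N'$, and a witness $a=c^{N'}$ over $b$ with $a\notin N'(S)$, as required. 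Implication is subsumed as the case where the right adjoint is $\phi\wedge(-)$, as noted in the interpretation clauses, so it needs no separate treatment. The delicate points are the negation-freeness (coherence) of the diagram construction and the precise dictionary between the semantic condition $b\notin N(\forall_{f}S)$ and non-provability of $S(c)$ — exactly the place where the Morleyization discussed in this subsection is the convenient device for capturing the universal quantifier coherently and transferring the conclusion back.
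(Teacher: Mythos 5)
Your proposal is correct and follows essentially the same route as the proof the paper actually supplies (for the finitary statement it only cites Makkai--Reyes, but its proof of the infinitary version, Theorem \ref{bt}, is the template): pointwise coherence of $ev$, conservativity from completeness of coherent logic for $\Sets$-valued models, the easy inequality for $\forall$ by adjunction, and the hard inequality via the positive diagram of the model. The only difference is presentational: you run the diagram argument contrapositively (extracting a countermodel from non-provability of $S(c)$), whereas the paper runs it directly, discharging exactly the ``dictionary'' you leave as a claim by means of the deduction theorem (Lemma \ref{dt}) and the existential rule --- turning provability from $\theory$ plus the diagram into provability of a sequent $\Phi(\mathbf{y})\wedge\lambda(\mathbf{x},\mathbf{y})\vdash_{\mathbf{x}\mathbf{y}}\theta(\mathbf{x})$ with $N\vDash\Phi(b)$ --- followed by the adjunction giving $\Phi\le\forall_f S$.
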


For the proof of Joyal's theorem we refer to \cite{mr}, Ch. 6, pp. 189, since we will later study and prove an infinitary version. The significance of the theorem resides in that it encapsulates three different completeness theorems. The conservativity of $ev$ is a categorical way of saying that models in $\cat{M}$ are semantically complete for coherent logic. In the particular case in which the logic is classical, this is precisely G\"odel's completeness theorem for first order logic. But even when we consider intuitionistic logic, the preservation of the right adjoint entails that $ev$ preserves the first-order structure of $\synt{C}{\theory}$, and through categorical semantics in the presheaf category $\Sets^{\cat{M}}$ we can see that the conservative embedding provides a universal Kripke model of the theory, resulting thus in Kripke completeness theorem for first-order intuitionistic logic. We shall go some steps further and consider variations that provide new completeness theorems in the infinitary case. More especifically, the work in Proposition \ref{scohcomp} and Theorem \ref{bt} will allow us to prove the following infinitary version of Joyal's theorem:

\begin{thm}
Let $\kappa$ be a weakly (resp. strongly) compact cardinal; let \theory\ be a $\kappa$-coherent theory of cardinality at most $\kappa$ (resp. of arbitrary cardinality) and let $\cat{M}$ be the category of $\kappa$-coherent models of $\theory$ of cardinality at most $\kappa$ (resp. of arbitrary cardinality). Then the functor
\[ev :\ \synt{C}{\theory}\to \Sets^{\cat{M}}\]
is conservative and preserves any right adjoint to pullback functors that might exist in $\synt{C}{\theory}$.
\end{thm}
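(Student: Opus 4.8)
The plan is to prove the two assertions separately, following the finitary pattern of \cite{mr}, Ch.~6, but feeding in the infinitary completeness results in place of classical completeness. First I would record that $ev$ is itself a $\kappa$-coherent functor: $\kappa$-limits, $\kappa$-unions and image factorizations in $\Sets^{\cat{M}}$ are all computed pointwise, and each $M\in\cat{M}$ is by definition a $\kappa$-coherent functor, so $ev$ preserves the entire $\kappa$-coherent structure of $\synt{C}{\theory}$. Consequently, by the correspondence of Proposition~\ref{thmp2} and Lemma~\ref{lemmap1}, together with the soundness of Lemma~\ref{soundness}, the clause ``$ev$ is conservative'' is literally the statement that the family $\cat{M}$ is jointly conservative on $\synt{C}{\theory}$, i.e. that $\kappa$-coherent logic is complete with respect to the models in $\cat{M}$. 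This is exactly the content of Proposition~\ref{scohcomp}, in whose proof the weak (resp. strong) compactness of $\kappa$ is used to manufacture enough $\kappa$-coherent models of cardinality at most $\kappa$ (resp. of arbitrary cardinality). So conservativity I would simply inherit from \ref{scohcomp}.

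The substantial part is the preservation of an existing right adjoint $\forall_f$ to a pullback functor $f^{-1}$. Fix such an $f\colon C\to D$ in $\synt{C}{\theory}$ and a subobject $A\rightarrowtail C$. One inclusion is formal: since $ev$ preserves pullbacks, applying $ev$ to the counit $f^{-1}\forall_f A \leq A$ and transposing along the adjunction $(ev f)^{-1}\dashv \forall_{ev f}$ available in the topos $\Sets^{\cat{M}}$ yields $ev(\forall_f A)\leq \forall_{ev f}(ev A)$. Everything therefore rests on the reverse inclusion $\forall_{ev f}(ev A)\leq ev(\forall_f A)$. I would unwind it through the Kripke--Joyal forcing clause for $\forall$ recalled above: membership of a generalized element in $\forall_{ev f}(ev A)$ over a model $M$ says that along every homomorphism $\mu\colon M\to N$ in $\cat{M}$, and for every witness in $N$ lying over the image of the element, that witness already belongs to $ev A$ evaluated at $N$. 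The goal is to deduce that the element lies in $ev(\forall_f A)(M)=M(\forall_f A)$.

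Arguing by contraposition, I would assume the element fails to lie in $M(\forall_f A)$ and seek a homomorphism $\mu\colon M\to N$ in $\cat{M}$ realizing an explicit witness $c$ in $N$, lying over $\mu$ of the element, with $c\notin (ev A)(N)$; this single witness breaks the forcing condition and establishes the reverse inclusion. The existence of such a \emph{witnessing extension} inside $\cat{M}$, and within the prescribed cardinality bound, is precisely what the infinitary Beth completeness theorem, Theorem~\ref{bt}, delivers: the Beth-type model built over the category of models forces a universally quantified implication only when every potential counterexample is eventually realized along some homomorphism, so a genuine failure of the syntactic $\forall$ must be reflected by an actual model extension. Granting \ref{bt}, the reverse inclusion follows and $ev$ preserves $\forall_f$; the Heyting implication is the special case where $f^{-1}$ is replaced by the functor $\phi\wedge-$, and is handled identically.

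The hard part will be exactly this last step: producing the witnessing extension $\mu\colon M\to N$ while keeping $N$ within $\cat{M}$ (cardinality at most $\kappa$, resp. arbitrary). Unlike the finitary Makkai--Reyes argument, where ordinary completeness and a L\"owenheim--Skolem bound suffice, here the extension $N$ must discharge possibly $\kappa$-many existential commitments simultaneously and remain a $\kappa$-coherent model of the required size; this is precisely where the combinatorics of the weakly (resp. strongly) compact $\kappa$, entering through \ref{scohcomp} and \ref{bt}, are indispensable. Once these two completeness inputs are in place, the remaining verifications, namely the pointwise preservation of the $\kappa$-coherent structure by $ev$, the forcing computations, and the identification of satisfaction in $M$ with forcing over $M$, are routine, completing the proof of both conservativity and preservation of right adjoints.
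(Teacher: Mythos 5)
Your treatment of conservativity is correct and is exactly what the paper does: $ev$ is pointwise $\kappa$-coherent, and its conservativity is the completeness statement of Proposition \ref{scohcomp} (plus a L\"owenheim--Skolem reduction to keep the models within the cardinality bound in the weakly compact case). The easy inclusion $ev(\forall_f A)\leq \forall_{ev f}(ev A)$ is also handled as in the paper.

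The gap is in the reverse inclusion, where you defer the entire content to Theorem \ref{bt}. This is circular: in the paper's architecture, Theorem \ref{bt} (Kripke completeness) is not an input to the infinitary Joyal theorem but its principal consequence --- the proof of \ref{bt} \emph{is} the proof that $ev$ preserves $\forall$, followed by reading off a universal Kripke model from the conservative Heyting embedding into $\Sets^{\cat{M}}$. You cannot use it to produce the ``witnessing extension'' $\mu\colon M\to N$ without assuming what is to be proved. Moreover, even granting \ref{bt} as a black box, its statement does not directly deliver a homomorphism out of a \emph{prescribed} model $M$ realizing a counterexample to $\forall_f A$; extracting that would itself require the argument you are omitting. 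The step that actually closes the proof is different in kind and is not a contraposition at all: starting from $\mathbf{c}\in\forall_{ev f}(ev A)(M)$, one observes that every model $N'$ of $\theory^m\cup \mathrm{Diag}_+(M)$ yields a homomorphism $M\to N$, so the hypothesis says the sequent $\psi(\mathbf{c})\wedge\lambda(\mathbf{x},\mathbf{c}/\mathbf{y})\vdash_{\mathbf{x}}\theta(\mathbf{x})$ holds in all such models; Proposition \ref{scohcomp} converts this into provability from $\theory^m$ together with fewer than $\kappa$ sentences of the positive diagram; the deduction theorem (Lemma \ref{dt}) and the existential rule then package those diagram sentences into a single coherent formula $\Phi(\mathbf{y})$ with $M\vDash\Phi(\mathbf{c})$ and $[\mathbf{y},\Phi]\leq\forall_f(A)$ by adjunction, whence $\mathbf{c}\in M(\forall_f A)$. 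Without this diagram-plus-deduction-theorem computation (which is where the regularity of $\kappa$ and the cardinality hypotheses actually do their work), your proposal does not constitute a proof.
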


We will see that the infinitary version of Joyal's theorem subsumes the completeness of $\kappa$-coherent logic with respect to \Sets-valued models (the conservativity of $ev$), that of $\kappa$-first-order classical logic (the particular case when $\synt{C}{\theory}$ is Boolean), and the completeness of $\kappa$-first order intuitionistic logic with respect to infinitary Kripke semantics (the universal Kripke model given by the embedding into the presheaf category).

We are also going to need one more fragment to work with:

\begin{defs}
 The $\kappa$-regular fragment is the fragment of $\kappa$-coherent logic that drops the disjunction $\vee$ from the language, and hence drops the rules involving it, but keeps the rule of dependent choice.
\end{defs}

The internal $\kappa$-coherent theory of a, say, $\kappa$-Heyting category can alternatively be described by a different axiomatization, which will be simpler for our purposes. Following \cite{johnstone}, where the process of rewriting of a classical first-order theory as an equivalent coherent theory is referred to as ``Morleyization'', we will also call ``Morleyizing'' a theory, in general, rewriting it into a theory in a less expressive fragment. From a categorical viewpoint (as opposed to the standard syntactic point of view), the syntactic category \synt{C}{\theory} of, for example, an intuitionistic $\kappa$-first-order theory \theory\, which is a $\kappa$-Heyting category, is also a $\kappa$-coherent (resp. $\kappa$-regular) category, and thus \synt{C}{\theory} has an internal $\kappa$-coherent theory (resp. internal $\kappa$-regular theory), which we refer to as ``the theory of $\kappa$-coherent (resp. $\kappa$-regular) models of \theory'', (its ``Morleyization'' $\theory^m$). It is not difficult to see (as it will become evident from the definition) that the theory and its Morleyization have equivalent syntactic categories:
\[\synt{C}{\theory}\simeq \synt{C}{\theory^m}\]
Although for classical $\kappa$-first-order theories, the $\kappa$-coherent Morleyization will have the same models in all Boolean $\kappa$-coherent categories, in general when Morleyizing a $\kappa$-first-order theory to a $\kappa$-coherent one (or a $\kappa$-coherent theory to a $\kappa$-regular one), this is not the case, but there still some gain in considering the category of models of the Morleyized theory, as our adaptation of Joyal's theorem will show. 

\begin{defs}
The theory of $\kappa$-regular ((i)--(iv) below) and the theory of $\kappa$-coherent ((i)--(v)) models $\theory^m$ of a $\kappa$-first-order (resp. $\kappa$-coherent) theory \theory\ over a signature $\Sigma$ is defined as follows: its signature $\Sigma^m$ extends $\Sigma$ by adding for each $\kappa$-first-order (resp. $\kappa$-coherent) formula $\phi$ over $\Sigma$ with free variables \alg{x} the relation symbol $P_{\phi}(\alg{x})$; then $\theory^m$ is the theory axiomatized by the following axioms:

\begin{enumerate}[(i)]

\item $P_{\phi}\dashv\vdash_{\alg{x}}\phi$ for every atomic formula $\phi$
\item $P_{\phi}\vdash_{\alg{x}}P_{\psi}$ for every sequent $\phi\vdash_{\alg{x}}{\psi}$ provable in \theory;
\item $P_{\bigwedge_{i<\gamma}\phi_i}\dashv\vdash_{\alg{x}}\bigwedge_{i<\gamma}P_{\phi_i}$;
\item $P_{\exists{\mathbf{y}}\phi}\dashv\vdash_{\alg{x}}\exists{\mathbf{y}}P_{\phi}$;
\item $P_{\bigvee_{i<\gamma}\phi_i}\dashv\vdash_{\alg{x}}\bigvee_{i<\gamma}P_{\phi_i}$.

\end{enumerate} 
\end{defs}

The theory of $\kappa$-coherent models of a positive $\kappa$-coherent\footnote{The fragment which results after discarding $\bot$.} theory is defined similarly; alternatively (since we are only discarding $\bot$) we could also treat $\bot$ as a propositional variable and add the axioms 
\[\bot\vdash_{\alg{x}}\phi\]
for all formulas $[\mathbf{x}, \phi]$ in context. 

\begin{defs}
 We will say that a positive $\kappa$-coherent model of a $\kappa$-coherent theory is \emph{possibly exploding}, and make the convention that such a model is \emph{exploding} if it assigns $\bot$ the value true.
\end{defs}

Note that since $P_{\phi}\vdash_{\alg{x}}P_{\psi}$ in $\theory^m$ if and only if $\phi\vdash_{\alg{x}}{\psi}$ in \theory, if positive $\kappa$-coherent theories are complete for $\Sets$-valued models, then $\kappa$-coherent theories will be complete for modified (i.e., possibly exploding) $\Sets$-valued models. Incidentally, any model of $\theory^m$ that assigns $\bot$ the value true must be inhabited, since $\bot\vdash \fins{x} (x=x)\in \theory^m$.

\subsection{Beth and Kripke models}

The following is a direct generalization to the infinitary case of the Beth models (see \cite{beth}) used in intuitionistic logic, except that we allow the nodes of the underlying tree to be possibly exploding (i.e., to force $\bot$):

\begin{defs}\label{bethmodelt}
 A Beth model for pure $\kappa$-first-order logic over $\Sigma$ is a quadruple $\mathcal{B}=(K, \leq, D, \Vdash)$, where $(K, \leq)$ is a tree of height $\kappa$ and levels of size less than $\kappa$, and with a set $B$ of branches (i.e., maximal chains in the partial order) each of size $\kappa$; $D$ is a set-valued functor on $K$, and the forcing relation $\Vdash$ is a binary relation between elements of $K$ and sentences of the language with constants from $\bigcup_{k \in K}D(k)$, defined recursively for formulas $\phi$ as follows. There is an interpretation of function and relation symbols in each $D(k)$; if $R_k \subseteq D(k)^{\lambda}$ is the interpretation in $D(k)$ of the $\lambda$-ary relation symbol $R$ in the language, we have $k \leq l \implies R_k(D_{kl}(\mathbf{c})) \subseteq R_l(\mathbf{c})$ for $\mathbf{c} \subseteq D_k$, and: 
 
 \begin{enumerate}
  \item $k \Vdash R(\mathbf{s}(\mathbf{d})) \iff \exists \alpha<\kappa \forall b \in B_k \exists l \in b, level(l)=level(k)+\alpha$ $(R_l(\mathbf{s}(D_{kl}(\mathbf{d}))))$
  \item $k \Vdash \bigwedge_{i<\gamma}\phi_i(\mathbf{d}) \iff k \Vdash \phi_i(\mathbf{d}) \text{ for every } i<\gamma$
  \item $k \Vdash \bigvee_{i<\gamma}\phi_i(\mathbf{d}) \iff \exists \alpha<\kappa \forall b \in B_k \exists l \in b, level(l)=level(k)+\alpha \qquad (l \Vdash \phi_i(D_{kl}(\mathbf{d})) \text{ for some } i<\gamma)$
  \item $k \Vdash \phi(\mathbf{d}) \to \psi(\mathbf{d'}) \iff \forall k' \geq k (k' \Vdash \phi(D_{kk'}(\mathbf{d})) \implies k' \Vdash \psi(D_{kk'}(\mathbf{d'}))$
  \item $k \Vdash \exists \mathbf{x} \phi(\mathbf{x}, \mathbf{d}) \iff \exists \alpha<\kappa \forall b \in B_k \exists l \in b, level(l)=level(k)+\alpha \qquad \exists \mathbf{e} \subseteq D(l) (l \Vdash \phi(\mathbf{e}, D_{kl}(\mathbf{d}))$
  \item $k \Vdash \forall \mathbf{x} \phi(\mathbf{x}, \mathbf{d}) \iff \forall k' \geq k \forall \mathbf{e} \subseteq D_{k'} (k' \Vdash \phi(\mathbf{e}, D_{kk'}(\mathbf{d})))$
 \end{enumerate}

A Beth model for a theory \theory\ is a Beth model for $\kappa$-first-order logic forcing all the axioms of the theory and not forcing $\bot$. 
\end{defs}

We have now:

\begin{proposition}  
$\kappa$-first-order logic is sound for Beth models.
\end{proposition}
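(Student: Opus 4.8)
The plan is to argue by transfinite induction on the length of the derivation of a sequent $\phi \vdash_{\mathbf{x}} \psi$, showing that every axiom is forced at each node of each Beth model and that each rule preserves forcing. Before running this induction I would isolate the two structural properties of $\Vdash$ that make it work — the Beth analogues of the monotonicity and gluing properties recorded above for Kripke--Joyal forcing.

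First I would prove \emph{monotonicity}: if $k \Vdash \phi(\mathbf{d})$ and $k \le k'$, then $k' \Vdash \phi(D_{kk'}(\mathbf{d}))$, by induction on the complexity of $\phi$. For the atomic, disjunctive and existential clauses this holds because a bar above $k$ restricts to a bar above $k'$ (every branch in $B_{k'}$ lies in $B_k$); for $\bigwedge$ it is immediate; for $\to$ and $\forall$ it is built into the clauses, which already range over all $k'' \ge k$. Second, and crucially, I would prove the \emph{local character} (bar) lemma: if there is $\alpha < \kappa$ such that every $b \in B_k$ contains a node $l$ with $level(l)=level(k)+\alpha$ and $l \Vdash \phi(D_{kl}(\mathbf{d}))$, then $k \Vdash \phi(\mathbf{d})$. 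For the positive connectives this is a ``bar of bars is a bar'' computation in which I would use that $\kappa$ is regular and that levels have size $<\kappa$ to replace branch-dependent bounds $\alpha_l$ by a single uniform bound below $\kappa$. The cases $\to$, $\forall$, $\bigwedge$ are where monotonicity is needed: for $\phi \to \psi$, given $k' \ge k$ with $k' \Vdash \phi$, the bar nodes lying above $k'$ force $\psi$ by monotonicity, so they form a bar above $k'$ and the inductive hypothesis for $\psi$ yields $k' \Vdash \psi$.

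With these two lemmas in hand the structural rules (identity, substitution, cut), the equality axioms, and the conjunction, disjunction, implication, existential and universal rules are all routine: each reduces to unwinding the relevant forcing clause and applying monotonicity and local character, exactly as in the finitary Beth soundness argument, with infinitary conjunctions and disjunctions handled uniformly because the clauses are stated for arbitrary $\gamma<\kappa$. I would also observe that the small distributivity and Frobenius axioms follow from the local-character lemma, and that soundness relative to a theory $\theory$ is then immediate, since by Definition \ref{bethmodelt} every node forces the axioms of $\theory$ and no node forces $\bot$.

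The main obstacle is the transfinite transitivity rule. Here I would assume a node $k$ forces $\phi_{\emptyset}(\mathbf{d})$ and construct, by transfinite recursion on $\beta \le \gamma$ along the branches above $k$, a family of nodes together with witness tuples realizing a path $f \in \gamma^{\le\gamma}$ and forcing $\bigwedge_{\delta<\beta}\phi_{f|_{\delta}}$. At a successor stage I apply the first premise $\phi_f \vdash \bigvee_{g}\exists \mathbf{x}_g \phi_g$: the disjunction and existential clauses produce, above each current node, a bar of nodes that each select an immediate successor $g$ and a witness tuple for $\mathbf{x}_g$ forcing $\phi_g$. At a limit stage I use the bidirectional premise $\phi_f \dashv\vdash \bigwedge_{\delta<\beta}\phi_{f|_{\delta}}$ together with monotonicity to glue the partial paths. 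The delicate points are, first, that these successive bars must be stacked into one bar above $k$ of bounded height — this is where I would again invoke the regularity of $\kappa$ together with $\gamma<\kappa$, so that the $\gamma$-fold iteration of covers remains cofinally below $\kappa$ and closes off before height $\kappa$ — and second, that the branch-wise choices of successors and witnesses can be organized coherently, using the fixed well-ordering of $\gamma^{\gamma}$, into the single existential-quantifier block of the conclusion. The assembled bar then forces $\exists_{\beta<\gamma}\mathbf{x}_{f|_{\beta+1}}\bigwedge_{\beta<\gamma}\phi_{f|_{\beta}}$ for the chosen $f$, so the local-character lemma gives $k \Vdash \bigvee_{f\in\gamma^{\gamma}}\exists_{\beta<\gamma}\mathbf{x}_{f|_{\beta+1}}\bigwedge_{\beta<\gamma}\phi_{f|_{\beta}}$. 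This is the exact Beth-semantic counterpart of the soundness of transfinite transitivity for $\kappa$-coherent categories established in Lemma \ref{soundness}.
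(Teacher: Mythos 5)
Your proposal is correct and follows essentially the same route as the paper: the paper's proof also rests on establishing, by induction on the complexity of $\phi$ using the regularity of $\kappa$, that $k \Vdash \phi(\mathbf{c})$ holds iff $\phi$ is forced on a bar of uniform height above $k$ (your local-character lemma, with monotonicity implicit), and then checking the axioms and rules, noting that regularity of $\kappa$ is what makes the transfinite transitivity rule go through. Your write-up simply spells out in more detail what the paper leaves as "easily proved" and "easy to check."
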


\begin{proof}
The key part of the proof is to note that the following property holds: for any $\kappa$-first-order formula $\phi(\mathbf{x})$ and any node $k$ in the Beth model, we have $k \Vdash \phi(\mathbf{c}) \iff \exists \alpha<\kappa \forall b \in B_k \exists l \in b, level(l)=level(k)+\alpha \qquad (l \Vdash \phi(D_{kl}(\mathbf{c})))$. This in turn can be easily proved by induction on the complexity of $\phi$ making use of the regularity of $\kappa$. Using now this property, it is easy to check the validity of all axioms and rules of $\kappa$-first-order logic; the regularity of $\kappa$ is used in the soundness of the transfinite transitivity rule.
\end{proof}

The restriction on the height and the size of the levels of the Beth model was motivated by proving soundness for full $\kappa$-first-order logic. This restriction can be relaxed if we do not intend to do so. We make thus the following:

\begin{defs} 
Given a regular cardinal $\delta<\kappa$ and a set \theory\ consisting of logical and non-logical axioms in a $\kappa$-first-order language, a partial Beth model of height $\delta$ for \theory\ is defined like a Beth model for $\theory$, except that the branches have size $\delta$, the ordinal $\alpha$ in the clauses for atomic formulas, disjunction and existential quantification satisfies $\alpha<\delta$, and the model only forces axioms in $\theory$.
\end{defs}

A Kripke model is a special kind of Beth model none of whose nodes forces $\bot$ and where the forcing relation for atomic formulas, disjunction and existential quantification satisfies the stronger condition $level(l)=level(k)$:

\begin{defs}
 A Kripke model for pure first-order logic over $\Sigma$ is a quadruple $\mathcal{K}=(K, \leq, D, \Vdash)$, where $(K, \leq)$ is a tree, $D$ is a set-valued functor on $K$ and the forcing relation $\Vdash$ is a binary relation between elements of $K$ and sentences of the language with constants from $\bigcup_{k \in K}D(k)$, satisfying $k \nVdash \bot$ and defined recursively for formulas $\phi$ as follows. There is an interpretation of function and relation symbols in each $D(k)$; if $R_k \subseteq D(k)^{\lambda}$ is the interpretation in $D(k)$ of the $\lambda$-ary relation symbol $R$ in the language, we have $k \leq l \implies R_k(D_{kl}(\mathbf{c})) \subseteq R_l(\mathbf{c})$ for $\mathbf{c} \subseteq D_k$, and: 
 
 \begin{enumerate}
  \item $k \Vdash R(\mathbf{s}(\mathbf{d})) \iff R_k(\mathbf{s}(\mathbf{d}))$
  \item $k \Vdash \bigwedge_{i<\gamma}\phi_i(\mathbf{d}) \iff k \Vdash \phi_i(\mathbf{d}) \text{ for every } i<\gamma$
  \item $k \Vdash \bigvee_{i<\gamma}\phi_i(\mathbf{d}) \iff k \Vdash \phi_i(\mathbf{d}) \text{ for some } i<\gamma$
  \item $k \Vdash \phi(\mathbf{d}) \to \psi(\mathbf{d'}) \iff \forall k' \geq k (k' \Vdash \phi(D_{kk'}(\mathbf{d})) \implies k' \Vdash \psi(D_{kk'}(\mathbf{d'})))$
  \item $k \Vdash \exists \mathbf{x} \phi(\mathbf{x}, \mathbf{d}) \iff \exists \mathbf{e} \subseteq D(k) (k \Vdash \phi(\mathbf{e}, \mathbf{d}))$
  \item $k \Vdash \forall \mathbf{x} \phi(\mathbf{x}, \mathbf{d}) \iff \forall k' \geq k \forall \mathbf{e} \subseteq D_{k'} (k' \Vdash \phi(\mathbf{e}, D_{kk'}(\mathbf{d})))$
 \end{enumerate}

 A Kripke model for a theory \theory\ is a Kripke model forcing all the axioms of the theory. 
\end{defs}

A Kripke model can also be seen categorically as a model on a presheaf category. That is, if \synt{C}{\theory}\ is the syntactic category of the theory, a Kripke model on $(K, \leq)$ is nothing but a $\kappa$-Heyting functor $F: \synt{C}{\theory}\to \Sets^{K}$, since such a functor determines the set-valued functor $D=F([x, \top]): K \to \Sets$ which specifies the underlying domains of the nodes. In this case the forcing relation is given by $k \Vdash \phi(\mathbf{d})$ for $\mathbf{d} \in D(k)^n$ if and only if $\mathbf{d}: [-, k] \to D^n=F([\mathbf{x}, \top])$ factors through the subobject $F([\mathbf{x}, \phi]) \rightarrowtail F([\mathbf{x}, \top])$, where we use Yoneda lemma to identify elements of $D(k)$ with natural transformations $[-, k] \to D$. This definition is precisely the forcing relation for the Kripke-Joyal semantics in the topos $\Sets^{K}$, whence the name Kripke associated to it. We therefore have:

\begin{proposition}  
$\kappa$-first-order logic is sound for Kripke models.
\end{proposition}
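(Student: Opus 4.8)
The plan is to exploit the categorical reformulation of Kripke models recorded just above: a Kripke model on a tree $(K, \leq)$ is precisely a $\kappa$-Heyting functor $F: \synt{C}{\theory} \to \Sets^K$, and its forcing relation coincides with the Kripke--Joyal forcing of the internal categorical semantics in the presheaf topos $\Sets^K$. Granting this, soundness for Kripke models reduces to soundness of $\kappa$-first-order logic with respect to models in $\kappa$-Heyting categories, applied to the single $\kappa$-Heyting category $\Sets^K$. So the proof assembles three ingredients already in hand: that $\Sets^K$ is $\kappa$-Heyting, that tree-forcing equals Kripke--Joyal forcing there, and that the categorical semantics is sound.

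First I would check that $\Sets^K$ is a $\kappa$-Heyting category. Being a presheaf topos it is a Heyting category with all small limits and colimits, hence it has $\kappa$-limits, $\kappa$-complete subobject lattices, stable $\kappa$-unions, and right adjoints to pullback functors; the only clause requiring comment is the transfinite transitivity property, which, as already noted in the discussion of $\kappa$-coherent categories, holds in every presheaf category. Consequently every $\Sigma$-structure in $\Sets^K$ interprets all $\kappa$-first-order formulas, and this interpretation obeys the inductive clauses of categorical semantics.

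The heart of the argument is the clause-by-clause identification of the recursively defined forcing relation of the Kripke model with the Kripke--Joyal forcing for this interpretation. The key point is that $\Sets^K = \operatorname{Sh}(K, \tau_{\mathrm{triv}})$ carries the trivial topology, whose only covering sieve of a node $k$ is the maximal one, generated by $\mathrm{id}_k$. Therefore the ``cover'' quantifiers occurring in the Kripke--Joyal clauses for $\bigvee$ and $\exists$ collapse to conditions at the node $k$ itself, reproducing exactly clauses (3) and (5) in the definition of a Kripke model (the strengthening $level(l)=level(k)$), while the clauses for $\bigwedge$, $\to$ and $\forall$ match the intersection and right-adjoint clauses verbatim, quantifying over all $k' \geq k$. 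This is precisely what is meant by the assertion that the forcing relation is the Kripke--Joyal semantics in the topos $\Sets^K$.

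Granting these two facts, soundness is immediate. If $\phi \vdash_{\mathbf{x}} \psi$ is derivable in $\kappa$-first-order logic, then by soundness of the categorical semantics in the $\kappa$-Heyting category $\Sets^K$ the inequality $[\![\mathbf{x}, \phi]\!] \leq [\![\mathbf{x}, \psi]\!]$ holds in $\mathcal{S}ub(F([\mathbf{x}, \top]))$, which through the forcing identification says that every node forcing $\phi$ forces $\psi$, i.e. the sequent is valid in the Kripke model. The soundness of the categorical semantics itself is, for the coherent rules, Lemma \ref{soundness} (the only genuinely infinitary rule being transfinite transitivity, sound because $\Sets^K$ is $\kappa$-Heyting), while the implication and universal rules are sound by the defining adjunctions $\phi \wedge (-) \dashv (\phi \to -)$ and $\pi^{-1} \dashv \forall_{\pi}$ in a Heyting category; for a Kripke model of a theory $\theory$ the functor $F$ is by construction a model of $\theory$, so the argument applies to every sequent provable from its axioms. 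The main obstacle is thus the clause-by-clause verification of the forcing identification, and specifically the observation that the trivial topology on $K$ trivializes the covering conditions for $\bigvee$ and $\exists$ --- exactly the feature distinguishing Kripke from Beth semantics, which explains why no ``$\exists \alpha < \kappa$'' appears in the Kripke clauses; everything else is routine bookkeeping.
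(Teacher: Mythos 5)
Your proof is correct and takes essentially the same route as the paper, which disposes of the proposition in one line by noting that presheaf categories are $\kappa$-Heyting and invoking the soundness of categorical semantics together with the already-stated identification of Kripke forcing with the Kripke--Joyal semantics of $\Sets^{K}$. You have merely spelled out the clause-by-clause verification and the role of the trivial topology that the paper leaves implicit.
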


\begin{proof}
It is enough to note that presheaf categories are $\kappa$-Heyting and apply Theorem \ref{cc}.
\end{proof}

More generally, one can consider Kripke models on arbitrary categories $\mathcal{M}$ instead of the tree $K$, and it turns out that the semantics of the Kripke model over $\mathcal{M}$ can be recovered in terms of Kripke semantics over a certain collection of trees. To do that, consider first the poset $P$ which consists of finite composable sequences of morphisms of $\mathcal{M}$, i.e., chains $A_0 \to ... \to A_n$ in $\mathcal{M}$. One such sequence is below another in $P$ if the former is an initial segment of the latter. There is a functor $E:P \to \mathcal{M}$ sending each chain to the last object in it and sending any morphism $f$ of $P$ to the composite of the morphisms of $\mathcal{M}$ that are in the codomain minus the domain of $f$. Now, given a Kripke model $F: \synt{C}{\theory}\to \Sets^{\mathcal{M}}$, we can compose $F$ with the transpose $E^*: \Sets^{\mathcal{M}} \to \Sets^{P}$, and if this latter is a conservative $\kappa$-Heyting functor, this will provide a Kripke model on $P$ forcing precisely the same formulas as the original model. Finally, the Kripke model on $P$ can be regarded as a collection of Kripke models on trees, where the roots of the trees are given by one-element chains. This construction amounts to build the Diaconescu cover of the topos $\Sets^{\mathcal{M}}$ (see e.g. \cite{maclane-moerdijk}). In our case the discussion above shows that for our purposes it is enough to prove the following, which is the infinitary counterpart of section 1.744 of \cite{fs}:

\begin{lemma}
 The functor $E^*: \Sets^{\mathcal{M}} \to \Sets^{P}$ is conservative and $\kappa$-Heyting.
\end{lemma}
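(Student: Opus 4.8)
The plan is to regard $E^*$ as the restriction (precomposition) functor along $E$ between two presheaf toposes and to isolate the single combinatorial property of $E$ that forces it to be logical in the required sense; this is the evident infinitary transcription of section 1.744 of \cite{fs}, and no cardinality condition is needed because every clause below is computed objectwise in $\Sets$. First I would dispose of conservativity. Since $E$ sends the length-zero chain $(A)$ to $A$, it is surjective on objects; hence if $\eta\colon G\to H$ is a natural transformation in $\Sets^{\mathcal{M}}$ with $E^*\eta$ invertible, then $\eta_{A}=\eta_{E((A))}$ is invertible for every object $A$ of $\mathcal{M}$, and a pointwise invertible natural transformation of $\Sets$-valued functors is invertible. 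Thus $E^*$ reflects isomorphisms.

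Next, the $\kappa$-coherent part of the structure is automatic. Being precomposition with $E$, the functor $E^*$ preserves all limits and colimits, these being computed objectwise in presheaf categories; in particular it preserves $\kappa$-limits, monomorphisms, image factorizations and $\kappa$-unions of subobjects (each of the latter a pointwise construction in $\Sets$). Both $\Sets^{\mathcal{M}}$ and $\Sets^{P}$ are $\kappa$-Heyting, since presheaf toposes carry all the required structure and satisfy the transfinite transitivity property. Hence the only genuine content is that $E^*$ preserves the right adjoints to pullback functors between subobject lattices, i.e. Heyting implication $\Rightarrow$ and universal quantification $\forall$.

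The heart of the argument is the following lemma: for each $p=(A_0\to\cdots\to A_n)\in P$ the assignment $h\mapsto E(h)$ is a surjection from the set of arrows $h\colon p\to q$ of $P$ onto the set of all arrows of $\mathcal{M}$ with domain $A_n=E(p)$. Indeed every $E(h)$ is by construction a composite starting at $A_n$, hence an arrow out of $A_n$; conversely any $g\colon A_n\to m$ equals $E(h)$ for the one-step extension $h\colon p\to (A_0\to\cdots\to A_n\xrightarrow{g} m)$. With this in hand I would substitute into the explicit presheaf formulas for the Heyting operations, which for covariant functors quantify over arrows \emph{out} of the object: writing out $E^*(A\Rightarrow B)(p)$ against $(E^*A\Rightarrow E^*B)(p)$, and likewise $E^*(\forall_{\pi}S)(p)$ against $(\forall_{E^*\pi}E^*S)(p)$, each side is an intersection of conditions indexed by arrows out of $A_n$; using $E(q)=$ the last object of $q$, the two index families and the conditions they impose coincide by the lemma. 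This yields $E^*(A\Rightarrow B)=E^*A\Rightarrow E^*B$ and $E^*\forall_{\pi}=\forall_{E^*\pi}E^*$, completing the verification that $E^*$ is $\kappa$-Heyting.

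The point that needs care, and essentially the only place one could go wrong, is that $h\mapsto E(h)$ is \emph{not} injective: a multi-step extension may compose to the same arrow as a shorter one, and distinct extensions may share an underlying arrow of $\mathcal{M}$. So $E$ is neither a discrete (op)fibration nor is $E^*$ étale-like, and one should not expect the strong two-sided bijection that would make $E^*$ an equivalence locally. The formulas for $\Rightarrow$ and $\forall$ nevertheless survive precisely because they are meets over the family of all arrows out of the object, for which only surjectivity of $h\mapsto E(h)$ is relevant and repetitions in the index family are harmless. I therefore expect this surjectivity-without-injectivity bookkeeping to be the main, though ultimately routine, obstacle in turning the sketch into a complete proof.
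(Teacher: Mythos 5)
Your proposal is correct and follows essentially the same route as the paper: conservativity from surjectivity of $E$ on objects, and preservation of $\forall$ (hence of the Heyting structure) by matching the pointwise formulas in $\Sets^{P}$ and $\Sets^{\mathcal{M}}$, using exactly the observation that every arrow of $\mathcal{M}$ out of $E(p)$ is $E(l)$ for some $l$ out of $p$ (the one-step extension). Your explicit remark that only surjectivity, not injectivity, of $h\mapsto E(h)$ is needed is a useful clarification of what the paper compresses into ``$E$ is surjective on objects and arrows,'' but it is the same argument.
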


\begin{proof}
 The conservativity of $E^*$ follows from the fact that $E$ is surjective on objects and arrows. To prove that it is $\kappa$-Heyting, the non-trivial part is proving that it preserves $\forall$. For a natural transformation $f: F \to G$ in $\mathcal{S}et^{\mathcal{M}}$ and a subfunctor $A$ of $F$, we need to show that $E^*(\forall_fA)$ is the same subfunctor of $E^*(G)$ as $\forall_{E^*(f)}E^*(A)$. By definition, for any object $p$ in $P$ and $y \in E^*(G)(p)=G(E(p))$, we have $y \in \forall_{E^*(f)}E^*(A)(p)$ if and only if for all arrows $l: p \to q$ in $P$ one has:

$$E^*(f)_q^{-1}(G(E(l))(y)) \subseteq E^*(A)(q)$$

$$\iff \forall x (E^*(f)_q(x)=G(E(l))(y) \implies x \in E^*(A)(q))$$

$$\iff \forall x (f_{E(q)}(x)=G(E(l))(y) \implies x \in A(E(q))) \qquad (1)$$

On the other hand, also by definition, for $y \in G(E(p))$ one has $y \in \forall_fA(E(p))$ if and only if for all arrows $t: E(p) \to r$ in $\mathcal{M}$ one has:

$$f_r^{-1}(G(t)(y)) \subseteq A(r)$$

$$\iff \forall x (f_r(x)=G(t)(y) \implies x \in A(r)) \qquad (2)$$

But because the functor $E$ is surjective (both on objects and arrows), we can find $q, l \in P$ such that $r=E(q)$ and $t=E(l)$, from which we deduce that $(1)$ and $(2)$ above are equivalent. Hence, $E^* \forall=\forall E^*$, as we wanted.
 
\end{proof}

\subsection{Syntactic sites}

The syntactic categories for fragments of $\kappa$-first-order logic can be equipped with appropriate Grothendieck topologies in such a way that the corresponding sheaf toposes are conservative models of the corresponding theories. Given a $\kappa$-regular category, we can define the $\kappa$-regular coverage, where the covering families are all singletons $f$ where $f$ is a cover. Similarly, for a $\kappa$-coherent category we can define the $\kappa$-coherent coverage, where the covering families are given by families of arrows $f_i: A_i \to A$ of cardinality less than $\kappa$ such that the union of their images is the whole of $A$ (in particular, the initial object $0$ is covered by the empty family). We can also find (see \cite{bj}) a conservative sheaf model given by Yoneda embedding into the sheaf topos obtained with the $\kappa$-coherent coverage. As proven in \cite{bj}, the embedding preserves $\kappa$-unions and $\kappa$-intersections, as well as any Heyting structure that might exist in $\mathcal{C}$. To highlight the fact that images and unions are stable under pullback is crucial, we prove the following lemma, which can be regarded as a generalization of the result corresponding to the finitary case:

\begin{lemma}\label{shemb}
 Given a $\kappa$-coherent (resp. $\kappa$-Heyting) category $\mathcal{C}$ with the $\kappa$-coherent coverage $\tau$, Yoneda embedding $y: \mathcal{C} \to \mathcal{S}h(\mathcal{C}, \tau)$ is a conservative $\kappa$-coherent (resp. $\kappa$-Heyting) functor and $\mathcal{S}h(\mathcal{C}, \tau)$ is a $\kappa$-Heyting category.
\end{lemma}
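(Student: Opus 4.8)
The plan is to split the statement into three parts --- conservativity of $y$, the claim that $\mathcal{S}h(\mathcal{C},\tau)$ is $\kappa$-Heyting, and the preservation of $\kappa$-coherent (resp.\ $\kappa$-Heyting) structure by $y$ --- and to isolate the transfinite transitivity property of the sheaf topos as the only genuinely infinitary point. First I would record that $\tau$ is subcanonical. A jointly covering family $\{f_i\colon A_i \to A\}_{i<\gamma}$ of cardinality $<\kappa$ is an effective-epimorphic family, stably so: because images and $\kappa$-unions are stable under pullback in $\mathcal{C}$, pulling the family back along any $g\colon B \to A$ again yields a jointly covering family, and the sieve it generates is effective. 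Hence every representable $\mathcal{C}(-,C)$ satisfies the sheaf condition, so the Yoneda embedding corestricts to $y\colon\mathcal{C}\to\mathcal{S}h(\mathcal{C},\tau)$. Being a corestriction of Yoneda, $y$ is full and faithful, and a full and faithful functor reflects isomorphisms; this is conservativity.

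Next, for the ambient structure of $\mathcal{S}h(\mathcal{C},\tau)$: as a Grothendieck topos it is complete and cocomplete, its subobject lattices are frames, and each pullback functor $f^{\ast}$ between subobject lattices has both a left and a right adjoint. In particular it has $\kappa$-limits, $\kappa$-complete subobject lattices, right adjoints to pullback (so it is Heyting), and $\kappa$-unions that are stable under pullback, since $f^{\ast}$, having a right adjoint, preserves all joins. Thus, modulo one property, $\mathcal{S}h(\mathcal{C},\tau)$ is already $\kappa$-Heyting; and since a $\kappa$-Heyting category is in particular $\kappa$-coherent, it suffices to treat the Heyting case.

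The principal point is then the transfinite transitivity property: in $\mathcal{S}h(\mathcal{C},\tau)$ every proper diagram $F\colon T^{op}\to\mathcal{S}h(\mathcal{C},\tau)$, with $T=\gamma^{<\gamma}$, is completely proper. I would prove this by the natural generalization of Makkai's argument for $\kappa$-regular logic in \cite{makkai}, working with the Kripke--Joyal forcing clauses established above and mirroring the path-building of Lemma \ref{soundness} and Lemma \ref{equivl}. Given a generalized element $e\colon y(C)\to C_{\emptyset}$, I use the joint-covering hypothesis at each node to refine $e$ along a $\tau$-cover and lift it one level up the tree; iterating, and at limit stages using that $C_{f|_{\beta}}$ is the transfinite composite (the limit of the chain $(C_{f|_{\alpha}})_{\alpha<\beta}$) to glue the partial lifts into a single lift at level $\beta$, I construct, over a cover of $C$, a path $g\in\gamma^{\gamma}$ through which $e$ factors up to the image of $h_{g,\emptyset}$. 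Hence $\{h_{g,\emptyset}\}_{g\in\gamma^{\gamma}}$ is jointly covering. The main obstacle I anticipate lies exactly here: ensuring that a coherent family of lifts along the chain $(C_{f|_{\alpha}})_{\alpha<\beta}$ determines one lift to the limit object $C_{f|_{\beta}}$, and that the iterated passage to covers of covers remains $\kappa$-small. Both rely on $\kappa$ being inaccessible --- regularity to keep the height of the covering tree below $\kappa$, and strong-limitness to keep the successive covering families of cardinality $<\kappa$.

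Finally, I would verify that $y$ preserves the structure. Yoneda preserves all limits existing in $\mathcal{C}$ and the inclusion $\mathcal{S}h(\mathcal{C},\tau)\hookrightarrow\Sets^{\mathcal{C}^{op}}$ preserves limits, so $y$ preserves $\kappa$-limits; a cover of $\mathcal{C}$ generates a singleton covering family and is therefore sent to an epimorphism, whence $y$ preserves the image factorization; and a $\kappa$-union $\bigvee_{i}A_i=A$ is by definition a covering family, so $\bigvee_{i}y(A_i)=y(A)$, giving preservation of $\kappa$-unions. These last two are the preservation results of \cite{bj}, which I would re-derive from the forcing clauses precisely to exhibit the role of stability of images and unions under pullback; and when $\mathcal{C}$ is $\kappa$-Heyting, preservation of the right adjoints $\forall$ is the corresponding statement of \cite{bj}. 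Assembling these gives that $y$ is a conservative $\kappa$-coherent (resp.\ $\kappa$-Heyting) functor into the $\kappa$-Heyting category $\mathcal{S}h(\mathcal{C},\tau)$.
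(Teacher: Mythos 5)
Your overall architecture coincides with the paper's proof: subcanonicity of $\tau$ via effective-epimorphic families (the paper cites \cite{mr}, Proposition 3.3.3), conservativity from full faithfulness, the standard Grothendieck-topos facts for everything in the definition of $\kappa$-Heyting except transfinite transitivity, the pullback arguments showing $y$ sends covers to locally surjective maps and $\kappa$-unions to unions of subsheaves, and the \cite{bj} argument (stability of image factorizations) for preservation of $\forall$. The transfinite recursion you sketch for the transitivity property in $\mathcal{S}h(\mathcal{C},\tau)$ is also the paper's argument: refine a local section of $S_{\emptyset}$ level by level through a tree of covers $\{l_f\colon C_f\to C\}$ and pass to limits of the $C_{f|_\alpha}$ at limit ordinals.

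There is, however, one concrete point where your diagnosis of the difficulty is off, and it is exactly the point where the hypothesis that $\mathcal{C}$ is $\kappa$-coherent gets consumed. You list two obstacles at limit stages --- gluing the partial lifts into a lift to $S_{f|_\beta}$, and keeping the covers $\kappa$-small --- and attribute both to inaccessibility. The first is in fact free: the premise of the property states $S_{f|_\beta}=\lim_{\alpha<\beta}S_{f|_\alpha}$, so a compatible family of lifts induces one by the universal property of the limit, with no cardinal arithmetic. The genuinely delicate step you do not mention is that the family of \emph{transfinite composites} $\{l_f\colon C_f\to C : f\in\gamma^{\mu}\}$ obtained at a limit level $\mu$ must again be a $\tau$-covering family of $C$. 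At successor levels this is just the local-character (covers of covers) axiom of a Grothendieck topology, but at limit levels it is precisely the transfinite transitivity property of the site $\mathcal{C}$ itself --- i.e., the assumption that proper diagrams in $\mathcal{C}$ are completely proper --- and it does not follow from inaccessibility of $\kappa$ alone. Without invoking it, your recursion stalls at the first limit ordinal: you would have lifts of $e$ over a family of maps into $C$ that you cannot certify is a cover, and the conclusion that $\{h_{g,\emptyset}\}_{g\in\gamma^{\gamma}}$ is jointly covering would not follow. Adding this one sentence to your limit step closes the argument and makes it agree with the paper's.
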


\begin{proof}
 By \cite{mr}, Proposition 3.3.3, we know that all representable functors are sheaves for the $\kappa$-coherent coverage, since the fact that the union of the images of the arrows in a covering family over $A$ is the whole of $A$ is equivalent to the fact that the family is effective epimorphic, and this is precisely the sheaf condition on representable functors. 
 
 In case $\mathcal{C}$ is $\kappa$-Heyting, the embedding preserves universal quantification as shown in \cite{bj}, Lemma 3.1. For the sake of completeness we reproduce here the proof. Let $f: A \to B$ be a morphism of $\mathcal{C}$, $A' \rightarrowtail A$ a subobject of A, and write $B' \rightarrowtail B$ for $\forall_f(A' \rightarrowtail A)$. Let $R' \rightarrowtail y(B)$ be any subobject of $y(B)$ in $\mathcal{S}h(\mathcal{C}, \tau)$ such that $(y(f))^*(R) \leq y(A')$ in $\mathcal{S}ub(y(A))$; we must show that $R \leq y(B’)$, i.e. that every morphism in $R$ factors through $B' \rightarrowtail B$. Let $g: C \to B$ be such a morphism, and let $h: D \to A$ be its pullback along $f$; then $h \in f^*(R)$, and so $h$ factors through $A' \rightarrowtail A$, that is, the image $A'' \rightarrowtail A$ of $h$ satisfies $A''<A'$. Since image factorizations in $\mathcal{C}$ are stable under pullback, it follows that the image $B'' \rightarrowtail B$ of $B$ satisfies $B'' \leq B'$; so we have our required factorization.
 
 Yoneda embedding preserves limits, and limits of sheaves are computed as in presheaves, so it remains to prove that it preserves images and $\kappa$-unions. Given a cover $f: A \twoheadrightarrow B$, we need to prove that $[-, A] \to [-, B]$ is a sheaf epimorphism, i.e., that it is locally surjective. For this it is enough to find a covering family over each object $C$ that witnesses the local surjectivity. Given an element $g$ in $[C, B]$, we can simply form the pullback of $f$ along $g$, obtaining thus a covering family over consisting on the single arrow $g^*(f)$ which will clearly witness the local surjectivity.
 
 The argument for the preservation of unions is similar: given the union $\bigvee_{i<\gamma}A_i$ of subobjects $f_i: A_i \to B$ we need to show that $[-, \bigvee_{i<\gamma}A_i]$ is the union of the sheaves $[-, A_i]$ . Given an object $C$ and an element $g$ in $[C, \bigvee_{i<\gamma}A_i]$, the pullbacks along $g$ of $f'_i: A_i \to \bigvee_{i<\gamma}A_i$ give a covering family $\{g^*(f'_i): P_i \to C\}_{i<\gamma}$ with the property that $g.g^*(f'_i) \in [P_i, \bigvee_{i<\gamma}A_i]$ belongs to $[P_i, A_i]$, which is enough to guarantee that $[-, \bigvee_{i<\gamma}A_i]$ is indeed the union of the $[-, A_i]$.  
 
 Finally, we show that the sheaf topos is a $\kappa$-coherent category by proving that the transfinite transitivity property holds in $\mathcal{S}h(\mathcal{C}, \tau)$. To this end, suppose we have a family of sheaves $\{S_{f}: \beta<\gamma, f \in \gamma^{\beta}\}$ satisfying the premises of the transfinite transitivity property, that is, that $\{S_{g} \to S_f: g \in \gamma^{\beta+1}, g|_{\beta}=f\}$ form a jointly covering family and that $S_{f|_{\beta}}=\lim_{\alpha<\beta}S_{f|_{\alpha}}$ for limit $\beta$. Then given $c \in S_{\emptyset}(C)$ we define by transfinite recursion a covering family $\{l_f: C_{f} \to C: \beta<\gamma, f \in \gamma^{\beta}\}$ such that, given $f \in \gamma^{\gamma}$, $c.l_f \in \bigwedge_{\alpha<\gamma}S_{f_i|_{\alpha}}(C)$ for some $f_i \in \gamma^{\gamma}$, witnessing that $\{\bigwedge_{\alpha<\gamma}S_{f|_{\alpha}} \to S_{\emptyset}: f \in \gamma^{\gamma}\}$ is a jointly covering family. In fact, the covering family over $C$ will be such that for any fixed $\beta<\gamma$ we will have that $\{l_f: C_{f} \to C: f \in \gamma^{\beta}\}$ is a witness of the joint covering of the sheaves $\{S_{f}: f \in \gamma^{\beta}\}$, that is, given $f \in \gamma^{\beta}$ we will have $c.l_f \in S_{f_i}(C)$ for some $f_i \in \gamma^{\beta}$. 
 
 Supposing that $\{l_f: C_{f} \to C: \beta<\mu, f \in \gamma^{\beta}\}$ has been defined, we show how to define the family at level $\mu$. If $\mu$ is a successor ordinal $\mu=\alpha+1$, we have by inductive hypothesis a covering $\{l_f: C_{f} \to C: f \in \gamma^{\alpha}\}$ such that, given $f \in \gamma^{\alpha}$, $c.l_f \in S_{f_i}(C)$ for some $f_i \in \gamma^{\alpha}$. Then, because $\{S_{g} \to S_{f_i}: g \in \gamma^{\mu}, g|_{\alpha}=f_i\}$ is jointly covering, we can find a covering $\{h_{gf_i}: C_g \to C_{f_i} : g \in \gamma^{\mu}, g|_{\alpha}=f_i\}$ such that, given $g \in \gamma^{\mu}, g|_{\alpha}=f_i$, $c.l_g=(c.l_{f_i}).h_{gf_i} \in S_{g_j}(C)$ for some $g_j \in \gamma^{\mu}$. This extends, by transitivity, the definition of the covering family to level $\mu$. If $\mu$ is a limit ordinal and $f \in \gamma^\mu$, we simply take $C_f$ to be the limit of the diagram formed by $C_{f|_{\alpha}}: \alpha<\mu$. Then clearly, given $f \in \gamma^\mu$, $c.l_f \in \bigwedge_{\alpha<\mu}S_{f_k|_{\alpha}}$ for some $f_k \in \gamma^\mu$. This finishes the recursive construction of the family over $C$ and proves the transfinite transitivity property for the sheaves.
 
\end{proof}

We get immediately:

\begin{thm}
 If $\kappa$ is any inaccessible cardinal, $\kappa$-first-order theories are complete with respect to models in $\kappa$-Heyting Grothendieck toposes. 
\end{thm}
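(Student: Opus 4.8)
The plan is to sharpen the completeness theorem for $\kappa$-Heyting \emph{categories} (Theorem~\ref{cc}) into one for $\kappa$-Heyting \emph{Grothendieck toposes} by transporting the universal syntactic model along the Yoneda embedding into a sheaf topos. Concretely, given a $\kappa$-first-order theory $\theory$, I would first form its syntactic category $\synt{C}{\theory}$, which is $\kappa$-Heyting by Proposition~\ref{catcomp}, together with the canonical model $M_{\theory}$ living inside it. By Proposition~\ref{thmp2}, a sequent $\phi \vdash_{\mathbf{x}} \psi$ is provable in $\theory$ if and only if it is satisfied by $M_{\theory}$, that is, if and only if $[\![\mathbf{x},\phi]\!] \leq [\![\mathbf{x},\psi]\!]$ in the subobject lattice of $\synt{C}{\theory}$.

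Next I would equip $\synt{C}{\theory}$ with the $\kappa$-coherent coverage $\tau$ and invoke Lemma~\ref{shemb}: the Yoneda embedding $y : \synt{C}{\theory} \to \mathcal{S}h(\synt{C}{\theory}, \tau)$ is a conservative $\kappa$-Heyting functor, and its target is a $\kappa$-Heyting category. Since $\synt{C}{\theory}$ is small, $\mathcal{S}h(\synt{C}{\theory}, \tau)$ is a genuine Grothendieck topos, hence a $\kappa$-Heyting Grothendieck topos. Composing the structure $M_{\theory}$ with $y$ yields a $\Sigma$-structure $y\circ M_{\theory}$ in this topos. Because $y$ preserves all the categorical structure used to interpret formulas --- finite and $\kappa$-limits, images and existential quantification, $\kappa$-unions and disjunctions, $\kappa$-intersections and conjunctions, and the right adjoints giving implication and universal quantification --- a routine induction on the complexity of $\phi$ gives $[\![\mathbf{x},\phi]\!]_{yM_{\theory}} = y\bigl([\![\mathbf{x},\phi]\!]_{M_{\theory}}\bigr)$. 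In particular $y\circ M_{\theory}$ validates every axiom of $\theory$ and so is a model of $\theory$ in a $\kappa$-Heyting Grothendieck topos.

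To finish, suppose a sequent $\phi \vdash_{\mathbf{x}} \psi$ is valid in every model of $\theory$ in $\kappa$-Heyting Grothendieck toposes; in particular it holds in $y\circ M_{\theory}$, so $y([\![\mathbf{x},\phi]\!]) \leq y([\![\mathbf{x},\psi]\!])$. Here I would use that a conservative functor preserving finite meets reflects the subobject order: writing $A = [\![\mathbf{x},\phi]\!]$ and $B = [\![\mathbf{x},\psi]\!]$, the hypothesis $y(A) \leq y(B)$ makes the canonical mono $y(A \wedge B) = y(A)\wedge y(B) \to y(A)$ an isomorphism, whence $A \wedge B \to A$ is an isomorphism by conservativity, i.e.\ $A \leq B$. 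Thus $[\![\mathbf{x},\phi]\!] \leq [\![\mathbf{x},\psi]\!]$ holds in $M_{\theory}$, and the sequent is provable in $\theory$ by Proposition~\ref{thmp2}, which is exactly completeness.

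The substantive work has already been carried out in Lemma~\ref{shemb} --- that the Yoneda embedding into the $\kappa$-coherent site preserves the full $\kappa$-Heyting structure and that the transfinite transitivity property survives in the sheaf topos --- and in the construction of $M_{\theory}$ with Proposition~\ref{thmp2}; the present statement is essentially a corollary. The only points needing care are the commutation $[\![-]\!]_{yM_{\theory}} = y[\![-]\!]_{M_{\theory}}$ and the reflection of the subobject order by the conservative embedding, both of which are routine given that machinery. I would therefore expect no genuine obstacle here beyond bookkeeping, the real difficulty having been absorbed into the preceding lemmas.
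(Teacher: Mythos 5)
Your proposal is correct and is exactly the argument the paper intends: the theorem is stated immediately after Lemma~\ref{shemb} with no separate proof, precisely because it follows by composing the canonical model $M_{\theory}$ in the $\kappa$-Heyting syntactic category (Propositions~\ref{catcomp} and~\ref{thmp2}) with the conservative $\kappa$-Heyting Yoneda embedding into $\mathcal{S}h(\synt{C}{\theory},\tau)$. Your two added details --- the commutation of interpretation with the embedding and the reflection of the subobject order via conservativity --- are the right bookkeeping and raise no issues.
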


\section{Completeness}

\subsection{Completeness of infinitary coherent logic}

We will need the following technical lemma, which corresponds to the canonical well-ordering of $\kappa \times \kappa$ from \cite{jechst}:

\begin{lemma}\label{dwo}
 For every cardinal $\kappa$ there is a well-ordering $f: \kappa \times \kappa \to \kappa$ with the property that $f(\beta, \gamma) \geq \gamma$.
\end{lemma}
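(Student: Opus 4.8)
The plan is to take $f$ to be the rank function of the classical Gödel canonical well-ordering of pairs of ordinals, restricted to $\kappa \times \kappa$.

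First I would define the relation $\lhd$ on $\kappa \times \kappa$ by declaring $(\alpha, \beta) \lhd (\gamma, \delta)$ iff either $\max(\alpha, \beta) < \max(\gamma, \delta)$, or $\max(\alpha, \beta) = \max(\gamma, \delta)$ and $(\alpha, \beta)$ precedes $(\gamma, \delta)$ lexicographically. A routine verification shows that $\lhd$ is a linear order and that every nonempty subset has a $\lhd$-least element: given such a subset, first pass to the pairs minimizing $\max$, then to those minimizing the first coordinate, then take the least second coordinate. Hence $\lhd$ is a well-ordering of $\kappa \times \kappa$.

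Second, I would show that the order type of $(\kappa \times \kappa, \lhd)$ is exactly $\kappa$, and let $f \colon \kappa \times \kappa \to \kappa$ be the induced order isomorphism; equivalently, $f(\beta, \gamma)$ is the order type of the set of $\lhd$-predecessors of $(\beta, \gamma)$. For each $(\gamma, \delta)$, every $\lhd$-predecessor lies in $\eta \times \eta$ with $\eta = \max(\gamma, \delta) + 1 < \kappa$, so each proper initial segment has cardinality at most $|\eta| \cdot |\eta| < \kappa$, using that $\kappa$ is a cardinal (indeed inaccessible in our setting, so products of smaller cardinals stay below $\kappa$). Thus every proper initial segment is order-isomorphic to an ordinal strictly below $\kappa$, which forces the order type to be $\leq \kappa$; since $|\kappa \times \kappa| = \kappa$ the order type is also $\geq \kappa$, whence it equals $\kappa$.

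Third, for the required inequality, I note that $f(\beta, \gamma)$ is the order type of the predecessor set of $(\beta, \gamma)$. For every $\delta < \gamma$ we have $\max(0, \delta) = \delta < \gamma \leq \max(\beta, \gamma)$, so $(0, \delta) \lhd (\beta, \gamma)$; moreover $\delta \mapsto (0, \delta)$ is $\lhd$-order-preserving, so $\{(0, \delta) : \delta < \gamma\}$ is a suborder of the predecessor set isomorphic to $\gamma$. Since order type is monotone on suborders of a well-order, we conclude $f(\beta, \gamma) \geq \gamma$, as desired. The only nonroutine step is establishing that the order type of the canonical ordering is precisely $\kappa$ rather than a larger ordinal of the same cardinality; this is exactly where the hypothesis that $\kappa$ is a cardinal is used, through the fact that $|\eta| \cdot |\eta| < \kappa$ whenever $\eta < \kappa$. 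Well-orderedness and the final inequality are both immediate from the definition of $\lhd$.
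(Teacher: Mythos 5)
Your proposal is correct and is essentially the paper's own argument: the explicit recursion the paper writes down (defined by induction on $\max(\beta,\gamma)$ and citing Jech, Theorem 3.5) is precisely the rank function of the canonical ``max first, then break ties within a block'' well-ordering of pairs that you construct, and the verification that its order type is $\kappa$ and that $f(\beta,\gamma)\geq\gamma$ proceeds exactly as you describe. The only cosmetic difference is the tie-breaking rule inside a block of fixed maximum (lexicographic in your version, column-then-row in the paper's formula), which affects neither the order type nor the required inequality.
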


\begin{proof}
 We define $f$ by induction on $\max (\beta, \gamma)$ as follows:
 
 $$f(\beta, \gamma)=\begin{cases} \sup\{f(\beta', \gamma')+1: \beta', \gamma'<\gamma\}+\beta  & \mbox{if }  \beta<\gamma  \\ \sup\{f(\beta', \gamma')+1: \beta', \gamma'<\beta\}+\beta+\gamma & \mbox{if } \gamma \leq \beta \end{cases}$$
 
\noindent which satisfies the required property (see \cite{jechst}, Theorem 3.5).
\end{proof}

We have now:

\begin{thm}\label{shcomp}
 Let $\kappa$ be an inaccessible cardinal. Then any $\kappa$-coherent theory of cardinality less than $\kappa$ has a partial Beth model of height less than $\kappa$.
\end{thm}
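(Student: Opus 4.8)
The plan is to build the underlying tree of the Beth model by a Henkin-style forcing construction carried out level by level, whose nodes are \emph{consistent states}: sets of $\kappa$-coherent sentences over the signature of $\theory$ enlarged by a reservoir of fewer than $\kappa$ fresh witnessing constants, where a state $\Delta$ is consistent if $\theory\cup\Delta\nvdash\bot$. Using that $\kappa$ is inaccessible, hence a regular limit cardinal, I would first fix a regular cardinal $\delta$ with $|\theory|<\delta<\kappa$ large enough that the bookkeeping below closes off; this $\delta$ will be the height. The root is the state $\theory$, and the tree order is extension of states. The construction keeps every node consistent, so that $\bot$ is never forced, and it keeps each level of size less than $\kappa$.

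The recursion over levels $\beta<\delta$ processes two kinds of \emph{tasks} at a node $k$ with state $\Delta_k$. An \emph{existential task}, triggered when $\Delta_k\vdash\exists\mathbf{x}\,\phi(\mathbf{x})$, is discharged by adjoining fresh constants $\mathbf{c}$ and passing to $\Delta_k\cup\{\phi(\mathbf{c})\}$; this remains consistent by the existential rule applied to the fresh $\mathbf{c}$. A \emph{disjunction task}, triggered when $\Delta_k\vdash\bigvee_{i<\gamma}\phi_i$, is discharged by branching $k$ into children with states $\Delta_k\cup\{\phi_i\}$ and discarding the inconsistent ones; by the disjunction-elimination rule at least one child survives, so consistency propagates. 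The decisive point is that a given disjunction is resolved \emph{simultaneously for all branches through $k$} in a single step, so its witnesses sit a fixed number of levels ahead of $k$ along every branch. To guarantee that every task generated anywhere is eventually discharged, I would schedule them by the canonical well-ordering $f\colon\kappa\times\kappa\to\kappa$ of Lemma \ref{dwo}: because $f(\beta,\gamma)\ge\gamma$, a task created at stage $\gamma$ is assigned a processing stage $\ge\gamma$, so fairness holds while the height stays below $\delta$. At limit levels one takes the union of states along each branch.

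From the completed tree one reads off the structure of Definition \ref{bethmodelt}: $D(k)$ is the set of constants present at $k$, the relations are interpreted by the atomic sentences lying in $\Delta_k$, and $\Vdash$ is the induced relation. The core of the verification is a truth lemma proved by induction on complexity, $k\Vdash\psi(\mathbf{c})\iff\Delta_k\vdash\psi(\mathbf{c})$, whose disjunctive and existential cases are exactly where the bar conditions of Definition \ref{bethmodelt} with $\alpha<\delta$ are needed; these hold precisely because the scheduling discharges each derivable disjunction and existential at a node within boundedly many levels, uniformly over the branches through that node. Granting the truth lemma, every axiom $\phi\vdash_{\mathbf{x}}\psi$ of $\theory$ is forced and $\bot$ is not, so the structure is a partial Beth model of height $\delta<\kappa$ for $\theory$.

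Two points are delicate, and this is where the hypotheses do their work. At a limit level the union $\Delta_{\lim}=\bigcup_{\xi<\lim}\Delta_\xi$ of consistent states need not be consistent by a naive ``short proof'' argument, since a derivation of $\bot$ may invoke fewer than $\kappa$ premises that are nonetheless cofinal in a limit of small cofinality; it is the limit clause of the transfinite transitivity rule (Definition \ref{sfol}) — equating the formula at a limit node with the meet of its predecessors — that governs these stages and guarantees both that the limit states stay consistent and that following branches transfinitely does not lose the covering information promised lower in the tree. The genuine obstacle, however, is the cardinal accounting: one must check, using the strong-limit and regularity properties of the inaccessible $\kappa$, that the constants, formula-instances, and hence tasks remain fewer than $\kappa$ at every node, so that a single regular $\delta<\kappa$ suffices to discharge all of them along all branches simultaneously and the tree really has height $<\kappa$ with levels of size $<\kappa$. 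Making this accounting close while preserving the uniform-bar requirement is where the main effort lies.
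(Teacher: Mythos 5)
Your route is genuinely different from the paper's --- a syntactic Henkin-style tree of consistent states, rather than the paper's construction of the tree inside the sheaf topos over the syntactic category by pulling back witnessing covering families --- but as written it has a gap at the decisive step. You argue the truth lemma $k\Vdash\psi(\mathbf{c})\iff\Delta_k\vdash\psi(\mathbf{c})$ only in the direction $\Leftarrow$ (fair scheduling yields the bar conditions). The direction $\Rightarrow$ for disjunctions and existentials --- given a bar at a uniform level $\alpha$ above $k$ witnessing $k\Vdash\bigvee_{i}\phi_i$, conclude $\Delta_k\vdash\bigvee_i\phi_i$ --- is where the real work lies, and it is exactly the point at which the transfinite transitivity rule must be invoked as a proof-theoretic principle: the subtree between $k$ and the bar satisfies the rule's premises (each state entails the disjunction of the existentially quantified states of its children, limit states are the meets of their predecessors), and the rule's conclusion $\Delta_k\vdash\bigvee_{l\text{ on the bar}}\exists(\cdots)\bigwedge(\cdots)$, followed by disjunction elimination and the deduction theorem to strip the added constants, is what pushes the information on the bar back down to $k$. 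Without this direction you cannot verify that the axioms of $\theory$ are forced (you need $k\Vdash\phi(\mathbf{c})\Rightarrow\Delta_k\vdash\phi(\mathbf{c})\Rightarrow\Delta_k\vdash\psi(\mathbf{c})\Rightarrow k\Vdash\psi(\mathbf{c})$), nor that the root does not force $\bot$. This is precisely the step the paper gets from the transfinite transitivity property of the $\kappa$-coherent syntactic category: each level of the subtree over a node is again a covering family, so sheaf forcing and Beth forcing agree.

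Relatedly, your appeal to the rule at limit stages proves less than you claim. Applied below a limit level $\lambda$, transfinite transitivity yields that $\Delta_k$ proves the \emph{disjunction} over branches of the limit conjunctions, i.e.\ that \emph{some} branch has a consistent limit state --- not that every branch does; a union of consistent states can prove $\bot$ by an infinitary inference using cofinally many of the added formulas. So you cannot both ``discard the inconsistent ones'' and keep every branch alive to height $\delta$: pruning at limit levels invalidates the uniform bars already established below (they were bars relative to the old set of branches), while retaining such nodes forces you to work with possibly exploding nodes, as Definition \ref{bethmodelt} in fact permits --- and then the truth lemma must be formulated so that it holds at exploding nodes, with $\bot$ read as an atomic sentence those nodes satisfy. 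By contrast, the cardinal accounting you single out as the main difficulty is routine once $\kappa$ is inaccessible.
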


\begin{proof}
 Consider the syntactic category $\mathcal{C}_{\theory}$ of the theory and its conservative embedding in the topos of sheaves with the $\kappa$-coherent coverage, $\mathcal{C}_{\theory} \to \mathcal{S}h(\mathcal{C}_{\theory}, \tau)$. By assumption, the cardinality of the set $S$ of all subformulas of axioms of the theory is $\delta<\kappa$. Consider for any object $A$ the set of basic covering families over $A$ (which are given by jointly cover sets of arrows of cardinality less than $\kappa$) that witness than some subformula in $S$ is forced by $A$. That is, if $S$ contains a subformula $\phi$ which is a (nonempty) disjunction $\bigvee_{i<\gamma}\phi_i(\boldsymbol{\beta})$ (resp. an existential formula $\exists_{\alpha<\gamma}\mathbf{x}_{\alpha}\psi(\mathbf{x}_0, ...,\mathbf{x}_{\alpha}, ..., \boldsymbol{\beta})$), and $A \Vdash \phi(\boldsymbol{\beta})$, we include in the set of coverings one of the form $l_j: C_j \to A$, where for each $j$ we have $C_j \Vdash \phi_{i_j}(\boldsymbol{\beta} l_j)$ for some $i_j<\gamma$ (resp. $C_j \Vdash \psi(\boldsymbol{\beta_0^j}, ..., \boldsymbol{\beta_{\alpha}^j}, ... \boldsymbol{\beta} l_j)$ for some $\boldsymbol{\beta_0^j}, ..., \boldsymbol{\beta_{\alpha}^j}, ...$). In case $\phi$ is $\bot$, or $\phi$ is a conjunctive subformula, or $A \nVdash \phi(\boldsymbol{\beta})$ we just consider the identity arrow as a cover.
 
 The set of covering families over a given object $A$ just specified has thus cardinality $\delta$. By adding identity covers to each set we can assume without loss of generality that $\delta$ is regular and bigger than the maximum arity of function and relation symbols in $S$. Construct a functor from a tree of height $\delta$ to the syntactic category, defined recursively on the levels of the tree. Start with a well-ordering $f: \delta \times \delta \to \delta$ as in Lemma \ref{dwo}, i.e.,  with the property that $f(\beta, \gamma) \geq \gamma$. We describe by an inductive definition how the tree obtained as the image of the functor is constructed.
 
 The root of that tree is the terminal object. Suppose now that the tree is defined for all levels $\lambda<\mu$; we show how to define the nodes of level $\mu$. Suppose first that $\mu$ is a successor ordinal $\mu=\alpha+1$, and let $\alpha=f(\beta, \gamma)$. Since by hypothesis $f(\beta, \gamma) \geq \gamma$, the nodes $\{p_i\}_{i<m_{\gamma}}$ at level $\gamma$ are defined. Consider the morphisms $g_{ij}^{\alpha}$ over $p_i$ assigned to the paths from each of the nodes $p_i$ to the nodes of level $\alpha$. To define the nodes at level $\alpha+1$, take then the $\beta-th$ covering family over each $p_i$ and pull it back along the morphisms $g_{ij}^{\alpha}$. This produces covering families over each node at level $\alpha$, whose domains are then the nodes of level $\alpha+1$. Suppose now that $\mu$ is a limit ordinal. Then each branch of the tree of height $\mu$ already defined determines a diagram, whose limit is defined to be the node at level $\mu$ corresponding to that branch. 
 
 The tree has height $\delta$, and clearly, the morphisms assigned to the paths from any node $p$ till the nodes of level $\alpha$ in the subtree over $p$ form a basic covering family of $p$ because of the transfinite transitivity property. Define now a partial Beth model $B$ over this tree by defining as the underlying set of a node $q$ the set of arrows from $q$ to the object $[x, \top]$ in the syntactic category, and where the function between the underlying set of a node and its successor is given by composition with the corresponding arrow. There is an interpretation of the function symbols in the subset underlying each node which corresponds to composition with the interpretation in the category of the corresponding function symbol. For relations $R$ (including equality), we set by definition $R_q(\mathbf{s}(\boldsymbol{\alpha}))$ if and only if $q$ forces $R(\mathbf{s}(\boldsymbol{\alpha}))$ in the sheaf semantics of the topos, that is, if $q \Vdash R(\mathbf{s}(\boldsymbol{\alpha}))$ (we identify the category with its image through Yoneda embedding). We shall now prove the following:\\
 
$Claim:$ For every node $p$, every tuple $\boldsymbol{\alpha}$ and every formula $\phi \in S$, $p \Vdash \phi(\boldsymbol{\alpha})$ if and only if $p \Vdash_B \phi(\boldsymbol{\alpha})$.\\

The proof goes by induction on $\phi$.
 
 \begin{enumerate}
  \item If $\phi$ is atomic, the result is immediate by definition of the underlying structures on each node.
  
  \item If $\phi=\bigwedge_{i<\gamma}\psi_i$, the result follows easily from the inductive hypothesis, since we have $p \Vdash \bigwedge_{i<\gamma}\psi_i(\boldsymbol{\alpha})$ if and only if $p \Vdash \psi_i(\boldsymbol{\alpha})$ for each $i<\gamma$, if and only if $p \Vdash_B \psi_i(\boldsymbol{\alpha})$ for each $i<\gamma$, if and only if $p \Vdash_B \bigwedge_{i<\gamma}\psi_i(\boldsymbol{\alpha})$.
  
  \item Suppose $\phi= \bigvee_{i<\gamma} \psi_i$. If $p \Vdash \bigvee_{i<\gamma} \psi_i$, then there is a basic covering family $\{f_i: A_i \to p\}_{i<\lambda}$ that appears at some point in the well-ordering, such that for each $i<\lambda$, $A_i \Vdash \psi_{k_i}(\boldsymbol{\alpha} f_i)$ for some $k_i<\gamma$. Now this covering family is pulled back along all paths $g_j$ of a subtree to create the nodes of a certain level of the subtree over $p$. Hence, every node $m_j$ in such a level satisfies $m_j \Vdash \psi_{k_j}(\boldsymbol{\alpha} fg'_j)$ for some $k_j<\gamma$. By inductive hypothesis, $m_j \Vdash_B \psi_{k_j}(\boldsymbol{\alpha} fg'_j)$, and hence we have $p \Vdash_B \bigvee_{i<\gamma} \psi_i$.
  
  Conversely, if $p \Vdash_B \bigvee_{i<\gamma} \psi_i$, there is a level in the subtree over $p$ such that for every node $m_j$ there one has $m_j \Vdash_B \psi_{k_j}(\boldsymbol{\alpha} f_j)$ for some $k_j<\gamma$, so by inductive hypothesis $m_j \Vdash \psi_{k_j}(\boldsymbol{\alpha} f_j)$. Since $\{f_k: m_k \to p\}$ is, by construction, a basic covering family, we must have $p \Vdash \bigvee_{i<\gamma} \psi_i$.
   
  \item Suppose $\phi=\exists \mathbf{x} \psi(\mathbf{x}, \boldsymbol{\alpha})$. If $p \Vdash \exists \mathbf{x} \psi(\mathbf{x}, \boldsymbol{\alpha})$, then there is a basic covering family $\{f_i: A_i \to p\}_{i<\lambda}$ that appears at some point in the well-ordering, such that for each $i$ one has $A_i \Vdash \psi(\boldsymbol{\beta_i}, \boldsymbol{\alpha} f_i)$ for some $\boldsymbol{\beta_i}: A_i \to [\mathbf{x}, \top]$. This basic cover is hence pulled back along all paths $g_j$ of a subtree to create the nodes of a certain level of the subtree over $p$. The nodes $m_{ij}$ in this level will have the property that $m_{ij} \Vdash \psi(\boldsymbol{\beta_i}g'_j, \boldsymbol{\alpha} f_ig'_j)$, and hence, by inductive hypothesis, that $m_{ij} \Vdash_B \psi(\boldsymbol{\beta_i}g'_j, \boldsymbol{\alpha} f_ig'_j)$. By definition, we get thus $p \Vdash_B \exists \mathbf{x} \psi(\mathbf{x}, \boldsymbol{\alpha})$.
  
  Conversely, suppose that $p \Vdash_B \exists \mathbf{x} \psi(\mathbf{x}, \boldsymbol{\alpha})$. Then there is a level in the subtree over $p$ such that for every node $m_k$ there one has $m_k \Vdash_B \psi(\boldsymbol{\beta_k}, \boldsymbol{\alpha} f_k)$ for some $\boldsymbol{\beta_k}: m_k \to [\mathbf{x}, \top]$, and hence, by inductive hypothesis, such that $m_k \Vdash \psi(\boldsymbol{\beta_k}, \boldsymbol{\alpha} f_k)$. Since the arrows $f_k: m_k \to p$ form a basic cover of $p$, we must have $p \Vdash \exists \mathbf{x} \psi(\mathbf{x}, \boldsymbol{\alpha})$.
  
  \end{enumerate}
 
\end{proof}

\begin{proposition}\label{cohcomp}
If $\kappa$ is an inaccessible cardinal, $\kappa$-coherent theories of cardinality less than $\kappa$ are complete with respect to \Sets-valued models.
\end{proposition}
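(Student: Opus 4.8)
The plan is to derive $\Sets$-completeness from the partial Beth model produced in Theorem~\ref{shcomp} by reading off classical models from suitably chosen branches. Arguing contrapositively, suppose $\phi(\mathbf{x}) \vdash_{\mathbf{x}} \psi(\mathbf{x})$ is not provable in $\theory$. I would introduce fresh constants $\mathbf{c}$ for the variables $\mathbf{x}$ and pass to the $\kappa$-coherent theory $\theory_{\phi} = \theory \cup \{\top \vdash \phi(\mathbf{c})\}$, still of cardinality less than $\kappa$; then $\top \vdash \psi(\mathbf{c})$ is not provable in $\theory_{\phi}$. Applying Theorem~\ref{shcomp} to $\theory_{\phi}$, while enlarging the set $S$ of relevant subformulas so that it also contains $\psi(\mathbf{c})$ and its subformulas (this keeps $|S| < \kappa$), yields a partial Beth model $B$ of height $\delta < \kappa$ whose root forces $\phi(\mathbf{c})$ (an axiom of $\theory_{\phi}$) but does not force $\psi(\mathbf{c})$: by conservativity of the embedding $\mathcal{C}_{\theory_\phi} \to \mathcal{S}h(\mathcal{C}_{\theory_\phi}, \tau)$ the non-provability of $\top \vdash \psi(\mathbf{c})$ means the root does not force $\psi(\mathbf{c})$ in the sheaf semantics, and the Claim of Theorem~\ref{shcomp} transfers this to $\Vdash_B$.

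From a branch $b$ of the underlying tree I would build a $\Sigma$-structure $M_b$ whose domain is the colimit $\varinjlim_{k \in b} D(k)$ and whose relations are the unions of the local interpretations $R_k$ along $b$. The technical core is a branch lemma: for every $\kappa$-coherent formula $\theta$ in $S$ and tuple $\mathbf{a}$ from $M_b$,
\[ M_b \models \theta(\mathbf{a}) \iff \theta(\mathbf{a}) \text{ is forced at some node of } b, \]
proved by induction on $\theta$. Here I would arrange in Theorem~\ref{shcomp} that $\delta$ is regular and strictly larger than every arity and every conjunction/disjunction length occurring in $S$ (possible since $\kappa$ is inaccessible, so this supremum stays below $\kappa$); regularity of $\delta$ is exactly what lets one commute ``for all $i<\gamma$, eventually along $b$'' with ``eventually along $b$, for all $i<\gamma$'' in the conjunction step, and a pigeonhole over the cofinal set of witnessing nodes selects a single disjunct or existential witness in the cases of $\bigvee$ and $\exists$. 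Granting the lemma, each $M_b$ is a genuine $\Sets$-valued model of $\theory$: if $M_b \models \theta_1(\mathbf{a})$ for an axiom $\theta_1 \vdash \theta_2$, then $\theta_1(\mathbf{a})$ is forced at some node, axiom-validity forces $\theta_2(\mathbf{a})$ there, and the lemma returns $M_b \models \theta_2(\mathbf{a})$.

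The main obstacle is the right-to-left direction of the branch lemma for $\exists$ (and $\bigvee$): eventual forcing of an existential only supplies witnesses that vary from node to node, whereas $M_b$ needs a single element. This forces me to choose $b$ not arbitrarily but \emph{generically}, so that every existential and disjunction forced along $b$ is actually realized at a later node of $b$. The construction in Theorem~\ref{shcomp} makes this possible, because the tree is obtained by pulling back precisely the covering families that witness each forced disjunction/existential; a transfinite bookkeeping over the $\delta$ levels, driven by the well-ordering $f \colon \delta \times \delta \to \delta$ of Lemma~\ref{dwo} and using regularity of $\delta$, discharges every witnessing obligation. Crucially, I can perform this construction while staying in the region where $\psi(\mathbf{c})$ is \emph{not} forced: if a node $k$ does not force $\psi(\mathbf{c})$, then not every member of a witnessing cover over $k$ can force $\psi(\mathbf{c})$ either — otherwise the local-character (gluing) property of the forcing would force $\psi(\mathbf{c})$ at $k$ — so at each step I may descend to a successor that simultaneously witnesses the pending existential/disjunction and avoids $\psi(\mathbf{c})$. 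The resulting generic branch $b$ never forces $\psi(\mathbf{c})$, so the (easy, always valid) left-to-right direction gives $M_b \not\models \psi(\mathbf{c})$, while genericity gives the right-to-left direction and hence $M_b \models \phi(\mathbf{c})$. Thus $M_b$ is a $\Sets$-valued model of $\theory$ refuting $\phi \vdash_{\mathbf{x}} \psi$, completing the contrapositive.
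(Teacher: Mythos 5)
Your overall architecture is the same as the paper's: run the construction of Theorem \ref{shcomp} with $S$ enlarged to contain the subformulas of $\phi$ and $\psi$, take the directed colimit of the node structures along a branch, and prove the branch lemma by induction. (Note that the right-to-left direction for $\exists$ and $\bigvee$ holds for \emph{every} branch, since the tree is built by pulling back every witnessing cover along every path; no separate genericity bookkeeping is needed.) The genuine gap is in your selection of a branch avoiding $\psi(\mathbf{c})$. The local-character argument gives you a choice only at \emph{successor} levels. At a limit level $\mu<\delta$ the node above your chain is the limit of the diagram you have already committed to; there is exactly one such node, and nothing prevents it from forcing $\psi(\mathbf{c})$ --- or indeed from being the initial object and forcing $\bot$ --- even though no node below it does, since forcing of coherent formulas is not continuous along $\kappa$-chains (a decreasing chain of nonzero subobjects can have zero limit). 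So while the subtree of nodes not forcing $\psi(\mathbf{c})$ has nodes at every level (local character applied to each level-$\mu$ cover), a greedy level-by-level choice is not guaranteed to thread a cofinal branch through it; this is exactly the Aronszajn-tree obstruction, and your construction as written may strand you at a limit stage.

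The repair is to choose the branch globally rather than greedily, which is the contrapositive of what the paper actually does. By the transfinite transitivity property, the level-$\delta$ limit nodes of \emph{all} branches form a single basic covering family of the root. Since the root does not force $\psi(\mathbf{c})$, one application of local character to this family yields a level-$\delta$ node $l_b$ with $l_b \nVdash \psi(\mathbf{c})$; by monotonicity of forcing no node of the branch $b$ below $l_b$ forces $\psi(\mathbf{c})$ either, and your branch lemma then gives $M_b \nvDash \psi(\mathbf{c})$ while $M_b \vDash \phi(\mathbf{c})$. The paper states this in the direct form: every colimit $\mathbf{D_b}$ satisfies $\phi$, hence --- being a possibly exploding positive model --- satisfies $\psi$; so $\psi$ is forced somewhere on every branch, hence at every level-$\delta$ node, hence at the root, hence the sequent is provable. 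With the limit-level step replaced by this single covering argument, your proof goes through and is essentially the paper's.
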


\begin{proof}
 It is enough to prove that every object in the sheaf model forcing the antecedent $\phi(\boldsymbol{\alpha})$ of a valid sequent $\phi \vdash_{\mathbf{x}} \psi$ also forces the consequent $\psi(\boldsymbol{\alpha})$ for every tuple $\boldsymbol{\alpha}$ in the domain. Construct a Beth model over a tree as above but taking as the root of the tree a given object forcing $\phi(\boldsymbol{\alpha})$ and including in the set of formulas $S$ also the subformulas of $\phi$ and $\psi$. For each branch $\mathbf{b}$ of the tree, consider the directed colimit $\mathbf{D_b}$ of all the underlying structures in the nodes of the branch, with the corresponding functions between them. Such a directed colimit is a structure under the definitions:
 
 \begin{enumerate}
 \item for each function symbol $f$, we define $f(\overline{x_0}, ..., \overline{x_{\lambda}}, ...)=\overline{f(x_0, ..., x_{\lambda}, ...)}$ for some representatives $x_i$ of $\overline{x_i}$; in particular, constants $\mathbf{c}$ are interpreted as $\overline{\mathbf{c}}=\overline{c_0}, ..., \overline{c_{\lambda}}, ...$;
 \item for each relation symbol $R$ we define $R(\overline{x_0}, ..., \overline{x_{\lambda}}, ...) \iff R(x_0, ..., x_{\lambda}, ...)$ for some representatives $x_i$ of $\overline{x_i}$.
 \end{enumerate} 
 
  It is easy to check, using the regularity of $\delta$, that the structure is well defined and that the choice of representatives is irrelevant. We will show that such a structure is a (possible exploding) positive $\kappa$-coherent model of the theory satisfying $\phi(\overline{\boldsymbol{\alpha}})$. Indeed, we have the following:
 
 $Claim:$ Given any $\kappa$-coherent formula $\phi(x_0, ..., x_{\lambda}, ...) \in S$, we have $\mathbf{D_b} \vDash \phi(\overline{\alpha_0}, ..., \overline{\alpha_{\lambda}}, ...)$ if and only if for some node $n$ in the path $\mathbf{b}$, the underlying structure $C_n$ satisfies $C_n \Vdash \phi(\alpha_0, ..., \alpha_{\lambda}, ...)$ for some representatives $\alpha_i$ of $\overline{\alpha_i}$.
 
 The proof of the claim is by induction on the complexity of $\phi$. 
 
 \begin{enumerate}
  \item If $\phi$ is $R(t_0, ..., t_{\lambda}, ...)$ or $s=t$ for given terms $t_i, s, t$, the result follows by definition of the structure. 
  
  \item If $\phi$ is of the form $\bigwedge_{i<\gamma} \theta_i$ the result follows from the inductive hypothesis: $\theta_i$ is forced at some node $n_i$ in the path $\mathbf{b}$, and therefore $\bigwedge_{i<\gamma} \theta_i$ will be forced in any upper bound of $\{n_i: i<\gamma\}$ (here we use the regularity of $\delta$).
  
  \item If $\phi$ is of the form $\bigvee_{i<\gamma} \theta_i$ and $\mathbf{D_b} \vDash \phi(\overline{\alpha_0}, ..., \overline{\alpha_s}, ...)$, then we can assume that $\mathbf{D_b} \vDash \theta_i(\overline{\alpha_0}, ..., \overline{\alpha_s}, ...)$ for some $i<\gamma$, so that by inductive hypothesis we get $C_n \Vdash \phi(\alpha_1, ..., \alpha_s, ...)$ for some node $n$ in $\mathbf{b}$. Conversely, if $C_n \Vdash \phi(\alpha_0, ..., \alpha_s, ...)$ for some node $n$ in $\mathbf{b}$, by definition of the forcing there is a node $m$ above $n$ in $\mathbf{b}$ and a function $f_{nm}: D_n \to D_m$ for which $C_m \Vdash \theta_i(f_{nm}(\alpha_0), ..., f_{nm}(\alpha_s), ...)$ for some $i<\gamma$, so that by inductive hypothesis we get $\mathbf{D_b} \vDash \phi(\overline{\alpha_0}, ..., \overline{\alpha_s}, ...)$.
  
  \item Finally, if $\phi$ is of the form $\exists \mathbf{x} \psi(\mathbf{x}, x_0, ..., x_s, ...)$ and $\mathbf{D_b} \vDash \phi(\overline{\alpha_0}, ..., \overline{\alpha_s}, ...)$, then $\mathbf{D_b} \vDash \psi(\boldsymbol{\overline{\alpha}}, \overline{\alpha_0}, ..., \overline{\alpha_s}, ...)$ for some $\boldsymbol{\overline{\alpha}}$, and then $C_n \Vdash \psi(\boldsymbol{\alpha}, \alpha_0, ..., \alpha_s, ...)$ for some node $n$ by inductive hypothesis. Conversely, if $C_n \Vdash \phi(\alpha_0, ..., \alpha_s, ...)$ for some node $n$ in $\mathbf{b}$, then by definition of the forcing there is a node $m$ above $n$ in $\mathbf{b}$ and a function $f_{nm}: D_n \to D_m$ for which $C_m \Vdash \psi(f_{nm}(\boldsymbol{\alpha}), f_{nm}(\alpha_0), ..., f_{nm}(\alpha_s), ...)$, which implies that $\mathbf{D_b} \vDash \psi(\boldsymbol{\overline{\alpha}}, \overline{\alpha_0}, ..., \overline{\alpha_s}, ...)$ and hence $\mathbf{D_b} \vDash \phi(\overline{\alpha_0}, ..., \overline{\alpha_s}, ...)$. 
 \end{enumerate}

 Since $\psi(\overline{\boldsymbol{\alpha}})$ is satisfied in all $\kappa$-coherent models of the theory satisfying $\phi(\overline{\boldsymbol{\alpha}})$, it is satisfied in all models of the form $\mathbf{D_b}$ (even if the structure $\mathbf{D_b}$ is exploding). Hence, $\psi(\boldsymbol{\alpha})$ is forced at a certain node of every branch of the tree. By taking limits over the diagram formed by each branch $\mathbf{b}$ we get nodes at level $\delta$ which also have to force $\psi(\boldsymbol{\alpha})$. Because these nodes form a basic covering family, $\psi(\boldsymbol{\alpha})$ is therefore forced at the root, as we wanted to prove. 
\end{proof}

One can remove the restriction on the cardinality of the theory if one assumes instead that $\kappa$ is a weakly (resp. strongly) compact cardinal:

\begin{proposition}\label{scohcomp}
If $\kappa$ is a weakly (resp. strongly) compact cardinal, $\kappa$-coherent theories of cardinality at most $\kappa$ (resp. of arbitrary cardinality) are complete with respect to \Sets-valued models.
\end{proposition}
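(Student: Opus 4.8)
The plan is to reduce to Proposition \ref{cohcomp} by means of the model-theoretic compactness theorem that characterizes weakly and strongly compact cardinals. Recall that $\kappa$ is weakly compact precisely when it is inaccessible and the compactness theorem for $\mathcal{L}_{\kappa,\kappa}$ holds for theories of cardinality at most $\kappa$ (any such theory all of whose subtheories of cardinality $<\kappa$ have a Tarskian model has itself a model), while $\kappa$ is strongly compact precisely when this compactness theorem holds for $\mathcal{L}_{\kappa,\kappa}$-theories of arbitrary cardinality. Since weakly and strongly compact cardinals are in particular inaccessible, Proposition \ref{cohcomp} is available for every subtheory of $\theory$ of cardinality $<\kappa$. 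I argue by contraposition: assuming the sequent $\phi \vdash_{\mathbf{x}} \psi$ is not provable in $\theory$, I will produce a $\Sets$-valued model of $\theory$ in which it fails.

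First I would observe that, because formal proofs are sets hereditarily of cardinality less than $\kappa$, any derivation uses fewer than $\kappa$ of the axioms of $\theory$; hence the unprovability of the sequent in $\theory$ entails its unprovability in every subtheory $\theory_0 \subseteq \theory$ of cardinality $<\kappa$. Next I would pass to full $\mathcal{L}_{\kappa,\kappa}$ over the signature of $\theory$ expanded by a tuple of new constants $\mathbf{c}$ matching the sorts of the context $\mathbf{x}$. Since $\Sets$ is Boolean, a $\Sets$-valued $\kappa$-coherent model of $\theory$ is exactly a classical Tarskian structure validating the translated sequents (using that $\phi \vdash_{\mathbf{y}} \psi$ may be read as $\forall \mathbf{y}(\phi \to \psi)$, as noted after Definition \ref{sfol}). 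Form the theory $\Sigma$ consisting of the translations of all axioms of $\theory$ together with the two sentences $\phi(\mathbf{c})$ and $\neg\psi(\mathbf{c})$. A Tarskian model of $\Sigma$ is precisely a $\Sets$-valued model of $\theory$ equipped with a witness tuple at which $\phi$ holds and $\psi$ fails, i.e.\ a countermodel to the sequent. As the context $\mathbf{x}$, and hence the set of new constants, has size $<\kappa$, the cardinality of $\Sigma$ is at most $\kappa$ in the weakly compact case and arbitrary in the strongly compact case.

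To invoke compactness it then remains to verify that every subset $\Sigma_0 \subseteq \Sigma$ of cardinality $<\kappa$ is satisfiable. Any such $\Sigma_0$ is contained in $\theory_0^{\mathrm{tr}} \cup \{\phi(\mathbf{c}),\, \neg\psi(\mathbf{c})\}$ for some subtheory $\theory_0 \subseteq \theory$ of cardinality $<\kappa$, where $\theory_0^{\mathrm{tr}}$ denotes the translated axioms. By the first step the sequent is not provable in $\theory_0$, so by Proposition \ref{cohcomp} it is not valid in all $\Sets$-valued models of $\theory_0$; a countermodel $M_0$ supplies a tuple $\mathbf{a}$ with $\mathbf{a} \in \sem{\phi}_{M_0} \setminus \sem{\psi}_{M_0}$, and interpreting $\mathbf{c}$ as $\mathbf{a}$ turns $M_0$ into a model of $\theory_0^{\mathrm{tr}} \cup \{\phi(\mathbf{c}),\, \neg\psi(\mathbf{c})\}$, hence of $\Sigma_0$. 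Thus every subset of $\Sigma$ of size $<\kappa$ is satisfiable, and by weak (resp.\ strong) compactness $\Sigma$ itself has a model. That model is a $\Sets$-valued model of $\theory$ refuting $\phi \vdash_{\mathbf{x}} \psi$, which completes the contrapositive and establishes completeness.

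The main obstacle is conceptual packaging rather than computation: one must correctly identify weakly and strongly compact cardinals with exactly those for which $\mathcal{L}_{\kappa,\kappa}$ enjoys the relevant compactness theorem, and verify that $\Sets$-valued $\kappa$-coherent models coincide with classical Tarskian models of the translated theory so that this compactness theorem is applicable. The accompanying cardinality bookkeeping—that adjoining fewer than $\kappa$ constants together with the two witnessing sentences preserves the bound ``at most $\kappa$'' in the weakly compact case and leaves ``arbitrary'' untouched in the strongly compact case—must be checked but is routine.
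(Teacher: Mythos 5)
Your proposal is correct and is essentially the paper's own argument: adjoin fresh constants $\mathbf{c}$ with the axioms forcing $\phi(\mathbf{c})$ and refuting $\psi(\mathbf{c})$, use Proposition \ref{cohcomp} to get models of all subtheories of size $<\kappa$, and conclude by the compactness characterization of weakly (resp.\ strongly) compact cardinals. The only cosmetic difference is that the paper keeps the two added axioms in sequent form ($\top \vdash \phi(\mathbf{c})$ and $\psi(\mathbf{c}) \vdash \bot$) rather than passing to full $\mathcal{L}_{\kappa,\kappa}$ sentences, and your hereditarily-small-proofs remark is unnecessary since unprovability in $\theory$ trivially passes to subtheories.
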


\begin{proof}
 Suppose that the sequent $\phi \vdash_{\mathbf{x}} \psi$ is valid in every model of a certain theory but not provable. Then it is not provable in any subtheory of cardinality less than $\kappa$. Therefore, if we add to the language new constants $\mathbf{c}$ and axioms $\top \vdash \phi(\mathbf{c})$ and $\psi(\mathbf{c}) \vdash \bot$, any subtheory of cardinality less than $\kappa$ together with these two new axioms has, by Proposition \ref{cohcomp}, a model. Since $\kappa$ is weakly (resp. strongly) compact, the whole theory has a model, which provides a model for the original theory where $\phi \vdash_{\mathbf{x}} \psi$ is not valid.
\end{proof}

\begin{rmk}
 In the particular case when $\kappa=\omega$, Proposition \ref{scohcomp} reduces to the completeness theorem for coherent logic. The proof given is thus an alternative path to the usual categorical treatment of Henkinization that we see in \cite{johnstone}, D 1.5, and is closer to the methods in \cite{clr}.
\end{rmk}

Incidentally, a slight modification on the definition of witnessing covers that build the underlying tree of the partial Beth model in Theorem \ref{shcomp} yields a full model for $\kappa$-first-order theories:

\begin{thm}
 Let $\kappa$ be an inaccessible cardinal. Then any $\kappa$-first-order theory of cardinality at most $\kappa$ has a universal\footnote{By universal we mean a conservative model, that is, one in which true sequents are provable.} Beth model.
\end{thm}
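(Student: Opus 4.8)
The plan is to adapt the construction of Theorem \ref{shcomp}, now exploiting that for a $\kappa$-first-order theory the Yoneda embedding $\mathcal{C}_{\theory} \to \mathcal{S}h(\mathcal{C}_{\theory}, \tau)$ is a \emph{conservative} $\kappa$-Heyting functor (Lemma \ref{shemb}), so that in particular it preserves $\forall$ and $\to$ and sheaf forcing reflects provability. Since $\theory$ has cardinality at most $\kappa$ and $\kappa$ is inaccessible, hence a strong limit, the set $S$ of all formulas-in-context over the relevant signature has cardinality at most $\kappa$; I would take $S$ to be this whole set and let the height of the tree be $\kappa$ itself, which is exactly what the definition of a full Beth model demands (as opposed to the height $\delta<\kappa$ produced by the cardinality-$<\kappa$ hypothesis in Theorem \ref{shcomp}). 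As before I would build a functor from a tree into $\mathcal{C}_{\theory}$ by transfinite recursion scheduled along a well-ordering $f : \kappa \times \kappa \to \kappa$ as in Lemma \ref{dwo}, take the objects (identified with their representables) as the nodes, and define $\Vdash_B$ from the sheaf forcing on atomic formulas.

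The modification to the witnessing covers is the heart of the matter. For the positive connectives $\bigvee$ and $\exists$ I would keep the witnessing covers of Theorem \ref{shcomp}. To handle $\to$ and $\forall$, whose Beth clauses quantify over \emph{all} future nodes $k' \geq k$ rather than over a bar, I would enlarge the family of scheduled tasks over each node $p$ (with underlying object $C_p$) to include, for every morphism $g : C \to C_p$ of $\mathcal{C}_{\theory}$, a singleton extension realizing $g$; thus every morphism into $C_p$ appears as the transition map $D_{pk'}$ of some descendant $k' \geq p$. Because there are at most $\kappa$ morphisms and at most $|S| \le \kappa$ witnessing covers over each node, all tasks can be indexed by ordinals below $\kappa$ and processed via $f$; each task being a family of size $<\kappa$, the inaccessibility of $\kappa$ guarantees that every level of the resulting tree has size $<\kappa$, that its height is $\kappa$, and that its branches have size $\kappa$, as required by Definition \ref{bethmodelt}.

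The core is then the same Claim as in Theorem \ref{shcomp}: for every node $p$, every tuple $\boldsymbol\alpha$ and every $\phi \in S$, one has $p \Vdash \phi(\boldsymbol\alpha)$ in the sheaf model if and only if $p \Vdash_B \phi(\boldsymbol\alpha)$, proved by induction on $\phi$. The atomic, $\bigwedge$, $\bigvee$ and $\exists$ cases are exactly those of Theorem \ref{shcomp}. For $\phi = \psi \to \theta$ the forward direction uses persistence of sheaf forcing, since any $k' \geq p$ carries a morphism $C_{k'} \to C_p$ and the inductive hypothesis applies; the converse uses the modification, as any sheaf counterexample $g : C \to C_p$ with $C \Vdash \psi$ and $C \nVdash \theta$ is realized by a node $k' \geq p$, contradicting the Beth clause. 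The case $\phi = \forall \mathbf{x}\,\psi$ is analogous, matching the sheaf quantification over pairs $(g, \mathbf{c})$ with the Beth quantification over $k' \geq p$ and $\mathbf{e} \subseteq D_{k'}$, since the underlying set of a node is by construction the set of sections $C_{k'} \to [x,\top]$.

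The hard part will be the interaction between the two kinds of tasks: adding non-covering singleton extensions threatens the bar property on which the $\bigvee$ and $\exists$ clauses rest, since a level of the subtree over $p$ need no longer be a cover of $C_p$. I expect to resolve this by noting that sheaf forcing is persistent, so a disjunction or existential forced at $p$ is re-forced along any extension and re-witnessed by the witnessing task scheduled over the corresponding descendant; the regularity of $\kappa$ then supplies a single ordinal $\alpha<\kappa$ bounding, along every branch, the level at which a witness appears, which is precisely the uniform bar required by Definition \ref{bethmodelt}. Once the Claim is established, conservativity transfers: a sequent holds in the Beth model if and only if it holds at every node of the sheaf model, if and only if, by conservativity of the Yoneda embedding (Lemma \ref{shemb}) together with Proposition \ref{thmp2}, it is provable in $\theory$, so the constructed Beth model is universal.
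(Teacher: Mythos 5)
Your overall strategy---enlarge the scheduled tasks so that arbitrary morphisms into a node get realized in the tree, prove the same Claim as in Theorem \ref{shcomp} by induction with new cases for $\to$ and $\forall$, and then transfer conservativity through Lemma \ref{shemb} and Proposition \ref{thmp2}---is the right one, and your forward directions for $\to$ and $\forall$ are essentially the paper's. But there is a genuine gap in how you realize arbitrary morphisms: you schedule, for each $g : C \to C_p$, a \emph{non-covering singleton} extension. This destroys the invariant that the composites from the nodes of any fixed level of the subtree over $p$ down to $p$ form a basic covering family of $C_p$ (in Theorem \ref{shcomp} this invariant follows from the transfinite transitivity property precisely because every scheduled family is jointly covering). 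That invariant is what the direction ``$p \Vdash_B \phi \Rightarrow p \Vdash \phi$'' of the Claim rests on for atomic formulas, disjunctions and existentials: one passes from ``every node of some level sheaf-forces a disjunct or a witness'' to ``$p$ sheaf-forces the formula'' by the local character of sheaf forcing, which requires that level to cover $C_p$. Your proposed repair (persistence plus regularity of $\kappa$ supplying a uniform bar) addresses only the sheaf-to-Beth direction, which was never in danger; and in any case the regularity argument does not go through as stated, since the set of branches over $p$ can have cardinality greater than $\kappa$, so one cannot take a supremum of witness levels over all branches---in the paper the uniformity of the bar is automatic because the witnessing cover is scheduled once and pulled back to constitute an entire level. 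Since the Beth-to-sheaf direction is also invoked inside your own inductive steps for $\to$ and $\forall$ (to convert $\Vdash_B \theta$ back into $\Vdash \theta$), and is what conservativity of the final model depends on, the induction collapses.

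The fix is the one the paper uses: schedule over each object \emph{all} basic covering families (there are $\kappa$ many by inaccessibility), so that in particular every arrow $f : c \to p$ occurs inside the jointly covering family $\{f, \mathrm{id}_p\}$. Every level of the subtree over $p$ then remains a basic cover of $C_p$, so the atomic, $\bigvee$ and $\exists$ cases go through verbatim; and the converse directions for $\to$ and $\forall$ are recovered not by having $c$ itself appear as a node, but by observing that the pulled-back arrows $f^*(g_j) : g_j^*(c) \to c$ form a basic cover of $c$ (pullback stability of covers), so that Beth-forcing at the nodes $g_j^*(c)$, converted to sheaf-forcing by the inductive hypothesis, transfers to sheaf-forcing at $c$ by local character.
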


\begin{proof}
 We consider, for any object, a well-ordering of the set of basic covering families over the object, which are given by jointly covering sets of arrows (of cardinality less than $\kappa$), that is, we include in the set $S$ all possible covering families and not just the ones that witness some disjunction or existential formula. Since $\kappa$ is inaccessible, it is easy to see that this set has cardinality $\kappa$. We define a tree of height $\kappa$ as before, using a well-ordering $f: \kappa \times \kappa \to \kappa$ as in Lemma \ref{dwo}. There is an obvious interpretation of the function symbols in the subset underlying each node, as before. If for relations $R$ (including equality), we set by definition $R_q(\mathbf{s}(\boldsymbol{\alpha}))$ if and only if $q$ forces $R(\mathbf{s}(\boldsymbol{\alpha}))$ in the sheaf semantics of the topos (we identify the category with its image through Yoneda embedding), we will have, as before, the:
 
 $Claim:$ For every node $p$, every tuple $\boldsymbol{\alpha}$ and every formula $\phi \in S$, $p \Vdash \phi(\boldsymbol{\alpha})$ if and only if $p \Vdash_B \phi(\boldsymbol{\alpha})$.\\

The proof is again by induction on $\phi$, and the steps for the atomic formulas, conjunctions, disjunctions and existential quantification are the same as before. We need only to consider implication and universal quantification.

\begin{enumerate}
  \item Suppose $\phi=\psi \to \theta$. If $p \Vdash \psi(\boldsymbol{\alpha}) \to \theta(\boldsymbol{\alpha})$, for every $f: c \to p$ in the category one has $c \Vdash \psi(\boldsymbol{\alpha} f) \implies c \Vdash \theta(\boldsymbol{\alpha} f)$. In particular, this holds when $c$ is any node $q$ in the tree above $p$, and by inductive hypothesis one has $q \Vdash_B \psi(\boldsymbol{\alpha} f) \implies q \Vdash_B \theta(\boldsymbol{\alpha} f)$ for all such nodes. Therefore, $p \Vdash_B \psi(\boldsymbol{\alpha}) \to \theta(\boldsymbol{\alpha})$.
  
  Conversely, suppose that $p \Vdash_B \psi(\boldsymbol{\alpha}) \to \theta(\boldsymbol{\alpha})$ and consider an arrow $f: c \to p$. Together with the identity, this arrow forms a covering family which appears at some point in the well-ordering and is hence pulled back along paths $g_j$ of a subtree to build the next level of the subtree over $p$. Suppose that $c \Vdash \psi(\boldsymbol{\alpha})$; then $g_j^*(c) \Vdash \psi(\boldsymbol{\alpha} g'_j)$, so by inductive hypothesis one has $g_j^*(c) \Vdash_B \psi(\boldsymbol{\alpha} g'_j)$. Therefore, we get $g_j^*(c) \Vdash_B \theta(\boldsymbol{\alpha} g'_j)$, and using once more the inductive hypothesis, $g_j^*(c) \Vdash \theta(\boldsymbol{\alpha} g'_j)$. But $g'_j=f^*(g_j): g_j^*(c) \to c$ is a basic cover of $c$ (since the $g_j$ form a basic cover of $p$), and hence we will have $c \Vdash \theta(\boldsymbol{\alpha})$. We have, thus, proved that $p \Vdash \psi(\boldsymbol{\alpha}) \to \theta(\boldsymbol{\alpha})$
  
  \item Suppose $\phi=\forall \mathbf{x} \psi(\mathbf{x}, \boldsymbol{\alpha})$. If $p \Vdash \forall \mathbf{x} \psi(\mathbf{x}, \boldsymbol{\alpha})$, for every $f: c \to p$ in the category and every $\boldsymbol{\beta}: c \to [\mathbf{x}, \top]$ one has $c \Vdash \psi(\boldsymbol{\beta}, \boldsymbol{\alpha})$. In particular, this holds when $c$ is any node $q$ in the tree above $p$, and by inductive hypothesis one has $q \Vdash_B \psi(\boldsymbol{\beta}, \boldsymbol{\alpha})$ for all such nodes. Therefore, $p \Vdash_B \forall \mathbf{x} \psi(\mathbf{x}, \boldsymbol{\alpha})$.
  
  Conversely, suppose that $p \Vdash_B \forall \mathbf{x} \psi(\mathbf{x}, \boldsymbol{\alpha})$ and consider an arrow $f: c \to p$. Together with the identity, this arrow forms a covering family which appears at some point in the well-ordering and is hence pulled back along the paths $g_j$ of a subtree to build the next level of the subtree over $p$. Suppose we have some $\boldsymbol{\beta}: c \to [\mathbf{x}, \top]$; then we have arrows $\boldsymbol{\beta} f^*(g_j): g_j^*(c) \to [\mathbf{x}, \top]$, and by definition we must have $g_j^*(c) \Vdash_B \psi(\boldsymbol{\beta} f^*(g_j), \boldsymbol{\alpha} fg'_j)$, so by inductive hypothesis one has $g_j^*(c) \Vdash \psi(\boldsymbol{\beta} f^*(g_j), \boldsymbol{\alpha} fg'_j)$. But $f^*(g_j): g_j^*(c) \to c$ is a basic cover of $c$ (since the $g_j$ form a basic cover of $p$), and hence we will have $c \Vdash \psi(\boldsymbol{\beta}, \boldsymbol{\alpha})$. We have thus proved that $p \Vdash \forall \mathbf{x} \psi(\mathbf{x}, \boldsymbol{\alpha})$.
 \end{enumerate}

This finishes the proof.
\end{proof}

We conclude with an equivalent characterization of weak (resp. strong) compactness.

\begin{defs}
A $\kappa$-complete lattice will be called $\kappa$-distributive if it satisfies the intuitionistic distributivity law, i.e., if $a \wedge \bigvee_{i<\gamma} b_{i} \vdash \bigvee_{i<\gamma}a \wedge b_{i}$ for every $\gamma<\kappa$, and if the following propositional version of transfinite transitivity property holds:   for every $\gamma<\kappa$ and all elements $\{a_f: f \in \gamma^{\beta}, \beta<\gamma\}$  such that 

$$a_{f} \leq \bigvee_{g \in \gamma^{\beta+1}, g|_{\beta}=f} a_{g}$$
\noindent for all $f \in \gamma^{\beta}, \beta<\gamma$, and 

$$a_{f} = \bigwedge_{\alpha<\beta}a_{f|_{\alpha}}$$
\noindent for all limit $\beta$, $f \in \gamma^{\beta}, \beta<\gamma$, we have that 

$$a_{\emptyset} \leq \bigvee_{f \in \gamma^{\gamma}} \bigwedge_{\beta<\gamma}a_{f|_{\beta}}.$$

A $\kappa$-complete filter in the lattice is a filter $\mathcal{F}$ such that whenever $a_i \in \mathcal{F}$ for every $i \in I$, $|I|<\kappa$, then $\bigwedge_{i \in I}a_i \in \mathcal{F}$. Dually, a $\kappa$-complete ideal in the lattice is an ideal $\mathcal{I}$ such that whenever $a_i \in \mathcal{I}$ for every $i \in I$, $|I|<\kappa$, then $\bigvee_{i \in I}a_i \in \mathcal{I}$. A $\kappa$-prime filter in the lattice is a filter $\mathcal{F}$ such that whenever $\bigvee_{i \in I}a_i$ is in $\mathcal{F}$ for $|I|<\kappa$ then $a_i \in \mathcal{F}$ for some $i \in I$.
\end{defs}

\begin{lemma}\label{quotient}
 Given any proper $\kappa$-complete filter $\mathcal{F}$ in a $\kappa$-complete, $\kappa$-distributive lattice $\mathcal{L}$, there exists a surjective morphism $\theta: \mathcal{L} \to \mathcal{K}$ into a $\kappa$-complete, $\kappa$-distributive lattice $\mathcal{K}$ such that $\theta^{-1}(1)=\mathcal{F}$.
\end{lemma}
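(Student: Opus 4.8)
The plan is to realize $\mathcal{K}$ as a quotient of $\mathcal{L}$ by the lattice congruence associated with $\mathcal{F}$. I would define a relation on $\mathcal{L}$ by declaring $a \equiv b$ precisely when there exists $c \in \mathcal{F}$ with $a \wedge c = b \wedge c$. First I would check that $\equiv$ is an equivalence relation: reflexivity and symmetry are immediate, and transitivity follows by taking the meet of the two witnessing elements, which again lies in $\mathcal{F}$ since $\mathcal{F}$ is closed under binary meets. I would then set $\mathcal{K} = \mathcal{L}/{\equiv}$ and let $\theta$ be the canonical projection. The identity $\theta^{-1}(1) = \mathcal{F}$ is easy: if $a \in \mathcal{F}$ then $a \wedge a = 1 \wedge a$ witnesses $a \equiv 1$, while conversely $a \equiv 1$ yields $c \in \mathcal{F}$ with $c = a \wedge c \leq a$, so $a \in \mathcal{F}$ because $\mathcal{F}$ is upward closed.

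The bulk of the work is to show that $\equiv$ is compatible with all the lattice operations, so that $\mathcal{K}$ inherits the structure of a $\kappa$-complete lattice and $\theta$ becomes a surjective morphism. Given a family $\{a_i \equiv b_i\}_{i \in I}$ with $|I| < \kappa$, I would choose witnesses $c_i \in \mathcal{F}$ and form the single witness $c = \bigwedge_{i \in I} c_i$, which belongs to $\mathcal{F}$ precisely because $\mathcal{F}$ is $\kappa$-complete. Then $a_i \wedge c = b_i \wedge c$ for every $i$, and meeting (resp.\ joining) over $i$ gives $\bigwedge_i a_i \equiv \bigwedge_i b_i$ (resp.\ $\bigvee_i a_i \equiv \bigvee_i b_i$); the join case uses the intuitionistic distributivity law $c \wedge \bigvee_i a_i = \bigvee_i (c \wedge a_i)$ available in $\mathcal{L}$. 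Consequently the operations $\bigwedge_i \theta(a_i) := \theta(\bigwedge_i a_i)$ and $\bigvee_i \theta(a_i) := \theta(\bigvee_i a_i)$ are well defined, and a routine verification (again invoking a common witness of size $<\kappa$) shows they are genuinely the infimum and supremum in the quotient order; hence $\mathcal{K}$ is $\kappa$-complete and $\theta$ preserves $\kappa$-meets and $\kappa$-joins. The intuitionistic distributivity law in $\mathcal{K}$ then follows at once by applying $\theta$ to the corresponding law in $\mathcal{L}$.

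The one genuinely delicate point, and the step I expect to be the main obstacle, is verifying the propositional transfinite transitivity property in $\mathcal{K}$. The difficulty is that a family $\{\bar{a}_f : f \in \gamma^{\beta}, \beta < \gamma\}$ in $\mathcal{K}$ satisfying the premises only satisfies them up to the congruence when lifted to arbitrary representatives $a_f \in \mathcal{L}$, whereas the property in $\mathcal{L}$ must be applied to families satisfying the premises on the nose. To get around this I would exploit that, since $\kappa$ is inaccessible (hence a strong limit), the tree $\gamma^{<\gamma}$ has fewer than $\kappa$ nodes and $\gamma^{\gamma}$ has fewer than $\kappa$ branches. Thus there are $<\kappa$ many instances of the premise inequalities $\bar{a}_f \leq \bigvee_{g} \bar{a}_g$ and the limit equalities $\bar{a}_f = \bigwedge_{\alpha<\beta}\bar{a}_{f|_\alpha}$; choosing a witness in $\mathcal{F}$ for each and taking their meet produces a single $c \in \mathcal{F}$ witnessing all of them simultaneously. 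Replacing each representative by $a'_f := a_f \wedge c$ (which still represents $\bar{a}_f$ because $\theta(c) = 1$) and using distributivity to commute $c$ past the joins, one checks that the primed family satisfies $a'_f \leq \bigvee_{g} a'_g$ and $a'_f = \bigwedge_{\alpha<\beta} a'_{f|_\alpha}$ exactly in $\mathcal{L}$. Applying the propositional transfinite transitivity property of $\mathcal{L}$ to the $a'_f$ gives $a'_{\emptyset} \leq \bigvee_{f \in \gamma^\gamma}\bigwedge_{\beta<\gamma} a'_{f|_\beta}$, and applying the operation-preserving morphism $\theta$ (here using $|\gamma^\gamma| < \kappa$, so that this join is a legitimate $\kappa$-join that $\theta$ respects) yields the desired $\bar{a}_{\emptyset} \leq \bigvee_{f \in \gamma^\gamma}\bigwedge_{\beta<\gamma}\bar{a}_{f|_\beta}$, completing the proof.
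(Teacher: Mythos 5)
Your proof is correct, and it constructs the same lattice $\mathcal{K}$ as the paper, reached by a slightly different route. The paper first treats a principal filter generated by $u$, with the congruence $a \cong b \iff a \wedge u = b \wedge u$, and then obtains the general case as the $\kappa$-filtered colimit $\lim_{u \in \mathcal{F}}\mathcal{L}/u$; unwinding that colimit gives exactly your congruence $a \equiv b \iff \exists c \in \mathcal{F}\,(a \wedge c = b \wedge c)$, and your device of replacing fewer than $\kappa$ many witnesses by their meet is precisely what the $\kappa$-filteredness of the colimit (i.e.\ the $\kappa$-completeness of $\mathcal{F}$) accomplishes there. What your version buys is that it makes explicit the step the paper leaves as an assertion, namely that the quotient still satisfies the propositional transfinite transitivity property: your observation that one instance of that property has at most $2^{\gamma}<\kappa$ premises (using that $\kappa$ is a strong limit, which is implicit in the paper's standing assumption of inaccessibility and is needed anyway for $\bigvee_{f \in \gamma^{\gamma}}$ to be a legitimate $\kappa$-join), so that a single $c \in \mathcal{F}$ witnesses all of them and can be absorbed into the representatives $a'_f = a_f \wedge c$ before invoking the property in $\mathcal{L}$, is the honest content behind the paper's one-line claim that the structure is ``inherited.'' The paper's two-step decomposition is marginally more modular --- the principal case uses no completeness of $\mathcal{F}$ at all --- but for the application in Proposition \ref{filter} the two arguments are interchangeable.
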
 

\begin{proof}
The proof is an adaptation of that exposed e.g. in \cite{maclane-moerdijk}, V.9,  for Heyting algebras. Consider first the case in which the filter $\mathcal{F}$ is principal, given by some element $u$ as the set $\{u': u' \geq u\}$. Define now an equivalence relation on $\mathcal{L}$ by setting $a \cong b \iff a \wedge u=b \wedge u$. Denoting by $a_u$ the equivalence class of $a$, we can give the quotient $\mathcal{L}/u$ a structure of a $\kappa$-complete lattice with the partial order given by $a_u \leq b_u \iff a \wedge u \leq b \wedge u$. On the resulting poset of equivalence classes, the meet and the join are given as follows:

$$\bigwedge_{i \in \gamma}a^i_u =\left(\bigwedge_{i \in \gamma}a^i\right)_u$$
$$\bigvee_{i \in \gamma}a^i_u =\left(\bigvee_{i \in \gamma}a^i\right)_u$$
\\
That these operations are well defined follows from the fact that $\mathcal{L}$ is $\kappa$-distributive. With these operations, it also follows that $\mathcal{L}/u$ inherits the $\kappa$-distributivity.  

Now, if we had $u \leq v \leq w$ in $\mathcal{L}$, there are evident morphisms $\mathcal{L}/w \to \mathcal{L}/v \to \mathcal{L}/u$ with composite $\mathcal{L}/w \to \mathcal{L}/u$. Then for a general filter $\mathcal{F}$, we just define $\mathcal{K}$ as the colimit:

$$\lim_{u \in \mathcal{F}}\mathcal{L}/u.$$
\\
Since $\mathcal{F}$ is $\kappa$-complete, it follows that the colimit is $\kappa$-filtered, and hence it inherits all the structure of $\kappa$-complete, $\kappa$-distributive lattice of each $\mathcal{L}/u$. This finishes the proof.
\end{proof}

We can state now:
 
\begin{proposition}\label{filter}
 A cardinal $\kappa$ is weakly (resp. strongly) compact if and only if, given any proper $\kappa$-complete ideal $\mathcal{I}$ in a $\kappa$-complete, $\kappa$-distributive lattice $\mathcal{L}$ of cardinality at most $\kappa$ (resp. of arbitrary cardinality), and given a $\kappa$-complete filter $\mathcal{F}$ disjoint from $\mathcal{I}$, there exists a $\kappa$-complete, $\kappa$-prime filter containing $\mathcal{F}$ and disjoint from $\mathcal{I}$.
\end{proposition}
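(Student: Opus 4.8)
The plan is to read the statement through the dictionary between $\kappa$-distributive lattices and \emph{propositional} $\kappa$-coherent theories: a $\kappa$-complete, $\kappa$-distributive lattice $\mathcal{L}$ is (the Lindenbaum algebra of) the propositional $\kappa$-coherent theory $T_{\mathcal{L}}$ whose proposition symbols are the elements $a\in\mathcal{L}$, with axioms $P_{\bigwedge_{i<\gamma}a_i}\dashv\vdash\bigwedge_{i<\gamma}P_{a_i}$ and $P_{\bigvee_{i<\gamma}a_i}\dashv\vdash\bigvee_{i<\gamma}P_{a_i}$ for all $<\kappa$-families. Here $\kappa$-distributivity of $\mathcal{L}$ is exactly what makes the small distributivity and transfinite transitivity rules sound under $P_a\mapsto a$. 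A $\Sets$-valued model of such a theory assigns to each $P_a$ a subobject of the terminal object, i.e.\ a truth value, so it is precisely a $\kappa$-complete, $\kappa$-prime filter of $\mathcal{L}$. Both directions of the proposition then compare the large-cardinal property of $\kappa$ with the existence of such filters. Throughout I use the standing hypothesis that $\kappa$ is inaccessible, which (being a strong limit) gives $|\gamma^{\gamma}|<\kappa$ for $\gamma<\kappa$, so that every join appearing in the definition of $\kappa$-distributivity is a $<\kappa$-join that genuinely lives in $\mathcal{L}$.

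For the forward implication, assume $\kappa$ is weakly (resp.\ strongly) compact. Given $\mathcal{L}$, the proper $\kappa$-complete ideal $\mathcal{I}$ and the $\kappa$-complete filter $\mathcal{F}$ disjoint from it, I would first apply Lemma~\ref{quotient} to $\mathcal{F}$ to obtain a $\kappa$-complete, $\kappa$-distributive quotient $\theta\colon\mathcal{L}\to\mathcal{K}$ with $\theta^{-1}(1)=\mathcal{F}$. Since $\theta$ preserves $<\kappa$-joins and meets, the image $\mathcal{J}=\theta(\mathcal{I})$ is again a proper $\kappa$-complete ideal (properness because $1\in\mathcal{J}$ would force some element of $\mathcal{I}$ into $\theta^{-1}(1)=\mathcal{F}$). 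This reduces the problem to producing a $\kappa$-complete, $\kappa$-prime filter of $\mathcal{K}$ disjoint from $\mathcal{J}$, whose $\theta$-preimage is then the filter sought. Consider the theory $T_{\mathcal{K}}$ augmented by the axioms $P_c\vdash\bot$ for $c\in\mathcal{J}$: its $\Sets$-models are exactly the desired filters, its cardinality is $\le\kappa$ (resp.\ arbitrary) since $|\mathcal{K}|\le|\mathcal{L}|$, and I claim it is $<\kappa$-satisfiable. Every subtheory of cardinality $<\kappa$ mentions only a sub-$\kappa$-complete-lattice $\mathcal{K}_0$ of size $<\kappa$ and a $<\kappa$-subfamily $\mathcal{J}_0\subseteq\mathcal{J}$; as $\bigvee\mathcal{J}_0\in\mathcal{J}$ is $\neq 1$, a maximal $\kappa$-complete filter of $\mathcal{K}_0$ disjoint from $\mathcal{J}$ exists by Zorn (the $<\kappa$-size keeps $\kappa$-completeness stable along chains), and the frame law supplied by $\kappa$-distributivity forces any such maximal filter to be $\kappa$-prime. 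Weak (resp.\ strong) compactness then upgrades $<\kappa$-satisfiability to satisfiability of the whole theory; this is exactly the mechanism of Proposition~\ref{scohcomp}, which one may cite directly, and it yields the filter.

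For the converse, assume the filter-extension property and recall the classical characterizations (see \cite{jechst}) of the two compactness notions through $\kappa$-complete filters: $\kappa$ is strongly compact iff every $\kappa$-complete filter on an arbitrary set extends to a $\kappa$-complete ultrafilter, and $\kappa$ is weakly compact iff every $\kappa$-complete filter on a $\kappa$-complete field of sets of cardinality $\le\kappa$ does so. I would verify first that a $\kappa$-complete field of sets $\mathcal{B}\subseteq\mathcal{P}(S)$ is a $\kappa$-complete, $\kappa$-distributive lattice: the frame law and the propositional transfinite transitivity hold pointwise in $\mathcal{P}(S)=\mathbf{2}^{S}$, and—because $\kappa$ is inaccessible—every join occurring in these laws is over an index set of size $<\kappa$, hence is computed inside $\mathcal{B}$. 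Given a $\kappa$-complete filter $\mathcal{F}$ on $\mathcal{B}$, take the dual ideal $\mathcal{I}=\{a\in\mathcal{B}:S\setminus a\in\mathcal{F}\}$, which is a proper $\kappa$-complete ideal disjoint from $\mathcal{F}$. The hypothesis furnishes a $\kappa$-complete, $\kappa$-prime filter $\mathcal{G}\supseteq\mathcal{F}$ disjoint from $\mathcal{I}$, and in a Boolean algebra a proper $\kappa$-prime filter is automatically a $\kappa$-complete ultrafilter (for each $a$, $a\vee\neg a=1\in\mathcal{G}$ forces exactly one of $a,\neg a$ into $\mathcal{G}$). Thus $\mathcal{F}$ extends to a $\kappa$-complete ultrafilter, and reading this at the appropriate cardinality gives weak (resp.\ strong) compactness.

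The main obstacle is the satisfiability step in the forward direction. Unlike the Boolean case, a $\kappa$-distributive lattice carries no complementation, so one cannot simply collapse $\mathcal{J}$ to $0$ by a quotient—the naive ideal quotient fails to preserve $<\kappa$-meets, since the required co-frame law is unavailable—and the existence of even one prime filter disjoint from $\mathcal{J}$ cannot be read off an elementary algebraic construction. The argument must therefore be routed through genuine model existence: the $<\kappa$ case is settled in ZFC by the maximal-filter/Zorn argument above (equivalently by Proposition~\ref{cohcomp}), and only the passage to the full theory consumes the large-cardinal strength, precisely via the compactness packaged in Proposition~\ref{scohcomp}. A secondary point to check with care is that maximality of a $\kappa$-complete filter forces $\kappa$-primeness, where the frame law must be combined with the $\kappa$-completeness of $\mathcal{J}$; this is the lattice-theoretic shadow of the soundness of the transfinite transitivity rule established in Lemma~\ref{soundness}.
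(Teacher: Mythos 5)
Your overall architecture is the same as the paper's: for the forward direction you quotient by $\mathcal{F}$ via Lemma~\ref{quotient}, encode the quotient lattice as a propositional $\kappa$-coherent theory whose \Sets-models are the desired $\kappa$-complete $\kappa$-prime filters, establish $<\kappa$-satisfiability, and then invoke weak (resp.\ strong) compactness; your converse is a reasonable fleshing-out of the paper's one-line reduction to ($\kappa$-complete fields of) sets, and the observation that a proper $\kappa$-prime filter in a Boolean algebra is a $\kappa$-complete ultrafilter is correct.

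The genuine gap is in your mechanism for $<\kappa$-satisfiability. You claim that a \emph{maximal} $\kappa$-complete filter of the small sublattice $\mathcal{K}_0$ disjoint from the ideal exists by Zorn, "the $<\kappa$-size keeping $\kappa$-completeness stable along chains," and that maximality plus the frame law yields $\kappa$-primeness. The second step is fine, but the first fails: since $|\mathcal{K}_0|<\kappa$, every $\kappa$-complete filter of $\mathcal{K}_0$ is principal, say $\mathord\uparrow g$, and such a filter is disjoint from a (downward-closed) ideal $\mathcal{J}$ iff $g\notin\mathcal{J}$; so maximal such filters correspond to \emph{minimal} elements of $\mathcal{K}_0\setminus\mathcal{J}$, which need not exist because $\kappa$-complete ideals are not closed under decreasing $<\kappa$-meets. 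Concretely, the chain $0<\cdots<a_2<a_1<a_0<1$ is a $\kappa$-complete, $\kappa$-distributive lattice (joins of subsets are attained, so the propositional transfinite transitivity property holds), $\{0\}$ is a proper $\kappa$-complete ideal, and the $\kappa$-complete filters disjoint from it are exactly $\mathord\uparrow 1\subset\mathord\uparrow a_0\subset\mathord\uparrow a_1\subset\cdots$, a strictly increasing chain with no maximal member; the union of that chain generates $\mathord\uparrow 0$, which meets the ideal. So Zorn does not apply and no maximal filter exists, even though $\kappa$-prime filters disjoint from the ideal do exist (every $\mathord\uparrow a_n$ is one). This confirms your own remark that the existence of a prime filter "cannot be read off an elementary algebraic construction": the parenthetical appeal to Proposition~\ref{cohcomp} is not an optional equivalent but the actual load-bearing step, exactly as in the paper, where the Beth-model completeness theorem (hence the transfinite transitivity property) is what produces a $\kappa$-prime filter of the $<\kappa$-sized sublattice avoiding the given non-unit element. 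Deleting the Zorn argument and keeping the citation of Proposition~\ref{cohcomp} repairs the proof.
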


\begin{proof}
 The ``if'' part follows by restricting ourselves to the case when the lattice is a Boolean algebra; we prove here the ``only if'' part. Consider the quotient $\mathcal{L}'$ of the lattice $\mathcal{L}$ by $\mathcal{F}$ given by Lemma \ref{quotient}; this is a $\kappa$-complete and $\kappa$-distributive lattice, and the image of $\mathcal{I}$ is a proper $\kappa$-complete ideal $\mathcal{I}'$. Define a new propositional variable $P_a$ for each element $a$ of $\mathcal{L}'$ and consider the theory axiomatized by the following axioms:
 
 \begin{enumerate}
  \item $P_a \vdash \bot$ for all $a \in \mathcal{I}'$
  \item $\bigwedge_{i \in I}P_{a_i} \vdash P_{\left(\bigwedge_{i \in I}a_i\right)}$ for all $a_i$ and $|I|<\kappa$
  \item $P_{\left(\bigvee_{i \in I}a_i\right)} \vdash \bigvee_{i \in I}P_{a_i}$ for all $a_i$ and $|I|<\kappa$
 \end{enumerate}

This is a theory over the $\kappa$-coherent fragment which has cardinality at most $\kappa$. Each subtheory of cardinality less than $\kappa$ involves $\gamma<\kappa$ propositional variables, whose correspondent elements generate a (non trivial) $\kappa$-complete sublattice of $\mathcal{L}'$ of cardinality $2^{\gamma}<\kappa$ (as every element there is equivalent to one of the form $\bigvee_{i<\gamma} \bigwedge_{j<\gamma} P_{a_{ij}}$) containing, thus, less than $\kappa$ many $\kappa$-prime filters. By Proposition \ref{cohcomp}, the intersection of all $\kappa$-prime filters of any such sublattice is $\{1\}$, and hence any such sublattice contains at least one $\kappa$-prime filter disjoint from elements of $\mathcal{I}'$. This shows that each subtheory of cardinality less than $\kappa$ has a model. Since $\kappa$ is weakly (resp. strongly) compact, the whole theory has a model, which corresponds to a $\kappa$-complete, $\kappa$-prime filter of $\mathcal{L}'$ whose preimage along the quotient map provides a $\kappa$-complete, $\kappa$-prime filter in $\mathcal{L}$ containing $\mathcal{F}$ and disjoint from $\mathcal{I}$.
\end{proof}

\begin{rmk}
 Proposition \ref{filter} can be used to provide a representation theorem for $\kappa$-complete, $\kappa$-distributive lattices of cardinality at most $\kappa$ (resp. of arbitrary cardinality) in terms of lattices of subsets. More especifically, the $\kappa$-complete lattice morphism $f$ that maps an element $e \in \mathcal{L}$ to the set of $\kappa$-complete, $\kappa$-prime filters containing $e$ provides an isomorphism of lattices $\mathcal{L} \to f(\mathcal{L})$ precisely when $\kappa$ is weakly (resp. strongly) compact. 
\end{rmk}

\subsection{Completeness of infinitary intuitionistic first-order logic}

Having now at hand a completeness theorem for $\kappa$-coherent theories, we can adapt the proof of Joyal's theorem by replacing the category of coherent models with that of $\kappa$-coherent models. As a result, we get:

\begin{thm}\label{bt}
 If $\kappa$ is weakly (resp. strongly) compact, $\kappa$-first-order theories of cardinality at most $\kappa$ (resp. of arbitrary cardinality) are complete with respect to Kripke models.
\end{thm}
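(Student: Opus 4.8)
The plan is to prove an infinitary analogue of Joyal's theorem (Theorem \ref{joyal}), replacing the category of coherent models by that of $\kappa$-coherent models, and then to convert the resulting conservative embedding into a presheaf category into a genuine Kripke model via the tree reduction established above. Concretely, pass to the Morleyization $\theory^m$, so that $\synt{C}{\theory}\simeq\synt{C}{\theory^m}$ and the $\kappa$-coherent structure is tracked by the predicates $P_\phi$. Let $\cat{M}$ be the category of $\kappa$-coherent models of $\theory$ of cardinality at most $\kappa$ (resp.\ arbitrary), and form the evaluation functor $ev:\synt{C}{\theory}\to\Sets^{\cat{M}}$, which is automatically $\kappa$-coherent. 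Its conservativity is exactly the content of Proposition \ref{scohcomp}: an inclusion $ev(S)\le ev(T)$ means the corresponding sequent holds in every $\kappa$-coherent model, hence by completeness is provable, hence $S\le T$ by Lemma \ref{lemmap1}. Thus the only substantial point is that $ev$ preserves the Heyting structure, i.e.\ every right adjoint $\forall_f$ to a pullback functor; since implication is the special case where $f$ is a meet, it suffices to treat $\forall$.

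Fix $f:A\to B$ and a subobject $A'\rightarrowtail A$ in $\synt{C}{\theory}$, and write $\forall_f A'\rightarrowtail B$. One inclusion is formal: as $ev$ preserves pullbacks, $ev(f)^*ev(\forall_f A')=ev(f^*\forall_f A')\le ev(A')$ by the counit, whence $ev(\forall_f A')\le\forall_{ev(f)}ev(A')$. The reverse inclusion $\forall_{ev(f)}ev(A')\le ev(\forall_f A')$ is the crux. Unwinding the presheaf formula for $\forall$ in $\Sets^{\cat{M}}$, this amounts to the statement that for every model $M$ and every $y\in M(B)$: if for every homomorphism $h:M\to N$ in $\cat{M}$ and every $x\in N(A)$ with $N(f)(x)=h(y)$ one has $x\in N(A')$, then already $y\in M(\forall_f A')$.

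The main obstacle is to produce, whenever $y\notin M(\forall_f A')$, a single witness, namely a model $N$ with a homomorphism $h:M\to N$ and an element $x\in N(A)\setminus N(A')$ satisfying $N(f)(x)=h(y)$. I would obtain $N$ as a $\kappa$-coherent model of the theory $\Sigma$ over the language of $\theory^m$ enlarged by a constant $\bar a$ for each element of each $M(X)$ and a fresh constant $c$ of sort $A$, axiomatized by the positive diagram of $M$ (all sequents $\top\vdash P_\psi(\bar a)$ with $M\vDash P_\psi(a)$, which are $\kappa$-coherent since $P_\psi$ is atomic in $\theory^m$), together with $\top\vdash f(c)=\bar y$ and $P_{A'}(c)\vdash\bot$. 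A model of $\Sigma$ yields exactly the required $N$, $h$ (read off the diagram constants) and $x$ (the interpretation of $c$). By Proposition \ref{scohcomp} and the weak (resp.\ strong) compactness of $\kappa$, it is enough that $\Sigma$ be consistent, i.e.\ that no subtheory of size $<\kappa$ proves $\bot$. If some subtheory did, then by the deduction theorem (Lemma \ref{dt}), after replacing the diagram constants by variables one obtains a $\kappa$-coherent sequent $\bigwedge_i P_{\psi_i}(\mathbf u)\wedge(f(v)=u_y)\vdash_{\mathbf u v}P_{A'}(v)$ provable in $\theory^m$, with $M\vDash P_{\psi_i}(a)$. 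Transferring along $\synt{C}{\theory}\simeq\synt{C}{\theory^m}$, the corresponding first-order sequent $\bigwedge_i\psi_i(\mathbf u)\wedge(f(v)=u_y)\vdash_{\mathbf u v}A'(v)$ is provable in $\theory$; the implication and universal rules then give $\bigwedge_i\psi_i(\mathbf u)\vdash_{\mathbf u}(\forall_f A')(u_y)$, and the Morleyization axiom (ii) translates this back to $\bigwedge_i P_{\psi_i}(\mathbf u)\vdash_{\mathbf u}P_{\forall_f A'}(u_y)$, valid in every $\kappa$-coherent model. Evaluating at $M$ forces $y\in M(\forall_f A')$, contradicting the choice of $y$. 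Hence $\Sigma$ is consistent and the witness exists, establishing preservation of $\forall$.

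With $ev$ now conservative and $\kappa$-Heyting, it remains to turn it into a Kripke model on trees. Composing with the transpose $E^*:\Sets^{\cat{M}}\to\Sets^{P}$, which is conservative and $\kappa$-Heyting (the lemma proved above), and decomposing $P$ into the family of trees rooted at the one-element chains (the Diaconescu cover of $\Sets^{\cat{M}}$), one obtains a conservative $\kappa$-Heyting functor $\synt{C}{\theory}\to\Sets^{K}$, i.e.\ a universal Kripke model. Conservativity says precisely that any sequent valid in this, hence in every, Kripke model is provable, which is the asserted completeness. Throughout, the cardinality bookkeeping keeps us inside the hypotheses of Proposition \ref{scohcomp}: in the weakly compact case $\Sigma$ has cardinality at most $\kappa$, using that the number of $\kappa$-coherent formulas over a signature of size $\le\kappa$ is $\kappa^{<\kappa}=\kappa$ for inaccessible $\kappa$.
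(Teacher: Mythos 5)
Your proposal follows essentially the same route as the paper's proof: Morleyization, the evaluation functor into $\Sets^{\cat{M}}$, conservativity from Proposition \ref{scohcomp}, preservation of $\forall$ via the positive diagram of $M$ plus the deduction theorem and the adjunction $f^*\dashv\forall_f$, and finally the reduction to tree-shaped Kripke models through $E^*$ and the Diaconescu cover. The one place where you genuinely diverge is in organizing the key step contrapositively: you adjoin a fresh constant $c$ together with the \emph{sequent} axiom $P_{A'}(c)\vdash\bot$ and argue from the inconsistency of the resulting theory $\Sigma$, whereas the paper never introduces a negative axiom --- it observes directly that $\psi(\mathbf{c})\wedge\lambda(\mathbf{x},\mathbf{c}/\mathbf{y})\vdash_{\mathbf{x}}\theta(\mathbf{x})$ is valid in every model of $Th(M)$ and hence provable by Proposition \ref{scohcomp}, and only then strips off the ($<\kappa$ many) diagram sentences with Lemma \ref{dt}. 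Your version has a small justification gap at exactly this point: Lemma \ref{dt} is stated only for axioms of the form $\top\vdash\sigma$, so it discharges the diagram sentences and $\top\vdash f(c)=\bar y$ but does not by itself convert ``$\theory^m$ plus the diagram plus $P_{A'}(c)\vdash\bot$ proves $\bot$'' into ``$\theory^m$ plus the diagram proves $\top\vdash P_{A'}(c)$''; that inference needs a separate argument (it does follow, e.g.\ semantically from the already-established completeness, or proof-theoretically from the description of the quotient of a distributive subobject lattice by $\chi=0$), but as written it is not covered by the lemma you cite. The paper's direct validity-implies-provability formulation sidesteps this entirely, so I would recommend either adding that justification or reorganizing the step to avoid the negative axiom.
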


\begin{proof}
Consider the syntactic category $\mathcal{C}$ of the $\kappa$-coherent Morleyization $\theory^m$ of the theory $\theory$. Let $\mathcal{C}oh(\mathcal{C})$ be the category of $\kappa$-coherent models of $\theory^m$ of size at most $\kappa$ (resp. of arbitrary size), and where arrows are model homomorphisms. We have a functor $ev: \mathcal{C} \to \Sets^{\mathcal{C}oh(\mathcal{C})}$ sending an object $A$ to the evaluation functor $ev(A)$. It is clear that this functor is $\kappa$-coherent, and by Proposition \ref{scohcomp}, it is also conservative (this is because any model contains an elementary submodel of cardinality at most $\kappa$, by the usual L\"owenheim-Skolem construction). We must prove that $ev$ also preserves $\forall$. 

Given an arrow $f: A \to B$, a subobject $C \rightarrowtail A$ and the subobject $Y=\forall_f(C) \rightarrowtail B$, we need to show that $ev(Y)=\forall_{ev(f)} (ev(C))$ as subobject of $ev(B)$. By the definition of $\forall$ in the Heyting category $\Sets^{\mathcal{C}oh(\mathcal{C})}$, this reduces to proving the following equivalence, for every $\mathbf{y} \in ev(B)(M)=M(B)$:

$$\mathbf{y} \in ev(Y)(M) \iff \text{ For every model } N, \text{ for every model homomorphism}$$
                            $$\phi: M \to N,$$ 

$$(ev(f)_N)^{-1}(\phi_B(\mathbf{y})) \subseteq ev(C)(N)$$

that is:

$$\mathbf{y} \in M(Y) \iff \text{ For every model } N, \text{ for every model homomorphism}$$
                         $$\phi: M \to N,$$

$$(N(f))^{-1}(\phi_B(\mathbf{y})) \subseteq N(C)$$

The implication $\implies$ can be proven as follows: if $\mathbf{y} \in M(Y)$, then $\phi_B(\mathbf{y}) \in N(Y)$, and so, since $N$ is $\kappa$-coherent, $\phi_B(\mathbf{y}) = N(f)(\mathbf{x})$ gives $\mathbf{x} \in N(f)^{-1} (N(\forall_f(C)))=N(f^{-1} \forall_f(C)) \subseteq N(C)$.

Let us focus on the other implication. Consider the following diagram in $\mathcal{C}$:

\begin{displaymath}
\xymatrix{
C=[\mathbf{x}, \theta] \ar@{ >->}[dd] & & \forall_f(C)=[\mathbf{y}, \gamma] \ar@{ >->}[dd]\\
 & & \\
A=[\mathbf{x}, \phi]  \ar@{->}[rr]_{f=[\mathbf{x}\mathbf{y}, \lambda]} & & B=[\mathbf{y}, \psi] \\
}
\end{displaymath}

Applying the functor $ev$ and evaluating at a model $M$ gives the diagram:

\begin{displaymath}
\xymatrix{
\{\mathbf{d} | M \vDash \theta(\mathbf{d})\} \ar@{ >->}[dd] & & \{\mathbf{c} | M \vDash \gamma(\mathbf{c})\} \ar@{ >->}[dd]\\
 & & \\
\{\mathbf{d} | M \vDash \phi(\mathbf{d})\}  \ar@{->}[rr]_{\{\mathbf{d}, \mathbf{c} | M \vDash \lambda(\mathbf{d}, \mathbf{c})\}} & & \{\mathbf{c} | M \vDash \psi(\mathbf{c})\} \\
}
\end{displaymath}

Given $\mathbf{c} \in \forall_{ev(f)} (ev(C))$, we need to prove that $M \vDash \gamma(\mathbf{c})$. Consider the positive diagram of $M$, $Diag_+(M)$, which, in a language extended with constants $c$ for every element $c$ of the underlying set of $M$, consists of all sequents of the form $\top \vdash \psi(c_0, ..., c_{\alpha}, ...)$ for every positive atomic $\psi$ such that $M \vDash \psi(c_0, ..., c_{\alpha}, ...)$ (we identify the constants symbols with the elements of $M$, to simplify the exposition). If $N'$ is a model of $Th(M)$ of size at most $\kappa$ (resp. of arbitrary size), then, defining $N$ as the reduct of $N'$ with respect to the elements $\{c^{N'}: c \in M\}$ we can define $\phi: M \to N$ by $\phi(c)=c^{N'}$, which is a well defined model homomorphism. But we know that for all $\phi: M \to N$ one has $N(f)^{-1}(\phi_B(\mathbf{c})) \subseteq N(C)$. This implies that for all models $N'$ of $Th(M)$ of size at most $\kappa$ (resp. of arbitrary size), the sequent $\lambda(\mathbf{x}, \mathbf{c}/\mathbf{y}) \vdash_{\mathbf{x}} \theta(\mathbf{x})$ holds, and therefore, the sequent $\psi(\mathbf{c}) \wedge \lambda(\mathbf{x}, \mathbf{c}/\mathbf{y}) \vdash_{\mathbf{x}} \theta(\mathbf{x})$ also holds.

By Proposition \ref{scohcomp}, this means that such a sequent is provable in $Th(M)$. Besides sequents in $\theory^m$, this proof uses less than $\kappa$ sequents of the general form $\top \vdash \phi_i(\mathbf{c}, \mathbf{c_0}, ..., \mathbf{c_{\alpha}}, ...)$, where the $\phi_i$ are positive atomic sentences corresponding to the diagram of $M$ and the $\mathbf{c_i}$ are elements of $M$. Considering the conjunction $\xi$ of the $\phi_i$, we see that there is a proof in $\theory^m$ from:

$$\top \vdash \xi(\mathbf{c}, \mathbf{c_0}, ..., \mathbf{c_{\alpha}}, ...)$$
\\
to 

$$\psi(\mathbf{c}) \wedge \lambda(\mathbf{x}, \mathbf{c}/\mathbf{y}) \vdash_{\mathbf{x}} \theta(\mathbf{x})$$
\\
By the deduction theorem (Lemma \ref{dt}), since $\xi(\mathbf{c}, \mathbf{c_0}, ..., \mathbf{c_{\alpha}}, ...)$ is a sentence, we obtain in $\theory^m$ a derivation of:

$$\xi(\mathbf{c}, \mathbf{c_0}, ..., \mathbf{c_{\alpha}}, ...) \wedge \psi(\mathbf{c}) \wedge \lambda(\mathbf{x}, \mathbf{c}/\mathbf{y}) \vdash_{\mathbf{x}} \theta(\mathbf{x})$$
\\
But it is always possible to replace the constants by variables as long as they are added to the contexts of the sequents, so using the existential rule, we have also a derivation of:

$$\exists \mathbf{x_0} ... \mathbf{x_{\alpha}} ... \xi(\mathbf{y}, \mathbf{x_0}, ..., \mathbf{x_{\alpha}}, ...) \wedge \psi(\mathbf{y}) \wedge \lambda(\mathbf{x}, \mathbf{y}) \vdash_{\mathbf{x} \mathbf{y}} \theta(\mathbf{x})$$
\\
Calling $Y'=[\mathbf{y}, \Phi(\mathbf{y})]$ the subobject of $B$ given by the interpretation in $\mathcal{C}$ of the formula:

$$\exists \mathbf{x_0} ... \mathbf{x_{\alpha}} ... \xi(\mathbf{y}, \mathbf{x_0}, ..., \mathbf{x_{\alpha}}, ...) \wedge \psi(\mathbf{y})$$
\\
we have a proof of the sequent:

$$\Phi(\mathbf{y}) \wedge \lambda(\mathbf{x}, \mathbf{y}) \vdash_{\mathbf{x} \mathbf{y}} \theta(\mathbf{x})$$
\\
and hence also of the sequent:

$$\exists \mathbf{y} (\Phi(\mathbf{y}) \wedge \lambda(\mathbf{x}, \mathbf{y})) \vdash_{\mathbf{x}} \theta(\mathbf{x})$$
\\
Now the antecedent is precisely the pullback of the subobject $\Phi(\mathbf{y})$ of $B$ along $f$, so by adjunction we have $Y' \leq \forall_f(C)=[\mathbf{y}, \gamma]$, i.e., the sequent $\Phi(\mathbf{y}) \vdash_{\mathbf{y}} \gamma(\mathbf{y})$ is provable. Therefore, since $M \vDash \Phi(\mathbf{c})$, it follows that $M \vDash \gamma(\mathbf{c})$, as we wanted to prove.

\end{proof}

There is also the following converse of Theorem \ref{bt}:

\begin{proposition}\label{converse}
The completeness theorem of $\kappa$-first order/$\kappa$-coherent theories of cardinality $\kappa$ (resp. of arbitrary cardinality) with respect to Kripke models/Tarski models implies that $\kappa$ is weakly (resp. strongly) compact.
\end{proposition}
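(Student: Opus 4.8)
The plan is to deduce from the assumed completeness theorem the \emph{compactness theorem} for $\mathcal{L}_{\kappa,\kappa}$, which for the (standing) inaccessible $\kappa$ is one of the standard equivalent formulations of weak (resp. strong) compactness (see \cite{jechst}). Concretely, I must produce, for every set $\Sigma$ of $\mathcal{L}_{\kappa,\kappa}$-sequents with $|\Sigma|\le\kappa$ (resp. of arbitrary cardinality) all of whose subsets of size $<\kappa$ are satisfiable, a model of $\Sigma$. So the task reduces to deriving this compactness statement from completeness.

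The first step is a syntactic \emph{smallness} observation: any derivation in the $\kappa$-coherent (resp. $\kappa$-first-order) calculus is a set hereditarily of cardinality $<\kappa$, as dictated by the metatheory appropriate to $\mathcal{L}_{\kappa,\kappa}$ (cf. the discussion of Karp's system and \cite{karp}). Here one uses that $\kappa$ is regular and a strong limit, so that each rule instance---including the transfinite transitivity rule, whose premise family is indexed by $\{f\in\gamma^{\beta}:\beta<\gamma\}$ of size $\gamma^{<\gamma}<\kappa$---has fewer than $\kappa$ premises, and a well-founded proof tree of rank $<\kappa$ with $<\kappa$-branching has fewer than $\kappa$ nodes. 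Consequently a single derivation invokes fewer than $\kappa$ of the non-logical axioms of a theory; in particular, any derivation object lies in $H(\kappa)$.

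From this the compactness property follows. Let $T$ be a theory (of the relevant fragment and cardinality) each of whose subtheories $T_0$ with $|T_0|<\kappa$ has a model. If the sequent $\top\vdash\bot$ were derivable from $T$, then by the smallness step the derivation would use only axioms from some $T_0\subseteq T$ with $|T_0|<\kappa$, so $T_0$ would already be inconsistent; but $T_0$ has a model, contradicting soundness (Lemma \ref{soundness} and the soundness results for Beth/Kripke models). Hence $T$ is consistent. Now I invoke completeness contrapositively: if $T$ had \emph{no} model, then $\top\vdash\bot$ would be vacuously valid in all models of $T$, hence provable from $T$ by the completeness hypothesis, contradicting consistency. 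Therefore $T$ has a model. Applying this to $\Sigma$---in the coherent/Tarski case after Morleyizing a classical $\mathcal{L}_{\kappa,\kappa}$-theory into a $\kappa$-coherent one, whose $\Sets$-valued models coincide with the classical ones since $\Sets$ is Boolean, and in the first-order/Kripke case after adjoining excluded middle so that the relevant structures are classical---yields a Tarski structure satisfying $\Sigma$. This is exactly the compactness theorem for $\mathcal{L}_{\kappa,\kappa}$ at the required cardinality, using Proposition \ref{scohcomp} (resp. Theorem \ref{bt}) as the completeness input, so $\kappa$ is weakly (resp. strongly) compact.

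The main obstacle is the interface between the two kinds of semantics. The completeness theorems deliver intuitionistic (Kripke) or positive, \emph{possibly exploding}, $\Sets$-valued models, whereas weak/strong compactness is a statement about classical Tarski satisfiability; the delicate point is to verify that, once excluded middle is present (resp. once one Morleyizes and works in the Boolean topos $\Sets$), the model furnished by completeness is a genuine classical structure for $\Sigma$, and that the exploding convention does not produce a model forcing $\bot$---which is precluded exactly because $T$ was shown consistent, so that completeness refutes the vacuously valid $\top\vdash\bot$ in some non-exploding model. The only other point needing care is the smallness step, i.e.\ justifying that proofs live in $H(\kappa)$ and hence use fewer than $\kappa$ axioms; this is where inaccessibility of $\kappa$ is essential.
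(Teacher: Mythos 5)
Your argument is essentially correct, but it takes a genuinely different route from the paper's. You derive the logical compactness theorem for $\mathcal{L}_{\kappa,\kappa}$ directly---via the smallness of derivations plus the contrapositive of completeness applied to $\top\vdash\bot$---and then invoke the characterization of weak (resp.\ strong) compactness as the compactness property of $\mathcal{L}_{\kappa,\kappa}$ for theories of size at most $\kappa$ (resp.\ of arbitrary size). The paper instead aims at the combinatorial characterizations: it writes down the $\kappa$-coherent ``theory of a cofinal branch'' of a tree of height $\kappa$ with small levels (giving the tree property, hence weak compactness for the inaccessible $\kappa$) and the ``theory of a $\kappa$-complete ultrafilter'' extending a given $\kappa$-complete filter (giving strong compactness). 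Both proofs rest on the same engine---$<\kappa$-satisfiable implies consistent implies, by completeness, satisfiable---but the paper only needs it for two concrete theories that are already $\kappa$-coherent, so it avoids your classical Morleyization step and needs the smallness of proofs only for those specific theories; your version is more uniform and yields the full compactness theorem for $\mathcal{L}_{\kappa,\kappa}$ as a byproduct. Your smallness step is sound for inaccessible $\kappa$: every rule instance, including transfinite transitivity, has fewer than $\kappa$ premises, and a well-founded tree with $<\kappa$-branching has rank, hence size, below $\kappa$ by regularity; the paper relies on the same fact implicitly when it asserts that $<\kappa$-satisfiability of the branch theory gives consistency.

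The one step you should tighten is the Kripke case. ``Adjoining excluded middle so that the relevant structures are classical'' is not automatic: in a Kripke model forcing excluded middle, the individual nodes, read as first-order structures, need not classically satisfy the theory, because the forcing clauses for $\to$ and $\forall$ still quantify over future nodes and larger domains. Either argue that evaluating a Kripke model of a Boolean theory at a node gives a $\kappa$-coherent functor into $\Sets$, which automatically preserves the (unique) Boolean complements and hence the Heyting structure; or do what the paper does: first show that Kripke completeness of $\kappa$-first-order theories implies Tarski completeness of $\kappa$-coherent theories, using that evaluation at nodes turns Kripke models into $\Sets$-valued coherent models together with the conservativity of $\kappa$-first-order over $\kappa$-coherent logic (Lemma \ref{shemb}), and then run only the coherent/Tarski half of the argument.
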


\begin{proof}
We prove first that completeness of $\kappa$-coherent theories of cardinality $\kappa$ is entailed by Kripke completeness of $\kappa$-first-order theories of cardinality $\kappa$. Given a $\kappa$-coherent theory, suppose a coherent sequent is valid in all $\kappa$-coherent models in \Sets. Then it is necessarily forced at every node of every Kripke model, and therefore provable from the axioms of the theory in $\kappa$-first-order logic. Because $\kappa$-first-order logic is conservative over $\kappa$-coherent logic (as there is, by Lemma \ref{shemb}, a conservative embedding of the syntactic category of the latter into a sheaf topos verifying the former), it has to be provable already in $\kappa$-coherent logic.

To prove that the completeness of $\kappa$-coherent theories of cardinality $\kappa$ implies weak compactness, given a tree of height $\kappa$ and levels of size less than $\kappa$, consider the theory of a cofinal branch, over a language containing a unary relation symbol $P$ and one constant $a$ for every node in the tree and axiomatized as follows:
 
 $$\top \vdash \bigvee_{a \in L_{\alpha}}P(a)$$
 \\
 for each $\alpha<\kappa$, where $L_{\alpha}$ is the level of height $\alpha$;
 
 $$P(a) \wedge P(b) \vdash \bot$$
 \\
 for each pair $a \neq b \in L_{\alpha}$ and each $\alpha<\kappa$;
 
 $$P(a) \vdash P(b)$$
 \\
 for each pair $a, b$ such that $a$ is a successor of $b$.

Then the theory is certainly consistent within $\mathcal{L}_{\kappa, \kappa}$, as every subtheory of cardinality less than $\kappa$ has a Tarski model, so by completeness it follows that the whole theory has a Tarski model, corresponding to a cofinal branch.
 
Finally, in case we have Kripke completeness of $\kappa$-first-order theories of arbitrary cardinality, we can deduce in a similar way as before the completeness of $\kappa$-coherent theories of arbitrary cardinalities. To show that this latter implies strong compactness, consider a $\kappa$-complete filter $\mathcal{F}$ in the lattice $\mathcal{L}$ of subsets of a set. In a language containing a constant $a$ for every $a \in \mathcal{L}/\mathcal{F}$ and a unary relation symbol $P$, consider the following theory of a $\kappa$-complete ultrafilter:
 
 \begin{enumerate}
  \item $P(a) \vdash P(b)$ for every pair $a \leq b$ in $\mathcal{L}$
  \item $\bigwedge_{i<\gamma}P(a_i) \vdash P(\bigwedge_{i<\gamma}a_i)$ for all families $\{a_i\}_{i<\gamma}$ such that $\gamma<\kappa$
  \item $\top \vdash P(a) \vee P(\neg a)$ for every $a \in \mathcal{L}$
 \end{enumerate}
 
 Since the theory is consistent (as it has a model in $\mathcal{L}/\mathcal{F}$ itself), by $\kappa$-coherent completeness it has a Tarski model, which provides a $\kappa$-complete ultrafilter in $\mathcal{L}/\mathcal{F}$ whose preimage along the quotient map yields a $\kappa$-complete ultrafilter in $\mathcal{L}$ extending $\mathcal{F}$. Therefore, $\kappa$ is strongly compact.
\end{proof}

\subsection{Heyting cardinals}

It is known, as we will prove in the next section, that for inaccessible $\kappa$, $\kappa$-first-order classical logic is complete for theories of cardinality less than $\kappa$. This was first observed by Karp in \cite{karp}, and one naturally wonders if the analogous situation holds in the intuitionistic case. This motivates the following:

\begin{defs}
 We say that $\kappa$ is a \emph{Heyting} cardinal if $\kappa$ is inaccessible and $\kappa$-first-order logic is complete for theories of cardinality less than $\kappa$.
\end{defs}

By Theorem \ref{bt} we know that, in terms of strength, a Heyting cardinal lies between inaccessible and weakly compact cardinals. Its exact large cardinal strength and relationship with other classical large cardinal properties is however currently unknown, although we will outline some evidence that relates it to an instance of weak compactness.

As applications of completeness, we obtain the disjunction and existence properties:

\begin{proposition}\label{dp}
If $\kappa$ is a Heyting cardinal, $\kappa$-first-order intuitionistic logic $\mathcal{L}_{\kappa, \kappa}$ has the infinitary disjunction property. That is, if $\top \vdash \bigvee_{i \in I}\phi_i$ is provable in the empty theory, then, for some $i \in I$, $\top \vdash \phi_i$ is already provable.
\end{proposition}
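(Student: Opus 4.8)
The plan is to argue by contraposition, producing from failures of the individual sequents a single Kripke model whose root refutes the whole disjunction. Suppose $\top \vdash \bigvee_{i \in I}\phi_i$ is provable in the empty theory but that $\top \vdash \phi_i$ fails to be provable for every $i \in I$; since $\bigvee_{i \in I}\phi_i$ is a formula of $\mathcal{L}_{\kappa,\kappa}$ we have $|I|<\kappa$. For each $i$, because $\kappa$ is Heyting and the empty theory has cardinality $0<\kappa$, completeness with respect to Kripke models (the content of Theorem \ref{bt}, available here by the very definition of a Heyting cardinal) furnishes a Kripke model in which $\top \vdash \phi_i$ is refuted at some node; passing to the subtree above that node I obtain a Kripke model $\mathcal{K}_i=(K_i,\leq_i,D_i,\Vdash_i)$ with root $r_i$ such that $r_i \nVdash \phi_i$.

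Next I would assemble a single Kripke model $\mathcal{K}$ by placing the disjoint union $\bigsqcup_{i \in I} K_i$ above a fresh root $r$, declaring $r \leq k$ for every node $k$ and retaining the orders inside each $K_i$; the set of predecessors of any node is then well-ordered, so $(K,\leq)$ is again a tree. For the domain I would take $D(r)$ to be the initial $\Sigma$-structure, namely the closed-term model of the empty theory, which admits a unique homomorphism into every $\Sigma$-structure and hence supplies canonical transition maps $D_{r r_i}\colon D(r)\to D_i(r_i)$. Interpreting every relation symbol other than equality as empty at $r$ makes the persistence requirement $R_r(\mathbf{c})\implies R_{r_i}(D_{r r_i}(\mathbf{c}))$ hold vacuously, while equality is genuine equality and is preserved by homomorphisms; thus $\mathcal{K}$ is a bona fide Kripke model of the (empty) theory.

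Finally, monotonicity of forcing delivers the contradiction: if $r \Vdash \phi_i$ held for some $i$, then $r \leq r_i$ would force $r_i \Vdash \phi_i$, contrary to the choice of $\mathcal{K}_i$; hence $r \nVdash \phi_i$ for all $i$, and by the disjunction clause for Kripke models $r \nVdash \bigvee_{i \in I}\phi_i$. By soundness of $\kappa$-first-order logic for Kripke models this is incompatible with the provability of $\top \vdash \bigvee_{i \in I}\phi_i$, so some $\top \vdash \phi_i$ must after all be provable. The step I expect to be the main obstacle is the root datum: one must exhibit a single structure mapping compatibly into all the $D_i(r_i)$ and check that the glued family really satisfies the recursive forcing clauses, in particular persistence across the new edges $r \leq r_i$. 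The initial-structure choice handles this cleanly; the only delicate point is a sort possessing no closed terms, which can be accommodated either by adjoining fresh constants interpreted arbitrarily in each $D_i(r_i)$ or by observing that only the \emph{existence} of the homomorphisms $D(r)\to D_i(r_i)$, not their injectivity, is used.
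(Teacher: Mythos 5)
Your argument is correct and is essentially the paper's own proof: by completeness you obtain a Kripke countermodel for each $\top \vdash \phi_i$, and you glue them above a fresh root whose domain consists of the closed terms (constants) of the language, forcing no atoms, so that monotonicity and the disjunction clause refute $\bigvee_{i \in I}\phi_i$ at the root, contradicting soundness. Your extra care about the transition maps out of the root and the case of a sort without closed terms only makes explicit what the paper leaves as ``the obvious injections into the roots of the countermodels.''
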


\begin{proof}
 This is a straightforward generalization of the usual semantic proof in the finitary case, based on the completeness with respect to Kripke models over trees. If no sequent $\top \vdash \phi_i$ was provable, there would be a countermodel for each. Then we can build a new Kripke tree appending to these countermodels a bottom node whose underlying set consists just of the constants of the language, with the obvious injections into the roots of the countermodels (and forcing no atoms). Such a Kripke tree would then be a countermodel for $\top \vdash \bigvee_{i \in I}\phi_i$.
\end{proof}

\begin{proposition}
If $\kappa$ is a Heyting cardinal, $\kappa$-first-order intuitionistic logic $\mathcal{L}_{\kappa, \kappa}$ over a language without function symbols and with at least one constant symbol has the infinitary existence property. That is, if $\top \vdash \exists \mathbf{x} \phi(\mathbf{x})$ is provable in the empty theory, then, for some constants $\mathbf{c}$, $\top \vdash \phi(\mathbf{c})$ is already provable.
\end{proposition}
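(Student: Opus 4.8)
The plan is to mimic the semantic argument for the disjunction property (Proposition \ref{dp}), now using the existential forcing clause for Kripke semantics together with the hypothesis that the language has no function symbols of positive arity, so that the only closed terms are the constants. I argue by contradiction: assume $\top \vdash \exists \mathbf{x}\phi(\mathbf{x})$ is provable in the empty theory, but that $\top \vdash \phi(\mathbf{c})$ fails to be provable for every tuple $\mathbf{c}$ of constants of the length of the context $\mathbf{x}$.

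Since $\kappa$ is a Heyting cardinal, Theorem \ref{bt} gives completeness with respect to Kripke models, so for each such tuple $\mathbf{c}$ there is a Kripke model $\mathcal{K}_{\mathbf{c}}$ over a tree with root $k_{\mathbf{c}}$ at which $k_{\mathbf{c}} \nVdash \phi(\mathbf{c})$. (Using that $\kappa$ is inaccessible one checks there are fewer than $\kappa$ such tuples, but the definition of Kripke model imposes no restriction on the height or width of the underlying tree, so this bound is not actually needed.) I then glue these countermodels beneath a common new root $r$: the domain $D(r)$ is taken to be exactly the set of constant symbols, $r$ forces no atomic formula, and the transition map $D_{r k_{\mathbf{c}}}$ sends each constant symbol to its interpretation in $\mathcal{K}_{\mathbf{c}}$. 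Because $r$ forces no atoms, the monotonicity requirement $R_r(\mathbf{s})\subseteq R_{k_{\mathbf{c}}}(D_{r k_{\mathbf{c}}}(\mathbf{s}))$ holds vacuously, so this is a legitimate Kripke model; since we work over the empty theory there are no further axioms to verify, and no node forces $\bot$.

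Now by soundness of $\kappa$-first-order logic for Kripke models, the provable sequent $\top \vdash \exists \mathbf{x}\phi(\mathbf{x})$ holds in this model, so in particular $r \Vdash \exists \mathbf{x}\phi(\mathbf{x})$. The existential clause reads $r \Vdash \exists \mathbf{x}\phi(\mathbf{x}) \iff \exists \mathbf{e}\subseteq D(r)\,(r \Vdash \phi(\mathbf{e}))$; since $D(r)$ consists precisely of the constant symbols, the witness $\mathbf{e}$ is a tuple $\mathbf{c}$ of constants, whence $r \Vdash \phi(\mathbf{c})$. Monotonicity of forcing along $r \leq k_{\mathbf{c}}$ then yields $k_{\mathbf{c}} \Vdash \phi(D_{r k_{\mathbf{c}}}(\mathbf{c})) = \phi(\mathbf{c})$, contradicting the choice of $\mathcal{K}_{\mathbf{c}}$. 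Hence $\top\vdash\phi(\mathbf{c})$ is provable for some tuple of constants $\mathbf{c}$.

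The step I expect to be delicate is pinning down the bottom node so that the existential witness is necessarily a tuple of constants rather than an arbitrary element: this is exactly where the absence of positive-arity function symbols is essential, since it lets the minimal domain at the new root be taken to be the set of constants, while the presence of at least one constant guarantees that this domain is nonempty (as Kripke domains must be). Everything else is a routine transcription of the finitary existence-property argument, with the regularity and inaccessibility of $\kappa$ entering only to keep the bookkeeping of the glued tree within bounds.
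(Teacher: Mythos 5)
Your argument is correct and is essentially the paper's own proof: the paper likewise takes countermodels for each $\top\vdash\phi(\mathbf{c})$, glues them beneath a fresh root whose domain is exactly the set of constants and which forces no atoms, and concludes via the existential forcing clause at that root (the paper states this in the contrapositive, exhibiting the glued tree as a countermodel to $\top\vdash\exists\mathbf{x}\phi(\mathbf{x})$, while you run it as a contradiction via soundness, but the content is identical). Your added remark on why the absence of positive-arity function symbols is what forces the witness at the root to be a tuple of constants is exactly the point the paper's hypothesis is there for.
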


\begin{proof}
 The proof of this is similar to the one given for the disjunction property. If no sequent $\top \vdash \phi(\mathbf{c})$ was provable, there would be a countermodel for each choice of $\mathbf{c}$. Then we can build a new Kripke tree appending to these countermodels a bottom node forcing no atoms, whose underlying domain contains just the constants of the language, again with the obvious injections into the roots of the countermodels (and forcing no atoms). Such a Kripke tree would then be a countermodel for $\top \vdash \exists \mathbf{x} \phi(\mathbf{x})$.
\end{proof}

It is possible that the disjunction and existence properties themselves represent a large cardinal notion different from mere inaccessibility. To see this, note first that the methods of \cite{jongh} allow to prove that $\kappa$-propositional logic (including the distributivity rule) with at least two atoms has $\kappa$ many mutually non-equivalent formulas\footnote{This is a remarkable property of infinitary intuitionistic logic. In classical infinitary propositional logic $\mathcal{L}_{\kappa}$, for example, the distributivity property implies, by a theorem of Tarski (see \cite{tarski}), that when there are less than $\kappa$-many atoms, the logic has less than $\kappa$ many mutually non-equivalent formulas.}, and so the same holds in general for $\kappa$-first-order logic. This implies that the $\kappa$-coherent Morleyization of $\kappa$-first-order logic contains $\kappa$ many relation symbols which are mutually non-equivalent, and so it has has signature of cardinality $\kappa$ (as well as $\kappa$-many axioms). By Proposition \ref{cohcomp}, this theory is $\kappa$-satisfiable, i.e., any subset of less than $\kappa$ axioms has a model. But the following proposition will show that the whole theory has a model, without using an instance of weak compactness:

\begin{proposition}
 If $\kappa$-first-order logic has the disjunction and the existence properties, its $\kappa$-coherent Morleyization has a \Sets-valued model.
\end{proposition}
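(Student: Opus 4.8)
The plan is to construct the canonical term model directly and to observe that the disjunction and existence properties are precisely the two ingredients needed to turn it into a model of the Morleyization. Write $\theory$ for the empty $\kappa$-first-order theory and $\theory^m$ for its $\kappa$-coherent Morleyization, whose signature adjoins a relation symbol $P_{\phi}(\mathbf{x})$ for every $\kappa$-first-order formula $\phi$. Since the language has no function symbols and at least one constant, the closed terms are exactly the constants; I would take the underlying set $M(A)$ of each sort $A$ to be the set of constants of sort $A$ (quotiented by provable equality, which for $\theory$ reduces to equality of constants, as no nontrivial equalities between distinct constants are provable in pure logic). Then interpret every relation symbol, both the original $R$ and the new $P_{\phi}$, by provability: for a tuple $\mathbf{c}$ of constants declare $M \vDash P_{\phi}(\mathbf{c})$ precisely when the sentence $\top \vdash \phi(\mathbf{c})$ is provable in $\theory$, and likewise $M \vDash R(\mathbf{c})$ when $\top \vdash R(\mathbf{c})$ is provable.

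It then remains to verify that $M$ satisfies each axiom (i)--(v) of $\theory^m$. Axioms (i) and (iii) are immediate from the definition together with the rules for atomic formulas and for conjunction (introduction and projection); axiom (ii) follows from the substitution and cut rules, since $\phi \vdash_{\mathbf{x}} \psi$ provable gives that $\top \vdash \phi(\mathbf{c})$ provable implies $\top \vdash \psi(\mathbf{c})$ provable. The backward directions of (iv) and (v) are equally routine, using existential and disjunction introduction: if $M \vDash P_{\phi}(\mathbf{d},\mathbf{c})$ for some $\mathbf{d}$ in the domain, then $\top \vdash \phi(\mathbf{d},\mathbf{c})$, whence $\top \vdash \exists \mathbf{y}\,\phi(\mathbf{y},\mathbf{c})$ and so $M \vDash P_{\exists \mathbf{y}\phi}(\mathbf{c})$; dually for disjunction.

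The two forward directions are where the hypotheses enter, and they are the heart of the argument. For axiom (v), suppose $M \vDash P_{\bigvee_i \phi_i}(\mathbf{c})$, i.e. $\top \vdash \bigvee_i \phi_i(\mathbf{c})$ is provable; since this is a closed formula, the disjunction property yields $\top \vdash \phi_i(\mathbf{c})$ for some $i$, so $M \vDash P_{\phi_i}(\mathbf{c})$ and hence $M \vDash \bigvee_i P_{\phi_i}(\mathbf{c})$. For axiom (iv), suppose $M \vDash P_{\exists \mathbf{y}\phi}(\mathbf{c})$, i.e. $\top \vdash \exists \mathbf{y}\,\phi(\mathbf{y},\mathbf{c})$; the existence property supplies constants $\mathbf{d}$ with $\top \vdash \phi(\mathbf{d},\mathbf{c})$, and because $\mathbf{d}$ lives in the domain of $M$ this gives $M \vDash \exists \mathbf{y}\,P_{\phi}(\mathbf{y},\mathbf{c})$. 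In other words, the disjunction and existence properties are exactly what promote provability of a disjunction or an existential to the provability of some instance that is actually realized inside the single term model; without them one could only conclude, via $\kappa$-satisfiability, that the axioms hold on fragments of size less than $\kappa$.

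Finally, $M$ is non-exploding: the empty disjunction $\bot$ satisfies $M \vDash P_{\bot}(\mathbf{c}) \iff {\top \vdash \bot}$, which fails by consistency of pure $\kappa$-first-order logic, so $\bot$ is not forced. Hence $M$ is a genuine $\Sets$-valued model of $\theory^m$. The main obstacle, modest given the assumed properties, is bookkeeping rather than depth: one must apply the disjunction and existence properties to the \emph{closed} substitution instances $\bigvee_i \phi_i(\mathbf{c})$ and $\exists \mathbf{y}\,\phi(\mathbf{y},\mathbf{c})$ (which are sentences, so the properties apply as stated), and one must ensure the witnesses returned by the existence property lie in $M$, that is, among the constants of the appropriate sort. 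This last point is exactly what the ``no function symbols, at least one constant'' hypothesis guarantees.
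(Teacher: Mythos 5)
Your proof is correct, and it deploys the two hypotheses in exactly the places they are needed, but it is packaged differently from the paper's argument. The paper works categorically: it shows that the representable functor $y(1)=[1,-]\colon \mathcal{C}_{\mathbb{T}}\to\Sets$ on the syntactic category is $\kappa$-coherent, and hence a model of $\mathbb{T}^m$. There, preservation of $\kappa$-limits is automatic, the disjunction property is invoked precisely to show that a point $1\to\bigvee_{i<\gamma}A_i$ factors through some $A_j$ (preservation of $\kappa$-unions), and the existence property is invoked precisely to produce a section of a cover $[\mathbf{x},\phi]\twoheadrightarrow 1$ from a constant witness (preservation of covers). Your term model of constants is, up to isomorphism, the same model: given the existence property, every global element $1\to[\mathbf{x},\top]$, i.e.\ every provably functional $\theta(\mathbf{x})$ with $\top\vdash\exists\mathbf{x}\,\theta$, is provably equivalent to $\mathbf{x}=\mathbf{c}$ for some tuple of constants $\mathbf{c}$. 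So the mathematical content coincides; what your route buys is that it avoids the syntactic category altogether, replacing the check that $[1,-]$ preserves the $\kappa$-coherent structure by the direct verification of axioms (i)--(v) of the Morleyization, and it makes explicit where the ``no function symbols, at least one constant'' proviso (left implicit in the paper's appeal to the existence property) enters, namely in guaranteeing that the witnesses returned by the existence property actually live in the carrier of the model.
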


\begin{proof}
 Consider the syntactic category $\mathcal{C}$ of the theory $\theory$. We will show that the representable functor $y(1)=[1, -]: \mathcal{C} \to \Sets$ is a $\kappa$-coherent functor, providing thus the model of the Morleyized theory $\theory^m$. Since $y(1)$ preserves all limits, it will preserve in particular $\kappa$-limits, so it is enough to show that it preserves $\kappa$-unions and covers.
 
 That $\bigvee_{i<\gamma} y(1)(A_i) \leq y(1)(\bigvee_{i<\gamma}A_i)$ follows since $y(1)$ preserves limits and hence monomorphisms. To see the other inequality, let $f: 1 \to \bigvee_{i<\gamma}A_i$. Then we have $1=\bigvee_{i<\gamma}f^{-1}(A_i)$. By the disjunction property, $1=f^{-1}(A_j)$ for some $j<\gamma$, and hence $f$ factors through $A_j \rightarrowtail \bigvee_{i<\gamma}A_i$. This shows that $y(1)(\bigvee_{i<\gamma}A_i) \leq \bigvee_{i<\gamma} y(1)(A_i)$, as we wanted.
 
 Finally, suppose we have a cover $g: A \twoheadrightarrow B$; we must show that any $h: 1 \to B$ factors through $A$. For this, in turn, it is enough to take the pullback $h^{-1}(g): h^{-1}(A) \twoheadrightarrow 1$ and find a section $s$ for $h^{-1}(g)$, then we can take the required factorization as the composite $g^{-1}(h) \circ s: 1 \to A$. Let $h^{-1}(A)=[\mathbf{x}, \phi(\mathbf{x})]$. Since $h^{-1}(g)$ is a cover, we have $\top \vdash \exists \mathbf{x} \phi(\mathbf{x})$. By the existence property, there are some $\mathbf{c}$ with $[\![\mathbf{c}]\!]: 1 \to [\mathbf{x}, \top]$ such that $\top \vdash \phi(\mathbf{c})$. But $\phi(\mathbf{c})$ is the pullback of $[\mathbf{x}, \phi(\mathbf{x})] \rightarrowtail [\mathbf{x}, \top]$ along $[\![\mathbf{c}]\!]$, which provides the required section since $1=\phi(\mathbf{c})$. This finishes the proof. 
\end{proof}

To summarize, if the disjunction and existence properties hold for $\kappa$ (for example if $\kappa$ is a Heyting cardinal), there is a $\kappa$-satisfiable $\mathcal{L}_{\kappa, \kappa}$-theory over a signature of cardinality $\kappa$, and with $\kappa$-many axioms, which has a model. Since there are $\kappa$ many mutually non-equivalent relation symbols, it follows that models for subtheories of less than $\kappa$ many axioms do not trivially extend to models of the whole theory. The existence of a model for the whole theory could thus require an instance of weak compactness in an essential way.

\subsection{Karp's theorem and completeness of classical infinitary systems}

In the classical case, when the syntactic category is Boolean, we can prove that Proposition \ref{cohcomp} reduces to the completeness theorem of Karp in \cite{karp}, since we know that the transfinite transitivity property can be rewritten into the classical distributivity and dependent choice axiom schemata. This reduction will be possible due to classical Morleyization, which is the infinitary version of the process explained in \cite{johnstone}, D 1.5:

\begin{thm}\label{karp}
 (Karp) If $\kappa$ is inaccessible, theories of cardinality less than $\kappa$ within the classical first-order system of Karp are complete with respect to \Sets-valued models.
\end{thm}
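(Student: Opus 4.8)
The plan is to derive Karp's theorem from the already-proved completeness of $\kappa$-coherent logic (Proposition~\ref{cohcomp}) by passing through a classical Morleyization, exactly as anticipated in the remark preceding the statement. The first move is to invoke Theorem~\ref{equiv}: since $\kappa$ is inaccessible, adjoining excluded middle to $\kappa$-first-order logic yields a system provably equivalent to Karp's, so a theory $\theory$ of cardinality less than $\kappa$ in Karp's classical system may be regarded as a $\kappa$-first-order theory whose syntactic category $\synt{C}{\theory}$ is a \emph{Boolean} $\kappa$-Heyting category (Proposition~\ref{catcomp}). I would then rewrite $\theory$ as a $\kappa$-coherent theory $\theory^m$, its classical Morleyization, obtained by adjoining a relation symbol $P_\phi$ for each formula $\phi$ together with coherent axioms pinning down the behaviour of $P_\phi$ relative to the immediate subformulas of $\phi$. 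The crucial classical ingredient enters here: because excluded middle is available, the connectives $\neg$, $\to$, $\forall$ can be captured by \emph{coherent} axioms (e.g. $\top \vdash P_\phi \vee P_{\neg\phi}$ and $P_\phi \wedge P_{\neg\phi} \vdash \bot$), so that in every Boolean $\kappa$-coherent category --- in particular in $\Sets$, which is Boolean --- the $\kappa$-coherent models of $\theory^m$ coincide with the classical Tarski models of $\theory$, with $P_\phi$ interpreted exactly as $\phi$.

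Granting that $\theory^m$ can be arranged to have cardinality less than $\kappa$, the argument closes quickly. By Proposition~\ref{cohcomp} (which needs only inaccessibility) $\theory^m$ is complete for $\Sets$-valued models; and since $\synt{C}{\theory}\simeq\synt{C}{\theory^m}$, provability of $P_\phi \vdash_{\mathbf{x}} P_\psi$ in $\theory^m$ is equivalent to provability of $\phi \vdash_{\mathbf{x}} \psi$ in $\theory$ (this is the defining property of the Morleyization, and may also be read off Proposition~\ref{thmp2}). Thus, if $\phi \vdash_{\mathbf{x}} \psi$ holds in every $\Sets$-model of $\theory$, then, by the model correspondence above, $P_\phi \vdash_{\mathbf{x}} P_\psi$ holds in every $\Sets$-model of $\theory^m$; completeness of $\theory^m$ makes it provable in $\theory^m$, hence $\phi \vdash_{\mathbf{x}} \psi$ is provable in $\theory$, which by Theorem~\ref{equiv} means provable in Karp's system. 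This is the desired completeness.

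The heart of the matter, and the step I expect to be the main obstacle, is the cardinality bound $|\theory^m| < \kappa$, and it is precisely here that classicality is indispensable. Naively the Morleyization adjoins one predicate per formula and one axiom per $\kappa$-ary conjunction, disjunction and existential, of which there are a priori $\kappa$ many. The way out is the classical analogue of Tarski's theorem (cf. the footnote in the subsection on Heyting cardinals, and \cite{tarski}): over a signature of size less than $\kappa$ a \emph{classical} theory has fewer than $\kappa$ formulas up to provable equivalence, so one may Morleyize using only representatives, or --- more robustly --- relative to the subformula-closure of the axioms together with the target sequent, a set of size less than $\kappa$. Making this reduction faithful (so that provability in the trimmed $\theory^m$ reflects provability in $\theory$ without circularly invoking completeness of $\theory$) is the delicate point; I would secure it through the subformula property of cut-free derivations in the classical infinitary calculus (as developed with Gentzen sequents in \cite{mt}), which guarantees that a derivation of $\phi \vdash_{\mathbf{x}} \psi$ traverses only formulas already present in that $<\kappa$-sized set. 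This is exactly the feature that fails intuitionistically, where there are $\kappa$ mutually non-equivalent formulas and the corresponding reduction forces $|\theory^m| = \kappa$, so that Proposition~\ref{scohcomp} and a weak-compactness hypothesis --- rather than mere inaccessibility --- become necessary.
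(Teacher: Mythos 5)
Your proposal follows essentially the same route as the paper: Morleyize the classical theory into a $\kappa$-coherent one over the subformula closure $S$ of the axioms together with the target sequent (which has size less than $\kappa$ by inaccessibility), apply Proposition~\ref{cohcomp} to the Morleyization, and translate back. Where you diverge is precisely at the step you flag as the main obstacle, and there the paper's resolution is much lighter than the tools you reach for. The paper introduces \emph{two} relation symbols $C_\phi, D_\phi$ for each $\phi\in S$ (playing the roles of your $P_\phi$ and $P_{\neg\phi}$), with purely $\kappa$-coherent axioms relating each to the immediate subformulas, and then closes the faithfulness gap by a direct proof transformation: given the $\kappa$-coherent derivation of $C_\phi\vdash_{\mathbf{x}} C_\psi$ supplied by Proposition~\ref{cohcomp}, substitute $C_\chi\mapsto\chi$ and $D_\chi\mapsto\neg\chi$ throughout. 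Every Morleyization axiom then becomes either an axiom of \theory\ or a classically provable sequent (e.g.\ $C_{\phi\to\psi}\dashv\vdash D_\phi\vee C_\psi$ becomes $(\phi\to\psi)\dashv\vdash\neg\phi\vee\psi$), and every instance of the transfinite transitivity rule occurring in the derivation is classically derivable from distributivity and dependent choice by Theorem~\ref{equiv}; the substituted derivation is therefore already a proof in Karp's system. The only direction needed is ``provable in the trimmed $\theory^m$ implies classically provable in \theory'', and the substitution gives it outright, so there is no circularity to avoid and no need for cut elimination or the subformula property of the classical infinitary calculus --- a nontrivial proof-theoretic import that, as you invoke it, addresses the shape of hypothetical derivations of $\phi\vdash_{\mathbf{x}}\psi$ rather than the construction of one. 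Likewise the Tarski-style counting of inequivalent formulas is unnecessary (and your first route via representatives of equivalence classes sits awkwardly with Morleyization axioms that must mention literal immediate subformulas). One further small correction: for the trimmed classical Morleyization the equivalence $\synt{C}{\theory}\simeq\synt{C}{\theory^m}$ you appeal to does not hold literally, since the paper's $\theory^m$ only records the axioms of \theory\ rather than all provable sequents; but it is not needed for the argument.
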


\begin{proof}
 Suppose the sequent $\phi \vdash_{\alg{x}} \psi$ is valid in all models in \Sets. Let $S$ be the set of subformulas of some axiom of $\theory$ or of $\phi$ or $\psi$. We consider now the classical Morleyized theory $\theory^m$, over a signature $\Sigma^m$ that extends the original signature $\Sigma$ by adding for each $\kappa$-first-order formula $\phi \in S$ over $\Sigma$ with free variables \alg{x} two new relation symbols $C_{\phi}(\alg{x})$ and $D_{\phi}(\alg{x})$, and whose axioms are:

\begin{enumerate}[(i)]

\item $C_{\phi}(\alg{x}) \wedge D_{\phi}(\alg{x}) \vdash_{\alg{x}} \bot$;
\item $\top \vdash_{\alg{x}} C_{\phi}(\alg{x}) \vee D_{\phi}(\alg{x})$;
\item $C_{\phi}\dashv\vdash_{\alg{x}}\phi$ for every atomic formula $\phi$;
\item $C_{\phi}\vdash_{\alg{x}}C_{\psi}$ for every axiom $\phi\vdash_{\alg{x}}{\psi}$ of \theory;
\item $C_{\bigwedge_{i<\gamma}\phi_i}\dashv\vdash_{\alg{x}}\bigwedge_{i<\gamma}C_{\phi_i}$;
\item $C_{\bigvee_{i<\gamma}\phi_i}\dashv\vdash_{\alg{x}}\bigvee_{i<\gamma}C_{\phi_i}$;
\item $C_{\phi \rightarrow \psi}\dashv\vdash_{\alg{x}}D_{\phi} \vee C_{\psi}$
\item $C_{\exists{\mathbf{y}}\phi}\dashv\vdash_{\alg{x}}\exists{\mathbf{y}}C_{\phi}$;
\item $D_{\forall{\mathbf{y}}\phi}\dashv\vdash_{\alg{x}}\exists{\mathbf{y}}D_{\phi}$.

\end{enumerate} 

These axioms are all $\kappa$-coherent and they ensure that the interpretations of $[\alg{x}, C_{\phi}(\alg{x})]$ and $[\alg{x}, D_{\phi}(\alg{x})]$ in any Boolean category (including \Sets) will coincide with those of $[\alg{x}, \phi(\alg{x})]$ and $[\alg{x}, \neg \phi(\alg{x})]$, respectively, and that, moreover, $\theory^m$-models coincide with $\theory$-models in such categories. Also, since $S$ has cardinality less than $\kappa$, so does $\theory^m$. 

Now, if a sequent $\phi \vdash_{\alg{x}} \psi$ is valid in all $\theory$-models in \Sets, then $C_{\phi}\vdash_{\alg{x}}C_{\psi}$ will be valid in every $\theory^m$-model in \Sets, and therefore will be provable in $\theory^m$ by Proposition \ref{cohcomp}. Replace in this proof every subformula of the form $C_{\phi}(t_1, ..., t_{\alpha}, ...)$ by the corresponding substitution instance of $\phi(\alg{t}/\alg{x})$, and every subformula of the form $D_{\phi}(t_1, ..., t_{\alpha}, ...)$ by the corresponding substitution instance of $\neg \phi(\alg{t}/\alg{x})$. We claim that this way we will get a proof in $\theory$ of the sequent $\phi \vdash_{\alg{x}} \psi$ using the rules of $\kappa$-first-order systems. Indeed, the effect of the transformation just described on the axioms of $\theory^m$ produces either axioms of $\theory$ or sequents classically provable from $\theory$; finally, any instance of the transfinite transitivity rule used is, by Theorem \ref{equiv}, classically provable from the instances of classical distributivity and classical dependent choice.
\end{proof}

If one sticks to the transfinite transitivity rule, it is possible to sharpen these methods to prove completeness of a variety of classical systems. We will consider a variant of the instances of transfinite transitivity rule in which the conclusion is slightly modified. We call $TT_\gamma^B$ the rule:

\begin{mathpar}
\inferrule{\phi_{f} \vdash_{\mathbf{y}_{f}} \bigvee_{g \in \gamma^{\beta+1}, g|_{\beta}=f} \exists \mathbf{x}_{g} \phi_{g} \\ \beta<\gamma, f \in \gamma^{\beta} \\\\ \phi_{f} \dashv \vdash_{\mathbf{y}_{f}} \bigwedge_{\alpha<\beta}\phi_{f|_{\alpha}} \\ \beta < \gamma, \text{ limit }\beta, f \in \gamma^{\beta}}{\phi_{\emptyset} \vdash_{\mathbf{y}_{\emptyset}} \bigvee_{f \in B}  \exists_{\beta<\delta_f}\mathbf{x}_{f|_{\beta +1}} \bigwedge_{\beta<\delta_f}\phi_{f|_\beta}}
\end{mathpar}
\\
subject to the same provisos as the original rule, but where now $B \subseteq \gamma^{\gamma}$ consists of the minimal elements of a given bar over the tree $\gamma^{\gamma}$, and the $\delta_f$ are the levels of the corresponding $f \in B$. The proof of soundness of this rule in $\Sets$ is an easy modification of that of the original rule. If $TT_{\gamma}=\cup_{B}TT_{\gamma}^B$ and we denote by $\mathcal{L}_{\gamma^+, \gamma}(TT_{\gamma})$ Karp's classical system in which distributivity and dependent choice are replaced by the rules $TT_{\gamma}$, we have now:

\begin{thm}\label{karp2}
If $\gamma$ is regular and $\gamma^\alpha=\gamma$ for every $\alpha<\gamma$, theories of cardinality at most $\gamma$ in the classical system $\mathcal{L}_{\gamma^+, \gamma}(TT_{\gamma})$ are complete for \Sets-valued models.
\end{thm}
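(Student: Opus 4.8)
The plan is to follow the strategy of Theorem~\ref{karp} almost verbatim, the only genuine novelty being that the underlying coherent completeness result must be re-established for $\kappa=\gamma^{+}$, which is a successor rather than an inaccessible cardinal. First I would pass to the classical Morleyization $\theory^{m}$ of the given theory $\theory$, exactly as in the proof of Theorem~\ref{karp}: fixing the set $S$ of subformulas occurring in the axioms of $\theory$ or in $\phi,\psi$, one adjoins relation symbols $C_{\chi},D_{\chi}$ for each $\chi\in S$ together with the nine $\gamma^{+}$-coherent Morleyization axioms (i)--(ix). Since $\theory$ has cardinality at most $\gamma$ and each formula of $\mathcal{L}_{\gamma^{+},\gamma}$ has at most $\gamma$ subformulas, we have $|S|\le\gamma$ and hence $\theory^{m}$ is a $\gamma^{+}$-coherent theory of cardinality at most $\gamma$. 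A valid sequent $\phi\vdash_{\alg{x}}\psi$ then yields a sequent $C_{\phi}\vdash_{\alg{x}}C_{\psi}$ valid in every $\Sets$-valued model of the theory $\theory^{m}$.

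The heart of the argument is therefore a completeness theorem for the coherent fragment of $\mathcal{L}_{\gamma^{+},\gamma}$, formulated with the bar rule $TT_{\gamma}$ in place of the full transfinite transitivity rule, for theories of cardinality at most $\gamma$; I would obtain it by adapting Theorem~\ref{shcomp} and Proposition~\ref{cohcomp}. Here the hypotheses on $\gamma$ enter decisively: since $\gamma$ is regular and $\gamma^{\alpha}=\gamma$ for every $\alpha<\gamma$, one has $|\gamma^{<\gamma}|=\sum_{\alpha<\gamma}\gamma^{\alpha}=\gamma$, so that every bar $B\subseteq\gamma^{<\gamma}$ has at most $\gamma$ minimal elements and the conclusion $\bigvee_{f\in B}\exists_{\beta<\delta_{f}}\alg{x}_{f|_{\beta+1}}\bigwedge_{\beta<\delta_{f}}\phi_{f|_{\beta}}$ of $TT_{\gamma}$ is a legitimate disjunction of length $\le\gamma$ of conjunctions of length $\delta_{f}<\gamma$, i.e.\ a formula of $\mathcal{L}_{\gamma^{+},\gamma}$. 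This is exactly the purpose of truncating at a bar: the untruncated disjunction over $\gamma^{\gamma}$ would have $2^{\gamma}$ disjuncts and leave the fragment. I would then run the tree construction of Theorem~\ref{shcomp} over a well-ordering $f\colon\gamma\times\gamma\to\gamma$ as in Lemma~\ref{dwo}, building a partial Beth model of height $\gamma$; the soundness of $TT_{\gamma}$ for such height-$\gamma$ models is the easy modification of the soundness of the full rule, the bar being reached after fewer than $\gamma$ levels. The cardinal arithmetic guarantees that the tree carries at most $\gamma$ nodes at each stage and that all covering families stay of size $<\gamma^{+}$, so the whole construction remains inside the metatheory of sets hereditarily of cardinality $<\gamma^{+}$.

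The final step back-translates a $\theory^{m}$-proof of $C_{\phi}\vdash_{\alg{x}}C_{\psi}$ into a $\theory$-proof of $\phi\vdash_{\alg{x}}\psi$ by substituting $\chi$ for $C_{\chi}$ and $\neg\chi$ for $D_{\chi}$; the Morleyization axioms become $\theory$-theorems or classical tautologies, and each instance of the coherent rule $TT_{\gamma}$ becomes an instance of the classical rule $TT_{\gamma}$ of the target system, so---unlike in Theorem~\ref{karp}---no appeal to Theorem~\ref{equiv} converting into distributivity and dependent choice is needed. The main obstacle I anticipate is precisely that $\gamma^{+}$ is not inaccessible: in Proposition~\ref{cohcomp} one extracts a Tarski model as a directed colimit $\mathbf{D_{b}}$ along a branch and, in order to transfer the forcing of a conjunction back to a single node, one needs a common upper bound of the witnessing nodes, which for a conjunction of length exactly $\gamma$ and a branch of length $\gamma$ can fail by cofinality. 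I would circumvent this by reducing the consequent through the conjunction rule (proving $\phi\vdash_{\alg{x}}\psi_{i}$ separately for each conjunct of a top-level conjunction and recombining) and by exploiting that the bar truncation forces every disjunctive or existential subformula in $S$ to be witnessed within boundedly many---fewer than $\gamma$---levels, so that, together with the regularity of $\gamma$, the relevant witnessing nodes can be kept below a fixed level of the branch. Verifying that this bounded-witnessing property indeed makes $\mathbf{D_{b}}$ a genuine $\gamma^{+}$-coherent model satisfying $\phi$ is the delicate point on which the theorem rests.
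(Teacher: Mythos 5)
Your outline matches the paper's strategy in broad terms---Morleyize classically, prove a coherent completeness theorem for a fragment of $\mathcal{L}_{\gamma^+,\gamma}$ equipped with $TT_\gamma$ via the Beth-tree and directed-colimit construction, then back-translate---and you correctly locate the central obstacle: along a branch of cofinality $\gamma$, the nodes witnessing the $\gamma$ many conjuncts of a $\gamma$-sized conjunction need not have a common upper bound, so the colimit $\mathbf{D_b}$ need not reflect such conjunctions to a single node. But your proposed circumvention does not close this gap. Splitting the consequent ``through the conjunction rule'' only treats a conjunction at the top level of $\psi$; a $\gamma$-sized conjunction can occur nested under a disjunction, an existential quantifier, an implication or a negation (e.g.\ $\exists x\,\bigwedge_{i<\gamma}\theta_i(x)$), where no such decomposition exists, and your bounded-witnessing observation concerns disjunctive and existential subformulas only---it says nothing about, say, $\gamma$ many atomic conjuncts each first forced at a level $\lambda_i$ with $\sup_i\lambda_i=\gamma$. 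Moreover, keeping axiom (v), $C_{\bigwedge_{i<\gamma}\phi_i}\dashv\vdash_{\alg{x}}\bigwedge_{i<\gamma}C_{\phi_i}$, for $\gamma$-sized conjunctions means your auxiliary theory $\theory^m$ is not a theory of the fragment to which the colimit argument applies; and formulas with $\gamma$ many free variables pose the same upper-bound problem for the tuple $\overline{\boldsymbol{\alpha}}$ itself.

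The paper's fix is structural rather than ad hoc: the auxiliary completeness theorem is proved for the $(\gamma^+,\gamma,\gamma)$-coherent fragment, in which only disjunctions may have $\gamma$ many disjuncts while conjunctions, existential strings and contexts all have size strictly less than $\gamma$; the regularity of $\gamma$ then makes the colimit argument go through exactly as in Proposition \ref{cohcomp}. The $\gamma$-sized conjunctions of the classical language are coded into this fragment by augmenting the Morleyization with the De Morgan axioms $D_{\bigwedge_{i}\phi_i}\dashv\vdash_{\alg{x}}\bigvee_{i}D_{\phi_i}$ (axiom (v) being retained only for conjunctions of length $<\gamma$), so that in any Boolean interpretation a $\gamma$-fold conjunction is recovered as the complement of a $\gamma$-fold disjunction of the $D$'s---and disjunctions are handled by the covering-family clauses of the forcing with no upper-bound issue. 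With that modification the rest of your argument (expressibility of $TT_\gamma$ from $\gamma^\alpha=\gamma$ and the bar truncation, the tree of height $\gamma$ and branching type $\gamma^+$, and the direct back-translation of $TT_\gamma$ without appeal to Theorem \ref{equiv}) is essentially the paper's proof.
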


\begin{proof}
The proof follows the same lines as the proof for \ref{karp}, except that instead of relying on Proposition \ref{cohcomp}, we prove an analogous completeness theorem for theories with at most $\gamma$ axioms in the $(\gamma^+, \gamma, \gamma)$-coherent fragment of $\mathcal{L}_{\gamma^+, \gamma, \gamma}$. The system for this fragment is like that of the infinitary coherent fragment, with two differences: first, only disjunctions can be indexed by sets of cardinality $\gamma$, while the indexing sets of conjunctions and existential quantification is always less than $\gamma$ (this means that in the inductive definition of formulas of the type $\bigwedge_{i<\alpha} \phi_i$ and $\bigvee_{i<\alpha} \phi_i$ we need to make sure that $|\cup_{i<\alpha}FV(\phi_i)|<\gamma$, and in particular, the contexts of the formulas in the syntactic category have length always less than $\gamma$); second, the transfinite transitivity rule is replaced by the rule $TT_{\gamma}$, which is expressible in the fragment due to the hypothesis on $\gamma$. There is an obvious embedding from the syntactic category of this fragment into a sheaf topos, and one can show much as in the proof of Lemma \ref{shemb} that the embedding preserves the relevant structure.

We prove this completeness theorem much as in the proof of Proposition \ref{cohcomp}, but now the building covering families over each object $A$, used to construct the tree, witness if $A$ forces each of the $\gamma$-many subformulas of the axioms or of the valid sequent $\phi \vdash_x \psi$ (we call this set of subformulas $S$); that is, if the subformula $\eta$ is a (nonempty) disjunction $\bigvee_{i<\gamma}\phi_i(\boldsymbol{\beta})$ (resp. an existential formula $\exists_{\alpha<\gamma}\mathbf{x}_{\alpha}\psi(\mathbf{x}_0, ...,\mathbf{x}_{\alpha}, ..., \boldsymbol{\beta})$), and $A \Vdash \eta(\boldsymbol{\beta})$, we include in the set of coverings one of the form $l_j: C_j \to A$, where for each $j$ we have $C_j \Vdash \phi_{i_j}(\boldsymbol{\beta} l_j)$ for some $i_j<\gamma$ (resp. $C_j \Vdash \psi(\boldsymbol{\beta_0^j}, ..., \boldsymbol{\beta_{\alpha}^j}, ... \boldsymbol{\beta} l_j)$ for some $\boldsymbol{\beta_0^j}, ..., \boldsymbol{\beta_{\alpha}^j}, ...$). In case $\eta$ is $\bot$, or $\eta$ is a conjunctive subformula, or $A \nVdash \eta(\boldsymbol{\beta})$ we just consider the identity arrow as a cover. Thus although the tree has branching type $\gamma^+$, its height is $\gamma$.

It is enough to prove that every object in the sheaf model forcing the antecedent $\phi(\boldsymbol{\alpha})$ of the valid sequent $\phi \vdash_x \psi$ also forces the consequent $\psi(\boldsymbol{\alpha})$ for every tuple $\boldsymbol{\alpha}$ in the domain. We can thus consider a partial Beth model over the tree of height $\gamma$ so defined taking as the root of the tree an object forcing $\phi(\boldsymbol{\alpha})$, and the directed colimit $\mathbf{D_b}$ of all the underlying structures in the nodes of a cofinal branch $b$ of the tree. We then make it into a structure with the expected definition and prove that it is a model of the theory. For this, we prove that given any $\kappa$-coherent formula $\phi(x_0, ..., x_{\lambda}, ...) \in S$, we have $\mathbf{D_b} \vDash \phi(\overline{\alpha_0}, ..., \overline{\alpha_{\lambda}}, ...)$ if and only if for some node $n$ in $b$, the underlying structure $C_n$ satisfies $C_n \Vdash \phi(\alpha_0, ..., \alpha_{\lambda}, ...)$ for some representatives $\alpha_i$ of $\overline{\alpha_i}$ (the regularity of $\gamma$ is used in the definition of the structure in $\mathbf{D_b}$ and in the inductive step for conjunctions). Now, since any $(\gamma^+, \gamma, \gamma)$-coherent formula satisfied in the models given by the directed colimits of the underlying structures of the nodes along all cofinal branches, is forced at some node of every branch, an application of the categorical property corresponding to the rule $TT_{\gamma}$ proves that $\psi(\boldsymbol{\alpha})$ is forced at the roots, and the completeness of the $(\gamma^+, \gamma, \gamma)$-coherent fragment follows. 

Finally, for the classical Morleyization, we add to the axioms of $\theory^m$ also the axioms:

$$D_{\bigwedge_{i<{\gamma^+}}\phi_i}\dashv\vdash_{\alg{x}}\bigvee_{i<{\gamma^+}}D_{\phi_i},$$
\\
which, taking advantage of the classical relation between conjunctions and disjunctions, are able to code conjunctions with indexing set of size $\gamma$ into the $(\gamma^+, \gamma, \gamma)$-coherent fragment considered. Then we proceed as in Theorem \ref{karp} to finish the proof.
\end{proof}

\begin{rmk} 
 Theorem \ref{karp2} applies, for example, when $\gamma$ is inaccessible, or for any regular $\gamma$ as long as we assume the Generalized Continuum Hypothesis. It is also best possible in terms of the cardinality of theories for which one is able to derive completeness, since it is known from Jensen (see \cite{jensen}) that $V=L$ implies the existence of $\gamma^+$-Aronszajn trees, for which the theory of a cofinal branch (\emph{cf.} proof of Proposition \ref{converse}) is consistent but has no model.
\end{rmk}

\begin{cor}\label{karp3}
 (Karp) Countable theories in the classical system $\mathcal{L}_{\omega_1, \omega}$ are complete with respect to \Sets-valued models.
\end{cor}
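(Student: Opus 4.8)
The plan is to obtain the corollary as the instance $\gamma=\omega$ of Theorem~\ref{karp2}. First I would check that $\gamma=\omega$ meets the two hypotheses: $\omega$ is regular, and $\omega^{\alpha}=\omega$ for every $\alpha<\omega$, since (recalling that $\alpha^{\beta}$ denotes here the set of functions $\beta\to\alpha$) a finite power of a countable set is countable. As $\omega^{+}=\omega_{1}$, Theorem~\ref{karp2} then yields that every theory of cardinality at most $\omega$, i.e.\ every countable theory, in the classical system $\mathcal{L}_{\omega_{1},\omega}(TT_{\omega})$ is complete with respect to $\Sets$-valued models. All the substantive work has therefore already been done; what remains is to identify this system with the system named in the statement.

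So the second step is to identify $\mathcal{L}_{\omega_{1},\omega}(TT_{\omega})$ with Karp's classical system $\mathcal{L}_{\omega_{1},\omega}$, which differs from it only in that Karp's system uses the countable distributivity axiom in place of the rules $TT_{\omega}$. The decisive point is that in $\mathcal{L}_{\omega_{1},\omega}$ all quantifier strings are finite: the \emph{original} transfinite transitivity rule is unusable here because its conclusion contains the infinite existential block $\exists_{\beta<\omega}\mathbf{x}_{f|_{\beta+1}}$, which is not a formula of the language, whereas each $TT_{\omega}^{B}$ \emph{is} expressible, precisely because $B$ consists of the minimal nodes of a bar over $\omega^{<\omega}$, so every $f\in B$ has finite level $\delta_{f}$ and the block $\exists_{\beta<\delta_{f}}\mathbf{x}_{f|_{\beta+1}}$ is finite. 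This is exactly the reason the bar-variant was introduced, and it is what lets the argument land inside the finite-quantifier fragment.

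For the interderivability of $TT_{\omega}$ with countable distributivity over the base classical system I would argue two directions. From the classical distributivity axiom one derives each instance of $TT_{\omega}^{B}$ by rerunning the derivation of Theorem~\ref{equiv} specialized to $\gamma=\omega$; the crucial simplification is that the dependent-choice steps of that argument, which in general build witnesses along a branch of full length $\gamma$, now only need to be carried out down to the finite level $\delta_{f}$ for each $f\in B$, so they collapse into finitely many applications of the ordinary existential rule (finite dependent choice being a theorem of ordinary first-order logic), and no genuine infinitary choice principle is invoked. Conversely, the countable distributivity axiom is recovered from $TT_{\omega}$ through its distributivity specialization together with Lemma~\ref{equivl} taken at $\gamma=\omega$, exactly as in the reduction used in Theorem~\ref{karp}. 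Since the two systems then have the same well-formed sequents and prove the same ones, the completeness established in the first step transfers verbatim to $\mathcal{L}_{\omega_{1},\omega}$.

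The main obstacle I anticipate is this interderivability claim, and specifically making precise that restricting to bars with finite $\delta_{f}$ loses no deductive strength: one must confirm that the classical content captured by $TT_{\omega}$ in the finite-quantifier setting is \emph{exactly} countable distributivity, so that the dependent-choice ingredient of the general transfinite transitivity machinery genuinely degenerates (into finite existential eliminations) rather than being silently required. Once this bookkeeping is in place, the corollary is immediate from Theorem~\ref{karp2}.
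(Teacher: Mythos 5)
Your first step is exactly right and matches the paper: instantiate Theorem \ref{karp2} at $\gamma=\omega$, noting that $\omega$ is regular and $\omega^{\alpha}=\omega$ for finite $\alpha$, to obtain completeness of countable theories in $\mathcal{L}_{\omega_1,\omega}(TT_{\omega})$. The gap is in your second step. You propose to identify $\mathcal{L}_{\omega_1,\omega}(TT_{\omega})$ with Karp's $\mathcal{L}_{\omega_1,\omega}$ by proving $TT_{\omega}$ interderivable with ``the countable distributivity axiom'' over the base system. But the classical distributivity axiom at $\gamma=\omega$ is $\bigwedge_{i<\omega}\bigvee_{j<\omega}\psi_{ij}\vdash\bigvee_{f\in\omega^{\omega}}\bigwedge_{i<\omega}\psi_{if(i)}$, whose right-hand disjunction is indexed by the continuum-sized set $\omega^{\omega}$; it is not even a well-formed formula of $\mathcal{L}_{\omega_1,\omega}$, so it is not part of the target system and cannot serve as the bridge. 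Moreover, ``rerunning Theorem \ref{equiv}'' cannot be confined to the fragment: that derivation passes through disjunctions indexed by $2^{I}$ for $I=\{f\in\gamma^{\beta}:\beta<\gamma\}$ and through infinite existential blocks $\exists_{\alpha<\gamma}(\cup_{f}\mathbf{x}_{f|_{\alpha+1}})$, none of which survive the restriction to finite quantifier strings and countable disjunctions.

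What is actually needed --- and what the paper supplies --- is a \emph{direct} derivation of each $TT_{\omega}^{B}$ inside bare $\mathcal{L}_{\omega_1,\omega}$, invoking no distributivity schema at all. One builds a proof tree along the bar: cut the premise $\phi_{\emptyset}\vdash_{\mathbf{y}_{\emptyset}}\bigvee_{g}\exists\mathbf{x}_{g}\phi_{g}$ against $\bigvee_{g}\exists\mathbf{x}_{g}\phi_{g}\vdash_{\mathbf{y}_{\emptyset}}C$, reduce the latter by disjunction elimination to $\exists\mathbf{x}_{g}\phi_{g}\vdash_{\mathbf{y}_{\emptyset}}C$ for each $g$, cut again with the level-two premise, and so on; every branch of this proof tree terminates after finitely many steps at a node of $B$ (because $B$ bars $\omega^{<\omega}$ at finite levels), where the leaf is either an instance of disjunction introduction or provable from a premise, so the tree is well-founded and constitutes a legitimate proof. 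Your instinct that the finiteness of the $\delta_{f}$ is the decisive point is correct, but its role is to make this induction along the bar well-founded, not to let a distributivity-plus-dependent-choice argument degenerate; without this direct derivation the identification of the two systems, and hence the corollary, does not go through.
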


\begin{proof}
 It is enough to apply Theorem \ref{karp2} and notice that $TT_{\omega}$ is actually provable in $\mathcal{L}_{\omega_1, \omega}$. To see this, we construct the proof tree as follows. Let 
 
$$C=\bigvee_{f \in B}  \exists_{\beta<\delta_f}\mathbf{x}_{f|_{\beta +1}} \bigwedge_{\beta<\delta_f}\phi_{f|_\beta}.$$
\\
To the premise:
 
$$\phi_{\emptyset} \vdash_{\mathbf{y}_{\emptyset}} \bigvee_{g \in \gamma} \exists \mathbf{x}_{g} \phi_{g}$$
\\
we append the sequent:

$$\bigvee_{g \in \gamma} \exists \mathbf{x}_{g} \phi_{g} \vdash_{\mathbf{y}_{\emptyset}} C$$
\\
to prepare an application of the cut rule and derive the conclusion we want. This latter sequent is in turn the conclusion of an application of the disjunction elimination rule from the set of sequents $\exists \mathbf{x}_{g} \phi_{g} \vdash_{\mathbf{y}_{\emptyset}} C$ for each $g \in \gamma$. To prove each of these, we use the following instance of the cut rule:

\begin{mathpar}
\inferrule{\exists \mathbf{x}_{g} \phi_{g} \vdash_{\mathbf{y}_{\emptyset}} \bigvee_{h \in \gamma^{2}, h|_{1}=g} \exists \mathbf{x}_{g} \mathbf{x}_{h} (\phi_{g} \wedge \phi_{h}) \\ \bigvee_{h \in \gamma^{2}, h|_{1}=g} \exists \mathbf{x}_{g} \mathbf{x}_{h} (\phi_{g} \wedge \phi_{h}) \vdash_{\mathbf{y}_{\emptyset}} C}{\exists \mathbf{x}_{g} \phi_{g} \vdash_{\mathbf{y}_{\emptyset}} C} 
\end{mathpar}
\\
and note that the first sequent on the left is derivable from a premise, and involves nodes of level $2$. This procedure is then repeated to the second sequent on the right, with further applications of the disjunction elimination rule and the cut rule, and we continue the tree proceeding upwards on every sequent containing $C$, until we reach all nodes in $B$. Since each node has finite height, the tree is well founded (i.e., it has no infinite ascending chains), and since each leaf is either a sequent provable from a premise or an instance of the disjunction introduction axiom, the tree provides the proof we wanted.
\end{proof}

\subsection{Makkai's theorem}

Consider the $\kappa$-regular fragment; the syntactic category of a theory over this fragment is a $\kappa$-regular category. If we consider the topos of sheaves over this category with the regular coverage given by single epimorphisms, the coverage is subcanonical and the topos is a conservative sheaf model for the theory, as can be proved analogously to Lemma \ref{shemb}. We have now:

\begin{proposition}
 Let $\kappa$ be a regular cardinal. Then any $\kappa$-regular theory of cardinality at most $\kappa$ has a linear partial Beth model of height $\kappa$.
\end{proposition}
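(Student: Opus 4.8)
The plan is to mimic the construction in the proof of Theorem \ref{shcomp}, specialized to the $\kappa$-regular coverage, and to observe that the absence of disjunction forces the witnessing tree to degenerate into a chain. First I would pass to the syntactic category $\mathcal{C}_{\theory}$ of the $\kappa$-regular theory and to its conservative embedding into the topos of sheaves for the regular coverage — the subcanonical coverage whose covering families are the single epimorphisms — which exists and preserves the relevant structure by the argument of Lemma \ref{shemb}. Let $S$ be the set of all subformulas of axioms of $\theory$; since $|\theory|\leq\kappa$ and each axiom has fewer than $\kappa$ subformulas, $|S|\leq\kappa$. The crucial difference from the coherent case is that there are no disjunctions to witness: whenever an object $A$ forces an existential subformula $\exists\mathbf{x}\,\psi\in S$, the witnessing cover in the regular coverage is a \emph{single} arrow $l\colon C\to A$ with $C\Vdash\psi(\boldsymbol{\beta},\boldsymbol{\alpha}l)$ for some $\boldsymbol{\beta}$ (for atomic and conjunctive subformulas, and when $A\nVdash$ the formula, one takes the identity cover).

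Next I would build the underlying tree exactly as in Theorem \ref{shcomp}, using a well-ordering $f\colon\kappa\times\kappa\to\kappa$ with $f(\beta,\gamma)\geq\gamma$ furnished by Lemma \ref{dwo}. Because every witnessing cover is a single arrow, pulling it back produces a unique successor at each successor stage, so the tree is in fact \emph{linear}: at level $\mu=f(\beta,\gamma)+1$ one pulls back the $\beta$-th cover over the unique node of level $\gamma$ along the composite path up to level $\mu-1$, obtaining the unique node of level $\mu$; at a limit level one takes the limit of the chain defined so far, whose projection to the root is the transfinite composite. The resulting chain has height $\kappa$, its unique branch has size $\kappa$, and the well-ordering guarantees that every existential subformula forced along the branch is eventually witnessed. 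I would then equip this chain with the Beth structure as in Theorem \ref{shcomp}: the underlying set of a node $q$ is the set of arrows $q\to[x,\top]$, function symbols act by composition, and relations (including equality) hold at $q$ precisely when they are forced in the sheaf semantics.

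The heart of the argument is the Claim that for every node $p$, every tuple $\boldsymbol{\alpha}$ and every $\phi\in S$, one has $p\Vdash\phi(\boldsymbol{\alpha})$ in the sheaf topos if and only if $p\Vdash_{B}\phi(\boldsymbol{\alpha})$ in the Beth model, proved by induction on the complexity of $\phi$. The atomic and conjunction cases are handled exactly as before, the latter using the regularity of $\kappa$ to find a node beyond finitely-or-fewer-than-$\kappa$-many witnesses; the only genuine connective is the existential quantifier. Here the single-arrow covers make the argument linear: if $p\Vdash\exists\mathbf{x}\,\psi$, the witnessing arrow is processed at some stage of the well-ordering and hence some node $m$ above $p$ on the branch forces $\psi$, so by induction $m\Vdash_{B}\psi$ and thus $p\Vdash_{B}\exists\mathbf{x}\,\psi$; conversely, a node $m$ of the branch with $m\Vdash_{B}\psi$ forces $\psi$ in the sheaf sense by induction, and since the arrow $m\to p$ is a transfinite composite of covers, hence a cover, we recover $p\Vdash\exists\mathbf{x}\,\psi$. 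Forcing of the axioms then transfers from the sheaf model, where they hold by construction, to the Beth model via the Claim, so the chain is a linear partial Beth model of height $\kappa$ for $\theory$.

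The main obstacle I expect is the limit-level bookkeeping: one must verify that the limit node of the chain constructed so far is again a node covering the root — i.e.\ that its projection to level $0$ is an epimorphism — which rests on the defining property of $\kappa$-regular categories that transfinite composites of covers are covers, together with the regularity of $\kappa$ guaranteeing that these linear diagrams have the expected limits and that the existential clause is witnessed below level $\kappa$. Because disjunctions are absent, no branching nor completely-proper-diagram machinery is required, and the inaccessibility hypothesis of Theorem \ref{shcomp} can be weakened to mere regularity of $\kappa$.
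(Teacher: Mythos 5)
Your proposal is correct and follows essentially the same route as the paper: the paper's proof likewise runs the construction of Theorem \ref{shcomp} with the $\kappa$-regular (single-epimorphism) coverage, observes that singleton witnessing covers collapse the tree to a chain, pads with identity covers to reach height $\kappa$, and transfers forcing between the sheaf semantics and the Beth model by the same induction. The only detail worth noting is that the paper makes the padding with identity covers explicit to ensure the chain has height exactly $\kappa$, a bookkeeping point your write-up handles implicitly.
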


\begin{proof}
The proof follows the same pattern as the proof of Theorem \ref{shcomp}, but is simplified in this case because the building covering families over each object $A$, used to construct the tree, consist of one element, thereby guaranteeing that the tree constructed will be linear. More especifically, the cover over an object $A$ witness if $A$ forces each of the less than $\kappa$ many subformulas of the axioms; that is, if the subformula $\eta$ is an existential formula $\exists_{\alpha<\gamma}\mathbf{x}_{\alpha}\psi(\mathbf{x}_0, ...,\mathbf{x}_{\alpha}, ..., \boldsymbol{\beta})$), and $A \Vdash \eta(\boldsymbol{\beta})$, we include in the set of coverings one of the form $l: C \to A$, where we have $C \Vdash \psi(\boldsymbol{\beta_0}, ..., \boldsymbol{\beta_{\alpha}}, ... \boldsymbol{\beta} l)$ for some $\boldsymbol{\beta_0}, ..., \boldsymbol{\beta_{\alpha}}, ...$. In case $\eta$ is a conjunctive formula or $A \nVdash \eta(\boldsymbol{\beta})$ we just consider the identity arrow as a cover. To guarantee that the tree has height $\kappa$, we add to the set of covers over an object identity arrows to the set of building covering families over each object $A$ until it has cardinality $\kappa$. 
\end{proof}
 
As a consequence, we immediately get:

\begin{proposition}\label{regbcomp}
If $\kappa$ is a regular cardinal, $\kappa$-regular theories of cardinality less than $\kappa$ are complete with respect to \Sets-valued models. 
\end{proposition}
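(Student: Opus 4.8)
The plan is to mirror the proof of Proposition \ref{cohcomp}, replacing the branching Beth model of Theorem \ref{shcomp} by the linear partial Beth model just constructed, and exploiting that in the $\kappa$-regular setting every covering family is a single epimorphism. Just as in the coherent case, it suffices to work inside the conservative sheaf model obtained by Yoneda embedding $\mathcal{C}_{\theory}$ into $\mathcal{S}h(\mathcal{C}_{\theory}, \tau)$ for the $\kappa$-regular coverage (conservative by the argument indicated above, analogous to Lemma \ref{shemb}), and to show that any object forcing the antecedent $\phi(\boldsymbol{\alpha})$ of a valid sequent $\phi \vdash_{\mathbf{x}} \psi$ also forces the consequent $\psi(\boldsymbol{\alpha})$; by conservativity this forcing statement is equivalent to provability of the sequent.

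First I would fix an object forcing $\phi(\boldsymbol{\alpha})$, take it as the root, and build the linear partial Beth model of height $\kappa$ over it, now enlarging the witnessing set $S$ so as to also include the (fewer than $\kappa$) subformulas of $\phi$ and $\psi$. Since the covers are singletons the underlying tree is a chain, so there is a single branch along which I would form the $\kappa$-directed colimit $\mathbf{D}$ of the underlying structures; the regularity of $\kappa$ guarantees that $\mathbf{D}$ is a well-defined \Sets-valued structure, and since the $\kappa$-regular fragment contains neither $\bot$ nor $\vee$ no exploding node can occur, so $\mathbf{D}$ is an honest model. The key lemma, established by induction on complexity exactly as in Proposition \ref{cohcomp} but with the disjunction clause now absent, asserts that $\mathbf{D} \vDash \phi(\overline{\boldsymbol{\alpha}})$ if and only if $\phi(\boldsymbol{\alpha})$ is forced at some node of the branch. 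In particular $\mathbf{D}$ is a model of the theory, and $\mathbf{D} \vDash \phi(\overline{\boldsymbol{\alpha}})$ because the root forces $\phi$.

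Since $\phi \vdash_{\mathbf{x}} \psi$ is assumed valid in every \Sets-valued $\kappa$-regular model, it holds in $\mathbf{D}$, whence $\mathbf{D} \vDash \psi(\overline{\boldsymbol{\alpha}})$; by the lemma $\psi(\boldsymbol{\alpha})$ is then forced at some node $n$ of the branch. The final step is to transport this down to the root. The arrow from $n$ to the root is a transfinite composite of the single-epimorphism covers used to build the chain, hence itself an epimorphism and so a cover for the $\kappa$-regular coverage; this is exactly the defining closure property of $\kappa$-regular categories (the transfinite composite of epimorphisms is an epimorphism), which is the categorical incarnation of dependent choice. The local character of forcing then carries $\psi(\boldsymbol{\alpha})$ from $n$ back to the root, the object we began with, and the proof is complete.

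I expect the only steps needing genuine care to be the verification that $\mathbf{D}$ is a legitimate $\kappa$-regular structure and that the forcing--satisfaction equivalence is preserved at the existential and dependent-choice stages; these are, however, strictly simpler instances of what is already carried out in Proposition \ref{cohcomp}, which is why completeness follows at once. The linearity of the tree is precisely what trivializes the concluding glueing: there is no family of branches to reconcile, and a single transfinite composite of epimorphisms delivers the descent to the root.
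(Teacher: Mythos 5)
Your proposal is correct and follows essentially the same route as the paper: build the linear partial Beth model over a root forcing $\phi(\boldsymbol{\alpha})$ with $S$ enlarged to include the subformulas of $\phi$ and $\psi$, take the directed colimit $\mathbf{D}$ along the unique branch, establish the forcing--satisfaction equivalence by induction (with the disjunction case absent), and descend from the node forcing $\psi(\boldsymbol{\alpha})$ to the root via the fact that transfinite composites of covers are covers, which is exactly the ``dependent choice property'' the paper invokes. No gaps.
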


\begin{proof} 
Again, the proof is similar to that of Proposition \ref{cohcomp}. It is enough to prove that every object in the sheaf model forcing the antecedent $\phi(\boldsymbol{\alpha})$ of a sequent $\phi \vdash_x \psi$ also forces the consequent $\psi(\boldsymbol{\alpha})$ for every tuple $\boldsymbol{\alpha}$ in the domain. We can thus consider a partial Beth model over a linear tree as above but taking instead as the root of the tree an object forcing $\phi(\boldsymbol{\alpha})$, and including in the set of subformulas of the axioms also subformulas in the valid sequent $\phi \vdash_x \psi$ (we call this set of subformulas $S$). Consider the directed colimit $\mathbf{D}$ of all the underlying structures in the nodes of the tree; we then make it into a structure with the expected definition and prove that it is a model of the theory. For this, we prove that given any $\kappa$-regular formula $\phi(x_0, ..., x_{\lambda}, ...) \in S$, we have $\mathbf{D} \vDash \phi(\overline{\alpha_0}, ..., \overline{\alpha_{\lambda}}, ...)$ if and only if for some node $n$ in the linear tree, the underlying structure $C_n$ satisfies $C_n \Vdash \phi(\alpha_0, ..., \alpha_{\lambda}, ...)$ for some representatives $\alpha_i$ of $\overline{\alpha_i}$ (the regularity of $\kappa$ is used in the definition of the structure in $\mathbf{D}$ and in the inductive step for conjunctions). Finally, since any $\kappa$-regular formula satisfied in the models given by the directed colimits of the underlying structures of the nodes in the linear trees, is forced at their roots (as can be seen by an application of the dependent choice property), the result follows. 
\end{proof}

Proposition \ref{regbcomp} can be improved by removing the restriction on the cardinality of the theories in question, which leads us to a result of \cite{makkai}:

\begin{thm}\label{sregbcomp}
(Makkai) If $\kappa$ is a regular cardinal, $\kappa$-regular theories are complete with respect to \Sets-valued models.
\end{thm}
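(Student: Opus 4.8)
The plan is to strengthen Proposition \ref{regbcomp} by removing the cardinality restriction, and the essential point is that for the $\kappa$-regular fragment the covering families of the syntactic site are \emph{single} morphisms, so the Beth model built over the syntactic category is \emph{linear}: it is a single chain rather than a branching tree. Because of this, a countermodel can be extracted from one branch alone, and there is never any need to reconcile distinct branches; this is precisely the step that forced the introduction of weak/strong compactness in the coherent case (Proposition \ref{scohcomp}, where disjunctions produce genuine branching). Consequently I expect that mere regularity of $\kappa$ suffices even for theories of arbitrary cardinality.

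Concretely, I would argue by contraposition: assume $\phi \vdash_{\mathbf{x}} \psi$ is valid in all \Sets-valued $\kappa$-regular models of \theory\ but fails to be provable, and build a single model refuting it. As in Lemma \ref{shemb} one embeds $\mathcal{C}_{\theory}$ conservatively into the sheaf topos for the $\kappa$-regular (single-epimorphism) coverage, and takes as root an object forcing $\phi(\boldsymbol{\alpha})$ but not $\psi(\boldsymbol{\alpha})$. Now let $S$ be the set of all subformulas of the axioms of \theory\ together with those of $\phi$ and $\psi$; unlike in Proposition \ref{regbcomp}, $S$ may have cardinality $\geq \kappa$. I would therefore fix a regular cardinal $\mu \geq |S| + \kappa$ and construct a chain of nodes of length $\mu$, using a pairing function on $\mu$ in the style of Lemma \ref{dwo} to schedule, cofinally often, the witnessing of every existential formula of $S$ at every tuple of parameters that forces it: each such witnessing is effected by a single cover, keeping the construction linear. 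At limit stages one takes the node determined by the chain below.

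The colimit $\mathbf{D}$ of the underlying structures along this single chain is then the desired model. The crucial structural fact is that $\kappa$-regular formulas are preserved by $\kappa$-directed colimits: atomic formulas and existential quantifiers pass through colimits directly, while a conjunction $\bigwedge_{i<\gamma}\theta_i$ with $\gamma<\kappa$ is handled because each conjunct becomes true at some stage below $\mu$ and, since $\gamma<\kappa\leq\mu$ and $\mu$ is regular, all the $\theta_i$ become simultaneously true at a common earlier stage. This yields the analogue of the Claim in Proposition \ref{regbcomp}: $\mathbf{D}\vDash\chi(\overline{\boldsymbol{\alpha}})$ iff $\chi(\boldsymbol{\alpha})$ is forced at some node of the chain, for every $\chi\in S$. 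From this, every axiom of \theory\ holds in $\mathbf{D}$, while $\phi(\overline{\boldsymbol{\alpha}})$ holds and $\psi(\overline{\boldsymbol{\alpha}})$ fails, contradicting validity; hence the sequent was provable after all. When $\kappa=\omega$ this specializes to the classical completeness of the regular fragment.

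The main obstacle I anticipate is the scheduling at stage $\mu$: witnessing an existential creates new elements and hence new existential requirements, so one must verify that a bookkeeping of length $\mu$ can discharge all of the (up to $\mu$-many) requirements while the chain remains continuous, and that the limit-stage argument for $<\kappa$-conjunctions never fails. This is exactly where regularity of $\kappa$ (for the conjunction step) and of $\mu$ (for the cofinality counting) are used, and it must be checked that no hidden appeal to compactness enters — the linearity of the regular coverage is what guarantees this, in contrast to the genuinely branching coherent case.
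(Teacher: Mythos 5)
Your guiding intuition --- that the linearity of the $\kappa$-regular coverage is what should let one drop compactness --- is correct, but the way you implement it has a genuine gap at the limit stage $\kappa$. All of the machinery you invoke is calibrated to chains of length strictly less than $\kappa$: the exactness property defining a $\kappa$-regular category (and the version verified for the sheaf topos in Lemma \ref{shemb}) only asserts that transfinite composites of covers indexed by some $\gamma<\kappa$ are again covers, and the syntactic category has only $\kappa$-small limits, its objects being formulas in contexts of size $<\kappa$. Once the chain runs to length $\mu\geq\kappa$, the node at stage $\kappa$ need not exist in $\mathcal{C}_{\theory}$ (each witnessing cover enlarges the context, so after $\kappa$ steps the context would have size $\kappa$), and even if one works in the sheaf topos the inverse limit of a $\kappa$-indexed chain of epimorphisms need not map epimorphically onto the root --- this is exactly the internal failure of dependent choice that the $\kappa$-regular axioms do not exclude for chains of length $\geq\kappa$. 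Consequently the last step of your argument breaks down: from $\mathbf{D}\vDash\psi(\overline{\boldsymbol{\alpha}})$ you obtain a node forcing $\psi$, but that node may sit at a level $\geq\kappa$, and without the composite down to the root being a cover you cannot conclude that the root forces $\psi(\boldsymbol{\alpha})$, so no contradiction is reached. The obstacle you flag (bookkeeping of the requirements) is not the essential one; the essential one is that the chain cannot be continued past $\kappa$ while retaining the properties you need.

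The paper's proof avoids long chains altogether: it keeps the linear Beth-model construction confined to subtheories of cardinality $<\kappa$ (Proposition \ref{regbcomp}), obtains a model $\mathcal{M}_x$ for each $x\in P_{\kappa}(\lambda)$, and then glues these by the reduced product $\Pi_{x}\mathcal{M}_x/\mathcal{F}$ over the $\kappa$-complete filter generated by the cones $\{x\in P_{\kappa}(\lambda): y\subseteq x\}$. The point that replaces your long chain is that \L{}o\'s's theorem for $\kappa$-regular formulas holds for reduced products over a mere $\kappa$-complete filter --- an ultrafilter is needed only for the disjunction step, which is absent from the regular fragment --- so no compactness principle enters. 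To repair your argument you should replace the length-$\mu$ chain by this filter-product gluing (or an equivalent device); as written, the construction does not go through.
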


\begin{proof}
 Suppose that the sequent $\phi \vdash_{\mathbf{x}} \psi$ is valid in every model of a certain theory but not provable. Then it is not provable in any subtheory of cardinality less than $\kappa$. Therefore, if we add to the language new constants $\mathbf{c}$ and axioms $\top \vdash \phi(\mathbf{c})$ and $\psi(\mathbf{c}) \vdash \bot$, any subtheory of cardinality less than $\kappa$ together with these two new axioms has, by Proposition \ref{regbcomp}, a model. Now we can build a model for  the whole theory by an usual reduced product construction using each of the models so far available, but where we only use a $\kappa$-complete filter instead of an ultrafilter (which will be enough for our purposes since we do not have disjunction among the connectives). To do so, suppose the cardinality of the theory is $\lambda$, and define $P_{\kappa}(\lambda)=\{x \subseteq \lambda: |x|<\kappa\}$. For each $x \in P_{\kappa}(\lambda)$ let $\mathcal{M}_x$ be a model for the axioms in $x$. Now notice that the set:
 
 $$\{\{x \in P_{\kappa}(\lambda): y \subseteq x\}: y \in P_{\kappa}(\lambda)\}$$
 \\
 generates a $\kappa$-complete filter $\mathcal{F}$ on $P_{\kappa}(\lambda)$ by the regularity of $\kappa$. Then we can define a model for the whole theory as the reduced product $\Pi_{x \in P_{\kappa}(\lambda)}\mathcal{M}_x / \mathcal{F}$. The usual proof of \L{}o\'s theorem can be adapted for the $\kappa$-regular case with the filter $\mathcal{F}$ (the use of an ultrafilter is in the inductive step showing that disjunctive subformulas satisfy the theorem, and so it is not needed here). This provides a model for the original theory where $\phi \vdash_{\mathbf{x}} \psi$ is not valid.
\end{proof}

\begin{rmk}
In the particular case when $\kappa=\omega$, Theorem \ref{sregbcomp} reduces to the completeness theorem for regular logic. As it happens with coherent logic, the proof given is thus an alternative path to the usual categorical treatment of Henkinization from \cite{johnstone}.
\end{rmk}

\subsection{Fourman-Grayson theorem}

In \cite{fourman}, the completeness of countably axiomatized geometric propositional theories was proven. Here we check that the general case, where we include also existential quantification, is deduced from the same techniques employed so far in the completeness theorems, and at this point it only needs to be outlined. We have:

\begin{thm}\label{fg}
Countable geometric theories are complete with respect to \Sets-valued models. 
\end{thm}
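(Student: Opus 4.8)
The plan is to recognize geometric logic as the intuitionistic $(\omega_1,\omega,\omega)$-coherent fragment and then to invoke, essentially verbatim, the machinery already assembled for Theorem \ref{karp2}. Recall that a geometric formula is built from atomic formulas using finite conjunctions, finite existential quantification and arbitrary disjunctions; for a \emph{countable} geometric theory \theory\ the set $S$ of subformulas occurring in the (countably many) axioms, together with those of a putative valid sequent $\phi\vdash_{\mathbf{x}}\psi$, is countable, so every disjunction we shall have to witness is countable. Thus \theory\ is a theory of cardinality at most $\omega$ in the $(\omega_1,\omega,\omega)$-coherent fragment: disjunctions indexed by sets of size $\leq\omega$, conjunctions and existentials indexed by finite sets. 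The cardinal $\gamma=\omega$ satisfies the hypotheses of Theorem \ref{karp2}, namely $\gamma$ is regular and $\gamma^{\alpha}=\gamma$ for every $\alpha<\gamma$ (indeed $\omega^{n}=\omega$ for every finite $n$), so the relevant intermediate completeness result established inside the proof of Theorem \ref{karp2} applies directly.

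First I would form the syntactic category $\mathcal{C}_{\theory}$ and embed it conservatively into the sheaf topos $\mathcal{S}h(\mathcal{C}_{\theory},\tau)$ obtained from the coherent coverage, exactly as in Lemma \ref{shemb}. Since the geometric fragment has neither implication nor universal quantification, one needs the embedding only to preserve finite limits, images, covers and countable unions, which is precisely the (easier) part of that lemma. The instances of transfinite transitivity required here are the rules $TT_{\omega}$, and by the argument of Corollary \ref{karp3} each $TT_{\omega}^{B}$ is already derivable in $\mathcal{L}_{\omega_1,\omega}$: the bar $B$ consists of nodes of finite height, so the proof tree built by iterated applications of the cut rule and disjunction elimination is well-founded. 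Hence the transfinite transitivity property holds in $\mathcal{C}_{\theory}$, and in the sheaf topos, without any appeal to inaccessibility or to weak compactness.

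Next I would run the partial-Beth-model construction from the proof of Theorem \ref{karp2} (equivalently Proposition \ref{cohcomp}) with $\gamma=\omega$: using a well-ordering of $\omega\times\omega$ as in Lemma \ref{dwo}, build a tree of height $\omega$ whose building covering families witness, for each object, the countably many geometric subformulas in $S$ that it forces in the sheaf semantics. The key Claim, that for every node $p$, every tuple $\boldsymbol{\alpha}$ and every $\phi\in S$ one has $p\Vdash\phi(\boldsymbol{\alpha})$ if and only if $p\Vdash_{B}\phi(\boldsymbol{\alpha})$, is proved by induction on the complexity of the geometric formula $\phi$; only the atomic, finite-conjunction, countable-disjunction and existential cases arise, and these are exactly the cases already treated there. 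Taking the root to force the antecedent $\phi(\boldsymbol{\alpha})$ of a valid sequent and forming the directed colimit $\mathbf{D_b}$ of the underlying structures along each cofinal branch $b$, the regularity of $\omega$ guarantees that $\mathbf{D_b}$ is a well-defined (possibly exploding) \Sets-valued geometric model; validity of $\phi\vdash_{\mathbf{x}}\psi$ then forces $\psi(\boldsymbol{\alpha})$ at some node of every branch, and the categorical property corresponding to $TT_{\omega}$ propagates this back to the root, yielding provability.

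The point that must be checked with care—rather than a deep obstacle—is that no inaccessibility is available at $\omega_1$, so Proposition \ref{cohcomp} cannot be quoted directly and the whole argument must be carried out inside the $(\omega_1,\omega,\omega)$-fragment. The gain in that fragment is exactly that the finiteness of conjunctions and existentials keeps every node of the tree at finite height, which is what makes the proof trees for $TT_{\omega}$ well-founded and the colimit structures $\mathbf{D_b}$ well-defined by regularity of $\omega$. Once this finiteness-and-regularity bookkeeping is in place, the completeness of countable geometric theories with respect to \Sets-valued models follows, giving the infinitary version of the Fourman--Grayson theorem and, in its propositional restriction, the original result of \cite{fourman}.
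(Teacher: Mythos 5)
Your proposal follows essentially the same route as the paper's proof: conservative embedding of the syntactic category into the sheaf topos, a partial Beth model over a tree of height $\omega$ whose covering families witness the forced antecedents and consequents, directed colimits along cofinal branches yielding (possibly exploding) positive geometric \Sets-models, and propagation of the consequent back to the root via the bar form $TT_{\omega}$ of transfinite transitivity, derivable in geometric logic by the well-founded proof tree of Corollary \ref{karp3}. So the architecture is right and matches the paper.

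The one genuine problem is your opening reduction. A countable geometric theory has countably many axioms, but geometric logic permits disjunctions indexed by \emph{arbitrary} sets, so the set of subformulas of those axioms need not be countable and the disjunctions to be witnessed need not be countable; the remark following the theorem, which takes an $\omega_1$-Aronszajn tree (countable levels, $\omega_1$ many axioms) as the counterexample at the next cardinality, shows that the paper measures the size of a theory by its number of axioms, not by bounding its disjunctions. Consequently the theory does not in general live in the $(\omega_1,\omega,\omega)$-coherent fragment, and Theorem \ref{karp2} with $\gamma=\omega$ cannot be invoked verbatim: the tree you build will have branching type as large as the disjunctions occurring in the axioms, not bounded by $\omega_1$ (the paper flags exactly this: ``its branching type could be quite high''). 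As written, your argument proves the theorem only for countable theories whose disjunctions are all countable. The repair is what the paper actually does: only the countably many antecedents and consequents of the axioms and of the sequent under consideration need witnessing covers over each object, so the tree still has height $\omega$ even though its branching is unbounded; the finite depth of conjunctions and existentials together with the regularity of $\omega$ keeps the branch colimits well defined; and the required instances of $TT_{\omega}^{B}$ are over arbitrarily branching trees of height $\omega$ with bars of finite height, which are still provable in geometric logic because disjunction elimination there applies to disjunctions of any size. With that adjustment your argument coincides with the paper's.
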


\begin{proof}
The proof follows the same lines as the proof for Proposition \ref{cohcomp}, except that building covering families over each object $A$, used to construct the tree, witness if $A$ forces each of the (countably many) antecedents and consequents of the axioms or of the valid sequent $\phi \vdash_{\mathbf{x}} \psi$; that is, if an antecedent or consequent $\eta$ is a (nonempty) disjunction of the form $\bigvee_{i<\gamma}\exists \mathbf{x}_{0}...\mathbf{x}_{n_i} \psi_i(\mathbf{x}_0, ...,\mathbf{x}_{n_i}, \boldsymbol{\beta})$, and $A \Vdash \eta(\boldsymbol{\beta})$, we include in the set of coverings one of the form $l_j: C_j \to A$, where for each $j$ we have $C_j \Vdash \psi_{i_j}(\boldsymbol{\beta_0^j}, ..., \boldsymbol{\beta_{n_{i_j}}^j}, \boldsymbol{\beta} l_j)$ for some $i_j$ and some $\boldsymbol{\beta_0^j}, ..., \boldsymbol{\beta_{n_{i_j}}^j}$. In case $\eta$ is $\bot$, or $\eta$ is a conjunctive subformula, or $A \nVdash \eta(\boldsymbol{\beta})$ we just consider the identity arrow as a cover. This guarantees that the tree that we will use to build the partial Beth model will have countable height (although its branching type could be quite high), and the structures built as the filtered colimits of underlying structures in a cofinal branch will be (possibly exploding) positive geometric models of the theory. Finally, since any geometric formula satisfied in the models given by the directed colimits of the underlying structures of the nodes along all cofinal branches, is forced at some node of every branch, an application of the categorical property corresponding to the rule $TT_{\omega}$ (itself provable in geometric logic, with the same proof sketched in the proof of Corollary \ref{karp3}) proves that the consequent of the valid sequent is forced at the roots, and the completeness result follows.
\end{proof}

\begin{rmk}
Theorem \ref{fg} is best possible in terms of the cardinality of the theories. Indeed, given an $\omega_1$-Aronszajn tree, the theory of a cofinal branch there (\emph{cf.} proof of Proposition \ref{converse}), is obviously geometric and of cardinality $\omega_1$, but although consistent, it has no model.
\end{rmk}

\subsection{Future work}

Besides finding the exact strength of Heyting cardinals and the disjunction and existence properties within the large cardinal hierarchy, another aspect which was not considered here is the question of whether conceptual completeness results could be established, or to what extent the category of models of a given theory determines the theory, up to $\kappa$-pretopos completion. Conceptual completeness theorems were obtained in \cite{mr}, and Makkai provided an even stronger result in \cite{makkai2}, by proving that the category of models of a theory could be endowed with certain structure whose study would allow to recover the theory. Awodey and Forssell provided in \cite{af} a different approach by considering a topological structure on the category of models. Neither of these approaches has been generalized to the infinitary first-order case, although Makkai gave in \cite{makkai} an answer for the infinitary regular fragment. It is therefore a natural step first to try to obtain conceptual completeness theorems for the infinitary first-order case, and second, to identify which type of structure could be given to the category of models to be able to recover the theory, possibly using some large cardinal assumptions.

\subsection{Acknowledgements} The main ideas of this work have been presented in my PhD thesis at Stockholm University, and my period there was funded by the Swedish Research Council. I am indebted to all people that encouraged and supported me during my studies, in particular to my advisors Erik Palmgren and Henrik Forssell, as well as to Peter Lumsdaine for useful discussions about the subject.

\bibliographystyle{amsalpha}

\renewcommand{\bibname}{References} % changes the header from Bibliography to References

\bibliography{references}

%\begin{thebibliography}{widest entry}

%\end{thebibliography}

\end{document}